\documentclass[12pt]{amsart}
\usepackage{amsmath}
\usepackage{amsfonts}
\usepackage{amssymb}
\usepackage{amscd}
\usepackage{mathtools}
\usepackage{amsxtra}
\usepackage{amsthm}
\usepackage{graphicx}
\usepackage[abbrev,alphabetic]{amsrefs}
\RequirePackage[dvipsnames,usenames]{color}
\usepackage{soul,xcolor}
\setstcolor{red}
\usepackage{stmaryrd}
\usepackage{booktabs}
\usepackage{multirow}
\newtagform{tiny}{\tiny(}{)}
\usepackage{aliascnt}

\usepackage{mathtools}
\usepackage{hyperref}
\usepackage[margin=1.25in]{geometry}
\usepackage{mathrsfs}

\usepackage{amsthm}
\usepackage{comment}
\usepackage[all,cmtip]{xy}
\usepackage{tikz-cd}
\usetikzlibrary{cd}

\tikzcdset{
  cells={font=\everymath\expandafter{\the\everymath\displaystyle}},
}

\usepackage[all]{xy}

\usepackage{cleveref}

\makeatletter
\def\@tocline#1#2#3#4#5#6#7{\relax
  \ifnum #1>\c@tocdepth 
  \else
    \par \addpenalty\@secpenalty\addvspace{#2}%
    \begingroup \hyphenpenalty\@M
    \@ifempty{#4}{%
      \@tempdima\csname r@tocindent\number#1\endcsname\relax
    }{%
      \@tempdima#4\relax
    }%
    \parindent\z@ \leftskip#3\relax \advance\leftskip\@tempdima\relax
    \rightskip\@pnumwidth plus4em \parfillskip-\@pnumwidth
    #5\leavevmode\hskip-\@tempdima
      \ifcase #1
       \or\or \hskip 1em \or \hskip 2em \else \hskip 3em \fi%
      #6\nobreak\relax
    \hfill\hbox to\@pnumwidth{\@tocpagenum{#7}}\par
    \nobreak
    \endgroup
  \fi}
\makeatother

\renewcommand{\P}{\mathbb{P}}
\newcommand{\Z}{\mathbb{Z}}
\newcommand{\Q}{\mathbb{Q}}

\newcommand{\F}{\mathbb{F}}

\newcommand{\cO}{\mathcal{O}}

\newcommand{\m}{\mathfrak{m}}

\newcommand{\perf}{\mathrm{perf}}
\newcommand{\Frac}{\mathrm{Frac}}
\newcommand{\Proj}{\mathrm{Proj}}
\newcommand{\wt}{\widetilde}

\makeatletter
\DeclareRobustCommand{\graded}{%
    \@ifnextchar\bgroup{\graded@with}{\ensuremath{\mathrm{gr}}}%
}
\newcommand{\graded@with}[1]{\ensuremath{#1\mathrm{-gr}}}
\makeatother

\DeclareMathOperator*{\grlim}{grlim}
\DeclareMathOperator*{\grcolim}{grcolim}

\newcommand{\Lgrotimes}{\mathbin{\otimes}^{L\graded}}

\newcommand{\cLgrotimes}{\mathbin{\widehat{\otimes}}^{L\graded}}

\DeclareMathOperator{\tperfd}{\widehat{\perfd}}
\DeclareMathOperator{\grpfd}{\mathrm{grpfd}}
\DeclareMathOperator{\tgrpfd}{\widehat{\grpfd}}
\DeclareMathOperator{\pfd}{pfd}

\makeatletter
\DeclareRobustCommand{\comp}[1]{%
    \@ifnextchar\bgroup{\comp@with{#1}}{\comp@without{#1}}%
}
\newcommand{\comp@with}[2]{\Lambda_{#1}(#2)}
\newcommand{\comp@without}[1]{\ensuremath{#1\mathrm{-comp}}}
\makeatother
\newcommand{\dcomp}[2]{d\Lambda_{#1}(#2)}
\newcommand{\grcomp}[2]{\Lambda^{\graded}_{#1}(#2)}
\newcommand{\dgrcomp}[2]{d\Lambda^{\graded}_{#1}(#2)}
\DeclareMathOperator{\grwedge}{{\wedge \graded}}

\def\var{\overline}

\DeclareMathOperator{\Supp}{Supp}
\DeclareMathOperator{\Spec}{Spec}

\DeclareMathOperator{\Hom}{Hom}

\DeclareMathOperator{\Pic}{Pic}

\newcommand{\hProj}{\widehat{\Proj}}

\theoremstyle{plain}
\newtheorem{theorem}{Theorem}[section]

\newtheorem{theoremA}{Theorem}

\crefname{theoremA}{Theorem}{Theorems}
\Crefname{theoremA}{Theorem}{Theorems}

\newaliascnt{lemma}{theorem}
\newtheorem{lemma}[lemma]{Lemma}
\aliascntresetthe{lemma}
\crefname{lemma}{Lemma}{Lemmas}
\Crefname{lemma}{Lemma}{Lemmas}

\newaliascnt{proposition}{theorem}
\newtheorem{proposition}[proposition]{Proposition}
\aliascntresetthe{proposition}
\crefname{proposition}{Proposition}{Propositions}
\Crefname{proposition}{Proposition}{Propositions}

\newaliascnt{corollary}{theorem}
\newtheorem{corollary}[corollary]{Corollary}
\aliascntresetthe{corollary}
\crefname{corollary}{Corollary}{Corollaries}
\Crefname{corollary}{Corollary}{Corollaries}

\newaliascnt{claim}{theorem}

\aliascntresetthe{claim}
\crefname{claim}{Claim}{Claims}
\Crefname{claim}{Claim}{Claims}

 \newtheorem*{claim*}{Claim}

\theoremstyle{definition}
\newaliascnt{definition}{theorem}
\newtheorem{definition}[definition]{Definition}
\aliascntresetthe{definition}
\crefname{definition}{Definition}{Definitions}
\Crefname{definition}{Definition}{Definitions}

\newaliascnt{notation}{theorem}
\newtheorem{notation}[notation]{Notation}
\aliascntresetthe{notation}
\crefname{notation}{Notation}{Notations}
\Crefname{notation}{Notation}{Notations}

\newaliascnt{example}{theorem}
\newtheorem{example}[example]{Example}
\aliascntresetthe{example}
\crefname{example}{Example}{Examples}
\Crefname{example}{Example}{Examples}

\theoremstyle{remark}
\newaliascnt{remark}{theorem}
\newtheorem{remark}[remark]{Remark}
\aliascntresetthe{remark}
\crefname{remark}{Remark}{Remarks}
\Crefname{remark}{Remark}{Remarks}

\newaliascnt{setting}{theorem}

\aliascntresetthe{setting}
\crefname{setting}{Setting}{Settings}
\Crefname{setting}{Setting}{Settings}

\newaliascnt{construction}{theorem}
\newtheorem{construction}[construction]{Construction}
\aliascntresetthe{construction}
\crefname{construction}{Construction}{Constructions}
\Crefname{construction}{Construction}{Constructions}

\numberwithin{equation}{section}



\usepackage{mymacros_common_Ryo, mymacros_english_paper_Ryo}

\title{A local-global correspondence for perfectoid purity}
\author{Ryo Ishizuka}
\address{Institute of Science Tokyo, Tokyo 152-8551, Japan}
\email{ishizuka.r.ac@m.titech.ac.jp}

\author{Shou Yoshikawa}
\address{Institute of Science Tokyo, Tokyo 152-8551, Japan}
\email{yoshikawa.s.9fe9@m.isct.ac.jp}

\thanks{2020 {\em Mathematics Subject Classification\/}: 14G45, 13D45, 14F08}

\keywords{Perfectoid rings, Projective schemes, Perfectoidization, Calabi--Yau, Fano}

\begin{document}

\begin{abstract}
    We introduce (lim-)perfectoid splitting, which is a global variant of (lim-)perfectoid purity.
    Our main result establishes a correspondence between the lim-perfectoid splitting of projective schemes and the lim-perfectoid purity of their Gorenstein section rings.
    As an application, we construct a new supply of examples of lim-perfectoid pure rings that go beyond the previously known complete intersection or splinter-type cases.
\end{abstract}

\maketitle 

\tableofcontents

\section{Introduction}

This paper studies a local--global correspondence for singularities in mixed-characteristic algebraic geometry through perfectoid theory introduced by Scholze \cite{scholze2012Perfectoida}. In characteristic \(p\), the prototype is provided by Frobenius splitting and related \(F\)-singularities: global Frobenius-theoretic properties of projective varieties are reflected in the \(F\)-singularities of their section rings \cites{Watanabe1991,SchwedeSmith2010}. 
In particular, this correspondence makes the \(F\)-splitting computable in many examples, since one can apply local criteria such as Fedder's criterion \cite{Fed} to the section ring. Our goal is to develop a mixed-characteristic analogue of this picture.

In mixed characteristic, the role of Frobenius is played by maps to perfectoid rings. Indeed, a complete Noetherian local ring \(R\) of mixed characteristic is regular if and only if there exists a faithfully flat map \(R \to P\) to a perfectoid ring \(P\) \cite{bhatt2019Regular}. Motivated by this, Bhatt et al.\ introduced \emph{(lim-)perfectoid pure}\footnote{A \(p\)-adically complete Noetherian ring \(S\) is \emph{lim-perfectoid pure} (resp., \emph{perfectoid pure}) if the canonical morphism \(S \to S_{\perfd}\) to the absolute perfectoidization of \(S\) is pure (resp., there exists a pure morphism \(S \to P\) to a perfectoid ring \(P\)).} singularities as mixed-characteristic analogues of \(F\)-pure singularities \cite{bhatt2024Perfectoid}.


From the perspective of lim-perfectoid pure singularities, it is natural to consider a global analogue obtained by replacing the local splitting condition with its geometric counterpart. 
In order to explain global theory, we prepare notations.

\begin{notation} \label{NotationIntroduction}
    Let \(V\) be a \(p\)-torsion-free discrete valuation ring with residue characteristic \(p\), and let \(X\) be a flat projective scheme over \(V\) such that \(H^0(X, \mcalO_X) \cong V\). 
    Write $d:=\dim X$.
    Let \(\widehat{X}\) be the \(p\)-adic completion of \(X\).
    Let \(\mcalO_X(1)\) be an ample line bundle on \(X\) and set the section ring \(R \defeq \bigoplus_{m \geq 0} H^0(X, \mcalO_X(m))\).
\end{notation}

A natural global analogue of perfectoid purity is provided by the notion of \emph{perfectoid split}. Namely, we say that \(X\) is \emph{perfectoid split} if there exists an affine morphism \(\pi \colon Y \to X\) such that the \(p\)-adic formal completion \(\widehat{Y}\) is a perfectoid formal scheme and the natural map \(\mathcal O_X \to \pi_*\mathcal O_Y\) is ind-split, that is, a filtered colimit of split morphisms in \(\QCoh(X)\) (\Cref{DefDPS}).

In one direction, the relation with the local theory is relatively well behaved: if the section ring $R^{\wedge p}$ is perfectoid pure, then $X$ is perfectoid split (\Cref{LocalGlobalPure}) by the theory of graded perfectoid rings \cite{ishizuka2025Graded}. 
The converse direction is more delicate, since it is not clear in general whether a perfectoid cover of \(X\) gives rise to a corresponding perfectoid cover of \(R\).
For this reason, to establish a global-to-local correspondence, we instead adopt a different approach based on absolute perfectoidization, which leads us to the notion of \emph{lim-perfectoid splitting}.

\begin{definition}[{\Cref{DefDPS}}, cf.~\cites{bhatt2025Aspects}] \label{IntroductionDefGLPP}
    Let \(\mcalO_{\widehat{X}, \perfd}\) be the absolute perfectoidization of the structure sheaf \(\mcalO_{\widehat{X}}\) of \(\widehat{X}\) defined in \cite{ishizuka2026Algebraization}.
    We say that \(X\) is \emph{lim-perfectoid split} if the canonical morphism \(\mcalO_{\widehat{X}} \to \mcalO_{\widehat{X}, \perfd}\) in \(\mcalD_{\qcoh}(\widehat{X})\) is ind-split.
\end{definition}

Recently, Bhatt proved that lim-perfectoid split regular schemes satisfy the Kodaira-type vanishing theorem (\cite{bhatt2025Aspects}*{Proposition 11.2.16}; see also \Cref{RemarkKodairaVanishing}). As in the case of \(F\)-splitting in positive characteristic (cf.~\cite{mehta1985Frobenius}), this shows that lim-perfectoid splitting is a useful notion for proving vanishing theorems in mixed-characteristic. 
For this reason, it is important to have effective criteria for detecting when a scheme is lim-perfectoid split.

Our main result compares perfectoid singularities of the section ring $R$ and (lim-)perfectoid splitting of $X$:

\begin{theoremA}[{\Cref{SectionLocalGlobal}}] \label{MainTheoremLocalGlobal}
    Keep the setting of \Cref{NotationIntroduction}.
    Assume that \(X\) is Cohen--Macaulay and either \(\mcalO_X \cong \omega_X\), \(\mcalO_X(1) = \omega_X^{-1}\), or \(\cO_X(1)=\omega_X\).
    Then the following assertions hold:
    \begin{enumerate}
        \item If $H^{d+1}_{(p,R_+)}(R) \to H^{d+1}_{(p,R_+)}(R_{\perfd})$ is injective, then \(X\) is lim-perfectoid split (\Cref{local-to-global-PS}).
        \item In either the case of \(\dim X \geq 3\) and \(\mcalO_X \cong \omega_X\), or \(\cO_X(1)=\omega_X^{-1}\), the converse direction of (1) holds (\Cref{Global-local-CY} and \Cref{Global-local-Fano}).
        \item Under the assumption on (2), we further assume that $X$ is complete intersection on a projective space.
        Then $X$ is perfectoid split if and only if $X$ is lim-perfectoid split (\Cref{CICase}).
    \end{enumerate}
\end{theoremA}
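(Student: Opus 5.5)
The plan is to use the usual dictionary between graded local cohomology of the section ring and sheaf cohomology on \(X\), transported to the perfectoid setting through the graded perfectoidization of \cite{ishizuka2025Graded} and the sheafified perfectoidization \(\mcalO_{\widehat{X},\perfd}\) of \cite{ishizuka2026Algebraization}. Let \(U\) be the punctured spectrum of the affine cone, so that \(U\) is a \(\mathbb{G}_m\)-torsor over \(X\). Since \(R\) is Cohen--Macaulay of dimension \(d+1\) (this follows from \(X\) being Cohen--Macaulay with the stated canonical-bundle hypotheses, which kill the relevant intermediate cohomology), \(H^{d+1}_{(p,R_+)}(R)\) is the only nonzero local cohomology. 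Its degree-\(m\) part is the \(p\)-power torsion limit of \(H^{d}(X,\mcalO_X(m))\), or more precisely \(H^d(\widehat X, \mcalO_{\widehat X}(m)\otimes \Q_p/\Z_p)\)-type groups. On the perfectoid side, I expect \(R_{\perfd}\) to be graded by \(\Z[1/p]_{\geq 0}\) and to compute \(\bigoplus_m R\Gamma(\widehat{X},\mcalO_{\widehat X,\perfd}(m))\) on the punctured cone. So the map in (1) should decompose degree by degree as the map \(H^d(\widehat X,\mcalO(m))\to H^d(\widehat X,\mcalO_{\perfd}(m))\), taken with \(p\)-torsion coefficients.

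For (1), the first step is to use Grothendieck/Serre duality on the Cohen--Macaulay \(\widehat X\): a map \(\mcalO_{\widehat X}\to \mcalE\) in \(\mcalD_{\qcoh}\) is ind-split exactly when it is pure, i.e. when it stays injective after tensoring with everything. For a Cohen--Macaulay projective \(X\), this is detected by injectivity on \(H^d(X, \omega_X\otimes \mcalO_X(-?)\otimes -)\) of the top cohomology, since the splitting obstruction lives in \(\Hom(\mcalE,\mcalO)\), which is dual to \(H^d(\omega\otimes\mcalE)\). Under \(\omega_X\cong \mcalO_X(k)\) with \(k\in\{0,-1,1\}\), the socle-type class of \(H^d(\omega_X)\) is the relevant one, and it lives in degree \(k\) of \(H^{d+1}_{(p,R_+)}(R)\). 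Injectivity of the local map then gives injectivity on this class. I would then run the standard argument, as for \(F\)-splitting in \cite{SchwedeSmith2010}: injectivity of \(H^d(\omega_X)\to H^d(\omega_X\otimes \mcalE)\) yields, by duality, a surjection onto \(\Hom(\mcalO,\mcalO)=V\) evaluated at \(1\). This must be done at each finite stage of a filtered presentation of \(\mcalO_{\perfd}\), and doing so produces ind-splitting.

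For (2), I would reverse this argument. Ind-splitting gives injectivity of \(H^d(\widehat X,\mcalO(m))\to H^d(\widehat X,\mcalO_{\perfd}(m))\) for every \(m\). The remaining task is to reassemble this into injectivity on local cohomology, which means controlling the fractional degrees and the comparison between \(R_{\perfd}\) and the global sections of the sheafified perfectoidization. This is where the hypotheses enter. In the Fano case \(\mcalO(1)=\omega^{-1}\), the socle of \(H^{d+1}_{\m}(R)\) sits in degree \(-1\), which is negative. In that case Kodaira-type vanishing and the ample twist let one reduce injectivity to a single degree. In the Calabi--Yau case the socle sits in degree \(0\), and \(d\ge 3\) is needed so that \(H^i\) of the cone agrees with the twisted cohomology on \(X\) in the relevant range, via depth considerations on \(R_{\perfd}\).

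For (3), perfectoid split always implies lim-perfectoid split by the universal property of perfectoidization. For the converse, I would use (2) to get injectivity on local cohomology of the complete intersection section ring. For complete intersections, this criterion is known to imply perfectoid purity (the Fedder-type results of \cite{bhatt2024Perfectoid}), and \Cref{LocalGlobalPure} then returns perfectoid splitting. The main obstacle I expect is the comparison in (2) between \(R_{\perfd}\) and \(\mcalO_{\widehat X,\perfd}\), in particular whether the cohomology of the punctured cone for the perfectoidization is correctly computed by the twists on \(X\).
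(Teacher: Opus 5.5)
\textbf{Parts (1) and (3).} These follow the paper's route, and they are fine. You use the duality criterion for ind-splitting: injectivity of \(H^d_{(p)}(\widehat{X},\omega_{\widehat{X}})\to H^d_{(p)}(\widehat{X},\mathcal{O}_{\widehat{X},\mathrm{perfd}}\otimes^L\omega_{\widehat{X}})\), checked on perfect approximations and Matlis-dualized. You observe that \(\omega_X\cong\mathcal{O}_X(k)\) puts the relevant class in degree \(k\). For (3) you use the chain: complete intersection plus injectivity \(\Rightarrow\) perfectoid pure \(\Rightarrow\) perfectoid split.

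Two claims along the way are inaccurate, though neither breaks (1).
\begin{itemize}
\item \(R\) is not automatically Cohen--Macaulay. For the canonical lift of an ordinary abelian surface, \(\omega_X\cong\mathcal{O}_X\) and \(d=3\), but \(H^3_{(p,R_+)}(R)_0\cong H^2_{(p)}(X,\mathcal{O}_X)\cong H^1(X,\mathcal{O}_X)\otimes\mathbb{Q}_p/\mathbb{Z}_p\neq 0\). This is harmless: \(H^{d+1}_{(p,R_+)}(R)_m\cong H^d_{(p)}(X,\mathcal{O}_X(m))\) only uses \(d\geq 2\) and \(p\)-torsion-freeness. Cohen--Macaulayness of \(R\) matters only for turning the injectivity into lim-perfectoid purity.
\item The map in (1) does not decompose degreewise as \(H^d_{(p)}(\mathcal{O}(m))\to H^d_{(p)}(\mathcal{O}_{\mathrm{perfd}}(m))\). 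The fiber sequence of \Cref{GradedAbsPerfdSheafCoho} only gives a commutative square. In it, \(H^d_{(p)}(\widehat{X},\mathcal{O}_{\widehat{X},\mathrm{perfd}}(m))\) maps to \(H^{d+1}_{(p,R_+)}(R_{\mathrm{perfd}})_m\) by a connecting map whose kernel is the image of \(H^d_{(p)}(R_{\mathrm{perfd}})_m\). That image is not known to vanish, since unlike in characteristic \(p\), \(R_{\mathrm{perfd}}\) is not discrete. The factorization is enough for (1), but it is exactly the obstruction in (2).
\end{itemize}

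\textbf{The gap is in (2).} Ind-splitting gives injectivity of \(H^d_{(p)}(\widehat{X},\omega_{\widehat{X}})\to H^d_{(p)}(\widehat{X},\mathcal{O}_{\widehat{X},\mathrm{perfd}}\otimes^L\omega_{\widehat{X}})\). To carry this to local cohomology, you must show the connecting map \(H^d_{(p)}(\widehat{X},\mathcal{O}_{\widehat{X},\mathrm{perfd}}(i))\to H^{d+1}_{(p,R_+)}(R_{\mathrm{perfd}})_i\) is injective, for \(i=0\) (Calabi--Yau) resp.\ \(i=-1\) (Fano). It suffices that \(H^d_{(p)}(R_{\mathrm{perfd}})_i\cong H^d_{(p)}((R_{\mathrm{grpfd}})_i)\) vanishes. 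Your proposal never proves this, and the mechanisms you name do not do the job. Kodaira-type vanishing plays no role, and no depth property of \(R_{\mathrm{perfd}}\) is available.

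The missing inputs are these:
\begin{itemize}
\item \(R_{\mathrm{grpfd}}\) is supported in \(\mathbb{Z}[1/p]_{\geq 0}\), so \((R_{\mathrm{grpfd}})_{-1}=0\). This settles the Fano case for every \(d\geq 2\).
\item \((R_{\mathrm{grpfd}})_0\cong (R_0)_{\mathrm{perfd}}=V_{\mathrm{perfd}}\) by \Cref{GradedAbsPerfdProperties}. The \(\mathbb{Z}_p\)-Galois description \(V_{\mathrm{perfd}}\to\mathrm{fib}(\mathcal{O}_C\xrightarrow{\gamma-1}\mathcal{O}_C)\to k_{\mathrm{perf}}[-1]\) shows \(R\Gamma_{(p)}(V_{\mathrm{perfd}})\) is concentrated in degrees \([1,2]\). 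So \(H^d\) vanishes precisely because \(d\geq 3\) in the Calabi--Yau case (\Cref{VanishingPerfd}, \Cref{GradedZeroPartInjective}).
\item This also explains why \(\mathcal{O}_X(1)=\omega_X\) is excluded from (2): there the socle sits in degree \(1\), where no such vanishing is available.
\end{itemize}

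Only after this does your socle step apply. \(H^{d+1}_{(p,R_+)}(R)\) is \((p,R_+)\)-power torsion, with one-dimensional socle concentrated in degree \(i\) (since \(\omega_R\cong R(i)\); see \Cref{socle}). A nonzero kernel would therefore contain a socle element in degree \(i\), contradicting injectivity there. In particular, the fractional degrees you worried about never need to be controlled. Since your (3) relies on (2), it inherits this gap.
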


\noindent
Remark that, under the assumption on \Cref{MainTheoremLocalGlobal}, if the section ring \(R\) is Cohen--Macaulay, then the injectivity in \Cref{MainTheoremLocalGlobal}~(1) is equivalent to the lim-perfectoid purity of \(R^{\wedge p}\)(\Cref{rmk:Gorensteiness-section-ring} and \Cref{EquivGradedPerfdPure}).

A key ingredient is a fiber sequence which relates the local cohomology of the absolute graded perfectoidization \(R_{\grpfd}\) defined in \cite{ishizuka2026Algebraization} to the cohomology of the absolute perfectoidization \(\mcalO_{\widehat{X}, \perfd}\):

\begin{theoremA}[{\Cref{GradedAbsPerfdSheafCoho}}] \label{MainTheoremFiberSequence}
    Keep the setting in \Cref{NotationIntroduction}.
    Then we can equip both \(R\Gamma_{(p, R_+)}(R_{\perfd})\) and \(R\Gamma_{(p)}(R_{\perfd})\) with a \(\setZ[1/p]\)-grading, and there exists a fiber sequence
    \begin{equation*}
        R\Gamma_{(p, R_+)}(R_{\perfd}) \to R\Gamma_{(p)}(R_{\perfd}) \to \bigoplus_{q \in \setZ[1/p]} R\Gamma_{(p)}(R\Gamma(\widehat{X}, \mcalO_{\widehat{X}, \perfd}(q)))
    \end{equation*}
    in the derived (\(\infty\)-)category of \(\setZ[1/p]\)-graded \(R\)-modules, where \(\mcalO_{\widehat{X}, \perfd}(q)\) is the twist by \(q \in \setZ[1/p]\) (\Cref{RationalTwistAbsPerfd}).
\end{theoremA}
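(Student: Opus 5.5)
The fiber sequence will come from the classical Čech/Proj comparison for a graded ring, transported to the graded absolute perfectoidization and then identified with $R_{\perfd}$. First, use the absolute graded perfectoidization $R_{\grpfd}$ from \cite{ishizuka2026Algebraization}. It is a $\setZ[1/p]$-graded $p$-complete object. The first step is to show that its underlying (ungraded, $p$-completed) object computes $R_{\perfd}$, or at least that
\[
R\Gamma_{(p)}(R_{\grpfd}) \simeq R\Gamma_{(p)}(R_{\perfd}) \quad\text{and}\quad R\Gamma_{(p,R_+)}(R_{\grpfd}) \simeq R\Gamma_{(p,R_+)}(R_{\perfd}).
\]
Transporting the grading along these equivalences gives the $\setZ[1/p]$-gradings asserted in the statement. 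I expect this comparison between graded and ungraded perfectoidization to be available from the cited work on graded perfectoid rings \cite{ishizuka2025Graded}. The underlying reason is that the forgetful functor from graded perfectoid rings, after $p$-completion, preserves perfectoidness and is compatible with the universal property.

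Next, for any $\setZ[1/p]$-graded $R$-module complex $M$, there is the standard fiber sequence
\[
R\Gamma_{R_+}(M) \to M \to \check{C}(M),
\]
where $\check{C}(M)$ is the Čech complex on homogeneous generators $f_1,\dots,f_r$ of $R_+$ with all localizations $M_{f_{i_0}\cdots f_{i_k}}$ omitted from degree $0$. Applying $R\Gamma_{(p)}$ and using $R\Gamma_{(p,R_+)} = R\Gamma_{(p)}\circ R\Gamma_{R_+}$ reduces the problem to identifying, degree by degree,
\[
\check{C}(R_{\grpfd}) \simeq \bigoplus_{q\in\setZ[1/p]} R\Gamma(\widehat{X}, \mcalO_{\widehat{X},\perfd}(q))
\]
after applying $R\Gamma_{(p)}$.

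For this identification, cover $\widehat{X}$ by the affine opens $D_+(f_i)$. By the construction of the rational twist (\Cref{RationalTwistAbsPerfd}), the sections of $\mcalO_{\widehat{X},\perfd}(q)$ over $D_+(f_{i_0}\cdots f_{i_k})$ should be the degree-$q$ part of the $p$-completed localization $(R_{\grpfd})_{f_{i_0}\cdots f_{i_k}}$. This relies on two facts: perfectoidization commutes with the étale (indeed Zariski) localization $R\to R_f$, and the degree-$0$ part of a graded perfectoid ring localized at a homogeneous element computes the perfectoidization of $\mcalO_{\widehat{X}}(D_+(f))$. Since $\mcalO_{\widehat{X},\perfd}$ is quasi-coherent in $\mcalD_{\qcoh}(\widehat{X})$, Čech descent then yields $R\Gamma(\widehat{X},\mcalO_{\widehat{X},\perfd}(q))$ as the degree-$q$ part of the $p$-completed Čech complex. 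The outer $R\Gamma_{(p)}$ absorbs the difference between $p$-completed and uncompleted localizations, because $R\Gamma_{(p)}$ of a complex agrees with $R\Gamma_{(p)}$ of its derived $p$-completion.

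The main obstacle is this degree-wise comparison on $D_+(f)$. One needs that $((R_{\grpfd})_f)_q$ is exactly the twist $\mcalO_{\widehat{X},\perfd}(q)(D_+(f))$, including for non-integral $q\in\setZ[1/p]$. Since $f$ admits compatible $p$-power roots in $R_{\grpfd}$, $(R_{\grpfd})_f$ should be a $\setZ[1/p]$-graded ring strongly graded over its degree-$0$ part, with invertible degree-$q$ pieces. I would first check this strong-gradedness, using the $p$-power roots of $f$ to produce units in every degree $q$. I would then check that these pieces glue to the line-bundle-like twists of \Cref{RationalTwistAbsPerfd}.
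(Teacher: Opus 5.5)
Your outline is sound, but you put the essential comparison at the opposite end of the fiber sequence from the paper.

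\textbf{How the paper argues.} In \Cref{ExactTriLocalCechCoho} it writes the classical sequence \eqref{FiberSequenceGrPerfd} for each discrete graded perfectoid $P$ and then takes $\grlim_P$. The third term then becomes $\bigoplus_q R\Gamma(\mscrX, \lim_P P(q)^{\Delta})$ tautologically. The real work is identifying $\grlim_P R\Gamma^{\wedge}_{R_+}(P)$ with $\dgrcomp{p}{R\Gamma_{R_+}(R_{\grpfd})}$. This is \Cref{LocCohoCommutesAbsPerfd}, proved by induction on the number of generators and driven by \Cref{CommutativityLimitsPerfd}.

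\textbf{How your route differs.} You apply the \v{C}ech sequence directly to $R_{\grpfd}$. The first two terms are then automatic, via \Cref{GradedAbsPerfdProperties}(2) and \Cref{PropertiesCompLocCoho}(6), as you say, and the work moves to the third term. This is legitimate:
\begin{enumerate}
\item After derived $p$-completion, the \v{C}ech complex is a finite limit of the objects $\dgrcomp{p}{R_{\grpfd}[1/g]}$, with $g$ a product of generators of $R_+$.
\item By \eqref{CommutesLimitIsom} and \Cref{LocalizationGradedPerfd}(2), each of these is naturally $\grlim_P \grcomp{p}{P[1/g]}$.
\item Finite limits commute with $\grlim_P$.
\item The degree-$q$ part of the completed \v{C}ech complex of $P$ computes $R\Gamma(\mscrX, P(q)^{\Delta})$, by the $R^1\lim$-vanishing argument of \Cref{GradedwiseCompletedCech}.
\end{enumerate}
Your route is shorter; the paper's induction is essentially a repackaging of this termwise \v{C}ech argument. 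The paper's route has the advantage of producing \eqref{FiberSequenceGrPerfd} for each single $P$, which it reuses in \Cref{local-to-global-PS}(2).

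\textbf{Where the proposal goes astray.} The problem is your last paragraph. The step you call the main obstacle is not resolved by strong gradedness and gluing. For $q \notin \setZ$ there is no line bundle to glue to, because \Cref{RationalTwistAbsPerfd} \emph{defines} $\mcalO_{\mscrX,\perfd}(q)$ as $\lim_P P(q)^{\Delta}$. Its sections over $D_+(g)$ are therefore $\lim_P (\grcomp{p}{P[1/g]})_q$, simply because $R\Gamma(D_+(g), -)$ commutes with limits. The only nontrivial input is interchanging $\grlim_P$ with inverting $g$ and completing, i.e.\ the graded statement \eqref{CommutesLimitIsom}.
\begin{itemize}
\item Your ``fact 1'' must be used in exactly this graded limit form. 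The ungraded compatibility of $R_{\perfd}$ with localization says nothing about individual degree-$q$ pieces.
\item Your ``fact 2'' on degree-zero parts only matters for matching integral twists with $\mcalO_{\mscrX,\perfd}\otimes\mcalO_{\mscrX}(m)$. It is not needed for the statement as posed.
\item The strong-gradedness plan is also inaccurate as stated. There is no reason for $f$ to have exact compatible $p$-power roots in the non-discrete $R_{\grpfd}$. In any case, roots of $f$ could only produce units in degrees $\deg(f)\setZ[1/p]$, not in every $q \in \setZ[1/p]$.
\end{itemize}
Replace that paragraph by an appeal to \Cref{CommutativityLimitsPerfd}, applied naturally over the \v{C}ech diagram, and your proof goes through.
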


One serious difficulty in proving \Cref{MainTheoremLocalGlobal} is that the middle term in the fiber sequence in \Cref{MainTheoremFiberSequence} is itself a derived object. 
In contrast to the local-global correspondence in positive characteristic, where the corresponding middle term is a discrete ring, here one does not have the vanishing on \(H^{d}_{(p)}(R_{\perfd})\).
This is one of the main reasons why our argument is more delicate than in positive characteristic. 
This lack of control is also reflected in the assumptions of \Cref{MainTheoremLocalGlobal}(2) (see \Cref{GradedZeroPartInjective} for details). 

The comparison theorem \Cref{MainTheoremLocalGlobal} makes it possible to pass between the local theory of the section ring $R$ and the global geometry of \(X\), and hence to take advantage of both viewpoints. 
In particular, by combining \Cref{MainTheoremLocalGlobal} with the Fedder-type criteria for perfectoid purity established in \cites{yoshikawa2025Computation,yoshikawa2025Criterion}, we obtain a practical method for constructing examples. These examples further show that perfectoid splitting and lim-perfectoid splitting are not merely mixed-characteristic analogues of \(F\)-splitting, but exhibit new phenomena specific to mixed characteristic.

\begin{example} \label{IntroductionExample}
Let $k$ be an algebraically closed field of characteristic $p$.
\begin{enumerate}
\item Any \(W(k)\)-lift of an elliptic curve over \(k\) is perfectoid split (\Cref{elliptic-curve}).
\item The canonical \(W(k)\)-lift \(X\) of an ordinary abelian variety over \(k\) is perfectoid split (\Cref{ExampleWeaklyOrdinary}).
\item Let $\var{X}$ be a hypersurface Calabi--Yau variety over $k$ such that the Artin--Mazur height of $X$ is infinite and $\dim \var{X} \geq 2$.
If $p=2$, then there exist  $W(k)$-lifts $X_1$ and $X_2$ of $\var{X}$ such that $X_1$ is not lim-perfectoid split and $X_2$ is perfectoid split (\Cref{CY}(2)).
\item Let $X \subseteq \P^3_{W(k)}$ be a smooth hypersurface of degree $4$ over $W(k)$.
If $X$ is lim-perfectoid split, then the $p$-adic completion of every section ring of $X$ is lim-perfectoid pure (\Cref{every-section-ring}).
Furthermore, if $p \neq 2$, then $X$ is automatically lim-perfectoid split by \cite{Yoshikawa26}*{Thoerem~A}.
This yields a new supply of examples of mixed-characteristic singularities that go beyond the previously known complete intersection or splinter-type cases (cf.~\Cref{non-lci}).
\end{enumerate}
\end{example}

\subsection*{Structure of this paper}
In \Cref{SectionPreliminaries}, we introduce notation and terminology and review our previous work. In \Cref{SectionPerfectoidSplitting}, we introduce several notions of perfectoid purity for (formal) schemes. In \Cref{SectionLocalCoho}, we develop the basic theory of local cohomology for derived graded modules and construct a fiber sequence (\Cref{MainTheoremFiberSequence}), which is a key ingredient in the proof of \Cref{MainTheoremLocalGlobal}. In \Cref{SectionLocalGlobal}, we prove \Cref{MainTheoremLocalGlobal} using the fiber sequence.

\subsection*{Acknowledgments}
The authors would like to thank Jack J. Garzella, L\'eo Navarro Chafloque, and Joe Waldron for helpful discussions.
This work was started while we attended the conference `\(p\)-adic and Characteristic \(p\) Methods in Algebraic Geometry' at EPFL and `the Summer Research Institute in Algebraic Geometry' at Colorado State University in 2025. We are very grateful for these opportunities and their hospitality.

The first-named author was supported by JSPS KAKENHI Grant number 24KJ1085.
The second-named author was supported by JSPS KAKENHI Grant number JP24K16889.

\section{Preliminaries} \label{SectionPreliminaries}

\subsection{Notation and terminology}

Throughout this paper, we use the following notation and terminology.

\subsubsection{General notation}
\begin{enumerate}
    \item We fix a prime number \(p\).
    \item We only consider commutative rings with unity, which we simply refer to as \emph{rings}. The category of (commutative) rings is denoted by \(\CRing\) (not \(\CAlg\)).
\end{enumerate}

\subsubsection{Higher categorical stuff}
\begin{enumalphp}
    \item We use the notion of an \(\infty\)-category (more precisely, an \((\infty,1)\)-category). Our main references are \cites{lurie2017Higher, lurie2018Spectral}.
    \item The \(\infty\)-category of \(\setE_{\infty}\)-ring spectra is denoted by \(\CAlg\); this category admits all small limits and colimits. We simply call an object of \(\CAlg\) an \emph{\(\setE_{\infty}\)-ring}. If we take the slice category over an \(\setE_{\infty}\)-ring \(R\), we denote the \(\infty\)-category of \(\setE_{\infty}\)-\(R\)-algebras as \(\CAlg_R\).
    \item If we say \emph{discrete} rings and modules, we mean commutative rings and modules without higher homotopy groups. Since we rarely use topological rings equipped with the discrete topology (and we make this explicit when doing so), we hope that no confusion will arise.
\end{enumalphp}

\subsubsection{Graded rings}
\begin{enumalphp}
    \item Let \(G\) be a torsion-free abelian group with identity element \(0\).
    \item A \emph{\(G\)-graded ring} is a pair consisting of a commutative ring \(R\) and a family of additive subgroups \(\{R_g\}_{g \in G}\) such that \(R\) admits a decomposition \(R = \bigoplus_{g \in G} R_g\) and satisfies \(R_g R_{g'} \subseteq R_{g + g'}\) for any \(g, g' \in G\). A \emph{homogeneous element} of \(R\) is an element of \(R_g\) for some \(g \in G\), and a \emph{homogeneous ideal} is an ideal of \(R\) generated by homogeneous elements.
    The subset $\{g \in G \mid R_g \neq 0\}$ of $G$ is called the \emph{support of $R$} and is denoted by $\Supp(R)$.
    Furthermore, for a submonoid $H$ of $G$, if the support of $R$ is contained in $H$, then we say that $R$ is an \emph{$H$-graded ring}.
    \item A \emph{\(G\)-graded ring morphism} between \(G\)-graded rings \(R = \bigoplus_{g \in G} R_g\) and \(S = \bigoplus_{g \in G} S_g\) is a ring homomorphism \(\varphi \colon R \to S\) such that \(\varphi(R_g) \subseteq S_g\) for any \(g \in G\).
    \item For a (discrete) \(G\)-graded ring \(R\), we will use the \(\infty\)-category \(\mcalD_{\graded{G}}(R)\) of graded \(R\)-modules defined in \cite{ishizuka2026Derived}*{Section 3}.
    \item In \(\mcalD_{\graded{G}}(R)\) of a \(G\)-graded ring \(R\), the (co)limits and symmetric monoidal products are denoted by \(\grlim\), \(\grcolim\), and \(- \Lgrotimes -\) (if they exist), whereas in \(\mcalD(R)\) of a discrete ring \(R\), i.e., derived limits and derived tensor products, they are often denoted by \(\lim\) (or \(R\lim\)), \(\colim\), and \(- \otimes^L -\).
    \item A \emph{\(G\)-graded adic ring} is a \(G\)-graded ring \(R\) with an adic topology for a finitely generated homogeneous ideal \(I\). We say that such ideal \(I\) is a \emph{homogeneous ideal of definition} of \(R\).
    \item A \(G\)-graded adic ring \(R\) is said to be \emph{gradedwise complete} if the canonical morphism \(R \to \bigoplus_{g \in G} \lim_{n \geq 1}(R/I^n)_g\) is an isomorphism of \(G\)-graded rings. See more comprehensively \cites{ishizuka2025Graded, ishizuka2026Derived}.
\end{enumalphp}

\subsubsection{Topological rings}
\begin{enumalphp}
    \item A topological ring is said to have a linear topology if there exists a fundamental system of neighborhoods of \(0\) consisting of ideals. A linear topological ring \(R\) is called \emph{complete} if every Cauchy net in \(R\) converges uniquely. If there exists the universal complete topological \(R\)-algebra, then it is called the \emph{completion} of \(R\).
    \item If a topological ring \(R\) has an ideal \(I\) such that \(I\) is finitely generated and the set \(\{I^n\}_{n \geq 0}\) forms a fundamental system of neighborhoods of \(0\), then we call \(R\) an \emph{adic ring} and \(I\) an \emph{ideal of definition} of \(R\).
    \item Morphisms between adic rings are assumed to be continuous ring homomorphisms.
\end{enumalphp}

\subsubsection{\texorpdfstring{\(p\)-adic stuff}{p-adic stuff}}
\begin{enumalphp}
    \item We freely use the notion of perfectoid rings, prisms, and related concepts, e.g., perfectoidization, following \cites{bhatt2018Integral, bhatt2022Prismsa}.
    \item An adic ring \(R\) is called an \emph{adic perfectoid ring} if \(p\) is a topologically nilpotent element of \(R\) and the underlying ring \(R\) is a perfectoid ring (\cite{takaya2025Relative}*{Definition 4.4}).
    \item For a base adic ring \(A\) with an ideal of definition \(I\), we denote by \(\Perfd_A^{\wedge I}\) the category of \(I\)-adically complete perfectoid \(A\)-algebras.
\end{enumalphp}

\subsubsection{Formal schemes}
\begin{enumalphp}
    \item A \emph{formal scheme} \(\mscrX\) is a topologically ringed space \(\mscrX = (\abs{\mscrX}, \mcalO_{\mscrX})\) that is Zariski locally isomorphic to the affine formal spectrum \(\Spf(R)\) of a linear topological ring \(R\). Our definition follows \cite{grothendieck1971Elements}*{Ch.I \S 10}.
    \item A formal scheme \(\mscrX\) is said to be \emph{adic}, \emph{quasi-compact}, or \emph{separated} if it is Zariski locally isomorphic to the affine formal spectrum \(\Spf(R)\) of an adic ring \(R\), if \(\mscrX\) is quasi-compact as a topological space, or if any intersection of two affine open subsets of \(\mscrX\) is affine, respectively.
    \item For an adic quasi-compact formal scheme \(\mscrX\), a finitely generated sheaf of ideals \(\mcalI \subseteq \mcalO_{\mscrX}\) is called an \emph{ideal of definition} of \(\mscrX\) if \(\{\mcalI(\mscrU)^n\}_{n \geq 0}\) forms a fundamental system of neighborhoods of \(0\) in \(\mcalO_{\mscrX}(\mscrU)\) for any affine open subset \(\mscrU\).
    \item Given a scheme \(X\) and a closed subscheme \(Z\) of \(X\), we can construct the \emph{formal completion \(\widehat{X}\) of \(X\) along \(Z\)}, which is a formal scheme whose underlying topological space is \(Z\) and whose structure sheaf is the restriction to \(Z\) of the sheaf \(\lim_{n \geq 1} \mcalO_X/\mcalI_Z^n\) on \(X\), where \(\mcalI_Z\) is the quasi-coherent sheaf of ideals of \(\mcalO_X\) defining \(Z\).
    \item For an adic separated formal scheme \(\mscrX\) with an ideal of definition \(\mcalI\), we will use the \emph{derived \(\infty\)-category of quasi-coherent \(\mcalO_{\mscrX}\)-modules} \(\mcalD_{\qcoh}(\mscrX) \defeq \lim_{n \geq 1} \mcalD_{\qcoh}(\mscrX_n)\), where \(\mscrX_n\) is the closed subscheme defiend by \(\mcalI^n\). See \cite{ishizuka2026Algebraization}*{Appendix C} for details.
\end{enumalphp}

\subsubsection{Completion functors}

Let $R$ be a ring and let $I$ be a finitely generated ideal of $R$.
Let $S$ be a $G$-graded ring and let $J$ be a finitely generated homogeneous ideal of $S$.
\begin{enumalphp}
    \item The symbol \(\comp{I}{-}\) is used only for the (classical) \(I\)-adic completion of discrete rings and modules.
    \item The symbol \(\dcomp{I}{-}\) is used to denote the derived (\(I\)-)completion of objects in a derived category such as \(\mcalD(R)\).
    \item The symbol \(\grcomp{J}{-}\) denotes the \emph{gradedwise (\(J\)-)completion} of discrete graded rings and modules as defined in \cite{ishizuka2025Graded}*{Construction 3.3}. In \emph{loc. cit.}, it is denoted by \((-)^{\grwedge}\), whereas in this paper, we employ the symbol \(\grcomp{J}{-}\) to avoid confusion with other derived completion functors.
    \item The symbol \(\dgrcomp{J}{-}\) is the \emph{derived gradedwise \(J\)-completion} of objects in \(\mcalD_{\graded{G}}(S)\), which is defined in \cite{ishizuka2026Derived}*{Definition 3.5}.
    \item The full subcategory of \emph{derived \(I\)-complete} objects in \(\mcalD(R)\) is denoted by \(\mcalD^{\comp{I}}(R)\) and the full subcategory of \emph{derived gradedwise \(J\)-complete} objects in \(\mcalD_{\graded{G}}(S)\) is denoted by \(\mcalD_{\graded{G}}^{\comp{J}}(S)\) (\cite{ishizuka2026Derived}*{Definition 4.6}).
    \item We will denote the derived (gradedwise) complete tensor products by \(- \widehat{\otimes}^L -\) and \(- \cLgrotimes -\) in \(\mcalD^{\comp{I}}(R)\) and \(\mcalD_{\graded{G}}^{\comp{J}}(S)\), respectively.
    \item We will also use the symbols \(\widehat{(-)}\) and \((-)^{\wedge}\) when convenient; their meanings will be made clear from the context.
\end{enumalphp}

\subsection{Perfectoid rings and formal schemes}

First, we recall the definition and properties of adic perfectoid rings.

\begin{definition}[{\cite{takaya2025Relative}*{Definition 4.4}}] 
    An adic ring \(R\) is called an \emph{adic perfectoid ring} if \(p\) is a topologically nilpotent element of \(R\) and the underlying ring \(R\) is a perfectoid ring.
    If an adic perfectoid ring \(R\) is complete with respect to the adic topology on \(R\), then we say that \(R\) is a \emph{complete adic perfectoid ring}.
\end{definition}

We also introduce the corresponding notion for formal schemes.

\begin{definition}[{\cite{takaya2025Relative}*{Definition 4.25}}] \label{DefPerfectoidFormalScheme}
    An adic formal scheme \(\mscrX\) on which \(p\) is topologically nilpotent is called a \emph{perfectoid formal scheme} if there exists an affine open covering \(\{\mscrU_i\}_{i \in I}\) of \(\mscrX\) such that for each \(i \in I\), the adic ring \(\mcalO_{\mscrX}(\mscrU_i)\) is a complete adic perfectoid ring.
\end{definition}

Nontrivially, any affine section of a perfectoid formal scheme is an adic perfectoid ring:

\begin{proposition}[{\cite{takaya2025Relative}*{Proposition 4.26}}] \label{PropPerfectoidFormalScheme}
    An adic formal scheme \(\mscrX\) is a perfectoid formal scheme if and only if the ring of sections \(\mcalO_{\mscrX}(\mscrU)\) for any affine open subset \(\mscrU\) is a complete adic perfectoid ring.
\end{proposition}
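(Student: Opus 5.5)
One direction is immediate: if every affine open section \(\mcalO_{\mscrX}(\mscrU)\) is a complete adic perfectoid ring, then any affine open covering witnesses \Cref{DefPerfectoidFormalScheme}. For the converse I would fix an affine open cover \(\{\mscrU_i\}\) with \(A_i \defeq \mcalO_{\mscrX}(\mscrU_i)\) complete adic perfectoid, and an arbitrary affine open \(\mscrU = \Spf(A)\) with \(A\) complete adic. The aim is to show that \(A\) is perfectoid. Completeness of \(A\) and topological nilpotence of \(p\) are automatic, so only perfectoidness of the underlying ring \(A\) is at stake.

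\textbf{Step 1: rational (basic) opens.} First I would show that for a complete adic perfectoid ring \(B\) with ideal of definition \(J\), and \(f \in B\), the section ring of the basic open \(D(f) \subseteq \Spf(B)\), namely \(B\{1/f\} = \comp{J}{B[1/f]}\), is again perfectoid. The localization \(B[1/f]\) is perfectoid in the \(p\)-complete sense after \(p\)-completion: \(p\)-completed localizations of perfectoid rings are perfectoid, since localization is flat and Frobenius stays surjective modulo \(p\). The remaining question is the \(J\)-adic completion. Here I would use that the derived \(J\)-completion of a perfectoid ring is perfectoid (cf. \cite{bhatt2022Prismsa}), and that for perfectoid rings it is discrete because \(B\) has bounded \(J\)-torsion behaviour after passing to the perfection along \(J\). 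Concretely, I would replace \(J\) by \(\sqrt{J}\), whose radical in a perfectoid ring is generated up to radical by \(p\)-power compatible systems. So \(\mscrU_i \cap D(f)\) has perfectoid sections.

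\textbf{Step 2: reduce to a cover of \(\mscrU\) by basic opens.} Since \(\mscrU\) is quasi-compact, it is covered by finitely many opens, each basic in some \(\mscrU_i\) and simultaneously basic in \(\mscrU\), using the standard common-refinement lemma for affine formal schemes. Thus \(\Spf(A)\) is covered by \(D(g_1), \dots, D(g_n)\) with each \(A\{1/g_j\}\) perfectoid by Step 1.

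\textbf{Step 3: descent.} The map \(A \to \prod_j A\{1/g_j\}\) is adically faithfully flat, and the Čech complex is exact, since \(\mscrU\) is affine and the sheaf is the completion of a quasi-coherent one. Perfectoidness descends along such covers: \(A\) is the equalizer of \(\prod_j A\{1/g_j\} \rightrightarrows \prod_{j,k} A\{1/g_jg_k\}\), all terms are perfectoid by Step 1, and perfectoid rings are closed under such limits provided the limit is computed in \(p\)-complete rings and the terms are flat over each other. The cleanest route is Bhatt--Scholze's characterization via \(\Delta\): \(A\) is perfectoid iff it is \(p\)-complete, \(\phi\) on \(A/p\) is surjective, and \(\ker\theta\) is principal. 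Alternatively, the perfectoidization functor is an arc-sheaf, hence a Zariski sheaf, and \(A_{\perfd} \simeq \lim A\{1/g\}_{\perfd} = \lim A\{1/g\} = A\). Since \(A\) is then equivalent to its perfectoidization, it is perfectoid.

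I expect the main obstacle to be Step 1, specifically the interaction between \(J\)-adic completion and perfectoidness when \(J \ne (p)\), together with ensuring discreteness. In Step 3 I would first try the sheaf-property argument, using that perfectoidization commutes with the relevant completed localizations.
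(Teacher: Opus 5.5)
The paper does not prove this statement; it quotes it from Takaya's Proposition 4.26, so the only question is whether your argument holds up.

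Your outline is the natural one:
\begin{itemize}
\item the trivial direction;
\item refinement to opens that are basic both in \(\mscrU\) and in some \(\mscrU_i\);
\item perfectoidness of completed localizations \(B\{1/f\}\), the ungraded analogue of \Cref{LocalizationGradedPerfd}.
\end{itemize}
Steps 1 and 2 are essentially fine. One correction to Step 1: ``localization is flat and Frobenius stays surjective mod \(p\)'' does not by itself give perfectoidness. You also need \(\ker\theta\) to be principal, for example by localizing the perfect prism of \(B\) at \([f^\flat]\), where \((f^\flat)^\sharp \equiv f \bmod p\). The genuine gap is Step 3, which carries the whole content of the proposition. None of your three arguments shows that \(A\) is perfectoid.

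\textbf{The general principle is false.} Perfectoid rings are not closed under equalizers of mutually flat maps. By Ax--Sen--Tate, \(\mathbb{Z}_p = \mcalO_{\mathbb{C}_p}^{G_{\mathbb{Q}_p}}\) is the equalizer of two flat maps \(\mcalO_{\mathbb{C}_p} \rightrightarrows \prod_{\gamma \in G_{\mathbb{Q}_p}} \mcalO_{\mathbb{C}_p}\) between perfectoid rings. Any valid descent must use that the cover is Zariski.

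\textbf{The BMS criterion is only named.} Two of its conditions do descend. First, \(A\) is \(p\)-adically complete, since it is \(J\)-adically complete and \(p \in \sqrt{J}\). Second, surjectivity of Frobenius on \(A/p\) descends: by derived Nakayama applied to the derived \(J\)-complete cokernel, it can be checked modulo a power of \(J\). There \(A\{1/g_j\}/J^N = A[1/g_j]/J^N\), so the question becomes Zariski-local. The remaining condition, principality of \(\ker(\theta \colon W(A^\flat) \to A)\), is global. You would need to show three things, none of which you address:
\begin{itemize}
\item tilting commutes with the completed localizations;
\item \(\ker\theta\) is an invertible ideal, locally generated by distinguished elements;
\item this ideal is actually principal.
\end{itemize}

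\textbf{The arc-sheaf route conflates two topologies.} Arc\(_p\)-descent for perfectoidization is descent along \(p\)-completed \v{C}ech nerves. Unless \(\sqrt{J} = \sqrt{pA}\), the \(p\)-completed tensor products \(A\{1/g_j\}\widehat{\otimes}_A A\{1/g_k\}\) are in general not \(A\{1/g_jg_k\}\). Meanwhile, the genuine \(p\)-complete Zariski cover \(A \to \prod_j \comp{p}{A[1/g_j]}\) has terms that are not known to be perfectoid. Bridging this needs two further inputs: that perfectoidization commutes with derived \(J\)-completion, and a \(J\)-adic descent statement of the kind in \Cref{DerivedPushoutArcCohomology}. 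Even then you only obtain \(A \simeq \dcomp{J}{A_{\perfd}}\). The last step, ``\(A\) agrees with its perfectoidization, hence is perfectoid'', is not formal: limits of perfectoid rings can be far from perfectoid, as \Cref{VanishingPerfd} illustrates. Over a perfectoid base this step follows from the prismatic description of \(A_{\perfd}\), but \(A\) is not a priori such an algebra, so it needs its own proof.
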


We will also use the following properties of derived completions for formal schemes.

\begin{lemma}[{Special case of \cite{ishizuka2026Algebraization}*{Lemma C.12}}] \label{DerivedCompletionFormalSchemes}
    Let \(X\) be a locally Noetherian quasi-compact separated scheme over an adic Noetherian ring \(R\) with an ideal of definition \(I\), and let \(\mscrX\) be the \(I\)-adic formal completion of \(X\).
    Then for any \(\mcalF \in \mcalD_{\qcoh}(X)\), we have a canonical isomorphism
    \begin{equation*}
        \dcomp{I}{R\Gamma(X, \mcalF)} \xrightarrow{\cong} R\Gamma(\mscrX, \mcalF^{\wedge})
    \end{equation*}
    in \(\mcalD(R)\), where \(\mcalF^{\wedge}\) is the limit \(\lim_{n \geq 1} (\mcalF \otimes^L_{\mcalO_X} \mcalO_{X_n})\) in \(\mcalD_{\qcoh}(\mscrX)\) with \(X_n\) being the closed subscheme of \(X\) defined by \(I^n\).
\end{lemma}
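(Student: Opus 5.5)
The approach is to reduce to the classical affine Zariski statement via Mayer--Vietoris and the Milnor/Mittag-Leffler sequence. Since \(X\) is quasi-compact and separated, fix a finite affine open cover \(\{U_i = \Spec A_i\}\). All finite intersections \(U_{i_0\cdots i_k}\) are then affine and Noetherian.

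First, express both sides as a finite limit over the Čech nerve. On the scheme side, \(R\Gamma(X,\mcalF) \simeq \lim_{[k]} \prod R\Gamma(U_{i_0\cdots i_k},\mcalF)\), and this is a finite totalization because the cover is finite. Derived \(I\)-completion \(\dcomp{I}{-}\) is exact and commutes with finite limits, so it passes through the Čech complex. On the formal side, \(\mcalD_{\qcoh}(\mscrX)=\lim_n \mcalD_{\qcoh}(X_n)\), and \(R\Gamma(\mscrX,-)\) is computed as \(\lim_n R\Gamma(X_n,-)\). This also satisfies Čech descent for the induced cover \(\{\widehat{U_i}\}\), since each \(X_n\) has the same underlying space.

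This reduces the statement to the affine case \(X=\Spec A\) with \(A\) Noetherian over \(R\), and \(\mcalF=\widetilde{M}\) for \(M\in\mcalD(A)\). In this case,
\begin{equation*}
R\Gamma(\mscrX,\mcalF^\wedge)\simeq \lim_n R\Gamma(X_n, \mcalF\otimes^L\mcalO_{X_n}) \simeq \lim_n M\otimes^L_A A/I^nA,
\end{equation*}
because each \(X_n\) is affine, so quasi-coherent cohomology on it is just the module.

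It then remains to identify \(\lim_n M\otimes^L_A A/I^n A\) with \(\dcomp{I}{M}\). By definition, \(\dcomp{I}{M}=\lim_n M\otimes^L_{\setZ[\underline{f}]}\setZ[\underline{f}]/(\underline{f}^n)\) for generators \(\underline{f}\) of \(I\), that is, the limit over Koszul complexes. The two pro-systems \(\{A/I^n A\}\) and \(\{\mathrm{Kos}(A;\underline{f}^n)\}\) are pro-isomorphic because \(A\) is Noetherian: by Artin--Rees, the higher Koszul homology is pro-zero. Therefore the limits agree after tensoring with any \(M\). Unbounded \(M\) is handled because pro-isomorphisms of pro-systems are preserved by \(-\otimes^L M\) levelwise.

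The main obstacle is checking that the global identification of \(R\Gamma(\mscrX,-)\) on \(\mcalD_{\qcoh}(\mscrX)\) (as defined in \cite{ishizuka2026Algebraization}*{Appendix C}) really is \(\lim_n R\Gamma(X_n,-)\) and satisfies Zariski descent. This amounts to limits commuting with limits, so it should be formal. Once that is established, the canonical map is induced by the compatible maps \(R\Gamma(X,\mcalF)\to R\Gamma(X_n,\mcalF\otimes^L\mcalO_{X_n})\) and factors through the derived completion, since the target is derived \(I\)-complete. The affine computation above then shows this map is an isomorphism locally, and descent upgrades it to an isomorphism globally.
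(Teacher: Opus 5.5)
Your proof is correct. The paper gives no argument for this lemma. It is quoted as a special case of \cite{ishizuka2026Algebraization}*{Lemma C.12}, so your proposal supplies a self-contained proof rather than reproducing one in the text.

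Each step holds up:
\begin{itemize}
\item The comparison map exists. Each \(R\Gamma(X_n, Lj_n^*\mcalF)\) has cohomology killed by \(I^n\), so it is derived \(I\)-complete, and hence so is the limit.
\item The map is natural in the open subset, by uniqueness of the factorization through the completion.
\item Derived completion is \(R\Hom_R(R\Gamma_I(R),-)\), so it commutes with all limits. The exact shape of the \v{C}ech totalization therefore does not matter.
\item Your ``main obstacle'' is indeed formal: on \(\mcalD_{\qcoh}(\mscrX)=\lim_n\mcalD_{\qcoh}(X_n)\), the functor \(R\Gamma(\mscrX,-)\) is \(\lim_n R\Gamma(X_n,-)\).
\end{itemize}

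What your route buys is that the only non-formal input is pushed down to rings. There it is the textbook fact that, over a Noetherian \(A\), the towers \(\{\mathrm{Kos}(A;\underline{f}^n)\}_n\) and \(\{A/I^nA\}_n\) are pro-isomorphic. In fact you only use that the higher Koszul homology of \(\underline{f}^n\) is pro-zero, i.e.\ weak proregularity of the generators. So the argument is not really tied to Noetherianity, which fits the weakly proregular setting of \Cref{DerivedPushoutArcCohomology}.

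An equivalent global route avoids descent on the formal side altogether:
\begin{itemize}
\item Artin--Rees bounds are uniform over a finite affine cover, so they give a pro-isomorphism \(\{\mcalO_X\otimes^L_{\setZ[\underline{x}]}\setZ[\underline{x}]/(\underline{x}^n)\}_n\simeq\{\mcalO_{X_n}\}_n\) in \(\mcalD_{\qcoh}(X)\).
\item The projection formula \(R\Gamma(X,\mcalF\otimes^L_{\setZ[\underline{x}]}\setZ[\underline{x}]/(\underline{x}^n))\simeq R\Gamma(X,\mcalF)\otimes^L_{\setZ[\underline{x}]}\setZ[\underline{x}]/(\underline{x}^n)\) then yields the isomorphism directly after applying \(\lim_n\).
\end{itemize}
Your version trades this sheaf-level pro-isomorphism for Zariski descent plus the purely affine statement.
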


\begin{lemma}[{cf.~\cite{ishizuka2026Algebraization}*{Lemma C.13}}] \label{SymMonDerivedCompletionFormalSchemes}
    Keep the setting of \Cref{DerivedCompletionFormalSchemes}.
    Then the construction \(\mcalF \mapsto \mcalF^{\wedge}\) in \Cref{DerivedCompletionFormalSchemes} defines a functor
    \begin{equation*}
        (-)^{\wedge} \colon \mcalD_{\qcoh}(X) \to \mcalD_{\qcoh}(\mscrX); \quad \mcalF \mapsto \lim_{n \geq 1} (Lj_n^*\mcalF)
    \end{equation*}
    (where \(j_n \colon X_n \to X\) denotes the natural closed immersion) that preserves colimits. Furthermore, for any \(\mcalF, \mcalG \in \mcalD_{\qcoh}(X)\), the canonical morphism
    \begin{equation*}
        (\mcalF \otimes^L_{\mcalO_X} \mcalG)^{\wedge} \xrightarrow{\cong} \dcomp{I}{\mcalF^{\wedge} \otimes^L_{\mcalO_{\mscrX}} \mcalG^{\wedge}}
    \end{equation*}
    is an isomorphism in \(\mcalD(\mscrX_{\Zar}, \mcalO_{\mscrX})\), where \(\dcomp{I}{-}\) is the derived \(I\mcalO_{\mscrX}\)-completion.
\end{lemma}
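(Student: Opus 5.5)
The statement to prove is \Cref{SymMonDerivedCompletionFormalSchemes}: the completion functor \((-)^{\wedge}\) preserves colimits and is compatible with tensor products after derived completion. The plan is to reduce both assertions to the affine case and to the level of each \(X_n\), using that \(\mcalD_{\qcoh}(\mscrX) = \lim_n \mcalD_{\qcoh}(\mscrX_n)\), where \(\mscrX_n = X_n\) as schemes.

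\textbf{Functoriality and colimits.} Each pullback \(Lj_n^* \colon \mcalD_{\qcoh}(X) \to \mcalD_{\qcoh}(X_n)\) is symmetric monoidal and preserves colimits. These pullbacks are compatible along the transition maps \(X_n \hookrightarrow X_{n+1}\), because \(Lj_{n}^* \simeq L\iota_{n}^* \circ Lj_{n+1}^*\). By the universal property of the limit of \(\infty\)-categories, they assemble into a functor \(\mcalD_{\qcoh}(X) \to \lim_n \mcalD_{\qcoh}(X_n) = \mcalD_{\qcoh}(\mscrX)\). The object written \(\lim_n Lj_n^*\mcalF\) is exactly the compatible system \((Lj_n^*\mcalF)_n\).

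Colimits in a limit of presentable \(\infty\)-categories along colimit-preserving transition functors are computed levelwise; see \cite{lurie2017Higher}*{5.5.3.13}. Since each \(Lj_n^*\) preserves colimits, so does \((-)^{\wedge}\). This step is formal.

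\textbf{Monoidality.} The symmetric monoidal structure on \(\mcalD_{\qcoh}(\mscrX)\) is the levelwise tensor product of systems. Because each \(Lj_n^*\) is symmetric monoidal, \((-)^\wedge\) is symmetric monoidal for this structure. The content of the second claim is therefore to identify the levelwise tensor product with \(\dcomp{I}{\mcalF^{\wedge} \otimes^L_{\mcalO_{\mscrX}} \mcalG^{\wedge}}\) computed in \(\mcalD(\mscrX_{\Zar}, \mcalO_{\mscrX})\), where the system is realized as a sheaf through \(R\lim\).

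The question is local, so I would work on an affine open \(U = \Spec A\) of \(X\), with \(\widehat{A}\) its \(I\)-adic completion. There the realization of \((M \otimes^L_A A/I^n)_n\) is \(R\lim_n (M \otimes^L_A A/I^n) = \dcomp{I}{M}\). The required identity then reads
\[
\dcomp{I}{M \otimes^L_A N} \simeq \dcomp{I}{\dcomp{I}{M} \otimes^L_{\widehat{A}} \dcomp{I}{N}}.
\]
Both sides are derived \(I\)-complete. Modulo the Koszul complex on generators of \(I\), both reduce to \(M \otimes^L_A N \otimes^L_A \mathrm{Kos}\), since \(M\) and \(\dcomp{I}{M}\) agree after applying \(- \otimes^L_A A/I\). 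Here I would use that \(A \to \widehat{A}\) is flat, as \(A\) is Noetherian, and that \(\widehat{A}/I = A/I\). By derived Nakayama the two sides are isomorphic.

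Finally, these local isomorphisms must glue. They are natural, so they are compatible with restriction to \(D(f)\). For this I would use that formation of \(R\lim\) of quasi-coherent systems commutes with restriction to affine opens.

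\textbf{Main obstacle.} The delicate point is passing from the abstract object in \(\lim_n \mcalD_{\qcoh}(\mscrX_n)\) to an actual complex of sheaves in \(\mcalD(\mscrX_{\Zar}, \mcalO_{\mscrX})\) via \(R\lim\). One must check that the sheaf-level tensor product followed by derived completion agrees with this realization, which amounts to justifying that \(R\Gamma(U,-)\) commutes with \(R\lim\) on affines \(U\). I expect to rely on \cite{ishizuka2026Algebraization}*{Appendix C} for this, combined with \Cref{DerivedCompletionFormalSchemes} applied to each affine.
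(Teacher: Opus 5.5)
Your formal half is fine. Assembling the \(Lj_n^*\) into a functor to \(\lim_n \mcalD_{\qcoh}(X_n)\) and computing colimits levelwise gives colimit preservation. The module-level identity \(\dcomp{I}{M\otimes^L_A N}\simeq \dcomp{I}{\dcomp{I}{M}\otimes^L_{\widehat A}\dcomp{I}{N}}\) is also correct. (The paper gives no argument and simply cites its source.)

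The gap is the step where, on \(U=\Spec A\), you say "the required identity then reads" this module identity. That step silently uses
\[
R\Gamma(\widehat U, \mcalF^{\wedge}\otimes^L_{\mcalO_{\mscrX}}\mcalG^{\wedge}) \simeq R\Gamma(\widehat U,\mcalF^{\wedge})\otimes^L_{\widehat A} R\Gamma(\widehat U,\mcalG^{\wedge}),
\]
at least after derived completion. The left side is the sheafified tensor product on \(\mscrX_{\Zar}\) of the complexes \(\mcalF^{\wedge}=R\lim_n Lj_n^*\mcalF\). These are not quasi-coherent in any sense that would make sections over affines commute with \(\otimes^L_{\mcalO_{\mscrX}}\), and that compatibility is exactly the content of the second claim. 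You do flag this spot as the main obstacle. But your proposed mechanism, that \(R\Gamma(U,-)\) commutes with \(R\lim\), holds automatically for every complex of sheaves (both are limits) and says nothing about tensor products. Nothing needs to be glued either: the statement concerns a globally defined canonical morphism, so it suffices to check it is an isomorphism on a basis of opens. Gluing locally constructed isomorphisms from compatibility on overlaps would not be legitimate in a derived category anyway.

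Here is one way to close the gap, working directly on \(\mscrX\).

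Both sides are derived \(I\mcalO_{\mscrX}\)-complete. The left side is an \(R\lim\) of the \(Lj_n^*(\mcalF\otimes^L\mcalG)\), on which \(I^n\) acts by zero. So by derived Nakayama it suffices to check that the canonical map becomes an isomorphism after \(-\otimes^L\mathrm{Kos}(f_1,\dots,f_r)\), for generators \(f_i\) of \(I\).

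Since the Koszul complex is perfect, \(\mcalF^{\wedge}\otimes^L\mathrm{Kos}\simeq R\lim_n(Lj_n^*\mcalF\otimes^L\mathrm{Kos})\). Since \(X\) is quasi-compact and locally Noetherian, \(\{\mcalO_X/I^n\}_n\) is pro-isomorphic to \(\{\mathrm{Kos}(f_1^n,\dots,f_r^n)\}_n\). This makes \(\{Lj_n^*\mcalF\otimes^L\mathrm{Kos}\}_n\) pro-constant, so \(\mcalF^{\wedge}\otimes^L\mathrm{Kos}\simeq i^{-1}(\mcalF\otimes^L\mathrm{Kos})\) for \(i\colon\lvert\mscrX\rvert\hookrightarrow\lvert X\rvert\).

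Next, \(i^{-1}\mcalO_X\to\mcalO_{\mscrX}\) is flat, and \(\mcalO_{\mscrX}\otimes^L_{i^{-1}\mcalO_X}T\simeq T\) for complexes \(T\) whose cohomology sheaves are killed by a power of \(I\). Together these let you move the second factor across:
\[
\mcalF^{\wedge}\otimes^L_{\mcalO_{\mscrX}}\mcalG^{\wedge}\otimes^L\mathrm{Kos}\simeq \mcalF^{\wedge}\otimes^L_{i^{-1}\mcalO_X} i^{-1}(\mcalG\otimes^L\mathrm{Kos})\simeq i^{-1}(\mcalF\otimes^L_{\mcalO_X}\mcalG\otimes^L\mathrm{Kos}).
\]
This is also what the left side gives.

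If you prefer your affine route, you need this same computation to justify the first displayed identification. You also need the fact that \(R\Gamma(\widehat U,-)\) commutes with derived completion.
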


\subsection{Graded perfectoid rings}

Following \cite{ishizuka2025Graded}, we introduce the notion of graded perfectoid rings.
We recall the definitions and properties that will be used later. See \cite{ishizuka2025Graded} for more details.

\begin{definition}[{cf.~\cite{ishizuka2025Graded}*{Definition 4.1}}] \label{DefGradedPerfd}
    We define a graded variant of perfectoid rings as follows.
    \begin{enumerate}
        \item Let $R$ be a $G$-graded ring.
        We say that $R$ is \emph{a $G$-graded perfectoid ring} if $R$ is gradedwise $p$-adic complete and $R^{\wedge p}$ is a perfectoid ring.
        \item Let \(R = (R, R_{\graded})\) be a pro-\(G\)-graded ring. We say that \(R\) is \emph{a pro-\(G\)-graded perfectoid ring} if \(p\) is a topologically nilpotent element of \(R_{\graded}\) and \(R_{\graded}\) is a \(G\)-graded perfectoid ring.
    \end{enumerate}
    On a base \(G\)-graded adic ring \(A\) with a homogeneous ideal of definition \(I\), the category of \(I\)-adic \(G\)-graded perfectoid \(A\)-algebras is written by \(\Perfd_{\graded{G}}^{\wedge I}(A)\).
\end{definition}

In our previous paper \cite{ishizuka2026Derived}, we have introduced the derived gradedwise completion, so we need to compare it with the usual gradedwise completion for graded perfectoid rings as follows.

\begin{lemma}[{\cite{ishizuka2026Algebraization}*{Lemma 2.16}}] \label{DerivedGrcompGradedPerfd}
    Let \((R, R_{\graded})\) be a \(p\)-adic pro-\(G\)-graded perfectoid ring.
    Let $I$ be a finitely generated homogeneous ideal of $R_{\graded}$ containing $p$.
    Then the canonical morphism
    \begin{equation*}
        \dgrcomp{I}{R_{\graded}} \to \grcomp{I}{R_{\graded}}
    \end{equation*}
    is an isomorphism in \(\mcalD_{\graded{G}}(R_{\graded})\).
\end{lemma}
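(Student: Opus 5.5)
The plan is to reduce the comparison to a statement about individual graded pieces, and then to work modulo \(p\). First I will clarify the two objects. The derived gradedwise completion \(\dgrcomp{I}{R_{\graded}}\) is computed degreewise: its degree-\(g\) piece is the derived \(I\)-completion of the \(R_0\)-module \((R_{\graded})_g\), or more precisely the derived limit of the Koszul quotients \((R_{\graded} \otimes^L \mathrm{Kos}(f_1^n,\dots,f_r^n))_g\) for homogeneous generators \(f_1,\dots,f_r\) of \(I\). Here I use the description of \(\dgrcomp{I}{-}\) from \cite{ishizuka2026Derived}*{Definition 3.5}. The classical gradedwise completion \(\grcomp{I}{R_{\graded}}\) has degree-\(g\) piece \(\lim_n (R_{\graded}/I^n)_g\). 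So it suffices to show two things: the derived completion is discrete, and it agrees with the classical completion.

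\textbf{Step 1.} I will show that \(R_{\graded}\) has bounded \(I\)-power torsion in a suitable sense, so that the pro-systems \(\{R_{\graded}\otimes^L \mathrm{Kos}(f^n)\}_n\) and \(\{R_{\graded}/(f^n)\}_n\) are pro-isomorphic. The standard criterion (cf.\ the Stacks Project) says the derived and classical completions agree once the Koszul homology pro-systems \(\{H_i(\mathrm{Kos}(f^n))\}_n\) are pro-zero for \(i>0\). Since \(p\in I\), I will treat \(p\) first. A perfectoid ring has bounded \(p^\infty\)-torsion: \(R[p^\infty]=R[p]\) up to a \(p^{1/p^\infty}\)-fudge, and in fact \(R[p^\infty] = R[\sqrt{pR}]\), which is killed by \(p^{1/p^k}\) for all \(k\). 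This property transfers to the graded ring \(R_{\graded}\), which sits inside \(R\) and is gradedwise \(p\)-complete. Since \(R_{\graded}\) is already gradedwise \(p\)-complete, its derived gradedwise \(p\)-completion is itself. Therefore \(\dgrcomp{I}{R_{\graded}} = \dgrcomp{I}{\dgrcomp{p}{R_{\graded}}}\) reduces to the completion of \(R_{\graded}\) with respect to the remaining generators.

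\textbf{Step 2.} I will handle the remaining homogeneous generators \(f_j\). Here the key input is that perfectoid rings have only almost-bounded torsion: \(R\) is semiperfectoid-like modulo \(p\), and \(R/p\) is semiperfect. Modulo \(p\), \(f\)-power torsion in the perfect reduction \(R/\sqrt{pR}\cong (R/p)_{\mathrm{perf}}\) is bounded, since \(f^N x=0\) implies \(fx=0\) in a perfect ring. Combining this with the \(p\)-step via the fiber sequence for \(R_{\graded}\to R_{\graded}/\sqrt{p}\) shows that \(H_i\) of the Koszul complexes are pro-zero degreewise. Working degreewise is compatible because all maps are homogeneous.

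\textbf{Step 3.} I will conclude that each graded piece of the derived completion is discrete and equals \(\lim_n (R_{\graded}/I^n)_g\), which identifies the canonical map as an isomorphism. The main obstacle is Step 1/2: transporting bounded-torsion statements from the perfectoid ring \(R\) to the graded ring \(R_{\graded}\), which is not itself \(p\)-complete. I would try to use the definition that \(R_{\graded}^{\wedge p}\) is perfectoid together with the fact that torsion in \(R_{\graded}\) injects into its \(p\)-completion. Alternatively, I would use the perfectoid-ring case for \(\Lambda_p(R_{\graded})\) and compare gradedwise via \cite{ishizuka2025Graded}.
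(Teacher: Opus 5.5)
Your framework is right. \(\dgrcomp{I}{R_{\graded}}\) is computed degreewise as \(R\lim_n\) of the degree-\(g\) parts of the graded Koszul complexes \(X_n=R_{\graded}\otimes\mathrm{Kos}(p^n,f_1^n,\dots,f_r^n)\), and it suffices to show that the systems \(\{H_i(X_n)\}_n\) are pro-zero for \(i>0\).

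Your \(p\)-step is also fine, and easier than you suggest. Since \(p\) has degree \(0\) and \(R_{\graded}\) is gradedwise \(p\)-complete, \(R_{\graded}\) embeds into the perfectoid ring \(R_{\graded}^{\wedge p}\). Perfectoid rings are reduced, so \(R_{\graded}\) is reduced and \(R_{\graded}[f^\infty]=R_{\graded}[f]\) for \emph{every} \(f\). The torsion is genuinely bounded, not merely almost bounded, so the ``transport'' you single out as the main obstacle is not the issue.

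The genuine gap is Step~2. Bounded \(f\)-power torsion of a ring, whether of \(R_{\graded}\) or of \(R/\sqrt{pR}\), controls only the one-element Koszul complex on that ring. For the sequence \((p,f_1,\dots,f_r)\), the higher Koszul homology sees torsion in \emph{quotients}. For example, there is an exact sequence
\[
0\to R_{\graded}[p^n]/f^nR_{\graded}[p^n]\to H_1(\mathrm{Kos}(R_{\graded};p^n,f^n))\to (R_{\graded}/p^nR_{\graded})[f^n]\to 0,
\]
and nothing in your argument controls the system \(\{(R_{\graded}/p^nR_{\graded})[f^n]\}_n\). For non-Noetherian rings, even domains, bounded torsion for each single element does not imply weak proregularity of a sequence.

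The ``fiber sequence for \(R_{\graded}\to R_{\graded}/\sqrt{p}\)'' does not fill this gap. Its fiber is the ideal \(\sqrt{pR_{\graded}}\), whose \(f\)-torsion has nothing to do with the bounded \(p\)-torsion of Step~1. In terms of iterated completions \(\dgrcomp{I}{-}\simeq\dgrcomp{f_r}{-}\circ\cdots\circ\dgrcomp{f_1}{-}\circ\dgrcomp{p}{-}\), reducedness only gives \(\dgrcomp{f_1}{R_{\graded}}=\grcomp{f_1}{R_{\graded}}\). To continue, you would need this new ring to be reduced again (e.g.\ graded perfectoid). At the end you would still need to identify the iterated completion with \(\grcomp{I}{R_{\graded}}\). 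Neither point is addressed.

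One way to close the gap is to import the ungraded fact for a perfectoid ring \(P\) and a finitely generated ideal \(I\ni p\): \(\dcomp{I}{P}\) is discrete and equals \(\comp{I}{P}\). This is proved by tilting to perfect rings, where Frobenius (not bounded torsion) makes the Koszul systems pro-zero. You then pass to graded pieces by a retract argument:
\begin{itemize}
\item The transition maps of \(\{X_n\}\) preserve degrees. Hence \(R\lim_n (X_n)_g\) is a retract of the ungraded limit \(R\lim_n X_n=\dcomp{I}{R_{\graded}}\simeq\dcomp{I}{R_{\graded}^{\wedge p}}\). The last step uses \(p\in I\) and \(\dcomp{p}{R_{\graded}}=R_{\graded}^{\wedge p}\) from your Step~1.
\item Consequently \(R\lim_n(X_n)_g\) is discrete.
\item The term \(R^1\lim_n H_1(X_n)_g\) in its Milnor sequence is a retract of \(R^1\lim_n H_1(X_n)\). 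The latter vanishes because \(H^0(\dcomp{I}{R_{\graded}^{\wedge p}})\to\lim_n R_{\graded}/(p^n,f_1^n,\dots,f_r^n)=\comp{I}{R_{\graded}^{\wedge p}}\) is an isomorphism.
\item Therefore \(H^0(R\lim_n (X_n)_g)=\lim_n (R_{\graded}/I^n)_g\), which is the claim.
\end{itemize}
Alternatively, you can keep your inductive strategy. Then you must prove that gradedwise completions of graded perfectoid rings along homogeneous ideals containing \(p\) are again graded perfectoid, and compare the iterated completion with the \(I\)-adic one.
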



\begin{lemma}[{\cite{ishizuka2026Algebraization}*{Lemma 2.17}}] \label{LocalizationGradedPerfd}
Let \((R, R_{\graded})\) be an \(I\)-adic pro-\(G\)-graded perfectoid ring with a homogeneous ideal of definition \(I\) containing \(p\), and let \(f\) be a homogeneous element of \(R_{\graded}\).
\begin{enumerate}
    \item The pair \((\comp{I}{R[1/f]}, \grcomp{I}{R_{\graded}[1/f]})\) is an \(I\)-adic pro-\(G\)-graded perfectoid ring.
    \item The canonical morphism
    \[
      \dgrcomp{I}{R_{\graded}[1/f]} \;\longrightarrow\; \grcomp{I}{R_{\graded}[1/f]}
    \]
    in \(\mcalD_{\graded{G}}(R_{\graded})\) is an isomorphism.
    \item The canonical morphism
    \begin{equation*}
        \dcomp{I}{R_{\graded}[1/f]} \to \comp{I}{R_{\graded}[1/f]}
    \end{equation*}
    in \(\mcalD(R_{\graded})\) is an isomorphism.
\end{enumerate}
\end{lemma}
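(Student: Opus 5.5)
The plan is to prove (1) first and then deduce (2) and (3) from a single torsion-vanishing statement about perfectoid rings. The derived and classical (gradedwise) completions agree as soon as the relevant inverse systems of torsion are Mittag-Leffler, and in fact pro-zero. So the core is to control torsion in perfectoid rings with respect to elements admitting compatible \(p\)-power roots.

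\textbf{Step 1: perfectoidness.} Since \(R_{\graded}^{\wedge p}\) is perfectoid, so is \((R_{\graded}^{\wedge p}[1/f])^{\wedge p}\). This uses the standard stability of perfectoid rings under \(p\)-completed localization: surjectivity of Frobenius modulo \(p\) and the kernel condition on \(\theta\) both survive inverting \(f\), and then \(p\)-completion. Localization commutes with the grading, since \(f\) is homogeneous, and gradedwise \(p\)-completion is computed degreewise. Hence \(\grcomp{p}{R_{\graded}[1/f]}\) is a \(G\)-graded perfectoid ring in the sense of \Cref{DefGradedPerfd}. The same argument gives that \(\comp{p}{R[1/f]}\) is perfectoid.

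Next I pass from \(p\)-completion to \(I\)-completion. Let \(A\) be a \(p\)-complete perfectoid ring and \(I=(p,g_1,\dots,g_n)\). After replacing each \(g_i\) by a unit multiple, up to \(p\), of an element with a compatible system of \(p\)-power roots (possible because \(A/p\) is semiperfect), the \(I\)-adic completion is unchanged, because the radical of \(I\) is unchanged. I would then invoke that the \(I\)-completion of a perfectoid ring is perfectoid, as in \cite{bhatt2022Prismsa}. In the graded setting I take the \(g_i\) homogeneous; their \(p\)-power roots are again homogeneous in the \(\setZ[1/p]\)-graded situations of interest, and in general one works degreewise. This gives (1).

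\textbf{Step 2: derived equals classical.} For a single element \(g\) with compatible roots \(g^{1/p^k}\) in a perfectoid \(A\), one has \(A[g^\infty]=A[g]\), since torsion in perfectoid rings is \(p\)-root closed. Therefore the inverse system \(\{A[g^m]\}_m\), with transition maps given by multiplication by \(g\), is pro-zero. Consequently \(R\lim\) of the Koszul tower \(A \xrightarrow{g^m} A\) has no \(\lim^1\) and no \(H^{-1}\). This gives \(\dcomp{(g)}{A}\cong\comp{(g)}{A}\), with discrete and classically complete value.

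Iterating over \(p,g_1,\dots,g_n\) yields the comparison for \(I\). Here each intermediate completion is again perfectoid by Step 1, so the argument can be applied at every stage. I apply this with \(A=\comp{p}{R_{\graded}[1/f]}\). Note that \(R_{\graded}[1/f]\) is already derived \(p\)-complete after the first step, because it has bounded \(p^\infty\)-torsion, again by the perfectoid torsion argument applied to a localization. This gives (3).

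For (2) I run exactly the same argument inside \(\mcalD_{\graded{G}}(R_{\graded})\). The derived gradedwise completion is computed degreewise as the \(R\lim\) of the graded Koszul tower, by \cite{ishizuka2026Derived}. The pro-zero property of the torsion towers holds degreewise because the elements are homogeneous. Alternatively, (2) follows from (1) combined with \Cref{DerivedGrcompGradedPerfd} applied to the pro-graded perfectoid ring produced in (1), once one checks that \(\dgrcomp{I}{R_{\graded}[1/f]}\cong \dgrcomp{I}{\grcomp{p}{R_{\graded}[1/f]}}\).

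\textbf{Main obstacle.} I expect the hardest step to be finding homogeneous generators of \(I\), up to radical, that admit compatible \(p\)-power roots inside the graded ring. Without such roots the torsion-bounding argument fails. My first attempt would be to use the tilt: take homogeneous elements of \(R_{\graded}^{\flat}\) lifting the images of the \(g_i\), and use their Teichmüller images. This requires knowing that tilting is compatible with the grading, which I would take from \cite{ishizuka2025Graded}.
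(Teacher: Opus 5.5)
The gap is in the graded half of (1). By \Cref{DefGradedPerfd}, (1) requires \(T := \grcomp{I}{R_{\graded}[1/f]}\) itself to be a \(G\)-graded perfectoid ring. That means \(T\) must be gradedwise \(p\)-complete with perfectoid \(p\)-adic completion \(T^{\wedge p}\). Your Step 1 proves this only for \(S := \grcomp{p}{R_{\graded}[1/f]}\). The black box ``the \(I\)-completion of a perfectoid ring is perfectoid'' only covers the ungraded component \(\comp{I}{R[1/f]}\).

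The problem is that \(T^{\wedge p}\) is not the \(I\)-adic completion of a perfectoid ring. \(T\) completes each graded piece separately, for the filtration \((I^nR_{\graded}[1/f])_g\), and then takes the direct sum, so it discards everything that converges only across degrees. For example, take a perfectoid field \(C\), \(R_{\graded}=\bigoplus_{q\in\setZ[1/p]_{\geq 0}}\mcalO_C x^q\), \(I=(p,x)\) and \(f=1\). Then \(T^{\wedge p}=\mcalO_C\langle x^{1/p^\infty}\rangle\), whereas the \(I\)-adic completion of \(R_{\graded}\) contains \(\sum_n x^n\).

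Nor can perfectoidness be checked ``degreewise''. It is a property of the ring structure (Frobenius sends degree \(g\) to degree \(pg\)), and the pieces \(T_g\) are not rings. What is missing is a genuinely graded statement: gradedwise \(I\)-completion preserves \(G\)-graded perfectoid rings. This has to be proved inside the graded theory of \cite{ishizuka2025Graded}, for instance via graded tilts, where Frobenius on a graded perfect ring is a degree-scaling automorphism carrying the \(I\)-adic filtration to a cofinal one.

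Your stated ``main obstacle'' is not where the difficulty lies. Torsion bounds need no \(p\)-power roots: perfectoid rings are reduced, so \(A[g^\infty]=A[g]\) for every \(g\). Homogeneous \(p\)-th roots modulo \(p\) also come for free, since \((\sum_h y_h)^p=\sum_h y_h^p\) in \(R_{\graded}/p\) and \(G\) is torsion-free.

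Parts (2) and (3) do not depend on the graded half of (1). Your alternative route to (2) is correct and uses only the \(p\)-adic part of Step 1. Since \(R_{\graded}\) embeds into the reduced ring \(R_{\graded}^{\wedge p}\), the ring \(R_{\graded}[1/f]\) is reduced and \(R_{\graded}[1/f][p^\infty]=R_{\graded}[1/f][p]\). Hence \(\dgrcomp{p}{R_{\graded}[1/f]}\cong S\); this, not ``\(R_{\graded}[1/f]\) is derived \(p\)-complete'', is the correct statement. Then \Cref{DerivedGrcompGradedPerfd} applied to \(S\), together with \(\grcomp{I}{S}=\grcomp{I}{R_{\graded}[1/f]}\), gives (2).

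For (3) your iteration is incomplete. Put \(A:=\comp{p}{R_{\graded}[1/f]}\) and \(B:=\dcomp{I}{A}\). Iterating one-element completions shows that \(B\) is discrete with \(B/(p^k,g_1^k,\dots,g_n^k)\cong A/(p^k,g_1^k,\dots,g_n^k)\). However, the surjection \(B\to\comp{I}{A}\) has kernel equal to \(\lim^1_k\) of the Koszul \(H_1\) of \((p^k,g_1^k,\dots,g_n^k)\). Bounding \(g_i\)-torsion one element at a time does not make this vanish. Instead, cite the standard fact that derived and classical \(I\)-completions of a perfectoid ring agree; this is the trivially graded case of \Cref{DerivedGrcompGradedPerfd}.
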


On the category of graded perfectoid algebras, we have the following cofinality results.

\begin{lemma}[{\cite{ishizuka2026Algebraization}*{Lemma 3.9}}] \label{GradedPerfdCofinalFunctors}
    Let \(R\) be a \(G\)-graded adic ring with \(G = G[1/p]\) and let \(I\) be a homogeneous ideal of definition of \(R\) containing \(p\).
    Let \(H\) be the submonoid of \(G\) generated by the support \(\Supp(R)\) of \(R\).
    Then the canonical functors of categories
    \begin{equation*}
        \Perfd_{\graded{H[1/p]}}^{\wedge I}(R) \hookrightarrow \Perfd_{\graded{G}}^{\wedge I}(R) \hookrightarrow \Perfd_{\graded{G}}^{\wedge}(R)
    \end{equation*}
    are cofinal, where \(\Perfd_{\graded{G}}^{\wedge}(R)\) (resp., \(\Perfd_{\graded{G}}^{\wedge I}(R)\)) is the category of gradedwise complete \(G\)-graded adic (resp., \(I\)-adic) perfectoid \(R\)-algebras and \(\Perfd_{\graded{H[1/p]}}^{\wedge I}(R)\) is the full subcategory of \(\Perfd_{\graded{G}}^{\wedge I}(R)\) consisting of all gradedwise complete \(G\)-graded \(I\)-adic perfectoid \(R\)-algebras \(P\) such that \(\Supp(P) = H[1/p]\).
\end{lemma}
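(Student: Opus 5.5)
The plan is to check cofinality by the usual criterion. A functor \(\iota \colon \mathcal{C} \to \mathcal{D}\) is cofinal if, for every \(Q \in \mathcal{D}\), the comma category \(Q/\iota\) of maps \(Q \to \iota(S)\) is nonempty and connected; it is enough that this category has an initial object, or weakly initial objects together with a way to join any two objects. Since a composite of cofinal functors is cofinal, I will treat the two inclusions separately. In each case I want to build, for a given \(Q\), a universal (or nearly universal) map from \(Q\) into the smaller category.

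\textbf{Second inclusion.} Let \(Q\) be a gradedwise complete \(G\)-graded adic perfectoid \(R\)-algebra with some homogeneous ideal of definition \(J\). I take the gradedwise \(I\)-adic completion \(\grcomp{I}{Q}\).
\begin{enumerate}
\item Since \(p \in I\), the ring \(\grcomp{I}{Q}\) should again be a \(G\)-graded perfectoid ring. I expect this to follow as in \Cref{DerivedGrcompGradedPerfd} and \Cref{LocalizationGradedPerfd}(1): the derived and classical gradedwise \(I\)-completions agree, and \(I\)-completions of \(p\)-complete perfectoid rings are perfectoid.
\item Every continuous graded map \(Q \to S\) with \(S\) gradedwise \(I\)-adically complete factors uniquely through \(\grcomp{I}{Q}\), because the gradedwise completion is universal.
\item So \(Q \to \grcomp{I}{Q}\) is initial in the comma category, and the inclusion is a left-adjointable (reflective) inclusion, hence cofinal.
\end{enumerate}
The point to check is continuity of \(Q \to \grcomp{I}{Q}\). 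The image of \(I\) is topologically nilpotent in \(Q\) because \(R \to Q\) is continuous, and \(I\) is finitely generated. From this I expect the \(I\)-adic topology on \(Q\) to be comparable with the given adic topology in the direction needed, but I may need to adjust the ideal of definition used.

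\textbf{First inclusion.} Put \(K \defeq H[1/p]\), a submonoid of \(G\) because \(G = G[1/p]\). Given \(Q \in \Perfd_{\graded{G}}^{\wedge I}(R)\), I construct a universal map into algebras with support \emph{contained in} \(K\), in three steps.
\begin{enumerate}
\item Let \(\mathfrak{a} \subseteq Q\) be the ideal generated by all \(Q_g\) with \(g \notin K\). Then \(Q/\mathfrak{a}\) is \(K\)-supported, and any graded map \(Q \to S\) with \(\Supp(S) \subseteq K\) kills \(\mathfrak{a}\).
\item Apply the graded perfectoidization of \cite{ishizuka2025Graded} to \(Q/\mathfrak{a}\), followed by gradedwise \(I\)-completion. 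The resulting object \(Q''\) has support in \(K[1/p] = K\). By the universal properties of the quotient, the perfectoidization and the completion, every map from \(Q\) to a \(K\)-supported \(I\)-adic graded perfectoid algebra factors uniquely through \(Q \to Q''\).
\item To reach support \emph{exactly} \(K\), fix one object \(T\) with \(\Supp(T) = K\). For instance, \(T\) can be the gradedwise \(I\)-completed graded perfectoidization of \(R[K]\) (or of \(R\)), provided it is nonzero in every degree of \(K\). Then use \(Q'' \times T\), whose support is \(\Supp(Q'') \cup K = K\).
\end{enumerate}
This makes the comma category \(Q/\iota\) nonempty. For connectedness, take two maps \(Q \to S_1\) and \(Q \to S_2\) with \(\Supp(S_i) = K\). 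Both factor uniquely through \(Q''\), and \(Q''\) maps to \(Q'' \times T\) and onward to \(S_1 \times S_2\). So any two objects of the comma category are joined by a zigzag through \(Q''\)-type objects.

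\textbf{Main obstacle.} I expect the hardest part to be the first inclusion, specifically the use of graded perfectoidization of \(Q/\mathfrak{a}\). One needs that a graded perfectoidization exists (or at least that a universal graded perfectoid quotient of \(Q\) exists), that it preserves the support bound \(K\), and that it remains nonzero in the relevant degrees. The nonvanishing is exactly why I pass to the product with \(T\), so that I never have to control \(Q''\) degreewise. If the graded perfectoidization is not available in the needed form, my first alternative is to take the \(K\)-graded part of the perfectoidization of \(Q/\mathfrak{a}\) computed via the semiperfectoid quotient theory in \cite{bhatt2022Prismsa}, using that \(Q/\mathfrak{a}\) is a quotient of a perfectoid ring.
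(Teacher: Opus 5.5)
\textbf{The proposal has a genuine gap: it checks cofinality in the wrong variance, and in that variance the statement is false.} These categories index \emph{limits}: $R_{\tgrpfd}=\grlim_P P$ in \Cref{DefGradedAbsPerfd}, and the companion statement \Cref{CofinalGradedPerfdForGradedRing} supplies maps \emph{from} an object of the smaller class \emph{to} a given $P$. For restriction of limits along $\iota\colon\mathcal{C}\to\mathcal{D}$ to be an equivalence, you need the following for every $Q\in\mathcal{D}$: the category of pairs $(S,\,S\to Q)$ with $S\in\mathcal{C}$ must be weakly contractible. This holds, for instance, if $\iota$ admits a right adjoint. You analyze $Q/\iota$ instead, i.e.\ maps \emph{out of} $Q$. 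An initial object there (a left adjoint to $\iota$) gives cofinality for colimits and says nothing about $\grlim$.

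Worse, the first inclusion is not cofinal in your sense, so no refinement of your steps can succeed. Take $R=\mathbb{Z}_p$ in degree $0$, $I=(p)$ and $G=\mathbb{Z}[1/p]$, so that $H[1/p]=\{0\}$. Let $Q=\bigoplus_{a\in\mathbb{Z}[1/p]}\mathbb{Z}_p^{\mathrm{cycl}}\,t^a$, the graded form of $\mathbb{Z}_p^{\mathrm{cycl}}\langle t^{\pm1/p^\infty}\rangle$ with $\deg t=1$. A graded map $Q\to S$ with $\Supp(S)=\{0\}$ sends the unit $t$ into $S_1=0$, which forces $S=0$. Hence $Q/\iota$ is empty.

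Your individual constructions also fail as stated. You note that $I$ is topologically nilpotent in $Q$; together with finite generation this gives $I^NQ\subseteq J$. So the $I$-adic topology is \emph{finer} than the $J$-adic one, not coarser. Consequently $Q\to\grcomp{I}{Q}$ is in general not continuous: take $J=(p,t)$ with $t$ of degree $0$ and $t^n\notin pQ$. What is continuous is the identity from $Q$ with the $I$-adic topology to $Q$. Likewise, your connectedness step needs a ring map $Q''\to Q''\times T$, i.e.\ a map $Q''\to T$, which need not exist.

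The repair is to dualize.
\begin{itemize}
\item \emph{Second inclusion.} Give $Q$ the $I$-adic topology. It is gradedwise $I$-complete: derived $J$-completeness gives derived $I$-completeness, and then \Cref{DerivedGrcompGradedPerfd} applies. By the topology comparison above, this object together with the identity to $Q$ is terminal among $I$-adic objects over $Q$.
\item \emph{First inclusion.} Put $K=H[1/p]$ and consider the subring $Q_K=\bigoplus_{g\in K}Q_g$. It contains the image of $R$, and every $S\to Q$ with $\Supp(S)\subseteq K$ factors uniquely through it. One checks that it is again an object, using that $pg\in K$ implies $g\in K$, so the relevant $p$-th roots stay in $Q_K$.
\end{itemize}
Your product trick then works in this direction, for a fixed $T$ with $\Supp(T)=K$ as in your step 3. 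The map $Q_K\times T\to Q$ goes via the first projection. The natural maps $S\leftarrow S\times T\to Q_K\times T$ over $Q$ show that the category of objects over $Q$ with support exactly $K$ is weakly contractible.
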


\begin{proposition}[{\cite{ishizuka2026Algebraization}*{Proposition 3.7}}] \label{CofinalGradedPerfdForGradedRing}
    Let \(R\) be a \(G\)-graded adic ring with \(G = G[1/p]\).
    Let $H \subseteq G$ be a submonoid such that $Supp(R) \subseteq H$.
    Take any complete adic perfectoid \(R\)-algebra \(P\) with an ideal of definition \(J\) containing \(p\).
    Then there exists a gradedwise complete \(G\)-graded adic perfectoid \(R\)-algebra \(Q\) and an \(R\)-algebra morphism \(Q \to P\) such that $\Supp(Q) \subseteq H[1/p]$.
\end{proposition}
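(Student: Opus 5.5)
The statement to prove is \Cref{CofinalGradedPerfdForGradedRing}: given a complete adic perfectoid \(R\)-algebra \(P\), produce a gradedwise complete graded perfectoid \(R\)-algebra \(Q\) with \(\Supp(Q) \subseteq H[1/p]\) mapping to \(P\) over \(R\).

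The plan is to build a "universal graded perfectoid" \(R\)-algebra from the monoid \(H[1/p]\) and the given \(R\)-algebra structure on \(P\), and then perfectoidize.

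First I construct a graded ring mapping to \(P\). Put \(M \defeq H[1/p]\), a submonoid of \(G\) (uniquely \(p\)-divisible since \(G = G[1/p]\)). For each homogeneous element \(r \in R_h\) I want \(p\)-power roots, but \(P\) itself need not contain compatible roots of the image of \(r\). So instead I form the graded polynomial ring
\[
A \defeq R \otimes_{\Z} \Z_p[x_{r}^{1/p^\infty} : r \ \text{homogeneous}],
\]
with \(\deg x_r^{1/p^n} = \deg(r)/p^n \in M\), and I adjoin the relation \(x_r = r\). Then \(\Supp(A) \subseteq M\). To map \(A\) to \(P\), I use the perfectoid property of \(P\): every element of \(P\) is, up to \(p\), a \(p\)-th power, and more precisely \(P^\flat \to P/p\) is surjective. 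A cleaner way to get a map is to choose, for each \(r\), an element \(t_r \in P^\flat\) with \(t_r^\sharp \equiv r \bmod p\). I then replace the relation \(x_r = r\) by the weaker requirement that the composite to \(P\) agree modulo \(p\).

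That weaker requirement is unsatisfactory. The better approach is the following.

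Use the graded \(R\)-algebra \(B \defeq R \otimes_{\Z} \Z[M]\), where \(\Z[M]\) is the monoid algebra. Map \(B \to P\) by sending \(R\) via the structure map and each \(e_m \in \Z[M]\) to \(1\). This is only an ungraded map, and that is fine: \(P\) carries no grading, and only an \(R\)-algebra morphism \(Q \to P\) is required. Next, apply the graded perfectoidization of \cite{ishizuka2025Graded}, combined with \Cref{GradedPerfdCofinalFunctors}, as follows. Take a graded perfectoid \(Q_0\) over \(\Z_p[M]^{\wedge}\) containing the perfect monoid algebra \(\Z_p\langle M\rangle\) (\(p\)-completed, gradedwise). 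This is graded perfectoid with support \(M\) as soon as we adjoin \(\Z_p^{\mathrm{cycl}}\) in degree \(0\).

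Finally, set \(Q \defeq\) the gradedwise \(J\)-completion of the graded perfectoidization of \(R \otimes Q_0\) relative to \(Q_0\), which is the graded analogue of the André-flatness argument. Its ungraded completion maps to the perfectoidization of \(P \otimes Q_0\), which receives a map from \(P\). The remaining task is to choose the map \(Q_0 \to P\) so that this map factors to \(P\) itself. For that I send \(e_m \mapsto 1\) and \(\Z_p^{\mathrm{cycl}} \to P\), which needs compatible roots of unity in \(P\). Roots of unity in \(P\) may fail to exist, so I would instead use \(\Z_p\langle M\rangle\) alone in degree-zero-free form, and obtain the degree-\(0\) perfectoid base from \(P\) itself via the tilt, taking \(W(P^\flat)/\ker\theta\) construction gradewise.

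The main obstacle is making the graded perfectoidization of \(R \otimes (\text{perfect monoid algebra})\) gradedwise complete with support inside \(M\) while still mapping to \(P\). I expect the support condition to follow from the fact that all constructions (Frobenius colimits, Witt vectors, quotients) preserve \(M\)-gradings because \(M\) is \(p\)-divisible.
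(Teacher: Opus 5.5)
\textbf{Verdict: genuine gap.} Your proposal never actually constructs \(Q\). The route you sketch stalls exactly at the point you call the main obstacle, and it cannot get past it.

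\emph{Why the sketch fails.} Replacing \(R\) by \(B=R\otimes_{\Z}\Z[M]\), with \(r\mapsto r\otimes 1\), gains nothing. Finding a graded perfectoid \(B\)-algebra with an \(R\)-map to \(P\) is the original problem again.

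The next step, a ``graded perfectoidization of \(R\otimes Q_0\) relative to \(Q_0\)'', is not available, for three reasons.
\begin{enumerate}
\item The base is either not perfectoid (\(\Z_p\langle M\rangle\)) or need not map to \(P\) (\(\Z_p^{\mathrm{cycl}}\langle M\rangle\)).
\item For algebras that are not semiperfectoid, perfectoidization is in general only a derived object, not a discrete perfectoid ring.
\item Most importantly, a graded perfectoidization has a universal property only with respect to \emph{graded} perfectoid targets. So it produces no map to the ungraded ring \(P\). Passing from an ungraded perfectoid \(R\)-algebra to a graded perfectoid one lying over it is exactly what the proposition asserts, so this step is circular.
\end{enumerate}

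\emph{The missing idea.} The paper does not reprove this statement; it quotes it. It does, however, have a short direct proof, and the idea your sketch lacks is to keep \(P\) in degree \(0\) and let monoid variables carry the grading of \(R\).

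Let \(\phi\colon R\to P\) be the structure map and \(M=H[1/p]\). Put \(Q=P[M]=\bigoplus_{m\in M}P\,e_m\), with \(e_me_{m'}=e_{m+m'}\) and \(\deg(ae_m)=m\). Define \(\psi\colon R\to Q\) by \(\psi(\sum_h r_h)=\sum_h\phi(r_h)e_h\) for \(r_h\in R_h\).
\begin{enumerate}
\item Since \(\Supp(R)\subseteq H\subseteq M\) and \(R_hR_{h'}\subseteq R_{h+h'}\), the map \(\psi\) is a graded ring map.
\item The augmentation \(\varepsilon\colon Q\to P\), \(ae_m\mapsto a\), satisfies \(\varepsilon\circ\psi=\phi\). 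Hence \(\varepsilon\) is an \(R\)-algebra morphism.
\item Each graded piece of \(Q\) is a copy of \(P\). So \(Q\) is gradedwise complete for the homogeneous ideal \(JQ\), which is generated in degree \(0\).
\item \(\psi\) is continuous, because the ideal of definition of \(R\) is homogeneous and \(\psi\) is \(\phi\) degreewise.
\item \(\Supp(Q)\subseteq M\) holds by construction.
\item \(Q^{\wedge p}=P\widehat{\otimes}_{\Z_p}\Z_p[M]^{\wedge}\) is perfectoid. Indeed, \(M\) is uniquely \(p\)-divisible because \(G\) is torsion-free with \(G=G[1/p]\), so \(\F_p[M]\) is perfect. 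With \(A=W(P^\flat)\) and \(\ker\theta=(\xi)\), \(Q^{\wedge p}\) is the reduction modulo \(\xi\) of the perfect prism \((A[M]^{\wedge}_{(p,\xi)},(\xi))\), exactly as for \(P\langle T^{1/p^\infty}\rangle\).
\end{enumerate}

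No perfectoidization of \(R\), no roots of unity and no Witt-vector construction are needed. Your final remark about taking the degree-\(0\) base from \(P\) itself points toward \(Q=P[M]\). But the untwisted map \(r\mapsto \phi(r)e_0\) is not graded, so only the twisted structure map \(r_h\mapsto\phi(r_h)e_h\) makes \(P[M]\) a graded \(R\)-algebra mapping to \(P\).
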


\subsection{Absolute graded perfectoidization}

In this subsection, we recall the definition and properties of the absolute graded perfectoidization defined in \cite{ishizuka2026Algebraization}.
Before that, we introduce the absolute perfectoidization for adic rings.

\begin{definition}[{\cite{ishizuka2026Algebraization}*{Definition 3.1 and Proposition 3.3}}] \label{CompletionAbsolutePerfectoidization}
    Let \(R\) be an adic topological ring in which \(p\) is topologically nilpotent, and let \(I\) be an ideal of definition of \(R\) containing \(p\).
    Then the \emph{topological absolute perfectoidization} \(R_{\tperfd}\) of \(R\) is defined as the limit
    \begin{equation*}
        R_{\tperfd} \defeq \lim_{P \in \Perfd^{\wedge I}(R)} P
    \end{equation*}
    in \(\CAlg(\mcalD(R))\), where \(\Perfd^{\wedge I}(R)\) is the category of \(I\)-adic perfectoid \(R\)-algebras.
    This is isomorphic to the derived \(I\)-completion of the absolute perfectoidization \(R_{\perfd}\) of \(R\) in the sense of \cite{bhatt2024Perfectoid}*{Definition 3.10}.
\end{definition}


\begin{definition}[{\cite{ishizuka2026Algebraization}*{Definition 3.11}}] \label{DefGradedAbsPerfd}
    Let \(R\) be a \(G\)-graded adic ring such that \(G = G[1/p]\) and let \(I\) be a homogeneous ideal of definition of \(R\) containing \(p\).
    Then the limit
    \begin{equation*}
        R_{\tgrpfd} \defeq \grlim_{P \in \Perfd_{\graded{G}}^{\wedge I}(R)} P
    \end{equation*}
    in \(\CAlg(\mcalD_{\graded{G}}(R))\) exists, where \(\Perfd_{\graded{G}}^{\wedge I}(R)\) is the category of \(I\)-adic \(G\)-graded perfectoid \(R\)-algebras.
    We call \(R_{\tgrpfd}\) the \emph{absolute graded perfectoidization} of \(R\).
\end{definition}

\begin{theorem} \label{GradedAbsPerfdProperties}
    In the situation of \Cref{DefGradedAbsPerfd}, the absolute graded perfectoidization \(R_{\tgrpfd}\) satisfies the following properties.
    \begin{enumerate}
        \item \label{GradedPerfdZeroPart} If \(R_g = 0\) or \(R_{-g} = 0\) holds for each \(g \in G\), then the degree-zero part \((R_{\tgrpfd})_0\) of \(R_{\tgrpfd}\) can be identified with the topological absolute perfectoidization \((R_0)_{\tperfd}\) of \(R_0\) (\cite{ishizuka2026Algebraization}*{Proposition 3.15}).
        \item \label{CompletionOfAbsoluteGradedPerfd} The canonical morphism \(R_{\tgrpfd} \to R_{\tperfd}\) induces an isomorphism
        \begin{equation*}
            \dcomp{I}{R_{\tgrpfd}} \xrightarrow{\cong} R_{\tperfd}
        \end{equation*}
        in \(\CAlg_R\) (\cite{ishizuka2026Algebraization}*{Corollary 3.26}).
        \item \label{RemarkGradedPerfd} The absolute graded perfectoidization \(R_{\tgrpfd}\) is derived gradedwise \(I\)-complete and its support \(\Supp(R_{\tgrpfd})\) is equal to \(H[1/p]\) for the monoid \(H\) generated by \(\Supp(R)\) (\cite{ishizuka2026Algebraization}*{Remark 3.13}).
    \end{enumerate}
\end{theorem}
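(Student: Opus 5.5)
The statement just made is \Cref{GradedAbsPerfdProperties}, and each of its three parts is quoted from \cite{ishizuka2026Algebraization}. So the proof will consist of checking that our setting matches the hypotheses there. Still, I would organize the argument so that the mechanism behind each part is visible. The basic tools are the cofinality results \Cref{GradedPerfdCofinalFunctors} and \Cref{CofinalGradedPerfdForGradedRing}, together with the comparison lemmas \Cref{DerivedGrcompGradedPerfd} and \Cref{LocalizationGradedPerfd}.

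For (3), I would start from the definition of \(R_{\tgrpfd}\) as a limit. Derived gradedwise \(I\)-complete objects are closed under limits in \(\mcalD_{\graded{G}}(R)\). Each \(P \in \Perfd_{\graded{G}}^{\wedge I}(R)\) is derived gradedwise complete by \Cref{DerivedGrcompGradedPerfd}, so the limit is derived gradedwise complete as well. For the support, \Cref{GradedPerfdCofinalFunctors} lets us replace the indexing category by its cofinal subcategory \(\Perfd_{\graded{H[1/p]}}^{\wedge I}(R)\). Since each degree-\(g\) part of a limit is the limit of the degree-\(g\) parts, we get \((R_{\tgrpfd})_g = 0\) for \(g \notin H[1/p]\). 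For the reverse inclusion, note that \(R \to R_{\tgrpfd}\) reaches every degree of \(H\) in a nonzero way. The compatible \(p\)-power roots of homogeneous elements existing in every perfectoid target then give nonzero classes in all degrees of \(H[1/p]\).

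For (1), I would identify the degree-zero part. Degree \(0\) of the limit is \(\lim_P P_0\), and each \(P_0\) is a perfectoid \(R_0\)-algebra. The assumption that \(R_g=0\) or \(R_{-g}=0\) for each \(g\) is what makes the following construction possible. Given a perfectoid \(R_0\)-algebra \(Q\), we form a graded perfectoid \(R\)-algebra whose degree-zero part is \(Q\) (or maps to \(Q\)), by killing one side of the grading, i.e.\ by sending \(R_g \to 0\) for \(g\neq 0\) along a suitable half. This shows that the functor \(P \mapsto P_0\) is cofinal onto \(\Perfd^{\wedge I}(R_0)\), which gives \((R_{\tgrpfd})_0 \cong (R_0)_{\tperfd}\).

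For (2), the comparison map \(R_{\tgrpfd}\to R_{\tperfd}\) exists because derived completion of a graded perfectoid algebra is perfectoid; by \Cref{DerivedGrcompGradedPerfd} this completion agrees with the classical one. For the inverse direction, \Cref{CofinalGradedPerfdForGradedRing} provides, for each complete perfectoid \(P\), a graded perfectoid \(Q\) mapping to \(P\). Hence the completed graded perfectoid algebras are cofinal in \(\Perfd^{\wedge I}(R)\). The main obstacle is that \(\dcomp{I}{-}\) does not commute with limits indexed by non-filtered categories in general. I would first try to handle this through the identification \(R_{\tperfd}\simeq \dcomp{I}{R_{\perfd}}\) of \Cref{CompletionAbsolutePerfectoidization}, which computes both sides via prismatic (Čech) descriptions. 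Failing that, I would simply invoke \cite{ishizuka2026Algebraization}*{Corollary 3.26} directly.
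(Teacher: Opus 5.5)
Deferring each part to \cite{ishizuka2026Algebraization} is exactly what the paper does. The statement is a recollection, and the paper gives no argument beyond the three citations. The trouble is that the mechanisms you sketch do not stand on their own.

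\textbf{Part (3).} Derived gradedwise completeness and the inclusion $\Supp(R_{\tgrpfd})\subseteq H[1/p]$ are fine: limits of derived gradedwise complete objects are complete, $\grlim$ is computed degreewise, and you may pass to the cofinal subcategory of \Cref{GradedPerfdCofinalFunctors}. Your reverse inclusion fails on two counts. First, a nonzero $R_g$ may map to zero in every graded perfectoid $R$-algebra. Second, a perfectoid ring need not contain exact $p$-th roots of a given element, since Frobenius is only surjective modulo $p$; even where such roots exist they are not functorial in $P$, so they give no element of the limit. In fact the equality is false in this generality. Take $R=\mathbb{F}_p[x]/(x^2)$ with $\deg x=1$ and $I=(p)=0$. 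Every graded perfect $R$-algebra kills $x$, so $\mathbb{F}_p$ in degree $0$ is an initial object of $\Perfd_{\graded{G}}^{\wedge I}(R)$. Hence $R_{\tgrpfd}=\mathbb{F}_p$, with support $\{0\}\neq\mathbb{Z}_{\geq 0}[1/p]$. Only the inclusion holds, and that is what the paper actually uses, namely $(R_{\grpfd})_m=0$ for $m<0$ in \Cref{GradedZeroPartInjective}.

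\textbf{Part (1).} The hypothesis does make $r\mapsto r_0$ a graded ring map $R\to R_0$. So each $Q\in\Perfd^{\wedge I}(R_0)$ gives a degree-zero graded perfectoid $R$-algebra $\Psi(Q)$, with $\Phi\Psi=\mathrm{id}$ for $\Phi(P)=P_0$. But a section only exhibits $\lim_Q Q$ as a retract of $\lim_P P_0$. To identify the two you need $\Phi$ to be initial, for instance an adjunction $\Phi\dashv\Psi$: graded $R$-algebra maps $P\to\Psi(Q)$ should correspond to $R_0$-algebra maps $P_0\to Q$. That requires the projection $P\to P_0$ to be a ring map, which fails whenever $P_gP_{-g}\neq 0$ for some $g\neq 0$. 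For example, take $R=\mathbb{Z}_p[x]$ and $P$ containing $x^{\pm 1/p^\infty}$. No nonzero degree-zero algebra receives a graded map from $P$, so $(\Psi(Q),\mathrm{id}_Q)$ is not terminal in $\Phi_{/Q}$ for $Q\neq 0$.

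The missing step is to first pass, via \Cref{GradedPerfdCofinalFunctors}, to those $P$ supported in $H[1/p]$. This yields the adjunction when $H\cap(-H)=\{0\}$, e.g.\ for $\mathbb{Z}_{\geq 0}$-graded $R$, the case used later. That condition is stronger than the stated hypothesis, so the general case still needs the cited Proposition~3.15.

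\textbf{Part (2).} \Cref{CofinalGradedPerfdForGradedRing} only shows that the comma categories of $P\mapsto\comp{I}{P}$ over each $P'\in\Perfd^{\wedge I}(R)$ are nonempty. Initiality needs them to be weakly contractible. This is needed even to construct the comparison map: restriction along $P\mapsto\comp{I}{P}$ gives a map out of $R_{\tperfd}$, not into it. Combined with the interchange of $\dcomp{I}{-}$ with $\bigoplus_g\lim$ that you flagged, this is the real content of the cited Corollary~3.26. So your fallback of citing it is the only complete argument you offer for (2).
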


\begin{corollary}[{\cite{ishizuka2026Algebraization}*{Corollary 3.31}}] \label{CommutativityLimitsPerfd}
    Keep the setting of \Cref{DefGradedAbsPerfd}.
    Let \(f\) be an homogeneous element of \(R\).
    Then there exists a canonical isomorphism
    \begin{equation*}
        \dgrcomp{I}{R_{\tgrpfd}[1/f]} \xrightarrow{\cong} (R[1/f])_{\tgrpfd}
    \end{equation*}
    in \(\CAlg(\mcalD_{\graded{G}}(R))\).
    Moreover, we have the following commutativity of limits:
    \begin{equation} \label{CommutesLimitIsom}
        \dgrcomp{I}{R_{\tgrpfd}[1/f]} = \dgrcomp{I}{(\grlim_{P \in \Perfd_{\graded{G}}^{\wedge I}(R)} P)[1/f]} \xrightarrow{\cong} \grlim_{P \in \Perfd_{\graded{G}}^{\wedge I}(R)}(\dgrcomp{I}{P[1/f]})
    \end{equation}
    in \(\CAlg(\mcalD_{\graded{G}}(R))\), where the limits run over all \(I\)-adic \(G\)-graded perfectoid \(R\)-algebras \(P\).
\end{corollary}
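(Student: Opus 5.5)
The statement has two parts: the isomorphism \(\dgrcomp{I}{R_{\tgrpfd}[1/f]} \cong (R[1/f])_{\tgrpfd}\), and the commutation \eqref{CommutesLimitIsom} of the limit with localization and completion. I will first rewrite \((R[1/f])_{\tgrpfd}\) as the limit on the right of \eqref{CommutesLimitIsom}. Then I will show that the canonical map from the left side of \eqref{CommutesLimitIsom} to that limit is an isomorphism. The second part is where the real content lies.

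\textbf{Step 1 (cofinality).} Consider the functor
\[
L \colon \Perfd_{\graded{G}}^{\wedge I}(R) \to \Perfd_{\graded{G}}^{\wedge I}(R[1/f]), \qquad P \mapsto \grcomp{I}{P[1/f]}.
\]
It is well defined by \Cref{LocalizationGradedPerfd}(1). A graded perfectoid \(R[1/f]\)-algebra \(Q\) is the same thing as a graded perfectoid \(R\)-algebra in which \(f\) is invertible, and for such \(Q\) we have \(\grcomp{I}{Q[1/f]} = Q\). Hence \(L\) is left adjoint to the fully faithful inclusion \(\iota\). For each \(Q\), the comma category of pairs \((P, L(P) \to Q)\) is equivalent to the slice \(\Perfd_{\graded{G}}^{\wedge I}(R)_{/\iota Q}\). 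That slice has a terminal object, namely \(\iota Q\), so it is weakly contractible. Therefore \(L\) is initial (limit-cofinal), and
\[
(R[1/f])_{\tgrpfd} = \grlim_{Q} Q \simeq \grlim_{P} \grcomp{I}{P[1/f]} \simeq \grlim_{P} \dgrcomp{I}{P[1/f]},
\]
where the last identification uses \Cref{LocalizationGradedPerfd}(2).

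\textbf{Step 2 (comparison map).} The maps \(R_{\tgrpfd} \to P\) induce a canonical map
\[
\dgrcomp{I}{R_{\tgrpfd}[1/f]} \to \grlim_P \dgrcomp{I}{P[1/f]}.
\]
Localization is a filtered colimit, so this cannot be proven an isomorphism formally. Instead I reduce to the ungraded case.

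\begin{itemize}
\item Both sides are derived gradedwise \(I\)-complete, since limits of complete objects are complete.
\item I expect, from the formalism of \cite{ishizuka2026Derived}, that a map of derived gradedwise \(I\)-complete objects is an isomorphism as soon as it becomes one after applying \(- \otimes^L R/I\) (or a Koszul complex on generators of \(I\)). The same then holds after applying \(\dcomp{I}{-}\) to the underlying objects. So it suffices to check the statement after ungraded derived \(I\)-completion.
\item By \Cref{GradedAbsPerfdProperties}(2), the left side becomes \(\dcomp{I}{R_{\tperfd}[1/f]}\).
\item Again by \Cref{GradedAbsPerfdProperties}(2), and by Step 1, the right side becomes \((R[1/f])_{\tperfd}\).
\end{itemize}

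The statement is thus reduced to the ungraded isomorphism
\[
\dcomp{I}{R_{\perfd}[1/f]} \simeq \dcomp{I}{(R[1/f])_{\perfd}}.
\]
I would deduce this from the étale (Zariski) base-change property of absolute perfectoidization in \cite{bhatt2022Prismsa} and \cite{bhatt2024Perfectoid}, together with \Cref{CompletionAbsolutePerfectoidization}.

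\textbf{Main obstacle.} The hardest step is the conservativity claim in Step 2. Derived \(I\)-completion of the underlying module in general forgets information about a gradedwise-complete object, because of the possibly infinitely many degrees. The rescue is that reduction mod \(I\) is computed degreewise and is insensitive to the choice of completion, so conservativity should hold. If this fails, the fallback is to prove the localization compatibility degreewise. In that approach one would use \Cref{GradedPerfdCofinalFunctors} to restrict to perfectoid algebras with support in \(H[1/p]\). One would then compare the degree-\(g\) pieces through the ungraded statement applied to the Veronese-type subrings.
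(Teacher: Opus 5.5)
The paper gives no proof of this statement; it is quoted from \cite{ishizuka2026Algebraization}*{Corollary 3.31}. Judged on its own, your proposal is correct, and each half matches an argument the paper uses elsewhere. Step 1 is the identification of \(\{\grcomp{I}{P[1/f]}\}\) with \(\Perfd^{\wedge I}_{\graded{G}}(R[1/f])\) that the paper invokes in the proof of \Cref{LocCohoCommutesAbsPerfd}. Your justification of it is right: \(L\) is left adjoint to the inclusion \(\iota\), so each comma category of pairs \((P, L(P) \to Q)\) is the slice over \(\iota Q\). That slice has a terminal object, so \(L\) is initial. Step 2 follows the pattern of the proof of \Cref{EquivGradedPerfdPure}. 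There \Cref{GradedAbsPerfdProperties}(2) is combined with the ungraded localization compatibility \cite{bhatt2024Perfectoid}*{Lemma 3.21}, and that reference also supplies your ungraded input \(\dcomp{I}{R_{\perfd}[1/f]} \simeq \dcomp{I}{(R[1/f])_{\perfd}}\).

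Two points would make the argument airtight.

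\textbf{The conservativity step is not an obstacle.} Your Veronese fallback is unnecessary. Let \(C\) be the cofiber of the comparison map; it is derived gradedwise \(I\)-complete. Suppose \(C/^L f = 0\) for a homogeneous \(f \in I\). Then \(f\) acts invertibly on \(C\), so \(C \simeq T(C,f) = 0\); this is the same trick as in the proof of \Cref{PropertiesCompLocCoho}(6). Applying this one generator at a time shows that \(C/^L(f_1, \dots, f_r) = 0\) forces \(C = 0\). Three facts then close the loop:
\begin{itemize}
\item the forgetful functor \(\mcalD_{\graded{G}}(R) \to \mcalD(R)\) is conservative;
\item it commutes with Koszul quotients;
\item the ungraded Koszul quotient of the underlying object of \(C\) agrees with that of its ungraded derived \(I\)-completion.
\end{itemize}
Hence vanishing of the ungraded derived \(I\)-completion of \(C\) already gives \(C = 0\).

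\textbf{You need the right map, not just the right objects.} An abstract isomorphism between the completed source and target does not suffice. You must know that the completed comparison map is the canonical map \(\dcomp{I}{R_{\tperfd}[1/f]} \to (R[1/f])_{\tperfd}\) induced by \(R \to R[1/f]\). This does hold. Under your Step 1 identification, the comparison map equals the functoriality map \(R_{\tgrpfd} \to (R[1/f])_{\tgrpfd}\): the projection to \(\iota L P\) is the unit \(P \to \iota L P\) composed with the projection to \(P\). In addition, the map in \Cref{GradedAbsPerfdProperties}(2) is natural in the graded ring.
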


For a graded ring, we have the following equivalences of lim-perfectoid purity.

\begin{lemma}[{cf.~\cite{ishizuka2025Graded}}] \label{EquivGradedPerfdPure}
    Let \(R\) be a \(p\)-torsion-free \(G\)-graded Noetherian ring with the unique graded maximal ideal \(\mfrakm\) containing \(p\).
    Write \(d \defeq \dim(R_{\mfrakm})\).
    Then the following are equivalent.
    \begin{enumerate}
        \item This ring \(R\) is graded lim-perfectoid pure in the sence that the canonical morphism \(R \to R_{\grpfd}\) is pure in \(\mcalD_{\graded{G}}(R)\).
        \item The \(p\)-adic completion \(R^{\wedge p}\) is lim-perfectoid pure.
        \item The localization \(R_{\mfrakm}\) is lim-perfectoid pure.
    \end{enumerate}
    If we further assume that \(R_{\mfrakm}\) is Gorenstein, then the following condition is also equivalent.
    \begin{enumerate}
        \item[(4)] The canonical morphism \(H^d_{\mfrakm}(R) \to H^d_{\mfrakm}(R_{\perfd})\) is injective for \(d \defeq \dim(R_{\mfrakm})\).
    \end{enumerate}
\end{lemma}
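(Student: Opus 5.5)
The plan is to pass through the graded world, using the graded perfectoidization of \Cref{DefGradedAbsPerfd} as the bridge. The goal is to show that purity of \(R \to R_{\grpfd}\) in \(\mcalD_{\graded{G}}(R)\), purity of \(R^{\wedge p} \to (R^{\wedge p})_{\perfd}\), and purity of \(R_{\mfrakm} \to (R_{\mfrakm})_{\perfd}\) are all detected by the same finite-level test. That test is injectivity after tensoring with graded Artinian modules supported at \(\mfrakm\), i.e.\ on local cohomology-type objects.

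First I would record the standard reduction: for a Noetherian local or graded-local ring, a map \(A \to M\) is pure if and only if \(E \to E \otimes^L M\) is injective on \(H^0\) for the (graded) injective hull \(E\) of the residue field. Equivalently, it suffices to test against all finite length (graded) modules \(N\) supported at \(\mfrakm\), since \(E\) is a filtered colimit of these. Since such \(N\) are \(p\)-power torsion and \(\mfrakm\)-torsion, tensoring with them is insensitive to derived \(p\)-completion and to localization at \(\mfrakm\). Thus \(N \otimes^L R_{\grpfd}\), \(N \otimes^L (R^{\wedge p})_{\perfd}\) and \(N \otimes^L (R_{\mfrakm})_{\perfd}\) should all agree. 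For \(\mathrm{(1)}\Leftrightarrow\mathrm{(2)}\) I would use \Cref{GradedAbsPerfdProperties}(\ref{CompletionOfAbsoluteGradedPerfd}): the \(p\)-completion of \(R_{\grpfd}\) is \(R_{\tperfd} = (R^{\wedge p})_{\perfd}\), so \(N \otimes^L R_{\grpfd} \simeq N \otimes^L (R^{\wedge p})_{\perfd}\). The grading on \(N\) only matters to check that the graded test (graded \(N\), graded \(E\)) suffices for ungraded purity. This holds because \(R^{\wedge p}\) is a local ring whose maximal ideal is \(\mfrakm R^{\wedge p}\), and its injective hull \(E\) is the graded injective hull viewed ungraded.

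For \(\mathrm{(2)}\Leftrightarrow\mathrm{(3)}\) I would use that \(R^{\wedge p} \to R_{\mfrakm}^{\wedge}\) and \(R_{\mfrakm} \to R_{\mfrakm}^{\wedge}\) are faithfully flat with the same residue field and the same \(E\). I would also use that perfectoidization commutes with \(p\)-completed localization (\Cref{CommutativityLimitsPerfd} in the graded form, and its ungraded analogue from \cite{bhatt2024Perfectoid}). Together these identify \(E \otimes^L (R^{\wedge p})_{\perfd}\) with \(E \otimes^L (R_{\mfrakm})_{\perfd}\), since \(E\) is already \(\mfrakm\)-torsion, so inverting elements outside \(\mfrakm\) does nothing.

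For (4) in the Gorenstein case, \(E \cong H^d_{\mfrakm}(R)\) up to a twist. Moreover \(E \otimes^L M \simeq R\Gamma_{\mfrakm}(M)[d]\) for any \(M\), because \(R\Gamma_{\mfrakm}(R) = E[-d]\) and \(R\Gamma_{\mfrakm}(-) = R\Gamma_{\mfrakm}(R)\otimes^L -\). Hence \(H^0(E \otimes^L R_{\perfd}) = H^d_{\mfrakm}(R_{\perfd})\), and purity is equivalent to injectivity of \(H^d_{\mfrakm}(R) \to H^d_{\mfrakm}(R_{\perfd})\). Here the target may be computed with any of the three perfectoidizations, by the previous paragraph.

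I expect the main obstacle to be the careful identification of \(N \otimes^L R_{\grpfd}\) with the ungraded, completed version. The graded limit \(\grlim\) differs from the ungraded limit, and \(R_{\grpfd}\) is only derived gradedwise complete. To handle this, I would first check that \(N \otimes^L -\) commutes with the relevant completion. Since \(N\) is perfect-after-truncation, I would write it as a finite colimit of shifts of \(R/\mfrakm\)-type Koszul objects. Then \(N\otimes^L \dgrcomp{I}{-}\) agrees with \(N\otimes^L -\), and the forgetful functor on torsion objects takes gradedwise completion to ordinary completion.
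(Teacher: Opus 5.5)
Your proposal has a genuine gap in how it handles condition (2). You assert that $R^{\wedge p}$ is a local ring with maximal ideal $\mathfrak{m}R^{\wedge p}$. You also use that $R^{\wedge p}\to \widehat{R_{\mathfrak m}}$ is faithfully flat. Both claims fail in the intended situation.

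Take $R=\mathbb{Z}_{(p)}[x]$ with $\deg x=1$, or any section ring as in \Cref{SettingFibersequenceRecall}. The unique graded maximal ideal is $(p,x)$. However, $R^{\wedge p}=\mathbb{Z}_p\langle x\rangle$ and $R^{\wedge p}/p=\mathbb{F}_p[x]$, so $(p,x-1)$ is another maximal ideal. Likewise, $\mathbb{Z}_p\langle x\rangle\to\mathbb{Z}_p[[x]]$ is flat but not faithfully flat.

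Consequently, purity of $R^{\wedge p}\to (R^{\wedge p})_{\perfd}$ in $\mathcal{D}(R^{\wedge p})$ is not detected by the single injective hull $E$ of $R/\mathfrak m$. It is a condition at every maximal ideal of $R^{\wedge p}$, and the non-graded ones such as $(p,x-1)$ are invisible to your test. Only the implication from (2) to the $E$-test is valid. Your arguments for (1) $\Rightarrow$ (2) and (3) $\Rightarrow$ (2) do not go through as written.

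The missing ingredient is a globalization step that uses the grading: a proper homogeneous ideal is contained in the unique graded maximal ideal $\mathfrak m$. The paper uses it in \Cref{PurityEquiv}, in four steps.
\begin{enumerate}
\item Write the target as a filtered colimit of graded perfect objects $M$; these are also perfect in $\mathcal{D}(R)$.
\item Identify the graded evaluation map $\underline{\Hom}(M,R)\to R$ with the ungraded one $\Hom_{\mathcal{D}(R)}(M,R)\to R$, and note that its image is a homogeneous ideal.
\item Purity of $R_{\mathfrak m}\to M_{\mathfrak m}$ gives surjectivity after localizing at $\mathfrak m$. This purity follows from (3) via $\dcomp{p}{(R_{\grpfd})_{\mathfrak m}}\cong (R_{\mathfrak m})_{\perfd}$.
\item Since the image is homogeneous, surjectivity at $\mathfrak m$ forces global surjectivity. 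Hence $R\to M$ splits in the graded derived category, which gives (1).
\end{enumerate}
After that, (1) $\Rightarrow$ (2) is formal: forget the grading and $p$-complete, using $\dcomp{p}{R_{\grpfd}}\cong R_{\perfd}$.

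Your ``graded-local $E$-test'' could do the same job. However, its proof is exactly this homogeneous-image argument, so you must prove it rather than cite it as standard. You must also apply it to $R$ with its grading, not to $R^{\wedge p}$.

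Your treatment of (4), via $R\Gamma_{\mathfrak m}(R_{\mathfrak m})\simeq E[-d]$ in the Gorenstein case, is fine. It is essentially the content of \cite{bhatt2024Perfectoid}*{Lemma 4.4}, which the paper invokes.
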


\begin{proof}
    By \Cref{GradedAbsPerfdProperties} and \cite{bhatt2024Perfectoid}*{Lemma 3.21}, we have isomorphisms
    \begin{equation*}
        \dcomp{p}{R_{\grpfd}} \cong R_{\perfd} \quad \text{and} \quad \dcomp{p}{(R_{\grpfd})_{\mfrakm}} \cong (R_{\mfrakm})_{\perfd}.
    \end{equation*}
    Therefore, the equivalences follow from \Cref{PurityEquiv} below.

    Assume that \(R\) is Gorenstein.
    If \(R^{\wedge p}\) is perfectoid pure, then the morphism
    \begin{equation*}
        H^d_{\mfrakm}(R) \to H^d_{\mfrakm}(R_{\perfd})
    \end{equation*}
    is injective. Conversely, if it is injective, then \(R_{\mfrakm}\) is lim-perfectoid pure since \(R\) is Gorenstein (\cite{bhatt2024Perfectoid}*{Lemma 4.4}).
\end{proof}

First, we prove the following lemma:

\begin{lemma} \label{PurityEquiv}
    Let \(S\) be a Noetherian \(p\)-adic \(G\)-graded ring, and let \(\psi \colon S \to M\) be a morphism in \(\mcalD_{\graded{G}}(S)\).
    Then the following are equivalent:
    \begin{enumerate}
        \item This \(\psi \colon S \to M\) is pure in \(\mcalD_{\graded{G}}(S)\).
        \item The derived \(p\)-completion \(\psi^{\wedge p} \colon S^{\wedge p} \to \dcomp{p}{M}\) is pure in \(\mcalD(S^{\wedge p})\).
        \item The induced morphism \(\psi'_{\mfrakm} \colon S_{\mfrakm} \to \dcomp{p}{M_{\mfrakm}}\) is pure in \(\mcalD(S_{\mfrakm})\).
        \item The localization \(\psi_{\mfrakm} \colon S_{\mfrakm} \to M_{\mfrakm}\) is pure in \(\mcalD(S_{\mfrakm})\).
    \end{enumerate}
\end{lemma}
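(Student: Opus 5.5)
The plan is to reduce every condition to one statement about the graded Matlis dual, namely the injectivity of \(E \to E \otimes^L_S M\) on \(H^0\). Here \(E\) is the injective hull of the residue field \(S/\mfrakm\). I will read \(\mfrakm\) as the unique homogeneous maximal ideal containing \(p\); this is implicit in the statement. The key tool is the standard criterion that a morphism \(S \to M\) (discrete source, possibly derived target) over a Noetherian local ring is pure if and only if \(E \to H^0(E \otimes^L M)\) is injective. The graded analogue in \(\mcalD_{\graded{G}}(S)\) reduces this to the graded injective hull \(E^{\graded}\) of \(S/\mfrakm\). The reason is that purity is tested against finitely presented graded modules, equivalently graded Artinian modules supported at \(\mfrakm\), and every graded simple module is a shift of \(S/\mfrakm\).

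First I would dispose of (3)\(\Leftrightarrow\)(4) and (2)\(\Leftrightarrow\)(3). The module \(E\) is \(p\)-power torsion, so \(E \otimes^L_S M \cong E \otimes^L_S \dcomp{p}{M}\). This holds because the cofiber of \(M \to \dcomp{p}{M}\) is \(p\)-local after tensoring appropriately; more precisely, \(E \otimes^L_S N = 0\) whenever \(p\) acts invertibly on \(N\), and the cofiber of derived \(p\)-completion has derived-\(p\)-completion zero. The same identity holds with \(M_{\mfrakm}\) in place of \(M\), and \(E\) is already an \(S_{\mfrakm}\)- and \(S^{\wedge p}\)-module. The residue field is the same for \(S_\mfrakm\), for \(S^{\wedge p}\) localized at \(\mfrakm\), and for their completions. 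So the injectivity condition for \(E\) is literally the same map in (3) and (4). For (2), I would note that purity of \(S^{\wedge p} \to \dcomp{p}{M}\) over the Noetherian ring \(S^{\wedge p}\) can be tested at all maximal ideals. All maximal ideals of \(S^{\wedge p}\) contain \(p\). Here I need to argue that purity at \(\mfrakm\) forces purity at the other maximal ideals, using the grading. I would do this by observing that the locus where \(\psi\) fails to be pure is closed and \(\mathbb{G}\)-stable, i.e.\ cut out by a homogeneous ideal. Any nonempty such locus containing \(p\) then contains \(\mfrakm\), after passing from \(S^{\wedge p}\) to \(S\) using that \(S/p = S^{\wedge p}/p\).

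Next, (1)\(\Leftrightarrow\)(4). Graded purity is equivalent to the injectivity of \(E^{\graded} \to H^0(E^{\graded} \Lgrotimes_S M)\). Forgetting the grading, \(E^{\graded}\) is the ordinary injective hull \(E\) of \(S_\mfrakm/\mfrakm\). This is standard for graded rings with a unique homogeneous maximal ideal. The graded tensor product also forgets to the ordinary one. So (1) becomes exactly the criterion for (4). Alternatively, I would go (4)\(\Rightarrow\)(1) via the same homogeneous-locus argument. Purity of a graded map can be checked after forgetting the grading (a filtered colimit of split maps). Failure of purity at a graded prime \(\mathfrak{q}\) propagates to a homogeneous prime, and every homogeneous prime lies in \(\mfrakm\).

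The main obstacle is making the graded-purity criterion precise in the \(\infty\)-category \(\mcalD_{\graded{G}}(S)\) of \cite{ishizuka2026Derived}. In particular I need that "pure" there, meaning that the cofiber map is phantom or that the map is ind-split, is detected by \(E^{\graded}\). I would first try to prove this by showing that \(\psi\) is pure if and only if \(N \to N \Lgrotimes M\) is injective on \(H^0\) for all finitely generated graded \(N\). I would then reduce to graded Artinian \(N\) by Krull intersection and Noetherianity, and finally embed such \(N\) into finite sums of shifts of \(E^{\graded}\).
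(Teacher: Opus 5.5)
\textbf{There is a genuine gap, in the one nontrivial direction.} Your handling of (2), (3), (4) against one another is fine. Your proof of (3)\(\Leftrightarrow\)(4) via \(E\otimes^L M\simeq E\otimes^L \dcomp{p}{M}\) is valid, because \(E\) is \(p\)-power torsion and \(p\) acts invertibly on the fiber of \(M\to\dcomp{p}{M}\). It is a legitimate alternative to the paper's use of the factorization \(S_{\mathfrak m}\to M_{\mathfrak m}\to \dcomp{p}{M_{\mathfrak m}}\) together with \cite{bhatt2024Perfectoid}*{Lemma 2.6}.

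The gap is the passage from purity at \(\mathfrak m\) to purity over all of \(S\), i.e.\ your (4)\(\Rightarrow\)(1); your (4)\(\Rightarrow\)(2) also rests on it.

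\emph{Locus route.} You only assert that purity of a graded map can be checked after forgetting the grading, and that the non-pure locus is closed and cut out by a homogeneous ideal. Closedness is not true in general. If \(M=\operatorname{colim}_\lambda M_\lambda\) with graded perfect \(M_\lambda\) under \(S\), the non-pure locus is \(\bigcup_\lambda V(J_\lambda)\), where \(J_\lambda\) is the image of the evaluation map \(\Hom_{\mathcal{D}(S)}(M_\lambda,S)\to S\), and such a union need not be closed. More importantly, you give no mechanism for why \(J_\lambda\) is homogeneous, or why an ungraded splitting of \(S\to M_\lambda\) yields a graded one. That is precisely the content of the lemma.

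\emph{Matlis route.} This does not close the gap either. The criterion ``pure in \(\mathcal{D}_{\graded{G}}(S)\) iff \(N\to H^0(N\Lgrotimes_S M)\) is injective for all finitely generated graded \(N\)'' is left open. The claim that \(E^{\graded}\) forgets to the ordinary injective hull also needs the graded maximal ideal to be an honest maximal ideal. For example, take \(S=\mathbb{Z}_p[t^{\pm 1}]\) with \(\deg t=1\): the unique graded maximal ideal is \((p)\), and \(E^{\graded}=S[1/p]/S\) is not even an \(S_{(p)}\)-module.

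\textbf{The missing idea} is the comparison of graded and ungraded Hom out of a perfect object, which makes this direction elementary.
\begin{enumerate}
\item Reduce to \(M\) graded perfect. The maps \(S_{\mathfrak m}\to (M_\lambda)_{\mathfrak m}\) stay pure by \cite{bhatt2024Perfectoid}*{Lemma 2.6}.
\item Represent \(M\) by a bounded complex of finitely generated graded projective \(S\)-modules. By \cite{ishizuka2026Derived}*{Proposition 2.9}, \(\Hom_{\mathcal{D}(S)}(M,S)\) is the underlying module of \(\underline{\Hom}(M,S)=\bigoplus_{g\in G}\Hom_{\mathcal{D}_{\graded{G}}(S)}(M,S(g))\).
\item Hence the evaluation map \(\Hom_{\mathcal{D}(S)}(M,S)\to S\) is graded and its image \(J\) is a homogeneous ideal. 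Moreover \(\psi\) splits in \(\mathcal{D}_{\graded{G}}(S)\) once \(J=S\): take the degree-zero component of an ungraded splitting.
\item Since \(S_{\mathfrak m}\to M_{\mathfrak m}\) is a pure map to a perfect complex, it splits. So \(J\not\subseteq\mathfrak m\) for every graded maximal ideal \(\mathfrak m\), hence \(J=S\).
\end{enumerate}

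This is the paper's proof. It gives (1) directly, and (1)\(\Rightarrow\)(2) is then trivial. It needs no graded injective hulls and does not require a unique graded maximal ideal; the paper lets \(\mathfrak m\) range over all graded maximal ideals.
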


\begin{proof}
    (1) \(\Rightarrow\) (2): This is clear.

    (2) \(\Rightarrow\) (3): As in the proof of \cite{ishizuka2025Graded}*{Lemma 2.10}, the assumption that \(S^{\wedge p} \to \dcomp{p}{M}\) is pure in \(\mcalD(S)\) implies that the composition
    \begin{equation*}
        S_{\mfrakm} \to (S_{\mfrakm})^{\wedge p} \cong ((S^{\wedge p})_{\mfrakm})^{\wedge p} \to \dcomp{p}{(\dcomp{p}{M})_{\mfrakm}} \cong \dcomp{p}{M_{\mfrakm}}
    \end{equation*}
    is pure in \(\mcalD(S_{\mfrakm})\) for any graded maximal ideal \(\mfrakm\) of \(S\).

    (3) \(\Rightarrow\) (4): The condition (3) implies that \(\psi_{\mfrakm} \colon S_{\mfrakm} \to M_{\mfrakm}\) is pure in \(\mcalD(S_{\mfrakm})\), as in \cite{bhatt2024Perfectoid}*{Lemma 2.6}.

    (4) \(\Rightarrow\) (1): Writing \(M\) as a filtered colimit of perfect objects in \(\mcalD_{\graded{G}}(S)\), which are also perfect in \(\mcalD(S)\) by \cite{ishizuka2026Derived}*{Proposition 3.12}, we may assume that \(M\) is perfect in \(\mcalD_{\graded{G}}(S)\).
    Then \(M\) can be represented by a bounded complex \(P^{\bullet}\) of finitely generated graded projective \(S\)-modules.

    It suffices to show that \(\psi\) splits in \(\mcalD_{\graded{G}}(S)\). This is equivalent to saying that the evaluation morphism
    \begin{equation*}
        \ev \colon \underline{\Hom}_{\mcalD_{\graded{G}}(S)}(M, S) = H^0(R\underline{\Hom}_{\mcalD_{\graded{G}}(S)}(M, S)) \to S
    \end{equation*}
    of discrete \(G\)-graded \(S\)-modules is surjective, where \(R\underline{\Hom}_{\mcalD_{\graded{G}}(S)}(M, S)\) is the derived internal hom in \(\mcalD_{\graded{G}}(S) \simeq \mcalD(\Mod_{\graded{G}}(S))\).
    Since \(M \cong P^{\bullet}\) is a bounded complex of finitely generated graded projective \(S\)-modules, we have
    \begin{equation*}
        \underline{\Hom}_{\mcalD_{\graded{G}}(S)}(M, S) \simeq H^0(\underline{\Hom}_S(P^{\bullet}, S)) \xrightarrow{\simeq} H^0(\Hom_S(P^{\bullet}, S)) \simeq \Hom_{\mcalD(S)}(M, S),
    \end{equation*}
    where \(\underline{\Hom}_S(-, -)\) (resp., \(\Hom_S(-, -)\)) denotes the internal hom in \(\Mod_{\graded{G}}(S)\) (resp., in \(\Mod(S)\)), and the second isomorphism follows from \cite{ishizuka2026Derived}*{Proposition 2.9}.
    Therefore, the evaluation morphism \(\ev\) is identified with the evaluation morphism
    \begin{equation*}
        \ev' \colon \Hom_{\mcalD(S)}(M, S) \to S
    \end{equation*}
    in \(\mcalD(S)\). Since \(\psi_{\mfrakm}\) is pure in \(\mcalD(S_{\mfrakm})\) and \(M_{\mfrakm}\) is perfect in \(\mcalD(S_{\mfrakm})\) for any graded maximal ideal \(\mfrakm\) of \(S\), the morphism \(\psi_{\mfrakm}\) is in fact split in \(\mcalD(S_{\mfrakm})\). Thus, \(\ev'_{\mfrakm}\) is surjective for any graded maximal ideal \(\mfrakm\) of \(S\).
    This implies that \(\ev' \simeq \ev\) is surjective, and thus \(\psi\) is split in \(\mcalD_{\graded{G}}(S)\).
\end{proof}

Moreover, we developed the absolute perfectoidization of the structure sheaf of an adic formal scheme in \cite{ishizuka2026Algebraization}*{Section 4} and the following results are relevant to our main theorems.

\begin{theorem}[{\cite{ishizuka2026Algebraization}*{Definition 4.10, Corollary 4.11, and Theorem 4.28}}] \label{DerivedPushoutArcCohomology}
    Let \(\mscrX\) be an adic quasi-compact separated formal scheme such that \(p\) is topologically nilpotent in \(\mcalO_{\mscrX}\) and assume that \(\mscrX\) admits an ideal of definition \(\mcalI\) which is locally generated by a weakly proregular sequence.
    Then there exists a quasi-coherent \(\setE_{\infty}\)-\(\mcalO_{\mscrX}\)-algebra
    \begin{equation*}
        \mcalO_{\mscrX, \perfd} \in \CAlg(\mcalD_{\qcoh}(\mscrX))
    \end{equation*}
    called the \emph{absolute perfectoidization} of \(\mcalO_{\mscrX}\) such that, for any affine open subset \(\mscrU\) of \(\mscrX\), we have a functorial isomorphism
    \begin{equation*}
        R\Gamma(\mscrU, \mcalO_{\mscrX, \perfd}) \xrightarrow{\cong} \mcalO_{\mscrX}(\mscrU)_{\tperfd}
    \end{equation*}
    in \(\CAlg(\mcalD(\mcalO_{\mscrX}(\mscrU)))\).
\end{theorem}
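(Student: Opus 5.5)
The plan is to construct \(\mcalO_{\mscrX, \perfd}\) by Zariski gluing. First I define it on affine opens by \(\mscrU = \Spf(A) \mapsto A_{\tperfd}\). Then I check that this assignment is a quasi-coherent \(\setE_{\infty}\)-algebra in \(\mcalD_{\qcoh}(\mscrX)\).

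I would proceed in four steps.

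First, for an affine \(\mscrU = \Spf(A)\) with \(I\) generated by a weakly proregular sequence, I identify \(\mcalD_{\qcoh}(\mscrU) = \lim_n \mcalD_{\qcoh}(\mscrU_n)\) with the full subcategory \(\mcalD^{\comp{I}}(A)\) of derived \(I\)-complete \(A\)-modules. Weak proregularity is exactly what makes derived completion agree with the limit of the derived reductions \(-\otimes^L_A A/I^n\).

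Second, by \Cref{CompletionAbsolutePerfectoidization}, \(A_{\tperfd} = \dcomp{I}{A_{\perfd}}\) is derived \(I\)-complete. It therefore defines an object of \(\CAlg(\mcalD_{\qcoh}(\mscrU))\), and it is functorial in \(A\) because it is a limit over \(\Perfd^{\wedge I}(A)\).

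Third, I prove compatibility with Zariski localization. For \(f \in A\), with \(A\{1/f\} \defeq \comp{I}{A[1/f]}\), I want the canonical map
\begin{equation*}
    \dcomp{I}{A_{\tperfd} \otimes^L_A A[1/f]} \to (A\{1/f\})_{\tperfd}
\end{equation*}
to be an isomorphism. This is the ungraded analogue of \Cref{CommutativityLimitsPerfd}, and it is the step I expect to be the main obstacle, since one must commute a large limit with localization and derived completion. The right-hand side is easy to identify. The \(I\)-adic perfectoid \(A\{1/f\}\)-algebras are exactly the \(I\)-adic perfectoid \(A\)-algebras in which \(f\) is invertible. Moreover, \(P \mapsto \comp{I}{P[1/f]}\) lands in this subcategory, and by \Cref{LocalizationGradedPerfd} (ungraded version) this agrees with \(\dcomp{I}{P[1/f]}\). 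So a cofinality argument gives \((A\{1/f\})_{\tperfd} \cong \lim_P \dcomp{I}{P[1/f]}\).

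It remains to move the localization and completion outside the limit. I would try first to use that \(A_{\perfd}\) is computed by arc-descent, i.e.\ as cohomology of \(\mcalO\) on the arc site of \(\Spec(A)\) restricted to perfectoids, following Bhatt--Scholze. In that description, \(\Spec(A[1/f]) \subseteq \Spec(A)\) is an open subobject, and restricting the arc-sheaf to it computes \(A_{\perfd} \otimes^L_A A[1/f]\), because localization is a filtered colimit that commutes with the finite-type cohomology involved. After derived \(I\)-completion this gives the displayed isomorphism. As a fallback, I would check the statement modulo \(I^n\): \(A_{\tperfd} \otimes^L A/I^n\) is a colimit-compatible object by Bhatt--Scholze's description of \(A_{\perfd}\) as a filtered colimit of perfect(oid) constructions, so inverting \(f\) commutes with it.

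Fourth, given this base-change compatibility, the assignment \(\Spf(A) \mapsto A_{\tperfd}\) is a Cartesian section over the affine opens of \(\mscrX\) with values in \(\CAlg(\mcalD^{\comp{I}}(-))\). Because \(\mscrX\) is quasi-compact and separated, \(\mcalD_{\qcoh}(\mscrX)\) is the limit of \(\mcalD_{\qcoh}(\mscrU)\) over affine opens; intersections are affine and covered by basic opens \(\Spf(A\{1/f\})\). Hence the section descends to an object \(\mcalO_{\mscrX, \perfd} \in \CAlg(\mcalD_{\qcoh}(\mscrX))\). The isomorphism \(R\Gamma(\mscrU, \mcalO_{\mscrX, \perfd}) \cong \mcalO_{\mscrX}(\mscrU)_{\tperfd}\) then holds by construction for every affine open \(\mscrU\). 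Here I use that an affine open of \(\mscrX\) again has an ideal of definition locally generated by a weakly proregular sequence, so that the first step applies to it. Functoriality of this isomorphism comes from functoriality of the limit defining \((-)_{\tperfd}\).
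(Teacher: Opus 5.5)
The paper gives no proof of this statement. It is imported from \cite{ishizuka2026Algebraization}, and the piece of that machinery reproduced here which matches your key step is the graded localization result \Cref{CommutativityLimitsPerfd}. Its ungraded form is exactly your Step~3.

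Judged on its own, your route is sound: define the object on affines as \(\mcalO_{\mscrX}(\mscrU)_{\tperfd}\), prove compatibility with Zariski localization, and glue in \(\lim_{\mscrU} \CAlg(\mcalD^{\comp{I}}(\mcalO_{\mscrX}(\mscrU)))\). Compare it with a global construction, for instance a derived pushforward of arc-type cohomology, or a limit of \(Rf_*\mcalO_{\mscrY}\) over perfectoid \(\mscrY \to \mscrX\).
\begin{itemize}
\item Gluing makes quasi-coherence and the affine formula hold by construction. The costs are two. You must produce the section \(\infty\)-functorially, e.g.\ by realizing \(A \mapsto (A \to A_{\tperfd})\) as a right Kan extension, which gives a section of the coCartesian fibration classified by \(A \mapsto \CAlg(\mcalD^{\comp{I}}(A))\); Step~3 plus a check on a common basic-open refinement makes it coCartesian. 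And the universal property in \Cref{PropertiesGlobalPerfectoidization}\Cref{GlobalPerfectoidizationSectionLimit} then needs a separate argument.
\item A global construction gets the sheaf structure and that universal property for free. It must instead earn quasi-coherence, which again comes down to your Step~3.
\end{itemize}

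Two points should be tightened.

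First, Step~3 is easier than you make it.
\begin{itemize}
\item By \Cref{CompletionAbsolutePerfectoidization}, \(\dcomp{I}{A_{\tperfd} \otimes^L_A A[1/f]} \simeq \dcomp{I}{A_{\perfd}[1/f]}\).
\item Also \((A\{1/f\})_{\tperfd} = (A[1/f])_{\tperfd} \simeq \dcomp{I}{(A[1/f])_{\perfd}}\), since \(I\)-adically complete perfectoid algebras over \(A[1/f]\) and over \(A\{1/f\}\) are the same.
\item Since \(p \in I\), everything reduces to \(\dcomp{p}{A_{\perfd}[1/f]} \simeq (A[1/f])_{\perfd}\). This is the localization property of absolute perfectoidization that the paper itself uses via \cite{bhatt2024Perfectoid}*{Lemma 3.21} in the proof of \Cref{EquivGradedPerfdPure}. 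Your cofinality computation then becomes unnecessary.
\end{itemize}
If you prove that property by \(\mathrm{arc}_p\)-descent, finitariness only yields \(R\Gamma_{\mathrm{arc}_p}(\Spf(A), \mcalO[1/f]) \simeq A_{\perfd}[1/f]\). You still have to identify \(\dcomp{p}{\mcalO[1/f]}\) with the pushforward of \(\mcalO\) from \(\Spf(\comp{p}{A[1/f]})\). This holds because both take the value \(\comp{p}{P[1/f]}\) on a perfectoid \(P\), this ring is again perfectoid, and \(\mcalO\) has no higher \(\mathrm{arc}_p\)-cohomology on perfectoids.

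Your fallback does not work as phrased. \(A_{\perfd}\) is not a filtered colimit of objects that visibly commute with inverting \(f\). The real input there would be \'etale base change for derived prismatic cohomology, after which the Frobenius colimit commutes with localization.

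Second, the hypothesis only says \(\mcalI\) is \emph{locally} generated by a weakly proregular sequence. So Step~1 does not apply verbatim to every affine open, contrary to your closing remark. Work on a basis of affines where it does apply. Then deduce the formula for an arbitrary affine \(\mscrU\) by Zariski descent in \(\mcalD^{\comp{I}}(\mcalO_{\mscrX}(\mscrU))\) together with Step~3, which needs no proregularity.
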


\begin{proposition} \label{PropertiesGlobalPerfectoidization}
    Keep the setting of \Cref{DerivedPushoutArcCohomology}.
    Then \(\mcalO_{\mscrX, \perfd}\) satisfies the following properties.
    \begin{enumerate}
        \item \label{ComparisonOXPerfd} If \(\mscrX\) is bounded \(p\)-adic, then \(\mcalO_{\mscrX, \perfd}\) is isomorphic to the derived pushforward of the structure sheaf of the \emph{perfectization} \(\mscrX^{\pfd}\) of \(\mscrX\) in the sense of \cite{bhatt2025Aspects}.
        \item \label{GlobalPerfectoidizationSectionLimit} For any morphism \(f \colon \mscrY \to \mscrX\) of adic quasi-compact separated formal schemes such that \(\mscrY\) is perfectoid, the canonical morphism \(\mcalO_{\mscrX} \to Rf_*\mcalO_{\mscrY}\) uniquely factors through \(\mcalO_{\mscrX, \perfd}\) in \(\CAlg(\mcalD_{\qcoh}(\mscrX))\).
    \end{enumerate}
\end{proposition}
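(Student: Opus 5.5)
The plan is to prove both assertions by working Zariski-locally on \(\mscrX\). We use that \(\mcalD_{\qcoh}(\mscrX)\), and hence \(\CAlg(\mcalD_{\qcoh}(\mscrX))\), satisfies Zariski descent along affine open covers. We also use \Cref{DerivedPushoutArcCohomology}, which identifies the sections of \(\mcalO_{\mscrX, \perfd}\) over an affine \(\mscrU = \Spf(A)\) with \(A_{\tperfd} = \lim_{P \in \Perfd^{\wedge I}(A)} P\), functorially in \(\mscrU\).

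For (2), we proceed in three steps.
\begin{enumerate}
    \item Fix an affine \(\mscrU = \Spf(A) \subseteq \mscrX\) with ideal of definition \(I\). Since \(\mscrY\) is separated, cover \(f^{-1}(\mscrU)\) by finitely many affine opens \(\mscrV_i\); all finite intersections \(\mscrV_{i_0 \cdots i_n}\) are again affine. By \Cref{PropPerfectoidFormalScheme}, each \(\mcalO_{\mscrY}(\mscrV_{i_0\cdots i_n})\) is a complete adic perfectoid ring. Continuity of \(f\) makes \(I\) topologically nilpotent there, so after passing to the \(I\)-adic topology each such ring is an object of \(\Perfd^{\wedge I}(A)\); a perfectoid ring complete for a finer adic topology containing \(I\) is derived \(I\)-complete.
    \item Hence \(R\Gamma(\mscrU, Rf_*\mcalO_{\mscrY})\) is the Čech limit of a cosimplicial diagram in \(\Perfd^{\wedge I}(A)\). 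The defining limit of \(A_{\tperfd}\) provides compatible projections \(A_{\tperfd} \to \mcalO_{\mscrY}(\mscrV_{i_0\cdots i_n})\), and these assemble into a map \(A_{\tperfd} \to R\Gamma(f^{-1}\mscrU, \mcalO_{\mscrY})\) under \(A\).
    \item For uniqueness, the key input is that for every \(P \in \Perfd^{\wedge I}(A)\) the space \(\mathrm{Map}_{\CAlg_A}(A_{\tperfd}, P)\) is contractible. This is the idempotence/universal property of perfectoidization: \(A \to A_{\tperfd}\) becomes an equivalence after any base change to a perfectoid \(A\)-algebra, as in \cite{bhatt2022Prismsa}. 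Taking Čech limits, the space of factorizations on each affine is contractible. Zariski descent, together with functoriality of the identification in \Cref{DerivedPushoutArcCohomology} in \(\mscrU\), then glues these into a unique factorization in \(\CAlg(\mcalD_{\qcoh}(\mscrX))\).
\end{enumerate}

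For (1), we use that for bounded \(p\)-adic \(\mscrX\) the pushforward \(\nu_*\mcalO_{\mscrX^{\pfd}}\) along \(\nu \colon \mscrX^{\pfd} \to \mscrX\) is a quasi-coherent \(\setE_\infty\)-algebra whose sections on affine \(\Spf(A)\) are the (completed) perfectoidization of \(A\) in the sense of \cite{bhatt2025Aspects}. This is computed by arc-descent as a limit over perfectoid \(A\)-algebras, and so agrees with \(A_{\tperfd}\) by \Cref{CompletionAbsolutePerfectoidization}. To get a global comparison map, we apply the same argument as in (2) to the perfectoid objects computing \(\mscrX^{\pfd}\) locally, via a hypercover by perfectoid formal schemes. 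This yields a canonical map \(\mcalO_{\mscrX, \perfd} \to \nu_*\mcalO_{\mscrX^{\pfd}}\), which is an isomorphism on every affine and hence an isomorphism.

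The main obstacle is in (1): matching topologies and completions. Bhatt's construction is phrased \(p\)-adically, with derived \(p\)-complete perfectoidization, while \(A_{\tperfd}\) is the derived \(I\)-completion of \(A_{\perfd}\). I would first check that for bounded \(p\)-adic \(A\) the perfectoidization entering \(\mscrX^{\pfd}\) is already derived \(I\)-complete, or is taken \(I\)-adically. That reduces the comparison to \Cref{CompletionAbsolutePerfectoidization}.
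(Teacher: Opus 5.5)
The paper gives no proof of this proposition; it is quoted from \cite{ishizuka2026Algebraization}. The only use made of it here, in \Cref{GPPtoGLPP}, is the \emph{existence} of the factorization in (2). Judged on its own terms, your existence argument for (2) is sound in outline. The topology check in Step 1 is correct: if $I^N B\subseteq J$, then derived $J$-completeness gives derived $I$-completeness, and $J$-adic separatedness then gives classical $I$-adic completeness. Restricting the limit cone of $A_{\tperfd}=\lim_P P$ to the \v{C}ech diagram of $f^{-1}(\mscrU)$ then gives the local map. To glue these local maps, however, you should use that the cone is natural in $\mscrU$, i.e.\ compatible with restriction of scalars along $\mscrU'\subseteq\mscrU$. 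You should not appeal to uniqueness.

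The genuine gap is Step 3. The ``idempotence'' you invoke is false: $A\to A_{\tperfd}$ does not become an equivalence after base change to a perfectoid $A$-algebra. Take $A=V$ a complete DVR with perfect residue field and $P=\mathcal{O}_C$, as in the proof of \Cref{VanishingPerfd}.
\begin{itemize}
\item The fiber sequence there, together with Tate's computation $H^1_{\mathrm{cont}}(\Gamma,C)=K$, shows that $H^1(V_{\perfd})$ is non-torsion of rank one.
\item Since $\mathcal{O}_C$ is $V$-flat, it follows that $H^1(V_{\perfd}\widehat{\otimes}^L_V\mathcal{O}_C)\neq 0$.
\end{itemize}
The universal property in \cite{bhatt2022Prismsa} is available only when $R_{\perfd}$ is a discrete perfectoid ring, for example when $R$ is semiperfectoid.

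Without that input, contractibility of $\mathrm{Map}_{\CAlg_A}(A_{\tperfd},P)$ is a statement about maps \emph{out of} a limit, and the formula $A_{\tperfd}=\lim P$ does not control such maps. There is also reason to doubt contractibility. Loops at the canonical projection are derivations $A_{\tperfd}\to P[-1]$, and $H^1(A_{\tperfd})$ contributes to these. Rationally this is visible: $V_{\perfd}[1/p]\simeq K\oplus K[-1]$ is the free $\mathbb{E}_\infty$-$K$-algebra on a degree-one class, so $\mathrm{Map}_{\CAlg_V}(V_{\perfd},C)\simeq BC$ rather than a point. Here $C$ is not a test object, but the computation shows that $H^1$ produces nontrivial loops.

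Consequently, Step 3 and the Zariski gluing of a ``unique'' factorization built on it do not go through. Your argument yields only the canonical factorization, functorial in $\mscrY$. Proving uniqueness would need a genuinely different input, or a weaker reading of ``uniquely''.

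For (1), the completion issue you flag is vacuous. For a $p$-adic $\mscrX$ one has $I=(p)$, so $A_{\tperfd}=\dcomp{p}{A_{\perfd}}=A_{\perfd}$. It is cleaner to compare the functors $A\mapsto A_{\tperfd}$ and $A\mapsto R\Gamma(\mathrm{Spf}(A)^{\pfd},\mathcal{O})$ naturally on affine opens, since both are limits over perfectoid $A$-algebras, and then glue by Zariski descent. That avoids routing the global comparison map through the uniqueness claim in (2).
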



For a projective-type scheme \(X\), we can algebraize \(\mcalO_{\widehat{X}, \perfd}\) to an object of \(\mcalD_{\qcoh}(X)\), which is one of our main results in \cite{ishizuka2026Algebraization}. We recall it here in a special case for simplicity, which is sufficient for the applications in this paper.

\begin{theorem}[{\cite{ishizuka2026Algebraization}*{Theorem B and Theorem 5.10}}] \label{AlgebraizableOXperfd}
    Let \(X\) be a projective scheme over a Noetherian ring \(H^0(X, \mcalO_X)\) and let \(I\) be an ideal of \(H^0(X, \mcalO_X)\) containing \(p\).
    Then there exists an object \(\mcalO_{X,\perfd} \in \mcalD_{\qcoh}(X)\) together with an isomorphism
    \begin{equation*}
        \mcalO_{X, \perfd}^{\wedge} \xrightarrow{\cong} \mcalO_{\widehat{X}, \perfd}
    \end{equation*}
    in \(\mcalD(\widehat{X}_{\Zar}, \mcalO_{\widehat{X}})\), where \(\widehat{X}\) is the \(I\)-adic formal completion of \(X\) and \(\mcalO_{X, \perfd}^{\wedge}\) is the derived \(I\)-completion of \(\mcalO_{X, \perfd}\) (\Cref{SymMonDerivedCompletionFormalSchemes}).

    Indeed, this \(\mcalO_{X, \perfd}\) is the corresponding object \(\widetilde{R_{\grpfd}}\) of \(R_{\grpfd} \in \mcalD_{\graded{\setZ[1/p]}}(R)\) along the functor of associated sheaf \(\widetilde{(-)} \colon \mcalD_{\graded{\setZ[1/p]}}(R) \to \mcalD_{\qcoh}(X)\).
\end{theorem}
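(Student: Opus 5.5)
The plan is to take \(\mcalO_{X,\perfd} \defeq \widetilde{R_{\grpfd}}\). Here \(R = \bigoplus_{m\ge 0} H^0(X,\mcalO_X(m))\) is viewed as a \(\setZ[1/p]\)-graded ring supported in \(\setZ_{\ge 0}\), and \(R_{\grpfd}\) is the absolute graded perfectoidization of \Cref{DefGradedAbsPerfd} for the ideal \(I + R_{+}\cap(\dots)\), or simply for \(IR\). I would then check the isomorphism \(\mcalO_{X,\perfd}^{\wedge} \cong \mcalO_{\widehat X,\perfd}\) Zariski locally on the standard affine cover \(\{D_+(f)\}\), with \(f\) homogeneous of positive degree. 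Finally I would glue, using that both sides are quasi-coherent and determined by their values on affines.

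First, the local computation. The associated-sheaf functor sends \(M \in \mcalD_{\graded{\setZ[1/p]}}(R)\) to an object whose sections on \(D_+(f)\) are \((M[1/f])_0\). Hence, by \Cref{DerivedCompletionFormalSchemes} applied affine-locally,
\begin{equation*}
R\Gamma(\widehat{D_+(f)}, \mcalO_{X,\perfd}^{\wedge}) \cong \dcomp{I}{(R_{\grpfd}[1/f])_0}.
\end{equation*}
Taking the degree-zero part commutes with derived gradedwise completion, since that completion is performed degree by degree. Then \Cref{CommutativityLimitsPerfd} identifies this with \(((R[1/f])_{\tgrpfd})_0\). By \Cref{DerivedPushoutArcCohomology}, the other side is \((R_{(f)})_{\tperfd}\), completed \(I\)-adically, where \(R_{(f)} = (R[1/f])_0\). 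So the key local claim is
\begin{equation*}
((R[1/f])_{\tgrpfd})_0 \cong (R_{(f)})_{\tperfd}.
\end{equation*}

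This is the main obstacle, because \Cref{GradedAbsPerfdProperties}(1) does not apply: \(R[1/f]\) has a homogeneous unit of positive degree, so \(R[1/f]_g\) and \(R[1/f]_{-g}\) are both nonzero. I would prove the identity directly by comparing the indexing categories of the two limits.

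1. If \(P\) is an \(I\)-adic \(\setZ[1/p]\)-graded perfectoid \(R[1/f]\)-algebra, then \(P_0\) is an \(I\)-adic perfectoid \(R_{(f)}\)-algebra. This follows because \(P\) contains compatible \(p\)-power roots of \(f\) up to units after passing to a cofinal subfamily, using the cofinality results \Cref{GradedPerfdCofinalFunctors} and \Cref{CofinalGradedPerfdForGradedRing}.
2. Conversely, a perfectoid \(R_{(f)}\)-algebra \(Q\) gives a graded perfectoid \(R[1/f]\)-algebra \(\grcomp{I}{Q[f^{\pm 1/p^\infty}]}\) whose degree-zero part is \(Q\).
3. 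The two constructions are adjoint-like, so the functor \(P\mapsto P_0\) is cofinal.
4. Degree-zero parts commute with graded limits, and completion is harmless by \Cref{LocalizationGradedPerfd}. Together these give the desired identification, naturally in \(f\).

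The gluing step is where I would be careful. Sectionwise isomorphisms do not by themselves give a morphism in \(\mcalD_{\qcoh}(\widehat X)\), so I would construct a global comparison map \(\mcalO_{X,\perfd}^{\wedge}\to \mcalO_{\widehat X,\perfd}\) as follows:

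1. Write \(R_{\grpfd}\) as \(\grlim P\) over graded perfectoid \(R\)-algebras.
2. Each \(\widetilde{P}^{\wedge}\) is the pushforward of the structure sheaf of a perfectoid formal scheme over \(\widehat X\), namely the formal completion of \(\Proj P\).
3. By \Cref{PropertiesGlobalPerfectoidization}(2), the map \(\mcalO_{\widehat X}\to \widetilde{P}^{\wedge}\) factors uniquely and functorially through \(\mcalO_{\widehat X,\perfd}\).

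This gives a map \(\mcalO_{\widehat X,\perfd}\to \lim_P \widetilde P^{\wedge}\). Next, \(\widetilde{(-)}\) and completion commute with this limit on each \(D_+(f)\), by \Cref{CommutativityLimitsPerfd} again, which identifies \(\lim_P \widetilde P^{\wedge}\) with \(\mcalO_{X,\perfd}^{\wedge}\). The previous paragraph shows the resulting map is an isomorphism on every \(D_+(f)\), hence an isomorphism.
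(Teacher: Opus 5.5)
Your architecture matches what the paper relies on. You define \(\mcalO_{X,\perfd} = \widetilde{R_{\grpfd}}\), map \(\mcalO_{\widehat X,\perfd}\) to \(\lim_P \widetilde{P}^{\wedge}\) via \Cref{PropertiesGlobalPerfectoidization}(2), and identify the sections of both \(\lim_P \widetilde{P}^{\wedge}\) and \(\widetilde{R_{\grpfd}}^{\wedge}\) over \(D_+(f)\) with \(((R[1/f])_{\tgrpfd})_0\) through \Cref{CommutativityLimitsPerfd}; this is the computation in \Cref{RemarkRationalTwist}(2). Everything then rests on the local claim \(((R[1/f])_{\tgrpfd})_0 \cong (R_{(f)})_{\tperfd}\). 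The paper does not prove this; it imports it from \cite{ishizuka2026Algebraization} as the identification \(\mcalO_{\mscrX,\perfd} \cong \lim_P P^{\Delta}\) recorded in \Cref{DefTwistedPerfdProj}. You correctly isolate this as the crux, but your argument for it has a genuine gap.

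\textbf{Step 2 fails.} Put \(e = \deg f\). Each \(R[1/f]_n = H^0(D_+(f), L^{\otimes n})\) is an invertible \(R_{(f)}\)-module, and multiplication \(R[1/f]_1^{\otimes e} \to R[1/f]_e = R_{(f)} f\) is an isomorphism. So any graded map out of \(R[1/f]\) that sends \(f\) to a unit is nonzero in degree \(1\). But \(Q[f^{\pm 1/p^\infty}]\) lives in degrees \(e\setZ[1/p]\), so your construction gives no \(R[1/f]\)-algebra at all unless \(e\) is a power of \(p\). Even then, or with a degree-one variable \(T\) and \(T^e \mapsto f\), you need a generator of \(L\) over \(\Spec Q\) whose \(e\)-th power is \(f\). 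That is, a \(\mu_e\)-torsor over \(Q\) must be trivial, which fails in general.

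\textbf{Step 3 fails even in the best case.} Shrink \(D_+(f)\) so that \(R[1/f] \cong A[y^{\pm 1}]\) with \(\deg y = 1\), and set \(\Psi(Q) = \grcomp{I}{Q[T^{\pm 1/p^\infty}]}\) with \(y \mapsto T\). A graded \(A[y^{\pm 1}]\)-map between \(\Psi(P_0)\) and \(P\), in either direction, that is the identity in degree \(0\) amounts to compatible \(p\)-power roots of \(y\) in \(P\).
\begin{itemize}
\item Tilting only yields a homogeneous unit \(y^{\sharp}\) with roots, and \(y = y^{\sharp} w\) with \(w \in 1 + pP_0\).
\item Such a \(w\) need not have a \(p\)-th root. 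For example, take \(P = \grcomp{p}{\mathbb{Z}_p^{\mathrm{cycl}}[U^{\pm 1/p^\infty}]}\) with \(y \mapsto (1+p)U\) and \(p\) odd.
\end{itemize}
So there is no unit or counit. The identity \(\Phi\Psi = \mathrm{id}\) only exhibits \((R_{(f)})_{\tperfd}\) as a retract of \(\lim_P P_0\). What is needed is that \(P \mapsto P_0\) is initial, i.e. that the comma categories \(\{(P, P_0 \to Q)\}\) are weakly contractible, and your sketch does not address this.

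\textbf{Step 1 is not established.} For \(P \mapsto P_0\) to be a functor into perfectoid \(R_{(f)}\)-algebras, \(P_0\) must be perfectoid for every \(P\), not just for a cofinal subfamily. \Cref{GradedPerfdCofinalFunctors} and \Cref{CofinalGradedPerfdForGradedRing} say nothing about \(p\)-power roots of \(f\), so they do not give this; it needs its own argument.
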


\section{Perfectoid splitting} \label{SectionPerfectoidSplitting}

We define global variants of lim-perfectoid purity and perfectoid purity introduced in \cite{bhatt2024Perfectoid}.


\begin{definition}[cf.~\cite{bhatt2025Aspects}] \label{DefDPS}
Let $X$ be a quasi-compact separated scheme and $\mscrX$ be the $p$-adic formal completion of $X$.
Assume that the closed subscheme defined by $p$ contains all closed points of $X$.
    \begin{enumerate}
    \item Following \cite{bhatt2024Perfectoid}*{\S 2.1}, a morphism \(f \colon \mcalF \to \mcalG\) in an \(\infty\)-category \(\mcalC\) admitting filtered colimits is \emph{ind-split} if there exists a filtered system \(\{f_\lambda \colon \mcalF \to \mcalG_{\lambda}\}_{\lambda \in \Lambda}\) in \(\mcalC\) whose filtered colimit is isomorphic to \(f\).
    \item We say that \(X\) is \emph{lim-perfectoid split} if the canonical morphism \(\mcalO_{\mscrX} \to \mcalO_{\mscrX, \perfd}\) in \(\mcalD_{\qcoh}(\mscrX)\) is ind-split.
    \item We say that \(X\) is \emph{perfectoid split} if there exists an affine morphism \(\pi \colon Y \to X\) such that the $p$-adic formal completion $\widehat{Y}$ is a perfectoid formal scheme and  \(\mcalO_{X} \to \pi_*\mcalO_{Y}\) in \(\QCoh(X)\) is ind-split.
    \end{enumerate}
\end{definition}


\begin{remark} \label{RemarkKodairaVanishing}
    Let \(X\) be a flat projective scheme over a \(p\)-torsion-free discrete valuation ring \(V\) whose residue characteristic is \(p\).
    Let \(L\) be an ample line bundle on \(X\).
    Let \(\mscrX\) be the \(p\)-adic formal completion of \(X\).
    In \cite{bhatt2025Aspects}*{Proposition 11.2.16}, Bhatt introduced the notion of \emph{absolute perfectoid purity} via the splitting property of \(\mcalO_{\mscrX} \to \mcalO_{\mscrX, \perfd}\) in \(\mcalD_{\qcoh}(\mscrX)\).
    Under this assumption, the author proved that the Kodaira vanishing \(H^{< \dim(X/V)}(X, L^{-1}) = 0\) holds when \(X\) is Cohen--Macaulay and \(X[1/p]\) is locally complete intersection.
    
    Lim-perfectoid splitting is a weaker condition, but the proof in \emph{loc.\ cit.} also works for an ind-split morphism \(\mcalO_{\mscrX} \to \mcalO_{\mscrX, \perfd}\). Therefore, the same Kodaira vanishing theorem holds for any lim-perfectoid split \(X\) such that \(X\) is Cohen--Macaulay and \(X[1/p]\) is locally complete intersection.
\end{remark}

We begin this subsection by establishing the fundamental properties of ind-split morphisms and  lim-perfectoid splitting.

\begin{proposition} \label{EquivAffineCase}
    Let \(R\) be a ring such that \(p\) is contained in the Jacobson radical of \(R\) and \(R\) has bounded \(p^\infty\)-torsion.
    \begin{enumerate}
        \item This ring \(R\) is lim-perfectoid pure if and only if \(\Spec(R)\) is lim-perfectoid split.
        \item This ring \(R\) is perfectoid pure if and only if \(\Spec(R)\) is perfectoid split.
    \end{enumerate}
\end{proposition}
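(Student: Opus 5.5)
The plan is to translate everything along the equivalence \(\mcalD_{\qcoh}(\Spf R^{\wedge p}) \simeq \mcalD^{\comp{p}}(R^{\wedge p})\) (and \(\QCoh(\Spec R) \simeq \Mod(R)\)), and then to use the standard fact (\cite{bhatt2024Perfectoid}*{\S 2.1}) that a morphism of \(R\)-modules, or of objects of \(\mcalD(R)\), with source \(R\) is ind-split if and only if it is pure.

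For (1), put \(\mscrX = \Spf(R^{\wedge p})\). Global sections identify \(\mcalD_{\qcoh}(\mscrX)\) with derived \(p\)-complete \(R^{\wedge p}\)-modules, and by \Cref{DerivedPushoutArcCohomology} the morphism \(\mcalO_{\mscrX} \to \mcalO_{\mscrX,\perfd}\) corresponds to \(R^{\wedge p} \to (R^{\wedge p})_{\tperfd} \cong \dcomp{p}{R_{\perfd}}\) (\Cref{CompletionAbsolutePerfectoidization}). Bounded \(p^\infty\)-torsion makes \(R_{\perfd}\) already derived \(p\)-complete and equal to \((R^{\wedge p})_{\perfd}\).

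The next step is to compare ind-splitting in the complete category \(\mcalD^{\comp{p}}(R^{\wedge p})\), where colimits are completed colimits, with purity in \(\mcalD(R)\). Suppose \(R\to R_{\perfd}\) is pure. Then it is ind-split in \(\mcalD(R)\), say as a colimit of split maps \(R\to M_\lambda\). Applying \(\dcomp{p}{-}\), which is a left adjoint and so preserves colimits into the complete category, turns this into an ind-split presentation of \(R^{\wedge p}\to \dcomp{p}{R_{\perfd}}\). Conversely, suppose the map is ind-split in \(\mcalD^{\comp{p}}\), via split maps \(R^{\wedge p}\to N_\lambda\) whose completed colimit is \(R_{\perfd}\). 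I then need purity of \(R\to R_{\perfd}\). Purity can be tested after \(-\otimes^L k\) for residue fields, or on the compact objects \(R/p^n\)-modules. Mod \(p^n\), the completed colimit becomes an honest colimit, so \(R/p^n \to R_{\perfd}\otimes^L R/p^n\) is ind-split, hence pure. Since \(p\in\mathrm{Jac}(R)\), purity mod \(p\) should lift to purity of \(R\to R_{\perfd}\), using the lemma in \cite{bhatt2024Perfectoid} that lim-perfectoid purity may be checked after \(p\)-completion (compare \Cref{PurityEquiv} (2)\(\Leftrightarrow\)(4)). I expect this passage between the completed and uncompleted categories, and the use of the Jacobson hypothesis, to be the main obstacle.

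For (2), first assume \(R\to P\) is pure with \(P\) perfectoid. Take \(Y=\Spec P\to \Spec R\), which is affine, and note \(\widehat{Y}=\Spf P\) is perfectoid. Purity of \(R\to P\) is exactly ind-splitness in \(\Mod(R)=\QCoh(\Spec R)\). For the converse, an affine \(\pi\) gives \(Y=\Spec S\) with \(S^{\wedge p}\) perfectoid, and \(R\to S\) is pure. Since \(S\to S^{\wedge p}\) need not be pure, I would instead use the hypothesis on closed points: all maximal ideals contain \(p\). Purity of \(R\to S\) gives injectivity of \(E\to E\otimes S\) for the injective hulls \(E\) of residue fields. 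Each such \(E\) is \(p^\infty\)-torsion, so \(E\otimes S=E\otimes S^{\wedge p}\). Hence \(R\to S^{\wedge p}\) is pure, by the criterion that purity can be tested on injective hulls of residue fields at maximal ideals; for a general ring this is the step to verify carefully, possibly by reducing to local rings \(R_{\mfrakm}\).
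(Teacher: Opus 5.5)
\textbf{Same route as the paper, but the step you defer is a genuine gap, and it cannot be filled as the statement is written.}

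Your overall route matches the paper's. For (1), the equivalence \(\mcalD_{\qcoh}(\mscrX)\simeq\mcalD^{\comp{p}}(R^{\wedge p})\) carries \(\mcalO_{\mscrX}\to\mcalO_{\mscrX,\perfd}\) to \(R^{\wedge p}\to R_{\perfd}\); for (2), you take sections over the affine \(Y\). The paper stops there, tacitly treating \(R\) as \(p\)-complete. You rightly notice two further comparisons are needed: purity of \(R\to R_{\perfd}\) in \(\mcalD(R)\) versus ind-splitting in the completed category, and purity of \(R\to S\) versus \(R\to S^{\wedge p}\).

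The step you defer is ``since \(p\in\mathrm{Jac}(R)\), purity modulo \(p^n\) lifts to purity over \(R\)'', equivalently ``purity can be tested on \(p\)-power-torsion modules such as injective hulls of residue fields''. This is precisely Krull's intersection theorem, i.e.\ it needs Noetherianity; note that \Cref{PurityEquiv}, which you compare it with, assumes Noetherian. Without it, both (1) and (2) fail.

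Here is a counterexample. Let \(C\) be a perfectoid field of characteristic \(0\) with residue characteristic \(p\), and put \(R=\mathcal{O}_C+tC[[t]]\subseteq C[[t]]\).
\begin{enumerate}
\item \(R\) is a local domain (a valuation ring) with \(p\) in its maximal ideal, so it satisfies the stated hypotheses.
\item One has \(p^nR=\{f\in R : f(0)\in p^n\mathcal{O}_C\}\). Hence \(R^{\wedge p}=\mathcal{O}_C\) is perfectoid, while \(\bigcap_n p^nR=tC[[t]]\neq 0\).
\item So \(\Spec(R)\) is perfectoid split (take \(\pi=\mathrm{id}\)) and lim-perfectoid split (\(\mcalO_{\mscrX}\to\mcalO_{\mscrX,\perfd}\) is an isomorphism since \(\mathcal{O}_C\) is perfectoid).
\item Yet \(R\to R_{\perfd}=\mathcal{O}_C\), and every map from \(R\) to a perfectoid ring, kills \(t\), so it is not even injective.
\end{enumerate}
Separately, testing purity after \(-\otimes^L k\) for residue fields is insufficient even for Noetherian rings; consider \(k[[x]]\to k\).

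\textbf{The Noetherian case.} The proposition must be read with \(R\) Noetherian, which is the setting in which (lim-)perfectoid purity is defined in the introduction. There your argument closes, and one mechanism handles both deferred steps.
\begin{enumerate}
\item Purity of \(R\to M\) may be tested on finitely generated \(Q\). If \(x\in Q\) dies in \(H^0(Q\otimes^L_R M)\), it dies in \(H^0(Q/p^nQ\otimes^L_R M)\) for every \(n\). So injectivity on modules killed by powers of \(p\) gives \(x\in\bigcap_n p^nQ=0\).
\item For \(Q\) killed by \(p^n\), one has \(Q\otimes^L_R M\simeq Q\otimes^L_R\dcomp{p}{M}\), because the fiber of \(M\to\dcomp{p}{M}\) has \(p\) acting invertibly. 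Also \(Q\otimes^L_R N\simeq Q\otimes^L_{R^{\wedge p}}N\) for \(N\in\mcalD(R^{\wedge p})\).
\item For (1): take split maps \(R^{\wedge p}\to N_\lambda\) whose completed colimit is \(R_{\perfd}\). Then \(R^{\wedge p}\to\colim N_\lambda\) is pure, and the two identifications above show \(R\to R_{\perfd}\) is pure.
\item For (2): purity of \(R\to S\) gives purity of \(R\to S^{\wedge p}\), because \(Q\otimes_R S\cong Q\otimes_R S^{\wedge p}\) for such \(Q\).
\end{enumerate}
Your injective-hull version of (2) also works in the Noetherian case, after localizing at maximal ideals and using Hochster's criterion for Noetherian local rings.
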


\begin{proof}
    We equip \(R\) with the \(p\)-adic topology and consider the \(p\)-adic affine formal spectrum \(\Spf(R)\).
    Under the categorical equivalence \(\mcalD_{\qcoh}(\Spf(R)) \xrightarrow{\simeq} \mcalD^{\comp{p}}(R)\) given by \(R\Gamma(\Spf(R), -)\) (\cite{ishizuka2026Algebraization}*{Proposition C.10}), the absolute perfectoidization \(\mcalO_{\Spf(R), \perfd}\) corresponds to \(R_{\perfd}\).
    Therefore, \(R \to R_{\perfd}\) is ind-split in \(\mcalD^{\comp{p}}(R)\) if and only if \(\mcalO_{\Spf(R)} \to \mcalO_{\Spf(R), \perfd}\) is ind-split in \(\mcalD_{\qcoh}(\Spf(R))\). This proves (1).

    If \(R\) is perfectoid pure, then \(\Spec(R)\) is perfectoid split. Conversely, assume there exists an affine morphism \(\pi \colon Y \to \Spec(R)\) from a scheme \(\mscrY\) such that \(\widehat{Y}\) is a perfectoid formal scheme and the induced morphism \(R \to \pi_*\mcalO_{Y}\) is ind-split.
    Since \(\pi\) is affine, \(\widehat{Y}\) is an affine formal scheme; thus, taking the global sections shows that \(R\) is perfectoid pure. This proves (2).
\end{proof}

\begin{proposition}[cf.~\Cref{CICase}] \label{GPPtoGLPP}
    Let \(X\) be a quasi-compact separated scheme such that the closed subscheme defined by \(p\) contains all closed points of \(X\).
    If \(X\) is perfectoid split, then it it lim-perfectoid split.
\end{proposition}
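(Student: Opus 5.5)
The plan is to start from an affine morphism \(\pi \colon Y \to X\) with \(\widehat{Y}\) a perfectoid formal scheme and \(\mcalO_X \to \pi_*\mcalO_Y\) ind-split in \(\QCoh(X)\), and to factor the \(p\)-adically completed map through \(\mcalO_{\mscrX,\perfd}\).

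First, write \(\mcalO_X \to \pi_*\mcalO_Y\) as a filtered colimit of split maps \(\mcalO_X \to \mcalG_\lambda\) in \(\QCoh(X)\). We view these in \(\mcalD_{\qcoh}(X)\); since \(\pi\) is affine, \(\pi_*\mcalO_Y = R\pi_*\mcalO_Y\). Next, apply the completion functor \((-)^{\wedge} \colon \mcalD_{\qcoh}(X) \to \mcalD_{\qcoh}(\mscrX)\) of \Cref{SymMonDerivedCompletionFormalSchemes}. It preserves colimits and is a functor, so it preserves split maps. Hence \(\mcalO_{\mscrX} \to (\pi_*\mcalO_Y)^{\wedge}\) is ind-split in \(\mcalD_{\qcoh}(\mscrX)\). (If \(X\) is not Noetherian, I would instead use the description \(\lim_n Lj_n^*\) directly; only colimit preservation is needed.)

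Then identify \((\pi_*\mcalO_Y)^{\wedge}\) with \(R\widehat{\pi}_*\mcalO_{\widehat{Y}}\), where \(\widehat{\pi} \colon \widehat{Y} \to \mscrX\) is the completed affine morphism. This is a local statement on affine opens \(\Spec A \subseteq X\) with \(\pi^{-1}(\Spec A) = \Spec B\). There it says that the derived \(p\)-completion of \(B\) agrees with the classical \(p\)-adic completion \(\widehat{B}\), which is perfectoid. I expect this to be the main obstacle: one needs \(B\) to have bounded \(p^\infty\)-torsion so that derived and classical completions coincide. The first thing I would try is to use that \(\widehat{B}\) perfectoid forces bounded torsion on the completion, and to compare \(B/p^n\) with \(\widehat{B}/p^n\) via \(B/^Lp^n\). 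If that fails, I would replace \(Y\) by the target of \(\mcalO_X \to R\widehat{\pi}_*\mcalO_{\widehat{Y}}\). The derived completion map \((\pi_*\mcalO_Y)^{\wedge} \to R\widehat\pi_*\mcalO_{\widehat Y}\) exists in any case, so I would work with the composite.

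Finally, by \Cref{PropertiesGlobalPerfectoidization}(\ref{GlobalPerfectoidizationSectionLimit}), the map \(\mcalO_{\mscrX} \to R\widehat{\pi}_*\mcalO_{\widehat{Y}}\) factors as \(\mcalO_{\mscrX} \to \mcalO_{\mscrX,\perfd} \to R\widehat{\pi}_*\mcalO_{\widehat{Y}}\). This uses that \(\widehat{Y}\) is quasi-compact and separated, which holds because \(\pi\) is affine and \(X\) is quasi-compact separated. It then remains to check that if a composite \(g \circ f\) is ind-split, then so is \(f\). Writing \(g\circ f = \colim_\lambda s_\lambda\) with each \(s_\lambda\colon \mcalO \to \mcalG_\lambda\) split, form the pushouts \(\mcalO_{\mscrX,\perfd} \sqcup_{\mcalO} \mcalG_\lambda\) (filtered colimits commute with pushouts). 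Alternatively, follow \cite{bhatt2024Perfectoid}*{\S 2.1}, where ind-splitness is shown to be equivalent to purity, which is clearly inherited by the first factor of a composite. Either route yields that \(\mcalO_{\mscrX} \to \mcalO_{\mscrX,\perfd}\) is ind-split, i.e.\ \(X\) is lim-perfectoid split.
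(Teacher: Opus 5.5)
Your outline is the paper's argument: complete the ind-split map, factor it through \(\mcalO_{\mscrX}\to\mcalO_{\mscrX,\perfd}\) using \Cref{PropertiesGlobalPerfectoidization}(2), and conclude because ind-splitness passes to the first factor of a composite. The paper passes in one line from \(\mcalO_X\to\pi_*\mcalO_Y\) to an ind-split map \(\mcalO_{\mscrX}\to R\widehat{\pi}_*\mcalO_{\widehat Y}\). You are right that this identification needs justification, but as written your proposal does not supply it, so there is a gap at that step.

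\emph{First remedy.} \(\widehat B\) being perfectoid does not bound the \(p^\infty\)-torsion of \(B\). Take \(X=\Spec\mathbb{Z}_p\) and \(B=\mcalO_C\oplus C/\mcalO_C\), a square-zero extension with \(C\) a perfectoid field over \(\mathbb{Q}_p\). Then \(\mathbb{Z}_p\to B\) is pure and \(\widehat B=\mcalO_C\), yet the derived \(p\)-completion of \(B\) is \(\mcalO_C\oplus\mcalO_C[1]\). Since \(B/p^n=\widehat B/p^n\) always holds, comparing these quotients cannot see the torsion \(B[p^\infty]\) that causes the discrepancy.

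\emph{Second remedy.} ``Working with the composite'' \(\mcalO_{\mscrX}\to(\pi_*\mcalO_Y)^{\wedge}\to R\widehat\pi_*\mcalO_{\widehat Y}\) does not help. Ind-splitness descends to the first factor of a composite, but it is not preserved by post-composition. So you would still have to prove that \(\mcalO_{\mscrX}\to R\widehat\pi_*\mcalO_{\widehat Y}\) is ind-split, which is the missing point itself.

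\emph{When it works.} If \(\pi_*\mcalO_Y\) has locally bounded \(p^\infty\)-torsion (e.g.\ \(Y\) is \(p\)-torsion-free or locally Noetherian), \Cref{BoundedProIsom} gives the identification and your argument is complete. In general you need either that hypothesis or a separate argument that the completed map is ind-split.

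\emph{Last step.} The pushouts \(\mcalO_{\mscrX,\perfd}\sqcup_{\mcalO_{\mscrX}}\mcalG_\lambda\) have filtered colimit \(\mcalO_{\mscrX,\perfd}\sqcup_{\mcalO_{\mscrX}}T\), where \(T=R\widehat\pi_*\mcalO_{\widehat Y}\). That is not \(\mcalO_{\mscrX,\perfd}\), so this construction proves nothing. What works is the fiber products \(\mcalO_{\mscrX,\perfd}\times_T\mcalG_\lambda\). Each receives a map from \(\mcalO_{\mscrX}\) that splits through the retraction of \(\mcalG_\lambda\), and their filtered colimit is \(\mcalO_{\mscrX,\perfd}\times_T T=\mcalO_{\mscrX,\perfd}\). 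Your alternative route via purity, i.e.\ \cite{bhatt2024Perfectoid}*{Lemma 2.6}, is fine and is what the paper cites.
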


\begin{proof}
    Take an affine morphism \(\pi \colon Y \to X\) from a scheme \(Y\) such that the \(p\)-adic formal completion \(\widehat{Y}\) is a perfectoid formal scheme and \(\mcalO_{X} \to \pi_*\mcalO_{Y}\) is ind-split.
    By \Cref{PropertiesGlobalPerfectoidization}\Cref{GlobalPerfectoidizationSectionLimit}, the \(p\)-adic completion of this morphism factors through the canonical morphism \(\mcalO_{\widehat{X}} \to \mcalO_{\widehat{X}, \perfd}\).
    Since \(\mcalD(\widehat{X}_{\Zar}, \mcalO_{\widehat{X}})\) is a stable \(\infty\)-category, if a composite morphism is ind-split, then the first morphism is also ind-split, as shown in \cite{bhatt2024Perfectoid}*{Lemma 2.6}.
    This implies that \(X\) is lim-perfectoid split.
\end{proof}

\begin{proposition}{(cf.~\cites{bhatt2023Globally, takamatsu2023Minimal})} \label{PlusRegular}
    Let \(X\) be a quasi-compact separated excellent integral scheme such that the closed subscheme defined by \(p\) contains all closed points of \(X\).
    If \(X\) is globally \pmb{+}-regular, then \(X\) is perfectoid split.
\end{proposition}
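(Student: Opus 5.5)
We have to show: if \(X\) is globally \(+\)-regular, then \(X\) is perfectoid split. Recall the definition from \cites{bhatt2023Globally, takamatsu2023Minimal}: for every finite surjective morphism \(f\colon Y\to X\) from an integral scheme \(Y\), the map \(\mcalO_X\to f_*\mcalO_Y\) splits in \(\QCoh(X)\) (equivalently in \(\Coh(X)\)).

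The candidate cover is the absolute integral closure \(X^+\). Fix an algebraic closure of the function field \(K(X)\) and let \(\pi\colon X^+\to X\) be the normalization of \(X\) in it. This morphism is integral, hence affine. Writing \(K(X)^{\mathrm{alg}}\) as the union of its finite subextensions \(L\), and letting \(Y_L\to X\) be the normalization of \(X\) in \(L\), we obtain
\[
\pi_*\mcalO_{X^+}=\colim_L \pi_{L,*}\mcalO_{Y_L},
\]
a filtered colimit in \(\QCoh(X)\). Each \(Y_L\to X\) is finite because \(X\) is excellent; it is surjective because it is integral and dominant; and \(Y_L\) is integral. Global \(+\)-regularity therefore says every \(\mcalO_X\to \pi_{L,*}\mcalO_{Y_L}\) splits, so \(\mcalO_X\to\pi_*\mcalO_{X^+}\) is ind-split in the sense of \Cref{DefDPS}.

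It remains to show that the \(p\)-adic completion \(\widehat{X^+}\) is a perfectoid formal scheme. Since \(\pi\) is affine, cover \(X\) by affine opens \(\Spec(A)\); then \(\pi^{-1}(\Spec A)=\Spec(A^+)\), where \(A^+\) is the absolute integral closure of the domain \(A\). By \Cref{DefPerfectoidFormalScheme} it suffices to show that each \(p\)-adic completion \(\comp{p}{A^+}\) is perfectoid.
\begin{itemize}
\item If \(p\) is invertible on \(\Spec A\), this open does not meet the completion, so there is nothing to check.
\item If \(A\) has characteristic \(p\), then \(A^+\) is a perfect ring, hence perfectoid.
\item In the mixed-characteristic case, \(A^+\) contains a compatible system of \(p\)-power roots of \(p\), and its Frobenius on \(A^+/p\) is surjective because every element has a \(p\)-th root in \(A^+\). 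The kernel of Fontaine's map \(\theta\) is then principal by the standard argument, e.g.\ \cite{bhatt2018Integral} or \cite{bhatt2022Prismsa}, using that \(A^+\) is \(p\)-torsion-free and integrally closed in its fraction field. Hence \(\comp{p}{A^+}\) is perfectoid.
\end{itemize}
The global condition that the closed subscheme defined by \(p\) contains all closed points is only needed so that the formal completion and \Cref{DefDPS} make sense.

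The main point requiring care is the identification of \(\pi_*\mcalO_{X^+}\) as a filtered colimit of finite covers to which the splitting hypothesis actually applies. This needs excellence, for finiteness of the normalizations, together with integrality of each \(Y_L\). Everything else reduces to the known local fact that \(\comp{p}{(A^+)}\) is perfectoid.
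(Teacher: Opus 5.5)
Your proposal is correct and follows essentially the same route as the paper. You take the absolute integral closure \(\pi\colon X^+\to X\), use global \(+\)-regularity to get that \(\mathcal{O}_X\to\pi_*\mathcal{O}_{X^+}\) is ind-split, and use that \(p\)-adic completions of absolutely integrally closed domains are perfectoid. Where the paper simply cites the definition of global \(+\)-regularity and asserts the perfectoid property, you supply the details: the filtered colimit over the finite normalizations \(Y_L\to X\) (with excellence giving finiteness), and the case analysis on affine opens showing \(\widehat{X^+}\) is a perfectoid formal scheme.
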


\begin{proof}
    Choose an absolute integral closure \(\pi \colon X^+ \to X\) of \(X\), which is an affine morphism.
    By \cite{bhatt2023Globally}*{Definition 6.1 and Convention 4.1}, if \(X\) is globally \pmb{+}-regular, then the canonical morphism
    \begin{equation*}
        \mcalO_X \to \pi_*\mcalO_{X^+}
    \end{equation*}
    is ind-split in \(\QCoh(X)\). Taking the \(p\)-adic formal completion, \(\widehat{X^+}\) becomes a perfectoid formal scheme since any affine subscheme of \(X^+\) is absolute integrally closed. So \(X\) is perfectoid split.
\end{proof}

The following result was obtained through discussions with Joe Waldron.

\begin{proposition}\label{ind-split-dual}
Let $(R,\m)$ be a Noetherian local ring admitting a dualizing complex. 
Let $I \subseteq \m$ be an ideal of $R$.
Let $X$ be a proper scheme over $\Spec R$ and let \(\mscrX\) be the \(I\)-adic formal completion of \(X\). 
Let $f \colon \cO_X \to \mathcal{F}$ be a morphism in $\mcalD_{\qcoh}(X)$.
Then the following are equivalent:
\begin{enumerate}
    \item $f^{\wedge}$ is ind-split in \(\mcalD_{\qcoh}(\mscrX)\), where \((-)^{\wedge}\) is the derived \(I\)-completion defined in \Cref{SymMonDerivedCompletionFormalSchemes}.
    \item $f$ is ind-split in \(\mcalD_{\qcoh}(X)\).
    \item The canonical morphism
    \begin{equation*}
            H^d_{\m}(\mscrX, \omega_{\mscrX}) \to H^d_{\m}(\mscrX, \widehat{\mcalF} \otimes^L_{\mcalO_{\mscrX}} \omega_{\mscrX}^{\bullet}[-d])
    \end{equation*}
        is injective.
    \item The canonical morphism
        \begin{equation*}
            H^d_{\m}(X, \omega_X) \to H^d_{\m}(X, \mcalF \otimes^L_{\mcalO_X} \omega_X^{\bullet}[-d])
        \end{equation*}
        is injective.
\end{enumerate}
\end{proposition}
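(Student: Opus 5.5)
The proof will go around the cycle through the most concrete condition (4), which is a Grothendieck-duality statement on \(X\). Here \(d = \dim X\), and \(\omega_X^{\bullet}\) is the normalized dualizing complex obtained from that of \(R\) by upper shriek, so that \(\omega_X := H^{-d}(\omega_X^{\bullet})\).

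The plan is the chain (2)\(\Rightarrow\)(1)\(\Rightarrow\)(3)\(\Leftrightarrow\)(4)\(\Rightarrow\)(2).

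\textbf{Formal steps.} For (2)\(\Rightarrow\)(1), note that \((-)^{\wedge}\) preserves colimits (\Cref{SymMonDerivedCompletionFormalSchemes}). It therefore carries a filtered system of split maps \(\cO_X \to \mcalF_\lambda\) with colimit \(f\) to a filtered system of split maps with colimit \(f^\wedge\).

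For (1)\(\Rightarrow\)(3), first observe that ind-split maps remain injective after applying any colimit-preserving additive functor to homotopy groups. Indeed, \(H^d\) of a filtered colimit of split injections is injective.

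I will apply the functor \(\mcalG \mapsto H^d_{\m}(\mscrX, \mcalG \otimes^L \omega^{\bullet}_{\mscrX}[-d])\). This functor commutes with filtered colimits because \(\mscrX\) is quasi-compact separated and local cohomology commutes with filtered colimits. Using \(H^d_\m(\mscrX, \omega^\bullet_{\mscrX}[-d]) = H^d_\m(\mscrX,\omega_{\mscrX})\), which holds by the cohomological amplitude of \(\omega^\bullet\) together with the vanishing of \(H^{>d}_\m\), the injectivity in (3) follows.

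For (3)\(\Leftrightarrow\)(4), I will use \Cref{DerivedCompletionFormalSchemes}. Local cohomology supported at \(\m \supseteq I\) is insensitive to derived \(I\)-completion, so \(R\Gamma_\m(X, \mcalG) \simeq R\Gamma_\m(\mscrX, \mcalG^\wedge)\) via \(R\Gamma_\m \circ \dcomp{I}{-} \simeq R\Gamma_\m\). Combined with the monoidality of \((-)^\wedge\), this identifies the two maps.

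\textbf{Main step (4)\(\Rightarrow\)(2).} Write \(\mcalF = \colim \mcalF_\lambda\) with each \(\mcalF_\lambda\) a perfect complex, choosing the system under \(\cO_X\); this is possible since \(X\) is quasi-compact separated, so \(\mcalD_{\qcoh}(X)\) is compactly generated by perfect complexes. It then suffices to show that each \(f_\lambda \colon \cO_X \to \mcalF_\lambda\) splits.

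By hypothesis (4) and the commutation with colimits, the map \(H^d_\m(X,\omega_X) \to H^d_\m(X, \mcalF_\lambda \otimes \omega^\bullet_X[-d])\) is injective for every \(\lambda\), since it factors the injective map to the colimit.

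Now \(f_\lambda\) splits iff \(f_\lambda^\vee \colon \mcalF_\lambda^\vee \to \cO_X\) is surjective on \(H^0 R\Gamma(X,-)\) onto the identity. This holds iff the map \(\Hom(\mcalF_\lambda, \cO_X) \to \Hom(\cO_X,\cO_X) = H^0(X,\cO_X)\) hits \(1\).

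Grothendieck duality for the proper map \(X \to \Spec R\), combined with local duality over \(R\), gives
\[
R\Gamma(X, \mcalG^\vee) \simeq R\Hom_R\bigl(R\Gamma_\m(X, \mcalG\otimes^L\omega_X^\bullet), E\bigr)
\]
up to \(\m\)-completion, where \(E\) is the injective hull of the residue field. This is the Matlis dual, and it is valid because \(R\Gamma(X, \mcalG^\vee)\) has finitely generated cohomology for perfect \(\mcalG\).

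Taking \(H^0\) and using that \(\Hom_R(-,E)\) is exact and faithful on finitely generated modules after completion, surjectivity of \(\mathrm{Hom}(\mcalF_\lambda,\cO_X) \to H^0(X,\cO_X)\) follows. This uses only the injectivity of the Matlis dual map \(H^d_\m(X,\omega_X) \to H^d_\m(X,\mcalF_\lambda\otimes\omega^\bullet_X[-d])\); the top-degree identification uses that \(H^{d}\) is the top nonvanishing degree of \(R\Gamma_\m(X,\omega_X^\bullet[-d])\). Passing to the completion \(\widehat R\) is harmless since \(R \to \widehat R\) is faithfully flat.

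The main obstacle is this last step: carefully matching the degree in the duality isomorphism and checking the completion/Matlis-duality bookkeeping so that injectivity in degree \(d\) corresponds exactly to surjectivity of \(H^0\) of the dual maps. Everything else is formal.
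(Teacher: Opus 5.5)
Your chain of implications is exactly the paper's argument. This includes the reduction to a filtered system of perfect complexes $\mcalF_\lambda$ under $\cO_X$ and the Grothendieck/local duality step, which the paper routes through \Cref{matlis-dual}.

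\textbf{The gap.} There is a genuine gap, and the paper shares it. You identify $H^d_{\m}(X,\omega_X)$ with $H^d_{\m}(X,\omega_X^{\bullet}[-d])=H^0_{\m}(X,\omega_X^{\bullet})$, and likewise on $\mscrX$. You use this in (1)$\Rightarrow$(3) and again in (4)$\Rightarrow$(2). Cohomological amplitude together with $H^{>d}_{\m}=0$ does not give this identification. The cone of $\omega_X\to\omega_X^{\bullet}[-d]$ is $\tau^{\geq 1}(\omega_X^{\bullet}[-d])$, and its $H^{d-1}_{\m}$ and $H^{d}_{\m}$ need not vanish. Under the very duality you invoke, $H^d_{\m}(X,\omega_X)\to H^0_{\m}(X,\omega_X^{\bullet})$ is Matlis dual to $\comp{\m}{H^0(X,\cO_X)}\to\comp{\m}{\Hom_{\cO_X}(\omega_X,\omega_X)}$. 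That map need be neither injective nor surjective. (In \Cref{matlis-dual}, the spectral sequence argument ignores the differentials $E_r^{d-r,\,r-d-1}\to E_r^{d,-d}$, and its strict dimension bound needs equidimensionality.)

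\textbf{Counterexamples.} As stated, the equivalences with (3) and (4) are false.
\begin{itemize}
\item Take $R=X=\Spec A$ with $A=k[[x,y,z,w]]/((x,y)\cap(z,w))$ and $f=\mathrm{id}$. Then (2) holds but (4) fails. With $A_1=A/(x,y)$ and $A_2=A/(z,w)$, one has $\omega_A\cong A_1\oplus A_2$. The map $H^2_{\m}(\omega_A)\to H^0_{\m}(\omega_A^{\bullet})$ is dual to the non-surjective inclusion $A\hookrightarrow A_1\times A_2$, so it has kernel $k$.
\item Take $R=k$, $X=\P^2_k\sqcup\Spec k$, and $f\colon\cO_X\to\cO_{\P^2_k}$ the restriction. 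Then (4) holds, since both sides are $H^2(\P^2_k,\omega_{\P^2_k})=k$. But $f$ is not ind-split: its target is perfect, so ind-split would force split.
\end{itemize}

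\textbf{The repair.} Your proof survives if you use $H^0_{\m}(X,\omega_X^{\bullet})$, resp.\ $H^0_{\m}(\mscrX,\omega_{\mscrX}^{\bullet})$, as the source in (3) and (4).
\begin{itemize}
\item Its Matlis dual is $\comp{\m}{H^0(X,\cO_X)}$. Your duality step then yields exactly the surjectivity of $\Hom(\mcalF_\lambda,\cO_X)\to H^0(X,\cO_X)$. In particular, (1)$\Leftrightarrow$(2) holds with no hypothesis on $X$.
\item The original (3) and (4) are recovered whenever $H^0(X,\cO_X)\to\Hom_{\cO_X}(\omega_X,\omega_X)$ is bijective. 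This holds, e.g., if $\omega_X$ is invertible (as in all uses in \Cref{SectionLocalGlobal}), or if $X$ is Cohen--Macaulay and equidimensional.
\end{itemize}

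\textbf{A smaller point in (1)$\Rightarrow$(3).} Filtered colimits in $\mcalD_{\qcoh}(\mscrX)$ are derived-completed colimits. So the reason $R\Gamma_{\m}(\mscrX,-\otimes^L\omega^{\bullet}_{\mscrX})$ commutes with them is that $R\Gamma_{\m}$ is insensitive to derived $I$-completion (as $I\subseteq\m$). It is not merely that local cohomology commutes with colimits.
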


\begin{proof}
    (1) \(\Rightarrow\) (3): If $f^{\wedge}$ is ind-split, then so is
    \[
    \omega_{\mscrX}^{\bullet}[-d] \to \widehat{\mcalF} \otimes^L \omega_{\mscrX}^{\bullet}[-d].
    \]
    Taking $H^0_{\m}(-)$, we obtain the injection
    \begin{equation*}
        H^d_{\m}(\mscrX, \omega^{\bullet}_{\mscrX}[-d]) \to H^d_{\m}(\mscrX, \widehat{\mcalF} \otimes^L_{\mcalO_{\mscrX}} \omega^{\bullet}_{\mscrX}[-d]).
    \end{equation*}
    As in the proof of \Cref{matlis-dual}, the target of this morphism is \(H^d_{\m}(\mscrX, \omega_{\mscrX})\).

    (2) \(\Rightarrow\) (4): If $f$ is ind-split, the same argument as above gives the desired injection.

    (3) \(\Leftrightarrow\) (4): It suffices to show that the morphism
    \begin{equation*}
        R\Gamma(\mscrX, \omega_{\mscrX}^{\bullet}) \to R\Gamma(\mscrX, \widehat{\mcalF} \otimes^L_{\mcalO_{\mscrX} \omega_{\mscrX}^{\bullet}})
    \end{equation*}
    in \(\mcalD(R)\) is the derived \(I\)-completion of the morphism
    \begin{equation*}
        R\Gamma(X, \omega_X^{\bullet}) \to R\Gamma(X, \mcalF \otimes^L_{\mcalO_X} \omega_X^{\bullet})
    \end{equation*}
    in \(\mcalD(R)\).
    Since \(I\cO_X\) is globally generated by \(I \subseteq R\), we can apply \Cref{DerivedCompletionFormalSchemes} and \Cref{SymMonDerivedCompletionFormalSchemes} to this setting, which concludes the proof.

    (2) \(\Rightarrow\) (1): It follows from the fact that the functor \((-)^{\wedge} \colon \mcalD_{\qcoh}(X) \to \mcalD_{\qcoh}(\mscrX)\) preserves filtered colimits (\Cref{SymMonDerivedCompletionFormalSchemes}). 

    (4) \(\Rightarrow\) (2): We can take a filtered system $\{\cO_{X} \to \mcalF_{\lambda}\}_{\lambda \in \Lambda}$ of perfect complexes \(\mcalF_{\lambda} \in \mcalD_{\coh}^b(X)\) such that $\colim_{\lambda \in \Lambda} \mcalF_{\lambda} \simeq \mcalF$ in $\mcalD_{\qcoh}(X)$.
    Thus, the isomorphism
    \begin{equation*}
        R\Gamma(X, \mcalF \otimes^L_{\mcalO_X} \omega_X^{\bullet}) \cong \colim_{\lambda \in \Lambda} R\Gamma(X, \mcalF_{\lambda} \otimes^L_{\mcalO_X} \omega_X^{\bullet})
    \end{equation*}
    holds in \(\mcalD(R)\).
    Shifting by \([-d]\) and applying \(H^d_{\m}(-)\), our assumption yields the injection
    \begin{equation*}
        H^0_{\m}(X, \omega_X^{\bullet}) \cong H^d_{\m}(X,\omega_{X}) \to H^d_{\m}(X, \mcalF_{\lambda} \otimes^L_{\mcalO_{X}} \omega_{X}^{\bullet}[-d]) \cong H^0_{\m}(X, \mcalF_{\lambda} \otimes^L_{\mcalO_X} \omega_X^{\bullet})
    \end{equation*}
    for any \(\lambda \in \Lambda\).
    Taking the Matlis dual over $R$ and invoking \cref{matlis-dual}, we obtain the surjection
    \begin{equation*}
        \comp{\m}{\Hom_{\mcalD_{\qcoh}(X)}(\mcalF_{\lambda},\cO_{X})} \to \comp{\m}{H^0(X,\cO_{X})}
    \end{equation*}
    of \(R\)-modules for any \(\lambda \in \Lambda\).
    Since these Hom-modules are finitely generated \(R\)-modules, the map is surjective even before completion.
    Therefore, we obtain that $\cO_{X} \to \mcalF_{\lambda}$ splits in \(\mcalD_{\qcoh}(X)\), as desired.
\end{proof}

\begin{corollary} \label{CompareGLPPGLPI}
    Let $(R,\m)$ be an Noetherian local ring of residue characteristic $p>0$ admitting a dualizing complex.
    Let $I \subseteq \m$ be an ideal of $R$ containing $p$.
    Let $X$ be a projective scheme over $\Spec R$. 
    Then the following are equivalent:
    \begin{enumerate}
        \item \(X\) is lim-perfectoid split, i.e., the canonical morphism \(\mcalO_{\mscrX} \to \mcalO_{\mscrX, \perfd}\) is ind-split in \(\mcalD_{\qcoh}(\mscrX)\).
        \item The morphism \(\mcalO_X \to \mcalO_{X, \perfd}\) defined in \Cref{AlgebraizableOXperfd} is ind-split in \(\mcalD_{\qcoh}(X)\).
        \item The canonical morphism
        \begin{equation*}
            H^d_{\m}(\mscrX, \omega_{\mscrX}) \to H^d_{\m}(\mscrX, \mcalO_{\mscrX, \perfd} \otimes^L_{\mcalO_{\mscrX}} \omega_{\mscrX}^{\bullet}[-d])
        \end{equation*}
        is injective.
        \item The canonical morphism
        \begin{equation*}
            H^d_{\m}(X, \omega_X) \to H^d_{\m}(X, \mcalO_{X, \perfd} \otimes^L_{\mcalO_X} \omega_X^{\bullet}[-d])
        \end{equation*}
        is injective.
    \end{enumerate}
\end{corollary}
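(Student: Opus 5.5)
The plan is to deduce \Cref{CompareGLPPGLPI} directly from \Cref{ind-split-dual}, applied to the morphism \(f \colon \mcalO_X \to \mcalF \defeq \mcalO_{X, \perfd}\) in \(\mcalD_{\qcoh}(X)\) produced by \Cref{AlgebraizableOXperfd}. The hypotheses of \Cref{ind-split-dual} are satisfied: \((R,\m)\) is Noetherian local with a dualizing complex, \(I \subseteq \m\), and a projective scheme over \(\Spec R\) is proper. Conditions (2) and (4) of the corollary are then literally conditions (2) and (4) of \Cref{ind-split-dual}.

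To match (1) and (3), I need \(f^{\wedge}\) to be identified with the canonical morphism \(\mcalO_{\mscrX} \to \mcalO_{\mscrX, \perfd}\). \Cref{AlgebraizableOXperfd} gives an isomorphism \(\mcalO_{X,\perfd}^{\wedge} \cong \mcalO_{\mscrX,\perfd}\). The one point to check is compatibility with the unit maps: under this isomorphism, \(f^{\wedge}\) should be the unit \(\mcalO_{\mscrX} \to \mcalO_{\mscrX,\perfd}\).

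I would argue this as follows. The unit \(R \to R_{\grpfd}\) sheafifies to \(\mcalO_X \to \widetilde{R_{\grpfd}} = \mcalO_{X,\perfd}\), which is a map of \(\setE_\infty\)-algebras. The completion functor \((-)^{\wedge}\) is symmetric monoidal up to completion by \Cref{SymMonDerivedCompletionFormalSchemes}, so it sends unit maps to unit maps. The isomorphism of \Cref{AlgebraizableOXperfd} is one of \(\setE_\infty\)-\(\mcalO_{\mscrX}\)-algebras, and any algebra map out of the unit is the unit map. Hence the two morphisms out of \(\mcalO_{\mscrX}\) agree. If the cited isomorphism were only stated in \(\mcalD(\widehat{X}_{\Zar},\mcalO_{\widehat X})\) rather than in algebras, I would instead use the universal property of \Cref{PropertiesGlobalPerfectoidization}: both maps out of \(\mcalO_{\mscrX}\) factor compatibly through \(\mcalO_{\mscrX,\perfd}\) via perfectoid covers.

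Since \(\mcalD_{\qcoh}(\mscrX)\) is a full subcategory of \(\mcalD(\mscrX_{\Zar},\mcalO_{\mscrX})\), being ind-split may be tested after this identification. Then (1) is condition (1) of \Cref{ind-split-dual}, and (3) is its condition (3), with \(\widehat{\mcalF} = \mcalO_{\mscrX,\perfd}\).

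The main obstacle is this identification of the unit maps, in particular confirming that the isomorphism of \Cref{AlgebraizableOXperfd} respects algebra structures. Everything else is a direct citation of \Cref{ind-split-dual}.
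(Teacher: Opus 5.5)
Your proposal is correct and matches the paper's proof, which simply applies \Cref{ind-split-dual} to \(\mcalO_X \to \mcalO_{X,\perfd}\) and uses the algebraization isomorphism of \Cref{AlgebraizableOXperfd} to identify (1) and (3). Your extra check, that the completed unit map coincides with the canonical morphism \(\mcalO_{\mscrX} \to \mcalO_{\mscrX,\perfd}\), is a point the paper leaves implicit.
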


\begin{proof}
It follows from \Cref{ind-split-dual} and \Cref{AlgebraizableOXperfd}.
\end{proof}


\begin{proposition}\label{descent-derived-split}
Let $f \colon Y \to X$ be a morphism of quasi-compact separated schemes whose closed subschemes defined by \(p\) contains all closed points of them.
Assume that the canonical morphism $\cO_{X} \to Rf_*\cO_{Y}$ is ind-split.
If $Y$ is lim-perfectoid split, then so is $X$.
\end{proposition}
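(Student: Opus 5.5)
The approach is to pass to $p$-adic formal completions and chain ind-split morphisms, using functoriality of absolute perfectoidization and the fact (as in \cite{bhatt2024Perfectoid}*{Lemma 2.6}) that if a composite is ind-split in a stable $\infty$-category, then so is its first factor. Write $\mscrX$ and $\mscrY$ for the $p$-adic completions of $X$ and $Y$, and $\hat f \colon \mscrY \to \mscrX$ for the induced morphism.

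First, complete the hypothesis. Applying the functor $(-)^{\wedge}$ of \Cref{SymMonDerivedCompletionFormalSchemes}, which preserves filtered colimits, to $\cO_X \to Rf_*\cO_Y$ shows that $\mcalO_{\mscrX} \to (Rf_*\cO_Y)^{\wedge}$ is ind-split in $\mcalD_{\qcoh}(\mscrX)$. A split morphism stays split after any functor, so ind-splitness is preserved by a filtered-colimit-preserving functor. Then identify $(Rf_*\cO_Y)^{\wedge}$ with $R\hat f_*\mcalO_{\mscrY}$. This is derived $p$-completion commuting with pushforward, checked on affines by \Cref{DerivedCompletionFormalSchemes} together with the quasi-compactness and separatedness of $Y$ (Čech complexes).

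Second, use the hypothesis on $Y$. Since $\mcalO_{\mscrY} \to \mcalO_{\mscrY,\perfd}$ is ind-split, and $R\hat f_*$ preserves filtered colimits (because $\hat f$ is qcqs), the morphism $R\hat f_*\mcalO_{\mscrY} \to R\hat f_*\mcalO_{\mscrY,\perfd}$ is ind-split. Composing with the first step, $\mcalO_{\mscrX} \to R\hat f_*\mcalO_{\mscrY,\perfd}$ is a composite of two ind-split maps.

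Third, factor this composite through $\mcalO_{\mscrX,\perfd}$. Locally on affines $\mscrU=\Spf A \subseteq \mscrX$ and $\mscrV = \Spf B\subseteq \hat f^{-1}(\mscrU)$, \Cref{DerivedPushoutArcCohomology} and the functoriality $A_{\tperfd} \to B_{\tperfd}$ (from the limit description in \Cref{CompletionAbsolutePerfectoidization}) give the factorization. Globally I would either use functoriality of $\mcalO_{-,\perfd}$ from \cite{ishizuka2026Algebraization} directly, or use \Cref{PropertiesGlobalPerfectoidization}\Cref{GlobalPerfectoidizationSectionLimit} applied to perfectoid covers of $\mscrY$ and pass to the limit. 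Either way we get $\mcalO_{\mscrX} \to \mcalO_{\mscrX,\perfd} \to R\hat f_*\mcalO_{\mscrY,\perfd}$ compatible with the maps above.

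Finally, show that a composite of ind-split maps out of $\mcalO_{\mscrX}$ is ind-split. I would use the characterization from \cite{bhatt2024Perfectoid}*{\S2.1}: a map $\mcalO\to\mcalG$ is ind-split iff every map from a compact object $P \to \mcalG$ through which it is composed admits a retraction property. A cleaner route is that ind-split is equivalent to being "pure" in the sense that $\Hom(\mcalO,\mcalO)\to\Hom(\mcalO,-)$ behaves well after tensoring, which is stable under composition. Then \cite{bhatt2024Perfectoid}*{Lemma 2.6} (first factor of an ind-split composite is ind-split) yields that $\mcalO_{\mscrX}\to\mcalO_{\mscrX,\perfd}$ is ind-split.

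I expect the main obstacle to be the global construction of the factorization through $\mcalO_{\mscrX,\perfd}$ in the third step, since \Cref{PropertiesGlobalPerfectoidization} only treats perfectoid $\mscrY$. Here I would use $\mcalO_{\mscrY,\perfd} = \lim R g_*\mcalO_{\mscrZ}$ over perfectoid $\mscrZ\to\mscrY$ (from the construction in \cite{ishizuka2026Algebraization}), so that the universal factorizations assemble. Closure of ind-splitness under composition is the other delicate point; if needed I would reduce it to the compact-object lifting criterion.
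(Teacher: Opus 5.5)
Your proposal is correct and is essentially the paper's own proof: the commutative square with \(\cO_{\mscrX}\to\cO_{\mscrX,\perfd}\) on top and \(R\hat f_*\cO_{\mscrY}\to R\hat f_*\cO_{\mscrY,\perfd}\) on the bottom, ind-splitness of the composite \(\cO_{\mscrX}\to R\hat f_*\cO_{\mscrY,\perfd}\), and then \cite{bhatt2024Perfectoid}*{Lemma 2.6} to pass to the first factor. The paper simply takes the ind-splitness of that composite for granted (``by assumption''), so your extra bookkeeping is fine; note, however, that the compact-object criterion you suggest for closure of ind-splitness under composition does not apply verbatim here, because \(\cO_{\mscrX}\) is in general not compact in \(\mcalD_{\qcoh}(\mscrX)\), where filtered colimits are \(p\)-completed.
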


\begin{proof}
Set the \(p\)-adic formal completions \(\mscrX \defeq \widehat{X}\) and \(\mscrY \defeq \widehat{Y}\).
Consider the following commutative diagram:
\[
\begin{tikzcd}
    \cO_{\mscrX} \arrow[r] \arrow[d] & \cO_{\mscrX,\perfd} \arrow[d] \\
    Rf_*\cO_{\mscrY} \arrow[r] & Rf_*\cO_{\mscrY,\perfd}.
\end{tikzcd}
\]
Since $Y$ is lim-perfectoid split, the bottom horizontal morphism is ind-split.
By assumption, the composition
\[
\cO_{\mscrX} \to \cO_{\mscrX,\perfd} \to Rf_*\cO_{\mscrY,\perfd}
\]
is ind-split.
Therefore, $X$ is lim-perfectoid split.
\end{proof}

\section{Gradedwise completed local cohomology} \label{SectionLocalCoho}

\subsection{Gradedwise completed local cohomology}

\begin{definition} \label{GradedwiseCompletedLocCoho}
    Let \(R\) be a \(G\)-graded adic ring with a homogeneous ideal of definition \(I\), and let \(\mfrakm\) be a homogeneous ideal of \(R\).
    Let \(M\) be a \(G\)-graded \(R\)-module.
    We define the \emph{gradedwise completed local cohomology} of \(M\) with respect to \(\mfrakm\), denoted by \(R\Gamma_{\mfrakm}^{\wedge}(M)\), as
    \begin{equation*}
        R\Gamma_{\mfrakm}^{\wedge}(M) \defeq \grlim_{n \geq 1} R\Gamma_{\mfrakm}(M/I^nM) = \bigoplus_{g \in G} R\lim_{n \geq 1} R\Gamma_{\mfrakm}(M/I^nM)_g
    \end{equation*}
    in \(\mcalD_{\graded{G}}(R)\).
    By definition, this construction is functorial in \(M\), and the canonical morphism \(R\Gamma_{\mfrakm}^{\wedge}(M) \to R\Gamma_{\mfrakm}^{\wedge}(\grcomp{I}{M})\) is an isomorphism.
\end{definition}

\begin{lemma} \label{GradedwiseCompletedCech}
    Let \(R\) be a \(G\)-graded adic ring with a homogeneous ideal of definition \(I\), and let \(\mfrakm\) be a homogeneous ideal of \(R\).
    Let \(M\) be a \(G\)-graded \(R\)-module.
    Assume that \(\mfrakm\) is finitely generated by homogeneous elements \(x_1, \dots, x_r \in R\).
    Then the gradedwise completed local cohomology \(R\Gamma_{\mfrakm}^{\wedge}(M)\) can be represented by the complex
    \begin{equation*}
        0 \to \grcomp{I}{M} \to \bigoplus_{i = 1}^r \grcomp{I}{M_{x_i}} \to \bigoplus_{1 \leq i < j \leq r} \grcomp{I}{M_{x_ix_j}} \to \cdots \to \grcomp{I}{M_{x_1 \cdots x_r}} \to 0
    \end{equation*}
    where the term \(\grcomp{I}{M}\) sits in cohomological degree \(0\). In other words, this complex is the gradedwise completion of the usual augmented \v{C}ech complex computing \(R\Gamma_{\mfrakm}(M)\).
\end{lemma}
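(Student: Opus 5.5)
The plan is to work degree by degree, since both \(\grlim\) and the gradedwise completion are defined componentwise. First I will fix the usual augmented \v{C}ech complex \(\check{C}^\bullet(M) = \check{C}^\bullet(x_1,\dots,x_r; M)\). Because the \(x_i\) are homogeneous, every term \(M_{x_{i_1}\cdots x_{i_k}}\) is a \(G\)-graded \(R\)-module and every differential is graded. For each \(n\), the complex \(\check{C}^\bullet(M/I^nM)\) represents \(R\Gamma_{\mfrakm}(M/I^nM)\) in \(\mcalD_{\graded{G}}(R)\). This is the standard fact that the \v{C}ech complex on finitely many generators computes local cohomology for any module, and it holds in the graded derived category because the forgetful functor to \(\mcalD(R)\) is conservative and compatible with local cohomology, via the graded torsion functor. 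Since localization is exact, \(\check{C}^\bullet(M/I^nM) \cong \check{C}^\bullet(M)/I^n\check{C}^\bullet(M)\) termwise, and these complexes fit into an inverse system indexed by \(n\).

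Next, for each \(g \in G\), I will compute \(R\lim_n\) of the degree-\(g\) pieces \((\check{C}^\bullet(M)/I^n)_g\). Each term of this inverse system is a tower \(\{(N/I^nN)_g\}_n\) with \(N\) a localization of \(M\). The transition maps are surjective, so the system is Mittag-Leffler and \(R^1\lim\) vanishes termwise. A termwise-surjective tower of complexes has its \(R\lim\) computed by the termwise limit, because the product sequence \(0 \to \lim \to \prod \to \prod \to 0\) is exact on each term. Hence \(R\lim_n (\check{C}^\bullet(M)/I^n)_g\) is represented by the complex whose terms are \(\lim_n (N/I^nN)_g = (\grcomp{I}{N})_g\). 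Taking the direct sum over \(g\) produces exactly the displayed complex \(\grcomp{I}{\check{C}^\bullet(M)}\), with the term \(\grcomp{I}{M}\) in degree \(0\).

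The main point needing care is the identification of \(\grlim\) in \(\mcalD_{\graded{G}}(R)\) with \(\bigoplus_g R\lim_n(-)_g\). This identification is already built into \Cref{GradedwiseCompletedLocCoho}, and I will use the convention of \cite{ishizuka2026Derived} that the graded derived category is equivalent to \(G\)-indexed families of objects of \(\mcalD(\setZ)\), so that limits are computed degreewise. I also need the compatibility between the derived object \(R\Gamma_{\mfrakm}(M/I^nM)\) and its \v{C}ech representative to be functorial in \(n\). This holds because the \v{C}ech complex construction is natural in the module, which makes the inverse systems isomorphic as diagrams.
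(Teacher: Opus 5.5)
Your proposal is correct and follows the paper's proof essentially step for step. You represent each \(R\Gamma_{\mathfrak{m}}(M/I^nM)\) by the augmented \v{C}ech complex, using exactness of localization. In each cohomological degree and each \(g \in G\) the tower has surjective transition maps, so \(R^1\lim\) vanishes and \(R\lim_n\) is the termwise limit \(\lim_n (N/I^nN)_g = (\grcomp{I}{N})_g\); you then sum over \(g\), exactly as the paper does via the cited Stacks lemmas. One small inaccuracy: \(\mathcal{D}_{G\text{-gr}}(R)\) is not equivalent to \(G\)-indexed families in \(\mathcal{D}(\mathbb{Z})\), since the \(R\)-action mixes degrees. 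You only need that \(\grlim\) is computed degreewise, and that is built into the definition of \(R\Gamma^{\wedge}_{\mathfrak{m}}\).
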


\begin{proof}
    Using the generators \(x_1, \dots, x_r\) of \(\mfrakm\), we know that \(R\Gamma_{\mfrakm}(M/I^nM)\) is computed by the augmented \v{C}ech complex \(\cech_{aug}(\underline{x}; M/I^nM) = \cech_{aug}(x_1, \dots, x_r; M/I^nM) \defeq (0 \to M/I^nM \to \cech(\underline{x}; M/I^nM))\), and its degree \(g\)-part is \(R\Gamma_{\mfrakm}(M/I^nM)_g = \cech_{aug}(\underline{x}; M/I^nM)_g\).
    We will compute \(R\lim_{n \geq 1} R\Gamma_{\mfrakm}(M/I^nM)_g\) for each \(g \in G\).
    The transition morphisms \(\cech_{aug}(\underline{x}; M/I^{n+1}M)_g \to \cech_{aug}(\underline{x}; M/I^nM)_g\) give rise to inverse systems in each cohomological degree, for example,
    \begin{equation*}
        \cdots \to \bigoplus_{i=1}^r (M/I^{n+1}M)_{x_i, g} \to \bigoplus_{i=1}^r (M/I^nM)_{x_i, g} \to \cdots
    \end{equation*}
    in cohomological degree \(1\), where \((M/I^nM)_{x_i, g}\) is the degree \(g\)-part of the localization \((M/I^nM)_{x_i}\) of the \(G\)-graded \(R\)-module \(M/I^nM\).
    Here, the \(R^1\lim_{n \geq 1}\) of these inverse systems vanishes, and the limit is isomorphic to the direct sum of the \(I\)-adic completions, e.g., \(\lim_{n \geq 1}\bigoplus_{i=1}^r (M/I^nM)_{x_i, g} \cong \bigoplus_{i=1}^r (\grcomp{I}{M_{x_i}})_g\).
    Using \citeStaN{07KW}{5} and \citeSta{07KX}, we have
    \begin{align*}
        & R\lim_{n \geq 1}R\Gamma_{\mfrakm}(M/I^nM)_g = R\lim_{n \geq 1}\cech_{aug}(\underline{x}; M/I^nM)_g \\
        & \cong (0 \to \grcomp{I}{M}_g \to \bigoplus_{i=1}^r (\grcomp{I}{M_{x_i}})_g \to \bigoplus_{1 \leq i < j \leq r} (\grcomp{I}{M_{x_ix_j}})_g \to \cdots \to (\grcomp{I}{M_{x_1 \cdots x_r}})_g \to 0)
    \end{align*}
    and this gives the desired representation since the direct sum \(\bigoplus_{g \in G} (\grcomp{I}{M_{x_i}})_g\) is the \(I\)-adic gradedwise completion \(\grcomp{I}{M_{x_i}}\) of the \(G\)-graded \(R\)-module \(M_{x_i}\).
\end{proof}

\subsection{Local cohomology of derived graded modules}

\begin{construction} \label{DefLocalCohoGradedModules}
    Let \(R\) be a \(G\)-graded ring and let \(M\) be an object of \(\mcalD_{\graded{G}}(R)\).
    Let \(f \in R\) be a homogeneous element.
    Then \emph{the local cohomology \(R\Gamma_{(f)}(M)\) of \(M\) with respect to \(f\)} is defined via the fiber sequence
    \begin{equation*}
        R\Gamma_{(f)}(M) \to M \to M[1/f]
    \end{equation*}
    in \(\mcalD_{\graded{G}}(R)\), where \(M[1/f] = M \otimes^L_{R} R[1/f]\) is the localization of \(M\) in \(\mcalD_{\graded{G}}(R)\).
    For a finitely generated homogeneous ideal \(\mfrakm\) of \(R\) with homogeneous generators \(f_1, \dots, f_r\), \emph{the local cohomology \(R\Gamma_{(f_1, \dots, f_r)}(M)\) of \(M\) with respect to \((f_1, \dots, f_r)\)} is defined as the composition
    \begin{equation*}
        R\Gamma_{(f_1, \dots, f_r)}(M) \defeq (R\Gamma_{(f_r)} \circ \cdots \circ R\Gamma_{(f_1)})(M) \in \mcalD_{\graded{G}}(R).
    \end{equation*}
    As we will show in \Cref{LocCohoWellDef} below, this definition is independent of the choice of homogeneous generators. Thus, we can unambiguously define \emph{the local cohomology \(R\Gamma_{\mfrakm}(M)\) of \(M\) with respect to \(\mfrakm\)}.
\end{construction}

\begin{lemma} \label{LocCohoWellDef}
    In the above setting, the following hold:
    \begin{enumerate}
        \item \label{LocCohoWellDefUnderlying} The underlying \(R\)-module of \(R\Gamma_{(f_1, \dots, f_r)}(M)\) coincides with the classical local cohomology of the underlying \(R\)-module \(M\) with respect to \(\mfrakm = (f_1, \dots, f_r)\).
        \item \label{LocCohoWellDefWellDefined} The local cohomology \(R\Gamma_{\mfrakm}(M) \in \mcalD_{\graded{G}}(R)\) is well-defined; i.e., it is independent of the choice of homogeneous generators of \(\mfrakm\).
        \item \label{LocCohoWellDefLocalization} For any homogeneous element \(f \in R\), the canonical morphism \(R\Gamma_{\mfrakm}(M)[1/f] \to R\Gamma_{\mfrakm}(M[1/f])\) is an isomorphism in \(\mcalD_{\graded{G}}(R)\).
        \item \label{LocCohoWellDefIsomorphism} There exists a canonical isomorphism \(R\Gamma_{\mfrakm}(M) \cong R\Gamma_{\mfrakm}(R) \Lgrotimes_R M\) in \(\mcalD_{\graded{G}}(R)\).
        \item \label{LocCohoWellDefColimits} The functor \(R\Gamma_{\mfrakm}(-) \colon \mcalD_{\graded{G}}(R) \to \mcalD_{\graded{G}}(R)\) preserves all colimits.
    \end{enumerate}
\end{lemma}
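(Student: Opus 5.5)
The plan is to establish (4) first and derive everything else from it. The point is that the fiber sequence defining \(R\Gamma_{(f)}\) is obtained by tensoring with \(R\).

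Start with a single homogeneous \(f\). Tensoring the fiber sequence \(R\Gamma_{(f)}(R) \to R \to R[1/f]\) in \(\mcalD_{\graded{G}}(R)\) with \(M\) over \(R\) gives a fiber sequence
\begin{equation*}
    R\Gamma_{(f)}(R) \Lgrotimes_R M \to M \to R[1/f] \Lgrotimes_R M \simeq M[1/f],
\end{equation*}
since \(\Lgrotimes\) is exact in each variable. Comparing with the defining fiber sequence yields \(R\Gamma_{(f)}(M) \simeq R\Gamma_{(f)}(R) \Lgrotimes_R M\), naturally in \(M\). Iterating and using associativity of \(\Lgrotimes\) gives
\begin{equation*}
    R\Gamma_{(f_1, \dots, f_r)}(M) \simeq K \Lgrotimes_R M, \qquad K \defeq R\Gamma_{(f_1)}(R) \Lgrotimes_R \cdots \Lgrotimes_R R\Gamma_{(f_r)}(R).
\end{equation*}
Concretely, \(K\) is the graded Čech-type complex \(\bigotimes_i (R \to R_{f_i})\), shifted so that \(R\) sits in degree \(0\). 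This is a bounded complex of flat graded modules, and it computes the derived graded tensor product. This proves (4), provided (2) holds so that the notation \(R\Gamma_{\mfrakm}(R)\) makes sense. Part (5) then follows at once, because \(- \Lgrotimes_R M\) preserves colimits in \(M\). Part (3) follows from \(M[1/f] = M \Lgrotimes_R R[1/f]\) together with commutativity and associativity of the tensor product.

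For (1), apply the forgetful functor \(\mcalD_{\graded{G}}(R) \to \mcalD(R)\). It is exact, and it is symmetric monoidal, carrying \(\Lgrotimes_R\) to \(\otimes^L_R\); I would cite \cite{ishizuka2026Derived} for compatibility of the forgetful functor with tensor products and localization. Under this functor \(K\) goes to the usual extended Čech complex on \(f_1, \dots, f_r\). Hence the underlying object of \(K \Lgrotimes_R M\) is \(\check{C}(\underline{f}) \otimes^L_R M\), which is the classical \(R\Gamma_{\mfrakm}(M)\) for finitely generated \(\mfrakm\).

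For (2), which I expect to be the main obstacle, we need independence of the generators inside the graded category, not merely after forgetting the grading. I would argue that the graded object \(K\) has a characterization intrinsic to \(\mfrakm\). It is the colocalization: \(K \to R\) is the universal map from an object on which every homogeneous element of \(\mfrakm\) acts locally nilpotently on cohomology. The corresponding cofiber is the localization killing such torsion objects. Concretely, for a second set of homogeneous generators \(g_1, \dots, g_s\), form \(K_{\underline{f}} \Lgrotimes K_{\underline{g}}\). The map \(K_{\underline{f}} \Lgrotimes K_{\underline{g}} \to K_{\underline{f}}\) is an isomorphism, because \(K_{\underline{f}} \Lgrotimes R[1/g_j]\) vanishes. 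That vanishing holds since \(g_j \in \sqrt{(f_1,\dots,f_r)}\), so on \(R[1/g_j]\) some power of each element is a unit and one of the factors \(R\Gamma_{(f_i)}(R[1/g_j])\) becomes zero. The vanishing of \(K_{\underline{f}} \Lgrotimes R[1/g_j]\) can be checked on underlying complexes, since the forgetful functor is conservative, and it then follows from the classical statement. By symmetry both objects are canonically isomorphic to \(K_{\underline{f}} \Lgrotimes K_{\underline{g}}\), which gives independence. Functoriality in \(M\) then follows from the tensor description in (4).
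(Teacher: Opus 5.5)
Your proposal is correct, but it is organized differently from the paper's proof. The paper proves (1) directly from exactness of the forgetful functor. It proves (2) in two hand-made steps. First, \(R\Gamma_{(f_1,\dots,f_r)}(R\Gamma_{(g_1,\dots,g_s)}(M)) \to R\Gamma_{(f_1,\dots,f_r)}(M)\) is an isomorphism because it is one after forgetting the grading (the classical \v{C}ech comparison plus conservativity). Second, \(R\Gamma_{(f)}\) and \(R\Gamma_{(g)}\) commute, which the paper reads off from a diagram of fiber sequences giving \(R\Gamma_{(g)}(M)[1/f] \cong R\Gamma_{(g)}(M[1/f])\). Parts (3) and (5) are then reduced to a single generator, and (4) comes last, by tensoring the fiber sequence exactly as you do.

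You instead make (4) the foundation. Once \(R\Gamma_{(f_1,\dots,f_r)}(M) \simeq K_{\underline{f}} \Lgrotimes_R M\) is established:
\begin{itemize}
\item the commutation the paper checks by hand is just the symmetry of the tensor product;
\item (3) and (5) become one-liners;
\item (2) becomes \(K_{\underline{f}} \simeq K_{\underline{f}} \Lgrotimes_R K_{\underline{g}} \simeq K_{\underline{g}}\), which also makes compatibility with the augmentations to \(R\), and hence with the maps to \(M\), transparent.
\end{itemize}
The price is that your (1) uses that the forgetful functor \(\mathcal{D}_{\graded{G}}(R) \to \mathcal{D}(R)\) is symmetric monoidal. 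This holds, since graded projective resolutions forget to projective ones, but it is an extra input. The paper needs only exactness, conservativity, and compatibility with localization. The essential input, a reduction to the classical statement via conservativity, is the same in both.

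One justification in your (2) is wrong and should be deleted. From \(g_j \in \sqrt{(f_1,\dots,f_r)}\) it does not follow that some \(f_i\) becomes a unit in \(R[1/g_j]\), nor that some single factor \(R\Gamma_{(f_i)}(R[1/g_j])\) vanishes. For example, take \(R = k[x,y]\), \(\underline{f} = (x,y)\) and \(g = x+y\). Neither \(x\) nor \(y\) is invertible in the domain \(R[1/(x+y)]\), and neither \(R\Gamma_{(x)}(R[1/(x+y)])\) nor \(R\Gamma_{(y)}(R[1/(x+y)])\) is zero; only the full tensor product vanishes.

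Your fallback sentence is the correct argument. After forgetting the grading, \(K_{\underline{f}} \Lgrotimes_R R[1/g_j]\) becomes the \v{C}ech complex of \(f_1,\dots,f_r\) on \(R[1/g_j]\). Its cohomology \(H^i_{\mathfrak{m}}(R[1/g_j])\) is \(\mathfrak{m}\)-power torsion, hence \(g_j\)-power torsion, while \(g_j\) acts invertibly, so it vanishes. Conservativity then gives the graded vanishing.

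Also spell out that \(K_{\underline{f}} \Lgrotimes_R K_{\underline{g}} \to K_{\underline{f}}\) is an isomorphism by peeling off one augmentation \(R\Gamma_{(g_j)}(R) \to R\) at a time. Each step has cofiber of the form \(N \Lgrotimes_R (K_{\underline{f}} \Lgrotimes_R R[1/g_j]) = 0\).
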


\begin{proof}
    \Cref{LocCohoWellDefUnderlying}: The fiber sequence \(R\Gamma_{(f)}(M) \to M \to M[1/f]\) in \(\mcalD_{\graded{G}}(R)\) maps to the corresponding fiber sequence in \(\mcalD(R)\) via the forgetful functor, by \cite{ishizuka2026Derived}*{Proposition 3.6(7)}.
    The fiber of \(M \to M[1/f]\) in \(\mcalD(R)\) is naturally identified with the classical local cohomology of \(M\) with respect to \((f)\), which is represented by the \v{C}ech complex \(\cech(f; M) \simeq \cech(f; R) \otimes^L_R M\).
    Iterating this process, we see that the underlying \(R\)-module of \(R\Gamma_{(f_1, \dots, f_r)}(M)\) coincides with the classical local cohomology of \(M\) with respect to \(\mfrakm\).

    \Cref{LocCohoWellDefWellDefined}: We next prove the well-definedness of \(R\Gamma_{\mfrakm}(-)\).
    Let \(f_1, \dots, f_r\) and \(g_1, \dots, g_s\) be two sets of homogeneous generators of \(\mfrakm\).
    Applying the functor \(R\Gamma_{(f_1, \dots, f_r)}\) to the canonical morphism \(R\Gamma_{(g_1, \dots, g_s)}(M) \to M\), we obtain a morphism
    \begin{equation*}
        R\Gamma_{(f_1, \dots, f_r)}(R\Gamma_{(g_1, \dots, g_s)}(M)) \to R\Gamma_{(f_1, \dots, f_r)}(M)
    \end{equation*}
    in \(\mcalD_{\graded{G}}(R)\). By \Cref{LocCohoWellDefUnderlying}, its image in \(\mcalD(R)\) is an isomorphism, because it corresponds to the canonical equivalence of \v{C}ech complexes \(\cech(\underline{f}; \cech(\underline{g}; M)) \xrightarrow{\sim} \cech(\underline{f}; M)\) associated with the equality of ideals \((f_1, \dots, f_r) = (g_1, \dots, g_s)\). By the conservativity of the forgetful functor (\cite{ishizuka2026Derived}*{Proposition 3.6(7)}), the morphism is an isomorphism in \(\mcalD_{\graded{G}}(R)\). To complete the proof of well-definedness, it suffices to show that the functors \(R\Gamma_{(f_1, \dots, f_r)}\) and \(R\Gamma_{(g_1, \dots, g_s)}\) commute up to natural isomorphism. By definition, we can reduce this to showing that \(R\Gamma_{(f)} \circ R\Gamma_{(g)} \simeq R\Gamma_{(g)} \circ R\Gamma_{(f)}\) for any homogeneous elements \(f\) and \(g\) of \(R\).

    We have fiber sequences
    \begin{align*}
        R\Gamma_{(f)}(R\Gamma_{(g)}(M)) & \to R\Gamma_{(g)}(M) \to R\Gamma_{(g)}(M)[1/f] \\
        R\Gamma_{(g)}(R\Gamma_{(f)}(M)) & \to R\Gamma_{(g)}(M) \to R\Gamma_{(g)}(M[1/f])
    \end{align*}
    in \(\mcalD_{\graded{G}}(R)\), where the former comes from the definition of \(R\Gamma_{(f)}(R\Gamma_{(g)}(M))\), and the latter is obtained by applying the functor \(R\Gamma_{(g)}\) to the fiber sequence defining \(R\Gamma_{(f)}(M)\) (this is valid since \(R\Gamma_{(g)}\), being defined as a fiber, preserves fiber sequences).
    It therefore suffices to establish a canonical isomorphism \(R\Gamma_{(g)}(M)[1/f] \xrightarrow{\cong} R\Gamma_{(g)}(M[1/f])\). This follows from the commutativity of the following diagram
    \begin{center}
        \begin{tikzcd}
            {R\Gamma_{(g)}(M)[1/f]} \arrow[d] \arrow[r] & {M[1/f]} \arrow[d, Rightarrow, no head] \arrow[r] & {(M[1/g])[1/f]} \arrow[d, "\cong"] \\
            {R\Gamma_{(g)}(M[1/f])} \arrow[r]           & {M[1/f]} \arrow[r]                                & {(M[1/f])[1/g]}                   
        \end{tikzcd}
    \end{center}
    whose horizontal sequences are fiber sequences in \(\mcalD_{\graded{G}}(R)\).

    \Cref{LocCohoWellDefLocalization}: This reduces to the case \(\mfrakm = (g)\), which was established above.

    \Cref{LocCohoWellDefIsomorphism}: Applying \(- \Lgrotimes_R M\) to the fiber sequence \(R\Gamma_{(f)}(R) \to R \to R[1/f]\) in \(\mcalD_{\graded{G}}(R)\) yields a fiber sequence
    \begin{equation*}
        R\Gamma_{(f)}(R) \Lgrotimes_R M \to M \to R[1/f] \Lgrotimes_R M \cong M[1/f]
    \end{equation*}
    in \(\mcalD_{\graded{G}}(R)\), which naturally identifies \(R\Gamma_{(f)}(R) \Lgrotimes_R M\) with \(R\Gamma_{(f)}(M)\).
    Using this conclusion inductively, we obtain the following isomorphism in \(\mcalD_{\graded{G}}(R)\):
    \begin{align*}
        R\Gamma_{(f_1, \dots, f_{r-1})}(R\Gamma_{(f_r)}(M)) & \cong R\Gamma_{(f_1, \dots, f_{r-1})}(R\Gamma_{(f_r)}(R) \Lgrotimes_R M) \\
        & \cong R\Gamma_{(f_1, \dots, f_{r-1})}(R) \Lgrotimes_R R\Gamma_{(f_r)}(R) \Lgrotimes_R M \cong R\Gamma_{(f_1, \dots, f_r)}(R) \Lgrotimes_R M
    \end{align*}
    as desired.

    \Cref{LocCohoWellDefColimits}: This reduces to the case \(\mfrakm = (f)\), which is immediate because \(R\Gamma_{(f)}(M)\) is defined as the fiber of \(M \to M[1/f]\), and both the identity functor and localization \((-)[1/f]\) preserve all colimits.
\end{proof}

We will use the following lemma to compare the gradedwise completed local cohomology with the limit of the local cohomology of the graded quotients by powers of \(f\).

\begin{lemma} \label{BoundedProIsom}
    Let \(R\) be a ring and let \(M\) be an \(R\)-module.
    Let \(f \in R\), and assume that the \(f^{\infty}\)-torsion of \(M\) is bounded.
    Then the morphism of pro-systems
    \begin{equation*}
        \{M/^Lf^n\}_{n \geq 1} \to \{M/f^nM\}_{n \geq 1}
    \end{equation*}
    induced by the canonical morphisms \(M/^Lf^n \to M/f^nM\) is a pro-isomorphism.
\end{lemma}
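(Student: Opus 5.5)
The plan is to compute the derived quotient explicitly and show that the discrepancy between it and the classical quotient is a pro-zero system. For each \(n\), the object \(M/^Lf^n\) is represented by the two-term complex \([M \xrightarrow{f^n} M]\) in cohomological degrees \(-1, 0\). Its cohomology is therefore
\[
H^0(M/^Lf^n) = M/f^nM, \qquad H^{-1}(M/^Lf^n) = M[f^n] \defeq \{m \in M \mid f^n m = 0\}.
\]
This gives a fiber sequence of pro-systems
\[
\{M[f^n][1]\}_{n\geq 1} \to \{M/^Lf^n\}_{n \geq 1} \to \{M/f^nM\}_{n\geq 1},
\]
natural in \(n\). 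Hence it suffices to prove that the pro-system \(\{M[f^n]\}_{n \geq 1}\) is pro-zero, with transition maps taken to be those induced from \(M/^Lf^{n+1} \to M/^Lf^n\).

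The key step, and the only place needing care, is to identify these transition maps. The map \(M/^Lf^{n+1} \to M/^Lf^n\) is given on complexes by multiplication by \(f\) in degree \(-1\) and the identity in degree \(0\). This is a map of complexes because the square
\[
f^n \circ f = \mathrm{id} \circ f^{n+1}
\]
commutes. Consequently, on \(H^{-1}\) the transition map \(M[f^{n+1}] \to M[f^n]\) is multiplication by \(f\), and the \(k\)-fold composite \(M[f^{n+k}] \to M[f^n]\) is multiplication by \(f^k\).

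Now use the hypothesis. Bounded \(f^\infty\)-torsion means there is some \(c\) with \(f^c \cdot M[f^\infty] = 0\). Then for every \(n\) the composite \(M[f^{n+c}] \to M[f^n]\), which is multiplication by \(f^c\), is zero. So \(\{M[f^n]\}\) is pro-zero, and therefore the shifted system \(\{M[f^n][1]\}\) is pro-zero as well.

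Finally, I conclude from the fiber sequence of pro-objects. In the pro-category of the stable \(\infty\)-category \(\mcalD(R)\), a map whose fiber is pro-zero is a pro-isomorphism. Concretely, one can exhibit an inverse: the composite \(M/f^{n+c}M \to M/^Lf^{n}\) lifts, because the obstruction to lifting lies in maps into \(M[f^{n}][2]\) that factor through the zero transition map. If one prefers to avoid general pro-category facts, this can be checked directly on the complexes. The map
\[
M/^Lf^{n+c} \to M/^Lf^n
\]
kills \(H^{-1}\), so it factors through the truncation \(\tau^{\geq 0}(M/^Lf^{n+c}) = M/f^{n+c}M\). This provides the inverse pro-morphism, and the two composites are the transition maps.
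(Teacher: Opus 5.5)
Your proof is correct and is essentially the argument the paper relies on. The paper simply defers to the proof of \cite{bhatt2018Integral}*{Lemma 4.7(2)}, whose key point is the one you isolate: the fiber of $M/^Lf^n \to M/f^nM$ is $M[f^n][1]$, with transition maps given by multiplication by $f$, and this system is pro-zero because $f^c$ kills the $f^\infty$-torsion. This is also exactly the fact ``$K_m \to K_n$ is zero'' that the paper uses in the proof of \Cref{CompatibleDerivedLocalCoho}.
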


\begin{proof}
    See the proof of \cite{bhatt2018Integral}*{Lemma 4.7(2)}.
\end{proof}

\begin{proposition} \label{CompatibleDerivedLocalCoho}
    Let \(R\) be an \(I\)-adic \(G\)-graded ring.
    Let \(M\) be a \(G\)-graded \(R\)-module.
    Let \(f \in R\) be a homogeneous element such that the \(f^\infty\)-torsion of \(M\) is bounded.
    Let \(\mfrakm\) be a homogeneous ideal of \(R\).
    Then the canonical morphism
    \begin{equation*}
        \grlim_{n \geq 1}R\Gamma_{\mfrakm}(M/^L f^n) = \bigoplus_{g \in G} R\lim_{n \geq 1} (R\Gamma_{\mfrakm}(M/^L f^n)_g) \xrightarrow{\cong} \bigoplus_{g \in G} R\lim_{n \geq 1} (R\Gamma_{\mfrakm}(M/f^nM)_g) = R\Gamma_{\mfrakm}^{\wedge}(M)
    \end{equation*}
    is an isomorphism in \(\mcalD_{\graded{G}}(R)\), where the right-hand side is the gradedwise completed local cohomology defined in \Cref{GradedwiseCompletedLocCoho}.
\end{proposition}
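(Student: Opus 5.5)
The plan is to work one degree \(g \in G\) at a time and deduce the statement from \Cref{BoundedProIsom}, upgraded to the graded setting. Throughout, I read the right-hand side as \(R\Gamma_{\mfrakm}^{\wedge}(M)\) computed with respect to the ideal \(I = (f)\) (the case of interest being \(f = p\)), since the displayed formula uses \(M/f^nM\). Both sides are defined as \(\bigoplus_{g} R\lim_n\) of the degree-\(g\) parts of towers. It therefore suffices to show, for each fixed \(g\), that the tower \(\{R\Gamma_{\mfrakm}(M/^Lf^n)_g\}_n\) maps by a pro-isomorphism to \(\{R\Gamma_{\mfrakm}(M/f^nM)_g\}_n\). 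Pro-isomorphic towers in \(\mcalD(\setZ)\) have isomorphic \(R\lim\), and the direct sum over \(g\) then gives the isomorphism in \(\mcalD_{\graded{G}}(R)\).

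First I will make the pro-isomorphism of \Cref{BoundedProIsom} graded. Write \(M/^Lf^n\) as the cofiber of \(f^n \colon M(-n\deg f) \to M\) in \(\mcalD_{\graded{G}}(R)\). The fiber of \(M/^Lf^n \to M/f^nM\) is then \(M[f^n][1]\), the \(f^n\)-torsion placed in cohomological degree \(-1\) with the appropriate grading shift. The transition maps on these fibers are induced by multiplication by \(f\), that is, \(M[f^{n+1}] \to M[f^n]\), \(x \mapsto fx\).

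By assumption the \(f^\infty\)-torsion of \(M\) is killed by some \(f^c\). Hence any composite of \(c\) transition maps in the fiber tower is zero, so the fiber tower is pro-zero. This argument only uses homogeneous maps, so it holds in \(\mcalD_{\graded{G}}(R)\), and in particular degreewise after any shift. Consequently the map of towers in \(\mcalD_{\graded{G}}(R)\) is a pro-isomorphism: a map of towers in a stable category whose fiber tower is pro-zero is a pro-isomorphism.

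Next I apply \(R\Gamma_{\mfrakm}(-)\) followed by taking the degree-\(g\) part. By \Cref{LocCohoWellDef}\Cref{LocCohoWellDefIsomorphism}, \(R\Gamma_{\mfrakm}(-) \cong R\Gamma_{\mfrakm}(R) \Lgrotimes_R (-)\) is an exact functor, and taking the \(g\)-part is exact as well. Exact functors send fiber sequences to fiber sequences and pro-zero towers to pro-zero towers. So the fiber tower \(\{R\Gamma_{\mfrakm}(M[f^n][1])_g\}_n\) is still pro-zero, with \(c\)-fold composites zero. Its \(R\lim\) therefore vanishes, and the \(R\lim\) of the fiber sequence of towers gives the desired degreewise isomorphism.

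The step needing the most care is the vanishing of \(R\lim\) for a tower of complexes whose \(c\)-fold transition maps are zero. Here I plan to pass to cohomology and use the Milnor sequence: each cohomology tower \(\{H^i(-)\}_n\) is pro-zero, hence Mittag-Leffler with zero limit, so both \(\lim\) and \(\lim^1\) vanish. This requires that the \(c\)-fold composites are zero as maps of objects, not merely nullhomotopic up to coherence. That holds here because they are induced by the honest module maps \(f^c \colon M[f^{n+c}] \to M[f^n]\), which are zero.
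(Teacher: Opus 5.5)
Your proposal is correct and is essentially the paper's argument. The paper also uses \Cref{BoundedProIsom} to see that the fibers of \(M/^L f^n \to M/f^nM\) form a pro-zero tower in \(\mcalD_{\graded{G}}(R)\), applies \(R\Gamma_{\mfrakm}(-)\) to get a pro-isomorphism, and passes to limits; you just make the fiber \(M[f^n][1]\) and the \(c\)-fold vanishing explicit and take \(R\lim\) degreewise (your caveat in the last paragraph is unnecessary, since vanishing of the \(c\)-fold composite in the homotopy category already kills the induced maps on cohomology, which is all the Milnor-sequence argument needs).
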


\begin{proof}
    By \Cref{BoundedProIsom}, the assumption that \(M\) has bounded \(f^\infty\)-torsion implies that, for any \(n \geq 1\), there exists \(m > n\) such that the left vertical morphism in the following commutative diagram of fiber sequences is zero in \(\mcalD_{\graded{G}}(R)\):
    \begin{center}
        \begin{tikzcd}
            K_m \arrow[r] \arrow[d] & M/^L f^m \arrow[r] \arrow[d] & M/f^mM \arrow[d, two heads] \\
            K_n \arrow[r]           & M/^L f^n \arrow[r]           & M/f^nM                     
        \end{tikzcd}
    \end{center}
    Applying \(R\Gamma_{\mfrakm}(-)\), we conclude that the pro-systems \(\{R\Gamma_{\mfrakm}(M/^L f^n)\}_{n \geq 1}\) and \(\{R\Gamma_{\mfrakm}(M/f^nM)\}_{n \geq 1}\) are pro-isomorphic.
    Taking the limits in \(\mcalD_{\graded{G}}(R)\), we obtain the desired isomorphism.
\end{proof}

\begin{lemma} \label{PropertiesCompLocCoho}
    Let $R$ be an \(I\)-adic graded ring and let \(M\) be an object of \(\mcalD_{\graded{G}}(R)\).
    Let \(f_1, \ldots, f_r\) be a fixed set of homogeneous generators of \(I\), and let \(\mfrakm\) and \(\mfrakn\) be homogeneous ideals of \(R\).
    \begin{enumerate}
        \item \label{PropertiesCompLocCohoRep} There exists a canonical isomorphism
        \begin{equation*}
            \grlim_{n \geq 1} R\Gamma_{\mfrakm}(M/^L(f_1^n, \dots, f_r^n)) \cong \dgrcomp{I}{R\Gamma_{\mfrakm}(M)}
        \end{equation*}
        in \(\mcalD_{\graded{G}}(R)\).
        \item \label{PropertiesCompLocCohoBounded} In the case of \(I = (f)\), if \(M\) is concentrated in degree \(0\) and has bounded \(f^\infty\)-torsion, then
        \[
        \dgrcomp{f}{R\Gamma_{\mfrakm}(M)}=R\Gamma^{\wedge}_{\mfrakm}(M),
        \]
        where $R\Gamma^{\wedge}_{\mfrakm}(M)$ is defined in \Cref{GradedwiseCompletedLocCoho} and it has a \v{C}ech representation in \Cref{GradedwiseCompletedCech}.
        \item \label{PropertiesCompLocCohoFiberSeq} There exists a fiber sequence
        \[
        \dgrcomp{I}{R\Gamma_{(f)}(M)} \to \dgrcomp{I}{M} \to \dgrcomp{I}{M[1/f]}
        \]
        in \(\mcalD_{\graded{G}}(R)\) for any homogeneous element \(f \in R\).
        \item \label{PropertiesCompLocCohoLocalization} For any homogeneous element \(f \in R\), the canonical morphism 
        \[
        \dgrcomp{I}{\dgrcomp{I}{R\Gamma_{\mfrakm}(M)}[1/f]} \to \dgrcomp{I}{R\Gamma_{\mfrakm}(M[1/f])}
        \]
        is an isomorphism in \(\mcalD_{\graded{G}}(R)\).
        \item \label{PropertiesCompLocCohoComposition} We have a canonical isomorphism \(\dgrcomp{I}{R\Gamma_{\mfrakm}(\dgrcomp{I}{R\Gamma_{\mfrakn}(M)})} \cong \dgrcomp{I}{R\Gamma_{\mfrakn + \mfrakm}(M)}\) in \(\mcalD_{\graded{G}}(R)\).
        \item \label{PropertiesCompLocCohoStable} The canonical morphism \(M \to \dgrcomp{I}{M}\) induces an isomorphism \(R\Gamma_{I}(M) \xrightarrow{\cong} R\Gamma_{I}(\dgrcomp{I}{M})\) in \(\mcalD_{\graded{G}}(R)\).
        \item \label{PropertiesCompLocCohoCompLoc} The canonical morphism \(R\Gamma_{I}(M) \to M\) induces an isomorphism \(\dgrcomp{I}{R\Gamma_{I}(M)} \xrightarrow{\cong} \dgrcomp{I}{M}\) in \(\mcalD_{\graded{G}}(R)\).
    \end{enumerate}
\end{lemma}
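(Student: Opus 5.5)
The whole plan rests on one model for \(\dgrcomp{I}{-}\). I will use that derived gradedwise \(I\)-completion is computed degreewise as the usual derived completion, \(\dgrcomp{I}{N} \cong \grlim_{n} N \Lgrotimes_R \mathrm{Kos}(R; f_1^n, \dots, f_r^n)\), where \(\mathrm{Kos}\) is the graded Koszul complex, so that \(N \Lgrotimes_R \mathrm{Kos}(R; f_1^n,\dots,f_r^n) = N/^L(f_1^n,\dots,f_r^n)\). I take this from \cite{ishizuka2026Derived}. With it, most items become formal consequences of \Cref{LocCohoWellDef}: \(R\Gamma_{\mfrakm}(-) \cong R\Gamma_{\mfrakm}(R) \Lgrotimes_R -\) commutes with colimits and with tensoring by finite complexes, and \(\dgrcomp{I}{-}\) is exact.

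\textbf{Items (1) and (3).} For (1), use \Cref{LocCohoWellDef}\Cref{LocCohoWellDefIsomorphism} to rewrite \(R\Gamma_{\mfrakm}(M/^L(f_1^n,\dots,f_r^n))\) as \(R\Gamma_{\mfrakm}(M) \Lgrotimes_R \mathrm{Kos}(f^n)\), then take \(\grlim\).

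For (3), \(\dgrcomp{I}{-}\) is an exact functor of stable \(\infty\)-categories. It therefore carries the defining fiber sequence of \Cref{DefLocalCohoGradedModules} to a fiber sequence.

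\textbf{Item (2).} Combine (1) for \(I=(f)\) with \Cref{CompatibleDerivedLocalCoho}; the latter is where bounded \(f^\infty\)-torsion is used. The \v{C}ech description then follows from \Cref{GradedwiseCompletedCech}.

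\textbf{Items (4) and (5).} For (4), the key observation is that \(\dgrcomp{I}{\dgrcomp{I}{N}[1/f]} \cong \dgrcomp{I}{N[1/f]}\) for any \(N\). To see this, note that the fiber of \(N \to \dgrcomp{I}{N}\) is \(I\)-adically local: it has vanishing \(\mathrm{Kos}(f^n)\)-reductions. Localization preserves this vanishing, since \((-)[1/f]\) commutes with \(\Lgrotimes\,\mathrm{Kos}\). Hence the fiber remains killed by completion. Apply this with \(N=R\Gamma_{\mfrakm}(M)\) and then use \Cref{LocCohoWellDef}\Cref{LocCohoWellDefLocalization}.

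For (5), the same principle gives \(\dgrcomp{I}{R\Gamma_{\mfrakm}(\dgrcomp{I}{N})} \cong \dgrcomp{I}{R\Gamma_{\mfrakm}(N)}\). Indeed, \(R\Gamma_{\mfrakm}\) is a tensor with \(R\Gamma_{\mfrakm}(R)\), so it preserves the property of having vanishing Koszul reductions. Applying this with \(N=R\Gamma_{\mfrakn}(M)\) reduces (5) to the isomorphism \(R\Gamma_{\mfrakm}\circ R\Gamma_{\mfrakn} \cong R\Gamma_{\mfrakm+\mfrakn}\). That isomorphism follows from the iterated definition, using generators of \(\mfrakn\) followed by generators of \(\mfrakm\), together with the generator-independence in \Cref{LocCohoWellDef}\Cref{LocCohoWellDefWellDefined}.

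\textbf{Items (6) and (7).} For (6), let \(C\) be the fiber of \(M\to\dgrcomp{I}{M}\). Then \(C \Lgrotimes \mathrm{Kos}(f^n)=0\) for all \(n\). Since \(R\Gamma_I(R)=\colim_n \mathrm{Kos}(f^n)\) up to shift, we get \(R\Gamma_I(C)=R\Gamma_I(R)\Lgrotimes C=0\). Here the colimit description is checked by forgetting to \(\mcalD(R)\) and using conservativity (\cite{ishizuka2026Derived}*{Proposition 3.6(7)}).

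For (7), the cofiber of \(R\Gamma_I(M)\to M\) is the Čech-localization term, an iterated extension of objects \(M[1/f_i]\)-type. Each such object has \(\mathrm{Kos}(f^n)\)-reduction zero, because \(f_i\) acts invertibly and nilpotently on it. Hence its completion vanishes.

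\textbf{Main obstacle.} The step I expect to be hardest is making precise, in \(\mcalD_{\graded{G}}(R)\), the criterion that \(\dgrcomp{I}{N}=0\) if and only if \(N\Lgrotimes\mathrm{Kos}(f^n)=0\) for all \(n\). I would first reduce this to the ungraded statement degreewise via the forgetful functor, and check that the cited definition of \(\dgrcomp{I}{-}\) matches the Koszul-limit formula.
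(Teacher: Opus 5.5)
Your proof is correct, and items (1)--(5) follow the paper's proof. For (4) and the first step of (5), you make explicit, via the Koszul-vanishing of the fiber of \(N\to\dgrcomp{I}{N}\), isomorphisms that the paper takes from \cite{ishizuka2026Derived}*{Lemma 4.8}.

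The real difference is in (6) and (7).

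\textbf{The paper's route.} For (6), the paper first treats a single element \(f\) by a Beauville--Laszlo-type argument. It shows that the square with vertices \(M\), \(\dgrcomp{f}{M}\), \(M[1/f]\), \(\dgrcomp{f}{M}[1/f]\) is cartesian: the fiber \(K\) of the comparison map has \(\dgrcomp{f}{K}=0\), so \(K\simeq T(K,f)\) has \(f\) acting invertibly, while \(K[1/f]=0\). It then reaches general \(I\) by iterating over generators, using \(R\Gamma_I\simeq R\Gamma_I\circ R\Gamma_{(f)}\). For (7), it handles one generator via (3) and \(\dgrcomp{I}{M[1/f]}=0\), then inducts on the number of generators through (5).

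\textbf{Your route.} You treat all generators at once.
For (6), the fiber \(C\) of \(M\to\dgrcomp{I}{M}\) satisfies \(C\Lgrotimes_R K_n=0\) for the Koszul complexes \(K_n\) on \(f_1^n,\dots,f_r^n\). Moreover, \(R\Gamma_I(R)\simeq\colim_n\bigotimes_i\mathrm{fib}(R\xrightarrow{f_i^n}R(n\deg f_i))\) is a filtered colimit of twisted shifts of the \(K_n\). Hence \(R\Gamma_I(C)=0\).
For (7), the cofiber of \(R\Gamma_I(M)\to M\) is a finite iterated extension of objects on which some \(f_i\) acts invertibly. Their completions vanish, so (7) follows without using (5).

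\textbf{Comparison.} Your route is shorter and uniform in the number of generators, but everything rests on the isomorphism \(M\Lgrotimes_R K_n\xrightarrow{\cong}\dgrcomp{I}{M}\Lgrotimes_R K_n\). Prove this directly in \(\mcalD_{\graded{G}}(R)\):
tensoring with the dualizable object \(K_n\) commutes with \(\grlim\), and \(\{K_m\Lgrotimes_R K_n\}_m\) is pro-isomorphic to the constant system \(K_n\) because each \(f_i^n\) is null-homotopic on \(K_n\).
Do not argue by forgetting to \(\mcalD(R)\), since the forgetful functor does not commute with \(\grlim\).
The paper's route needs only idempotence of \(\dgrcomp{f}{-}\) and the vanishing of completions of objects on which \(f\) is invertible. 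It also yields the stronger fracture-square statement as a by-product.
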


\begin{proof}
    \Cref{PropertiesCompLocCohoRep}: By \Cref{LocCohoWellDef}\Cref{LocCohoWellDefIsomorphism} and \cite{ishizuka2026Derived}*{Lemma 4.8(5)}, we have isomorphisms
    \begin{align*}
        \grlim_{n \geq 1} R\Gamma_{\mfrakm}(M/^L(f_1^n, \dots, f_r^n)) & \cong \grlim_{n \geq 1} (R\Gamma_{\mfrakm}(R) \Lgrotimes_{R} (M/^L(f_1^n, \dots, f_r^n))) \\
        & \cong \grlim_{n \geq 1} (R\Gamma_{\mfrakm}(R) \Lgrotimes_R (M \Lgrotimes_R R/^L(f_1^n, \dots, f_r^n))) \\
        & \cong \grlim_{n \geq 1} (R\Gamma_{\mfrakm}(M)/^L(f_1^n, \dots, f_r^n)) \cong \dgrcomp{I}{R\Gamma_{\mfrakm}(M)}
    \end{align*}
    in \(\mcalD_{\graded{G}}(R)\), where the second isomorphism follows from \cite{ishizuka2026Derived}*{(4.1)}.

    \Cref{PropertiesCompLocCohoBounded}: This follows from \Cref{PropertiesCompLocCohoRep} and \Cref{CompatibleDerivedLocalCoho}.

    \Cref{PropertiesCompLocCohoFiberSeq}: This follows from the definition of \(R\Gamma_{(f)}\).

    \Cref{PropertiesCompLocCohoLocalization}:
    First, we have a natural isomorphism
    \begin{equation*}
        \dgrcomp{I}{R\Gamma_\mfrakm(M)[1/f]} \xrightarrow{\sim} \dgrcomp{I}{\dgrcomp{I}{R\Gamma_\mfrakm(M)}[1/f]}.
    \end{equation*}
    Therefore, the assertion follows from \Cref{LocCohoWellDef}\Cref{LocCohoWellDefLocalization}.

    \Cref{PropertiesCompLocCohoComposition}: Using \Cref{LocCohoWellDef}\Cref{LocCohoWellDefIsomorphism} and \cite{ishizuka2026Derived}*{Lemma 4.8(4)}, we have the following isomorphisms
    \begin{align*}
        \dgrcomp{I}{R\Gamma_{\mfrakm}(\dgrcomp{I}{R\Gamma_{\mfrakn}(M)})} & \cong \dgrcomp{I}{R\Gamma_{\mfrakm}(R) \Lgrotimes_{R} \dgrcomp{I}{R\Gamma_{\mfrakn}(M)}} \\
        & \cong \dgrcomp{I}{R\Gamma_{\mfrakm}(R) \Lgrotimes_{R} \dgrcomp{I}{R\Gamma_{\mfrakn}(R) \Lgrotimes_{R} M}} \\
        & \cong \dgrcomp{I}{R\Gamma_{\mfrakm}(R) \Lgrotimes_{R} R\Gamma_{\mfrakn}(R) \Lgrotimes_{R} M} \cong \dgrcomp{I}{R\Gamma_{\mfrakm + \mfrakn}(M)}
    \end{align*}
    in \(\mcalD_{\graded{G}}(R)\).

    \Cref{PropertiesCompLocCohoStable}: If we can establish the isomorphism \(R\Gamma_I(M) \xrightarrow{\cong} R\Gamma_I(\dgrcomp{f}{M})\) for any homogeneous element \(f \in I\), the general case follows by iterating this procedure.
    Moreover, since \(R\Gamma_I(M) \cong R\Gamma_{I}(R\Gamma_{(f)}(M))\) for \(f \in I\), it suffices to consider the case \(I = (f)\) and prove that \(R\Gamma_{(f)}(M) \xrightarrow{\cong} R\Gamma_{(f)}(\dgrcomp{f}{M})\).
    Consider the following commutative diagram, whose rows are fiber sequences:
    \begin{equation}\label{local-comp}
        \begin{tikzcd}
            R\Gamma_{(f)}(M) \arrow[d] \arrow[r]    & M \arrow[d] \arrow[r]    & {M[1/f]} \arrow[d]    \\
            R\Gamma_{(f)}(\dgrcomp{f}{M}) \arrow[r] & \dgrcomp{f}{M} \arrow[r] & {\dgrcomp{f}{M}[1/f]}.
        \end{tikzcd}
    \end{equation}
    It is sufficient to show that the right square in \eqref{local-comp} is a pullback square in \(\mcalD_{\graded{G}}(R)\) \footnote{Our argument is based on the proof of Beauville--Laszlo theorem in \cite{lurie2018Spectral}*{Lemma 7.4.1.2}}. To verify that the square is a pullback square, let \(K\) be the fiber of the morphism 
    \[
    M \to \dgrcomp{f}{M} \times_{\dgrcomp{f}{M}[1/f]} M[1/f];
    \]
    it then suffices to show that \(K = 0\).
    Since \(\dgrcomp{f}{-}\) commutes with limits (\cite{ishizuka2026Derived}*{Lemma 4.8(2)}), taking the derived gradedwise \(f\)-completion of the above morphism yields
    \[
    \dgrcomp{f}{M} \to \dgrcomp{f}{\dgrcomp{f}{M} \times_{\dgrcomp{f}{M}[1/f]} M[1/f]} \simeq \dgrcomp{f}{M} 
    \]
    because \(\dgrcomp{f}{N[1/f]} = 0\) for any \(N\). This implies \(\dgrcomp{f}{K} = 0\).
    In particular, we have $T(K,f) \defeq \lim(\cdots \xrightarrow{\times f} K) \xrightarrow{\simeq} K$ by \cite{ishizuka2026Derived}*{Definition 4.3}.
    Furthermore, we have
    \[
    M[1/f] \to  (\dgrcomp{f}{M} \times_{\dgrcomp{f}{M}[1/f]} M[1/f])[1/f] \simeq M[1/f]
    \]
    since pullback squares and pushout squares coincide in stable \(\infty\)-categories.
    Thus we have $K[1/f]\simeq 0$.
    In conclusion, we obtain
    \[
    K \simeq T(K,f) \simeq T(K,f)[1/f] \simeq  K[1/f] \simeq 0,
    \]
    where the second equivalence follows from the fact that multiplying by \(f\) induces an isomorphism on \(T(K, f)\).

    \Cref{PropertiesCompLocCohoCompLoc}: We have \(\dgrcomp{I}{R\Gamma_{(f)}(M)} \xrightarrow{\cong} \dgrcomp{I}{M}\) for any \(f \in I\), because \(\dgrcomp{I}{M[1/f]} = 0\) and we have the fiber sequence from \Cref{PropertiesCompLocCohoFiberSeq}.
    In particular, the case \(I = (f)\) is proved.
    Assume that \(I = (f_1, \ldots, f_r)\) and that the conclusion holds for the ideal generated by \(f_1, \ldots, f_{r-1}\).
    Then we have
    \begin{align*}
        \dgrcomp{I}{R\Gamma_I(M)} & \cong \dgrcomp{I}{R\Gamma_{(f_1, \dots, f_{r-1})}(\dgrcomp{I}{R\Gamma_{(f_r)}(M)})} \cong \dgrcomp{I}{R\Gamma_{(f_1, \dots, f_{r-1})}(M)} \\
        & \cong \dgrcomp{(f_r)}{\dgrcomp{(f_1, \dots, f_{r-1})}{R\Gamma_{(f_1, \dots, f_{r-1})}(M)}} \cong \dgrcomp{I}{M}
    \end{align*}
    in \(\mcalD_{\graded{G}}(R)\), where the first isomorphism follows from \Cref{PropertiesCompLocCohoComposition}.
\end{proof}

\subsection{Local cohomology of absolute graded perfectoidization}

\begin{lemma} \label{GradedPerfdLocalCoho}
    Let $P$ be a \(G\)-graded perfectoid ring with a homogeneous ideal of definition \(I\) containing \(p\).
    Take a finitely generated homogeneous ideal \(\mfrakm\) of \(P\).
    Then the canonical morphism
    \begin{equation*}
        \dgrcomp{I}{R\Gamma_{\mfrakm}(P)} \xrightarrow{\cong} R\Gamma_{\mfrakm}^{\wedge}(P)
    \end{equation*}
    is an isomorphism in \(\mcalD_{\graded{G}}(P)\), where \(R\Gamma_{\mfrakm}^{\wedge}(P)\) is defined in \Cref{GradedwiseCompletedLocCoho} with respect to the \(I\)-adic topology.
\end{lemma}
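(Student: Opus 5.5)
We reduce the statement to the computation already carried out in \Cref{PropertiesCompLocCoho}. The plan is to first treat the principal ideal \((p)\), where \Cref{PropertiesCompLocCoho}\Cref{PropertiesCompLocCohoBounded} applies directly, and then pass from \(p\) to the whole ideal of definition \(I\).

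\textbf{Step 1: the case \(I = (p)\).} A perfectoid ring has bounded \(p^\infty\)-torsion; in fact \(P[p^\infty] = P[p]\), by \cite{bhatt2018Integral}. This applies to \(P\) because its completion \(P^{\wedge p}\) is perfectoid, and the torsion of \(P\) is gradedwise the same as that of its completion. Since \(P\) is discrete, \Cref{PropertiesCompLocCoho}\Cref{PropertiesCompLocCohoBounded} gives
\[
\dgrcomp{p}{R\Gamma_{\mfrakm}(P)} \cong R\Gamma^{\wedge, p}_{\mfrakm}(P),
\]
where the right-hand side is the gradedwise \(p\)-completed local cohomology. By \Cref{GradedwiseCompletedCech}, it is represented by the gradedwise \(p\)-completed \v{C}ech complex with terms \(\grcomp{p}{P_{x_{i_1}\cdots x_{i_k}}}\).

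\textbf{Step 2: passing from \(p\) to \(I\).} Since \(p \in I\), we have \(\dgrcomp{I}{-} = \dgrcomp{I}{\dgrcomp{p}{-}}\). Hence
\[
\dgrcomp{I}{R\Gamma_{\mfrakm}(P)} \cong \dgrcomp{I}{C^\bullet}, \qquad C^\bullet = \bigl(\grcomp{p}{P} \to \textstyle\bigoplus_i \grcomp{p}{P_{x_i}} \to \cdots\bigr),
\]
a finite complex. Because the complex is finite, derived \(I\)-completion can be computed termwise. For each term, \Cref{LocalizationGradedPerfd} applies to the pro-graded perfectoid pair attached to \(P\) (first in the \(p\)-adic topology and then in the \(I\)-adic topology). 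It shows that each localization \(P_{x_{i_1}\cdots x_{i_k}}\) has \(I\)-adic gradedwise completion that is graded perfectoid. It also gives \(\dgrcomp{I}{P_{x}} \cong \grcomp{I}{P_{x}}\) (part (2)), and this discrete module agrees with \(\dgrcomp{I}{\grcomp{p}{P_x}}\). Alternatively one can run the same argument with \(\dgrcomp{I}{-}\) applied directly to the terms of the \v{C}ech complex of \(P\) and invoke \Cref{LocalizationGradedPerfd}(2), using \Cref{DerivedGrcompGradedPerfd} for the degree-zero term. Either way, \(\dgrcomp{I}{R\Gamma_{\mfrakm}(P)}\) is represented by the complex
\[
0 \to \grcomp{I}{P} \to \bigoplus_i \grcomp{I}{P_{x_i}} \to \cdots \to \grcomp{I}{P_{x_1\cdots x_r}} \to 0.
\]
By \Cref{GradedwiseCompletedCech}, this complex is exactly \(R\Gamma^{\wedge}_{\mfrakm}(P)\) for the \(I\)-adic topology.

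\textbf{Step 3: compatibility of the maps.} It remains to check that the resulting isomorphism is the canonical morphism of the statement. The canonical morphism is \(\dgrcomp{I}{R\Gamma_{\mfrakm}(P)} \to \grlim_n R\Gamma_{\mfrakm}(P/I^n P)\), induced from \(R\Gamma_{\mfrakm}(P)/^L I^n \to R\Gamma_{\mfrakm}(P/I^nP)\). Both sides are computed via the same \v{C}ech complex, so this reduces to naturality of the termwise maps.

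\textbf{Main obstacle.} The delicate point is Step 2: identifying the derived gradedwise \(I\)-completion of each localization \(P_x\) with the classical gradedwise completion \(\grcomp{I}{P_x}\). For a general module this fails. Here it is supplied exactly by the perfectoid input, namely \Cref{LocalizationGradedPerfd}(2) and \Cref{DerivedGrcompGradedPerfd}. Care is needed because \(P\) is only gradedwise \(p\)-complete, and we must apply these lemmas to the associated pro-graded perfectoid pair.
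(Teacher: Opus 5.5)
Your proposal is correct and is essentially the paper's argument. The paper's proof is exactly your ``alternative'' in Step 2: it computes \(\dgrcomp{I}{-}\) termwise on the \v{C}ech complex of \(P\) via \cite{ishizuka2026Derived}*{Lemma 4.12}, uses \Cref{LocalizationGradedPerfd}(2) to identify each \(\dgrcomp{I}{P[1/x_{i_1}\cdots x_{i_k}]}\) with \(\grcomp{I}{P[1/x_{i_1}\cdots x_{i_k}]}\), and compares the result with \Cref{GradedwiseCompletedCech}, so your Step 1 detour through the \(p\)-adic case (bounded \(p^\infty\)-torsion and \Cref{PropertiesCompLocCoho}\Cref{PropertiesCompLocCohoBounded}) is valid but unnecessary.
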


\begin{proof}
    First, note that by \Cref{GradedwiseCompletedCech}, the right-hand side can be represented by the complex
    \begin{equation*}
        0 \to \grcomp{I}{P} \to \bigoplus_{i = 1}^r \grcomp{I}{P[1/x_i]} \to \cdots \to \grcomp{I}{P[1/x_1 \cdots x_r]} \to 0
    \end{equation*}
    in \(\Ch^b(\Mod_{\graded{G}}(P))\).
    To apply \cite{ishizuka2026Derived}*{Lemma 4.12} to the \v{C}ech complex \(R\Gamma_{\mfrakm}(P)\), it suffices to show that the canonical morphism
    \begin{equation*}
        \dgrcomp{I}{P[1/\mbfx]} \to \grcomp{I}{P[1/\mbfx]}
    \end{equation*}
    is an isomorphism in \(\mcalD_{\graded{G}}(P)\) for any \(\mbfx = x_{i_1} \cdots x_{i_k}\).
    This is precisely \Cref{LocalizationGradedPerfd}(2).
\end{proof}

\begin{theorem} \label{LocCohoCommutesAbsPerfd}
    Let \(R\) be a \(G\)-graded ring with \(G = G[1/p]\), and let \(I\) be a homogeneous ideal of definition of \(R\) containing \(p\). Let \(\mfrakm\) be a finitely generated homogeneous ideal of \(R\).
    Then there exists a natural isomorphism
    \begin{equation*}
        \dgrcomp{I}{R\Gamma_{\mfrakm}(R_{\tgrpfd})} \xrightarrow{\cong} \grlim_{P \in \Perfd_{\graded{G}}^{\wedge I}} \dgrcomp{I}{R\Gamma_{\mfrakm}(P)}
    \end{equation*}
    in \(\mcalD_{\graded{G}}(R)\).
\end{theorem}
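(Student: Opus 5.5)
We reduce to a statement about localizations and finite limits, using the \v{C}ech description of \(R\Gamma_{\mfrakm}\). Fix homogeneous generators \(x_1, \dots, x_r\) of \(\mfrakm\). By \Cref{LocCohoWellDef}, \(R\Gamma_{\mfrakm}(-)\) is obtained by iterating the functors \(R\Gamma_{(x_i)}\), each of which is the fiber of \(M \to M[1/x_i]\). Equivalently, \(R\Gamma_{\mfrakm}(M)\) is the finite limit (total fiber) of the \v{C}ech cube \(\mbfx \mapsto M[1/\mbfx]\), where \(\mbfx\) runs over products \(x_{i_1} \cdots x_{i_k}\). Consider the following ingredients:
\begin{itemize}
\item the functor \(\dgrcomp{I}{-}\), which is exact and commutes with all limits (\cite{ishizuka2026Derived}*{Lemma 4.8(2)});
\item finite limits, which commute with the limit \(\grlim_{P}\) over \(\Perfd_{\graded{G}}^{\wedge I}(R)\) in the stable \(\infty\)-category \(\mcalD_{\graded{G}}(R)\).
\end{itemize}
Together these show that it suffices to prove, for every product \(\mbfx = x_{i_1} \cdots x_{i_k}\) (including \(\mbfx = 1\)), that the canonical morphism
\begin{equation*}
    \dgrcomp{I}{R_{\tgrpfd}[1/\mbfx]} \to \grlim_{P \in \Perfd_{\graded{G}}^{\wedge I}(R)} \dgrcomp{I}{P[1/\mbfx]}
\end{equation*}
is an isomorphism, naturally in \(\mbfx\).

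For \(\mbfx = 1\), the claim reads \(\dgrcomp{I}{R_{\tgrpfd}} \cong \grlim_P \dgrcomp{I}{P}\). This holds for two reasons:
\begin{itemize}
\item \(R_{\tgrpfd}\) is derived gradedwise \(I\)-complete by \Cref{GradedAbsPerfdProperties}\Cref{RemarkGradedPerfd};
\item each \(P\) is derived gradedwise \(I\)-complete by \Cref{DerivedGrcompGradedPerfd}, so \(\dgrcomp{I}{P} \cong P\).
\end{itemize}
Hence both sides equal \(\grlim_P P = R_{\tgrpfd}\).

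For general \(\mbfx\), the claim is exactly the commutativity of limits \eqref{CommutesLimitIsom} in \Cref{CommutativityLimitsPerfd}, applied with \(f = \mbfx\), which is homogeneous. So the proof assembles these isomorphisms over the cube. The identifications must be compatible with the cube's transition maps \(M[1/\mbfx] \to M[1/\mbfx y]\); this follows because all maps in \Cref{CommutativityLimitsPerfd} are the canonical ones induced by the projections \(R_{\tgrpfd} \to P\), so the whole construction is functorial in the localization.

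More concretely, one can run an induction on \(r\) using the fiber sequence of \Cref{PropertiesCompLocCoho}\Cref{PropertiesCompLocCohoFiberSeq}. Take \(N = R\Gamma_{(x_1, \dots, x_{r-1})}(R_{\tgrpfd})\) and \(f = x_r\). Then \(\dgrcomp{I}{R\Gamma_{\mfrakm}(R_{\tgrpfd})}\) is the fiber of
\[
\dgrcomp{I}{N} \to \dgrcomp{I}{N[1/x_r]}.
\]
By \Cref{LocCohoWellDef}\Cref{LocCohoWellDefLocalization}, the second term is \(\dgrcomp{I}{R\Gamma_{(x_1,\dots,x_{r-1})}(R_{\tgrpfd}[1/x_r])}\). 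Using \Cref{PropertiesCompLocCoho}\Cref{PropertiesCompLocCohoLocalization} to pass to \(\dgrcomp{I}{R_{\tgrpfd}[1/x_r]}\), this becomes \((R[1/x_r])_{\tgrpfd}\) by \Cref{CommutativityLimitsPerfd}. The induction hypothesis therefore applies with \(R[1/x_r]\) in place of \(R\), and its perfectoid algebras \(\dgrcomp{I}{P[1/x_r]}\) (\Cref{LocalizationGradedPerfd}). This last identification needs cofinality of the functor \(P \mapsto \grcomp{I}{P[1/x_r]}\) into \(\Perfd_{\graded{G}}^{\wedge I}(R[1/x_r])\).

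The main obstacle is this bookkeeping step: making the inductive identification canonical and natural, and checking that the index categories match under the localization. I expect this to be absorbed by citing \eqref{CommutesLimitIsom} directly on the \v{C}ech cube, which avoids changing the base ring altogether.
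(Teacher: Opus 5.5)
Your proof is correct, but its main line differs from the paper's. The paper inducts on the number of generators.
\begin{itemize}
\item For one generator, it compares the fiber sequences $\dgrcomp{I}{R\Gamma_{(f)}(-)} \to (-) \to \dgrcomp{I}{(-)[1/f]}$ for $R_{\tgrpfd}$ and for the $P$'s, and invokes \eqref{CommutesLimitIsom}. This is exactly your argument for a one-dimensional cube.
\item In the inductive step, it rewrites the third term using the localization and composition compatibilities of \Cref{PropertiesCompLocCoho} together with \Cref{LocalizationGradedPerfd}(2).
\item It then identifies that term with the analogous map over the new base $R[1/f_r]^{\wedge}$. This uses \Cref{CommutativityLimitsPerfd} and the identification of $\{\grcomp{I}{P[1/f_r]}\}$ with $\Perfd_{\graded{G}}^{\wedge I}(R[1/f_r]^{\wedge})$, after which the induction hypothesis is applied over that base.
\end{itemize}

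You instead write $R\Gamma_{\mfrakm}(-)$ as the total fiber of the \v{C}ech cube $S \mapsto (-)[1/x_S]$. By \Cref{LocCohoWellDef}\Cref{LocCohoWellDefIsomorphism}, this is the tensor product with the total fiber of the cube for $R$, so it is functorial in the module. Hence the comparison map is automatically a map of cubes, whose components are the canonical maps of \eqref{CommutesLimitIsom} for the homogeneous products $x_S$. Exactness of $\dgrcomp{I}{-}$ and the commutation of finite limits with $\grlim_P$ then finish the argument.

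Your route never changes the base ring. So the naturality and index-category bookkeeping you worried about, which the paper's inductive step has to handle, does not arise. The only extra cost is applying \eqref{CommutesLimitIsom} to $2^r$ elements instead of one. What the paper's induction buys is that it stays within the fiber-sequence formalism of \Cref{PropertiesCompLocCoho} developed just before the theorem. The inductive sketch in your last paragraphs is essentially the paper's route and can be dropped, since the cube argument is already complete.
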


\begin{proof}
    We proceed by induction on the number \(r\) of homogeneous generators \(f_1, \dots, f_r\) of \(\mfrakm\).
    For \(r = 1\), \Cref{PropertiesCompLocCoho}\Cref{PropertiesCompLocCohoFiberSeq} yields a fiber sequence
    \begin{equation*}
        \dgrcomp{I}{R\Gamma_{(f)}(P)} \to \dgrcomp{I}{P} \to \dgrcomp{I}{P[1/f]}
    \end{equation*}
    in \(\mcalD_{\graded{G}}(R)\) for each \(I\)-adic \(G\)-graded perfectoid \(R\)-algebra \(P\).
    By \Cref{DerivedGrcompGradedPerfd} and the gradedwise \(I\)-completeness of \(P\), the canonical morphism
    \begin{equation*}
        \dgrcomp{I}{P} \xrightarrow{\cong} P
    \end{equation*}
    in \(\mcalD_{\graded{G}}(R)\) is an isomorphism.
    Therefore, the above fiber sequence becomes a fiber sequence
    \begin{equation*}
        \dgrcomp{I}{R\Gamma_{(f)}(P)} \to P \to \dgrcomp{I}{P[1/f]}
    \end{equation*}
    in \(\mcalD_{\graded{G}}(R)\).
    Taking the limit over all such \(P\), we obtain a fiber sequence
    \begin{equation*}
        \grlim_{R \to P} (\dgrcomp{I}{R\Gamma_{(f)}(P)}) \to R_{\tgrpfd} \to \grlim_{R \to P} (\dgrcomp{I}{P[1/f]})
    \end{equation*}
    in \(\mcalD_{\graded{G}}(R)\).
    Using the projection \(R_{\tgrpfd} \to P\) and \Cref{PropertiesCompLocCoho}\Cref{PropertiesCompLocCohoFiberSeq} for \(R_{\tgrpfd}\), we have the following commutative diagram in \(\mcalD_{\graded{G}}(R)\) whose horizontal sequences are fiber sequences:
    \begin{center}
        \begin{tikzcd}
            {\grlim_{R \to P} (\dgrcomp{I}{R\Gamma_{(f)}(P)})} \arrow[r]      & {R_{\tgrpfd}} \arrow[r]      & {\grlim_{R \to P} (\dgrcomp{I}{P[1/f]})}  \\
            {\dgrcomp{I}{R\Gamma_{(f)}(R_{\tgrpfd})}} \arrow[u] \arrow[r] & {R_{\tgrpfd}} \arrow[u, Rightarrow, no head] \arrow[r] & {\dgrcomp{I}{R_{\tgrpfd}[1/f]}} \arrow[u]
        \end{tikzcd}
    \end{center}
    The right vertical morphism corresponds to the isomorphism \eqref{CommutesLimitIsom} in \Cref{CommutativityLimitsPerfd}, hence the left vertical morphism is also an isomorphism.

    Assume that the desired isomorphism holds for the local cohomology with respect to \(\mfrakn \defeq (f_1, \dots, f_{r-1})\).
    Using \Cref{PropertiesCompLocCoho}\Cref{PropertiesCompLocCohoFiberSeq} and \Cref{PropertiesCompLocCohoComposition} for \(M = \dgrcomp{I}{R\Gamma_{\mfrakn}(-)}\), we have a commutative diagram
    \begin{equation} \label{DiagramInductionLocCohoLimits}
        \begin{tikzcd}[column sep=small]
            {\grlim_{R \to P} \dgrcomp{I}{R\Gamma_{\mfrakm}(P)}} \arrow[r]    & {\grlim_{R \to P} \dgrcomp{I}{R\Gamma_{\mfrakn}(P)}} \arrow[r]    & {\grlim_{R \to P} \dgrcomp{I}{\dgrcomp{I}{R\Gamma_{\mfrakn}(P)}[1/f_r]}}      \\
            {\dgrcomp{I}{R\Gamma_{\mfrakm}(R_{\tgrpfd})}} \arrow[u] \arrow[r] & {\dgrcomp{I}{R\Gamma_{\mfrakn}(R_{\tgrpfd})}} \arrow[u] \arrow[r] & {\dgrcomp{I}{\dgrcomp{I}{R\Gamma_{\mfrakn}(R_{\tgrpfd})}[1/f_r]}} \arrow[u]
        \end{tikzcd}
    \end{equation}
    in \(\mcalD_{\graded{G}}(R)\) whose horizontal sequences are fiber sequences.
    By the induction hypothesis, the middle vertical morphism is an isomorphism.
    By \Cref{PropertiesCompLocCoho}\Cref{PropertiesCompLocCohoLocalization}, the right vertical morphism can be identified with the morphism
    \begin{equation*}
        \dgrcomp{I}{R\Gamma_{\mfrakn}(R_{\tgrpfd}[1/f_r])} \to \grlim_{R \to P} \dgrcomp{I}{R\Gamma_{\mfrakn}(P[1/f_r])}.
    \end{equation*}
    By \Cref{PropertiesCompLocCoho}\Cref{PropertiesCompLocCohoCompLoc} and \Cref{PropertiesCompLocCohoStable}, and by replacing \(\dgrcomp{I}{P[1/f_r]}\) with \(\grcomp{I}{P[1/f_r]}\) via \Cref{LocalizationGradedPerfd}(2), this morphism is the derived gradedwise \(I\)-completion of
    \begin{equation*}
        R\Gamma_{\mfrakn}(\dgrcomp{I}{R_{\tgrpfd}[1/f_r]}) \to \grlim_{P \in \Perfd_{\graded{G}}^{\wedge I}} R\Gamma_{\mfrakn}(\grcomp{I}{P[1/f_r]}).
    \end{equation*}
    
    Furthermore, by \Cref{CommutativityLimitsPerfd} and the equivalence between the categories \(\set{\grcomp{I}{P[1/f_r]}}{P \in \Perfd^I_{\graded{G}}(R)}\) and \(\Perfd^I_{\graded{G}}(R[1/f_r]^{\wedge})\), this morphism translates to
    \begin{equation*}
        R\Gamma_{\mfrakn}((R[1/f_r]^{\wedge})_{\tgrpfd}) \to \grlim_{R[1/f_r]^{\wedge} \to P'} R\Gamma_{\mfrakn}(P')
    \end{equation*}
    and this is an isomorphism in \(\mcalD_{\graded{G}}(R[1/f_r]^{\wedge})\) by the induction hypothesis.
    This implies that the left vertical morphism in \eqref{DiagramInductionLocCohoLimits} is also an isomorphism, completing the proof.
\end{proof}

\begin{theorem} \label{ExactTriLocalCechCoho}
    Let \(R\) be a \(\setQ^n_{\geq 0}\)-graded adic ring with a homogeneous ideal of definition \(I\) containing $p$.
    Let \(G \subseteq \setQ^n\) be the subgroup generated by the support \(\Supp(R)\) and \(1/p\).
    Let \(R_+\) be the homogeneous ideal of \(R\) generated by all homogeneous elements whose degrees belong to \((\setQ_{>0})^n\), and assume that \(R_+\) is generated by finitely many homogeneous elements \(x_1, \dots, x_r \in R\).\footnote{Note that, in the case of \(n > 1\), this \(R_+\) is not the same as the homogeneous ideal generated by homogeneous elements whose degrees are not \(0 = (0, \dots, 0)\).}
    By \cite{ishizuka2026Algebraization}*{Construction A.1}, we can associate the projective spectrum \(\Proj(R) \defeq \Proj(R; R_+)\) to \(R\) with respect to \(R_+\).
    Then we have a fiber sequence
    \begin{equation*}
        \dgrcomp{I}{R\Gamma_{R_+}(R_{\tperfd,\graded})} \to R_{\tperfd,\graded} \to \bigoplus_{\mbfq \in G} R\Gamma(\hProj(R), \lim_{P \in\Perfd_{\graded{G}}^{\wedge I}(R)} P(\mbfq)^\Delta)
    \end{equation*}
    in \(\mcalD^{\comp{I}}_{\graded{G}}(R)\), where \(\hProj(R)\) is the \(I\)-adic completion of the \(R\)-scheme \(\Proj(R)\) and \(P(\mbfq)^{\Delta}\) is the limit \(\lim_{n \geq 1} \widetilde{P(\mbfq)/I^n}\) of the associated \(\mcalO_{\Proj(R)}\)-modules.\footnote{This \(P(\mbfq)^{\Delta}\) is isomorphic to the \(\widetilde{I}\)-adic completion of the quasi-coherent sheaf \(\widetilde{P(\mbfq)}\) on \(\Proj(R)\). See \cite{ishizuka2026Algebraization}*{Construction A.8}.}


\end{theorem}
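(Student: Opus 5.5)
The plan is to prove the statement first for each graded perfectoid \(P\), using the graded \v{C}ech complex for \(R_+ = (x_1, \dots, x_r)\), and then to pass to the limit over \(P\) with \Cref{LocCohoCommutesAbsPerfd}. Here \(R_{\tperfd,\graded}\) is read as \(R_{\tgrpfd}\).

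\textbf{Step 1: the sequence for a fixed \(P\).} Let \(P \in \Perfd_{\graded{G}}^{\wedge I}(R)\). By \Cref{GradedPerfdLocalCoho} and \Cref{GradedwiseCompletedCech}, \(\dgrcomp{I}{R\Gamma_{R_+}(P)}\) is represented by the gradedwise completed augmented \v{C}ech complex
\begin{equation*}
    0 \to P \to \bigoplus_i \grcomp{I}{P[1/x_i]} \to \cdots \to \grcomp{I}{P[1/x_1\cdots x_r]} \to 0.
\end{equation*}
This uses \(\grcomp{I}{P} = P\) from \Cref{DerivedGrcompGradedPerfd}. Removing the augmentation gives a fiber sequence
\begin{equation*}
    \dgrcomp{I}{R\Gamma_{R_+}(P)} \to P \to \check{C}^\wedge(P),
\end{equation*}
where \(\check{C}^\wedge(P)\) is the non-augmented completed \v{C}ech complex.

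\textbf{Step 2: identify \(\check{C}^\wedge(P)\) with sheaf cohomology.} The opens \(D_+(x_i)\) cover \(\Proj(R)\), because the \(x_i\) generate \(R_+\); their completions cover \(\hProj(R)\), and all their intersections are affine. By \cite{ishizuka2026Algebraization}*{Construction A.8}, the sections of \(P(\mbfq)^\Delta\) over \(\widehat{D_+(\mbfx)}\) are \(\lim_n (P(\mbfq)/I^n)_{(\mbfx)}\), and this is \((\grcomp{I}{P[1/\mbfx]})_{\mbfq}\).

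Next I need the higher cohomology of \(P(\mbfq)^\Delta\) on each affine \(\widehat{D_+(\mbfx)}\) to vanish. By a Mittag-Leffler argument, \(R\Gamma\) of the limit is \(R\lim_n\) of the quasi-coherent sections \((P(\mbfq)/I^n)_{(\mbfx)}\). The transition maps of this system are surjective, so the \(R^1\lim\) vanishes.

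Taking degree-\(\mbfq\) parts and summing over \(\mbfq \in G\) then gives
\begin{equation*}
    \check{C}^\wedge(P) \simeq \bigoplus_{\mbfq} R\Gamma(\hProj(R), P(\mbfq)^\Delta),
\end{equation*}
naturally in \(P\).

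\textbf{Step 3: pass to the limit.} I take \(\grlim\) over \(P\) of the fiber sequence from Step 1. The first term becomes \(\dgrcomp{I}{R\Gamma_{R_+}(R_{\tgrpfd})}\) by \Cref{LocCohoCommutesAbsPerfd}, and the middle term is \(R_{\tgrpfd}\) by definition. For the third term, \(\grlim\) is computed degreewise, i.e.\ as \(\bigoplus_{\mbfq}\lim\) of the degree-\(\mbfq\) parts. In each degree I commute the limit over \(P\) with \(R\Gamma(\hProj(R), -)\), which is allowed because \(R\Gamma\) preserves limits. The third term is therefore
\begin{equation*}
    \bigoplus_{\mbfq} R\Gamma\Bigl(\hProj(R), \lim_P P(\mbfq)^\Delta\Bigr),
\end{equation*}
where the limit is taken in \(\mcalD(\hProj(R)_{\Zar}, \mcalO)\).

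All terms are derived \(I\)-complete: each \(P\) is gradedwise complete and the sheaf cohomology terms are limits of \(I\)-complete objects. So the sequence lives in \(\mcalD^{\comp{I}}_{\graded{G}}(R)\).

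\textbf{Main obstacle.} The hardest part is making Step 2 functorial in \(P\) at the \(\infty\)-categorical level, rather than only on each object. My approach is to realise both sides as a limit over the \v{C}ech nerve of the cover \(\{\widehat{D_+(x_i)}\}\), functorially in \(P\). Then Zariski descent for \(\mcalD(\hProj(R)_{\Zar},\mcalO)\) identifies the totalization with \(R\Gamma\) of \(P(\mbfq)^\Delta\). Commuting the limit over \(P\) with this finite totalization is then automatic.

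A second point to check is the Mittag-Leffler vanishing of \(R^1\lim\) for each \(\widehat{D_+(\mbfx)}\), which Step 2 relies on.
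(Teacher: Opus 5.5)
Your proposal is correct and follows essentially the same route as the paper. For each \(P\) you compare the gradedwise completed augmented \v{C}ech complex (via \Cref{GradedwiseCompletedCech} and \Cref{GradedPerfdLocalCoho}) with the \v{C}ech computation of \(R\Gamma(\hProj(R), P(\mbfq)^{\Delta})\) from the vanishing of \(R^1\lim\), and then take the limit over \(P\) using \Cref{LocCohoCommutesAbsPerfd}; the only cosmetic difference is that the paper computes the \v{C}ech complex at each finite level \(P(\mbfq)^{\Delta}/I^n\) before passing to \(R\lim\), whereas you pass to the completed sheaf on each affine first.
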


\begin{proof}
Set $\mscrX \defeq \hProj(R)$.
Let $P \in \Perfd_{\graded{G}}^{\wedge I}(R)$.
Choose the standard affine open covering \(\mcalU \defeq \{D_+(x_i)\}_{i=1}^r\) of \(\mscrX\).
By \cite{gabber2018Foundations}*{Theorem 15.1.37 (ii)}, $R\Gamma(\mscrX, P(\mbfq)^{\Delta}/I^n)$ is computed by the \v{C}ech complex \(\cech(\mcalU, P(\mbfq)^{\Delta}/I^n)\).
Following the argument in \Cref{GradedwiseCompletedCech}, the vanishing of \(R^1\lim_{n \geq 1}\) in each cohomological degree yields the representation
\begin{align*}
    & R\Gamma(\mscrX, P(\mbfq)^{\Delta}) \cong R\lim_{n \geq 1}R\Gamma(\mscrX, \widetilde{P(\mbfq)}/I^n) \\
    & \cong (0 \to \bigoplus_{i=1}^r (\grcomp{I}{P(\mbfq)_{x_i}})_0 \to \bigoplus_{1 \leq i < j \leq r} (\grcomp{I}{P(\mbfq)_{x_ix_j}})_0 \to \cdots \to (\grcomp{I}{P(\mbfq)_{x_1 \cdots x_r}})_0 \to 0)
\end{align*}
in \(\mcalD(R)\) by the \(I\)-adic completeness of \(P(\mbfq)^{\Delta}\), where the term \(\bigoplus_{i=1}^r (\grcomp{I}{P(\mbfq)_{x_i}})_0\) sits in cohomological degree \(0\).
Comparing the representations obtained here and in \Cref{GradedwiseCompletedCech}, we obtain a canonical fiber sequence \((R\Gamma^{\wedge}_{R_+}(P))_{\mbfq} \to P_{\mbfq} \to R\Gamma(\mscrX, P(\mbfq)^{\Delta})\) in \(\mcalD(R)\).
Taking the direct sum over all \(\mbfq \in G\), we obtain the following fiber sequence in \(\mcalD_{\graded{G}}(R)\):
\begin{equation} \label{FiberSequenceGrPerfd}
    R\Gamma^{\wedge}_{R_+}(P) \to P \to \bigoplus_{\mbfq \in G} R\Gamma(\hProj(R), P(\mbfq)^{\Delta}).
\end{equation}
Taking the limit in $\mcalD_{\graded{G}}^{\comp{I}}(R)$ over all $P \in \Perfd_{\graded{G}}^{\wedge I}(R)$ and applying \Cref{GradedPerfdLocalCoho} and \Cref{LocCohoCommutesAbsPerfd}, we obtain the desired fiber sequence in $\mcalD^{\comp{I}}_{\graded{G}}(R)$:
\begin{equation*}
        \dgrcomp{I}{R\Gamma_{R_+}(R_{\tperfd,\graded})} \to R_{\tperfd,\graded} \to \bigoplus_{\mbfq \in G} R\Gamma(\hProj(R), \lim_{P \in \Perfd_{\graded{G}}^{\wedge I}(R)} P(\mbfq)^{\Delta}),
    \end{equation*}
as desired.
\end{proof}

\section{Local-global correspondence} \label{SectionLocalGlobal}

In this section, we will show the local and global correspondence for lim-perfectoid splitting.
We work on the setting below.

\begin{notation} \label{SettingFibersequenceRecall}
    Recall that \(p\) be a prime number.
\begin{itemize}
\item Let $(R_0,\m_0, k)$ be a \(p\)-torsion-free discrete valuation ring whose residue characteristic is \(p\).
We fix a normalized dualizing complex \(\omega_{R_0}^{\bullet}\).
\item Let $X$ be a flat projective scheme over $R_0$ such that $H^0(X,\cO_X)=R_0$.
We set $d\defeq \dim X$ and assume \(d \geq 2\).
\item We define $\omega_X^{\bullet} \defeq (X \to \Spec R_0)^{!}\omega_{R_0}^{\bullet}$ and $\omega_X\defeq \mcalH^{-d}(\omega_X^{\bullet})$.
\item Let \(L\) be an ample line bundle on \(X\), and let \(L^{-1}\) denote its dual, equipped with a fixed isomorphism \(L \otimes_{\mcalO_X} L^{-1} \xrightarrow{\cong} \mcalO_X\).
Let
\begin{equation*}
    R \defeq \bigoplus_{n \geq 0} H^0(X, L^{\otimes n})
\end{equation*}
be the section ring, with the homogeneous ideal \(R_+ \defeq \bigoplus_{n > 0} H^0(X, L^{\otimes n})\), and let \(\Proj(R)\) be its projective spectrum.
We can identify \(X\) with \(\Proj(R)\) and \(L\) on \(X\) with \(\mcalO_{\Proj(R)}(1)\) on \(\Proj(R)\) (\cite{ishizuka2026Algebraization}*{Lemma 5.2}).
\item Let $\mscrX$ be the $p$-adic formal completion of $X$.
\item Let \(\omega_{\mscrX}\) be the \(p\)-adic completion of \(\omega_X\), and let \(\omega_{\mscrX}^{\bullet}\) be the derived \(p\)-completion \((\omega_{X}^{\bullet})^{\wedge} \in \mcalD_{\qcoh}(\mscrX)\) of \(\omega_X^{\bullet}\) defined in \Cref{DerivedCompletionFormalSchemes}.
\end{itemize}
Note that $\omega_X$ is a dualizing sheaf since $\omega_X^{\bullet}$ is a normalized dualizing complex by \cite{KTTWYY1}*{Proposition~2.5}.
Moreover, \(\mcalH^{-d}(\omega_{\mscrX}^{\bullet})\) is isomorphic to \(\omega_{\mscrX}\) by a variant of the formal function theorem (e.g., \cite{gortz2023Algebraic}*{Proposition 24.33}).
Furthermore, the symbol \(R\Gamma_{(p)}(\mscrX, -)\) denotes \(R\Gamma_{(p)}(R\Gamma(\mscrX, -))\).
\end{notation}

\begin{remark}\label{rmk:Gorensteiness-section-ring}
In \Cref{SettingFibersequenceRecall}, we further assume that $\cO_X \simeq \omega_X$, or $L=\omega_X$, or $L=\omega_X^{-1}$.
Then $R_{(\m_0,R_+)}$ is Gorenstein if and only if $R_{(\m_0,R_+)}$ is Cohen--Macaulay by \Cref{dualizing-module-affine-cone}.    
\end{remark}


\subsection{Local-to-global correspondence}

Using the algebraization result of \(\mcalO_{\mscrX, \perfd}\), we can show the following:

\begin{proposition} \label{LocalGlobalPure}
    Keep the setting of \Cref{SettingFibersequenceRecall}.
    If the \(p\)-adic completion \(R^{\wedge p}\) of \(R\) is lim-perfectoid pure (resp., perfectoid pure), then \(X\) is lim-perfectoid split (resp., perfectoid split).
\end{proposition}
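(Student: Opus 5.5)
We treat the two assertions separately. In both, the route is to pass from the ring $R$ to its graded version, and then apply the associated-sheaf functor $\widetilde{(-)}\colon \mcalD_{\graded{\setZ[1/p]}}(R)\to\mcalD_{\qcoh}(X)$. This functor is exact, preserves colimits, and sends $R$ to $\mcalO_X$ (since $X\cong\Proj(R)$).

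\emph{Lim-perfectoid case.} Suppose $R^{\wedge p}$ is lim-perfectoid pure.
\begin{itemize}
\item By \Cref{EquivGradedPerfdPure} (equivalence (1)$\Leftrightarrow$(2)), the morphism $R\to R_{\grpfd}$ is pure in $\mcalD_{\graded{\setZ[1/p]}}(R)$. Here $R$ is $p$-torsion-free, and $(p,R_+)$ is its unique graded maximal ideal containing $p$ after localizing at $\m_0$, which suffices since $R_0$ is local.
\item Purity in this sense means ind-split. Indeed, the proof of \Cref{PurityEquiv} writes the target as a filtered colimit of perfect objects $R\to M_\lambda$, each of which splits.
\item Applying $\widetilde{(-)}$ gives a filtered colimit of split morphisms $\mcalO_X\to\widetilde{M_\lambda}$ whose colimit is $\mcalO_X\to\widetilde{R_{\grpfd}}=\mcalO_{X,\perfd}$ (\Cref{AlgebraizableOXperfd}).
\item So $\mcalO_X\to\mcalO_{X,\perfd}$ is ind-split in $\mcalD_{\qcoh}(X)$, and \Cref{CompareGLPPGLPI}, (2)$\Rightarrow$(1), shows that $X$ is lim-perfectoid split. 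Its hypotheses hold: $R_0$ is a DVR, hence admits a dualizing complex, and we take $I=(p)$.
\end{itemize}
The main point to check is that the ind-splitting produced by \Cref{PurityEquiv} is genuinely graded, that is, the $M_\lambda$ are graded perfect complexes. This holds because the colimit presentation is taken inside $\mcalD_{\graded{G}}(R)$ itself.

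\emph{Perfectoid case.} Suppose $R^{\wedge p}\to P$ is pure with $P$ perfectoid.
\begin{itemize}
\item By \Cref{CofinalGradedPerfdForGradedRing}, applied with $H=\setZ_{\ge0}$, there is a gradedwise complete $\setZ[1/p]$-graded perfectoid $R$-algebra $Q$ with $\Supp(Q)\subseteq\setZ[1/p]_{\ge0}$ and a map $Q\to P$.
\item The composite $R^{\wedge p}\to Q^{\wedge p}\to P$ is pure, so $R^{\wedge p}\to Q^{\wedge p}$ is pure. By \Cref{PurityEquiv}, (2)$\Rightarrow$(1) (using $\dcomp{p}{Q}=Q^{\wedge p}$, from \Cref{DerivedGrcompGradedPerfd}), $R\to Q$ is pure in the graded derived category, hence ind-split there. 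Since both sides are discrete, this is ind-split as graded modules after truncation.
\item Set $Y\defeq\Proj(Q;R_+Q)$, or equivalently the relative $\Proj$ over $X$ of the quasi-coherent graded $\mcalO_X$-algebra $\bigoplus_{q\ge0}\widetilde{Q(q)}$. This is affine over $X$, with $\pi_*\mcalO_Y=\widetilde{Q}$ (the degree-zero part on each $D_+(x)$).
\item Since $\widetilde{(-)}$ preserves filtered colimits and splittings, $\mcalO_X\to\pi_*\mcalO_Y$ is ind-split.
\item On $D_+(x)$ we have $\mcalO_Y(D_+(x))=(Q[1/x])_0$. By \Cref{LocalizationGradedPerfd}, $\grcomp{p}{Q[1/x]}$ is a graded perfectoid ring, and its degree-zero part is perfectoid by \Cref{GradedAbsPerfdProperties}\Cref{GradedPerfdZeroPart}, or by \cite{ishizuka2025Graded}. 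Hence the $p$-completion of $(Q[1/x])_0$ is perfectoid, so $\widehat{Y}$ is a perfectoid formal scheme.
\end{itemize}
The expected obstacle is this last step: verifying that degree-zero parts of graded perfectoid rings (with a degree-$1$ unit $x$) are perfectoid. I would cite \cite{ishizuka2025Graded} for this, and handle the rational grading via $x^{1/p^\infty}$-type roots as in that paper.
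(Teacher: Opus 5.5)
Your argument is correct. The lim-perfectoid half is the paper's proof: purity of $R\to R_{\grpfd}$ from \Cref{EquivGradedPerfdPure}, apply $\widetilde{(-)}$, identify $\widetilde{R_{\grpfd}}=\mathcal{O}_{X,\perfd}$ via \Cref{AlgebraizableOXperfd}, and conclude with \Cref{ind-split-dual} (equivalently \Cref{CompareGLPPGLPI}).

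In the perfectoid half you keep the paper's skeleton: a pure graded perfectoid $R$-algebra, and the open part $Y$ of its $\Proj$ lying over $X$. You differ from the paper in two places.
\begin{enumerate}
\item You build the graded cover yourself from \Cref{CofinalGradedPerfdForGradedRing} and \Cref{PurityEquiv}, instead of citing Proposition 4.24 of \cite{ishizuka2025Graded}.
\item You get the ind-splitting of $\mathcal{O}_X\to\pi_*\mathcal{O}_Y$ directly in $\QCoh(X)$, by applying $\widetilde{(-)}$ to the graded ind-splitting of $R\to Q$ after taking $H^0$. The paper instead passes to $p$-adic completions $R^{\Delta}\to R_\infty^{\Delta}$ and descends with \Cref{ind-split-dual}.
\end{enumerate}
Your version is slightly more direct and lands exactly in the category required by \Cref{DefDPS}.

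Two references need fixing.

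First, \Cref{GradedAbsPerfdProperties}\Cref{GradedPerfdZeroPart} cannot give perfectoidness of $((Q[1/x])_0)^{\wedge p}$. That result is about $(A_{\tgrpfd})_0$, not about the degree-zero part of a graded perfectoid ring. Moreover, its hypothesis ($A_g=0$ or $A_{-g}=0$ for every $g$) fails once $x$ is a unit of positive degree. What you need is Proposition B.18 of \cite{ishizuka2026Algebraization}: the $p$-adic completion of $\Proj$ of a graded perfectoid ring is a perfectoid formal scheme. The paper invokes exactly this at the same step, and it settles the obstacle you flagged.

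Second, the identification $\dcomp{p}{Q}\cong Q^{\wedge p}$ comes from \Cref{LocalizationGradedPerfd}(3) with $f=1$, or from the bounded $p^\infty$-torsion of $Q\subseteq Q^{\wedge p}$. It does not come from \Cref{DerivedGrcompGradedPerfd}, which concerns the derived gradedwise completion.
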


\begin{proof}
    First, we assume $R^{\wedge p}$ is lim-perfectoid pure.
    By \Cref{EquivGradedPerfdPure}, the canonical morphism \(R \to R_{\grpfd}\) is also ind-split in \(\mcalD_{\graded{\setZ[1/p]}}(R)\).
    Taking the associated sheaf, we have an ind-split morphism
    \begin{equation*}
        \mcalO_X \cong \widetilde{R} \to \widetilde{R_{\grpfd}} = \mcalO_{X, \perfd}
    \end{equation*}
    in \(\mcalD_{\qcoh}(X)\) by \Cref{AlgebraizableOXperfd}.
    Therefore, \(X\) is lim-perfectoid split by \Cref{ind-split-dual}.

Next, we assume $R^{\wedge p}$ is perfectoid pure.
Hence, by \cite{ishizuka2025Graded}*{Proposition 4.24}, there exists a $\setZ[1/p]$-graded perfectoid $R$-algebra $R_{\infty}$ such that the induced homomorphism $R \to R_{\infty}$ is pure.
The morphism \(R \to R_{\infty}\) induces an affine morphism
\[
\Proj(R_{\infty}) \supseteq Y \defeq \bigcup_{f \in R_+} D_+(\varphi(f)) \xrightarrow{\pi} \Proj(R) = X.
\]
Since $R_{\infty}$ is a graded perfectoid ring, the $p$-adic completion of $\Proj(R_{\infty})$ is a perfectoid formal scheme by \cite{ishizuka2026Algebraization}*{Proposition B.18}.
Therefore, the open subscheme $\wt{Y}$ is a perfectoid formal scheme.
By construction, we have
\[
(\pi_*\mcalO_{Y})^{\wedge} \cong \widehat{\pi}_*\cO_{\widehat{Y}} \cong R_{\infty}^{\Delta},
\]
where $\widehat{\pi}$ denotes the $p$-adic formal completion of the morphism $\pi \colon Y \to X$.
By the above argument, we have the ind-split homomorphism
\[
\cO_X \cong R^{\Delta} \to R_{\infty}^{\Delta} \cong \wt{\pi}_*\mcalO_{\wt{Y}}.
\]
Thus, the assertion follows from \Cref{ind-split-dual}.
\end{proof}

The above proposition requires the lim-perfectoid purity on \(R^{\wedge p}\).
In what follows, we will relax this assumption to the lim-perfectoid injectivity (\Cref{local-to-global-PS}).

Using the twist of the structure sheaf of \(\mscrX\), we can define the twist of \(\mcalO_{\mscrX, \perfd}\).

\begin{definition}[{\cite{ishizuka2026Algebraization}*{Definition 5.8 and Corollary 5.9}}] \label{DefTwistedPerfdProj}
    In the setting of \Cref{SettingFibersequenceRecall}, we define the twisted absolute perfectoidization
    \begin{equation*}
        \mcalO_{\mscrX, \perfd}(m) \defeq \mcalO_{\mscrX, \perfd} \otimes^L_{\mcalO_{\mscrX}} \mcalO_{\mscrX}(m) \in \mcalD(\mscrX_{\Zar}, \mcalO_{\mscrX})
    \end{equation*}
    for every \(m \in \setZ\).
    By the cited reference, there is a canonical isomorphism
    \begin{equation*}
        \mcalO_{\mscrX, \perfd}(m) \xrightarrow{\cong} \lim_{P \in \Perfd^{\wedge p}_{\graded{\setZ[1/p]}}(R)} P(m)^{\Delta}
    \end{equation*}
    in \(\mcalD(\mscrX, \mcalO_{\mscrX})\) for every \(m \in \setZ\).
\end{definition}


Since \(R\) is only \(\setZ_{\geq 0}\)-graded, we cannot define the twist \(\mcalO_{\mscrX, \perfd}(q)\) for \(q \in \setZ[1/p] \setminus \setZ\) by using the twist on \(\mscrX\) as in \Cref{DefTwistedPerfdProj}.
However, the isomorphism established above allows us to define such a twist as follows:

\begin{definition} \label{RationalTwistAbsPerfd}
    Keep the setting of \Cref{SettingFibersequenceRecall}.
    For any \(q \in \setZ[1/p]\), we define an object in \(\mcalD(\mscrX, \mcalO_{\mscrX})\) by
    \begin{equation*}
        \mcalO_{\mscrX, \perfd}(q) \defeq \lim_{P \in \Perfd^{\wedge p}_{\graded{\setZ[1/p]}}(R)} P(q)^{\Delta},
    \end{equation*}
    which we call the \emph{\(q\)-twist of \(\mcalO_{\mscrX, \perfd}\)} (or the \emph{twist by \(q\)}), denoted by \(\mcalO_{\mscrX, \perfd}(q)\).
\end{definition}



\begin{remark} \label{RemarkRationalTwist}
    Regarding the definition of \(\mcalO_{\mscrX, \perfd}(q)\), the following properties hold:
    \begin{enumerate}
        \item For \(q = m \in \setZ\), this definition coincides with the definition of \(\mcalO_{\mscrX, \perfd}(m)\) in \Cref{DefTwistedPerfdProj}.
        \item For any \(q \in \setZ[1/p]\), the absolute graded perfectoidization \(R_{\grpfd}\) has the grade shifting \(R_{\grpfd}(q)\) in \(\mcalD_{\graded{\setZ[1/p]}}(R)\). By using this, we can take an object
        \begin{equation*}
            \mcalO_{X, \perfd}(q) \defeq \widetilde{R_{\grpfd}(q)}^{\wedge} \in \mcalD_{\qcoh}(X)
        \end{equation*}
        by using the derived \(p\)-completion and the associated object by the same construction in \Cref{AlgebraizableOXperfd}. As in the proof of \Cref{AlgebraizableOXperfd}, we have isomorphisms
        \begin{align*}
            & R\Gamma(D_+(f), \mcalO_{X, \perfd}(q)^{\wedge}) \cong (R[1/f]_{\grpfd})_q = \lim_{Q' \in \Perfd^{\wedge p}_{\graded{\setZ[1/p]}}} Q'_q \\
            & \cong \lim_{P \in \Perfd^{\wedge p}_{\graded{\setZ[1/p]}}} (\grcomp{p}{P[1/f]})_q \cong R\Gamma(D_+(f), \mcalO_{\mscrX, \perfd}(q))
        \end{align*}
        for any \(f \in R_+\). Therefore, we have another representation of \(\mcalO_{\mscrX, \perfd}(q)\):
        \begin{equation*}
            \mcalO_{\mscrX, \perfd}(q) \cong \mcalO_{X, \perfd}(q)^{\wedge}
        \end{equation*}
        in \(\mcalD_{\qcoh}(\mscrX)\). Namely, we can algebraize the \(q\)-twist of \(\mcalO_{\mscrX, \perfd}\) for any \(q \in \setZ[1/p]\).
        \item By (1), we obtain a natural split inclusion
        \begin{equation*}
            \bigoplus_{m \in \setZ} R\Gamma(\mscrX, \mcalO_{\mscrX, \perfd}(m)) \hookrightarrow \bigoplus_{q \in \setZ[1/p]} R\Gamma(\mscrX, \mcalO_{\mscrX, \perfd}(q))
        \end{equation*}
        in \(\CAlg(\mcalD_{\graded{\setZ[1/p]}}(R))\).
    \end{enumerate}
\end{remark}

Using these twists, we obtain the following fiber sequence.

\begin{proposition} \label{GradedAbsPerfdSheafCoho}
    In the setting of \Cref{SettingFibersequenceRecall}, there exists a fiber sequence
    \begin{equation*}
        \dgrcomp{(p)}{R\Gamma_{R_+}(R_{\grpfd})} \to R_{\grpfd} \to \bigoplus_{q \in \setZ[1/p]} R\Gamma(\mscrX, \mcalO_{\mscrX, \perfd}(q))
    \end{equation*}
    in \(\mcalD_{\graded{\setZ[1/p]}}^{\comp{p}}(R)\).
    In particular, \(R\Gamma_{(p)}(R_{\perfd})\) and \(R\Gamma_{(p, R_+)}(R_{\perfd})\) naturally inherit \(\setZ[1/p]\)-graded structures, and with respect to these structures, we obtain a fiber sequence
    \begin{equation*}
        R\Gamma_{(p, R_+)}(R_{\perfd}) \to R\Gamma_{(p)}(R_{\perfd}) \to \bigoplus_{q \in \setZ[1/p]} R\Gamma_{(p)}(\mscrX, \mcalO_{\mscrX, \perfd}(q))
    \end{equation*}
    in \(\mcalD_{\graded{\setZ[1/p]}}(R)\).
\end{proposition}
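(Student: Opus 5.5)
The first fiber sequence will come from \Cref{ExactTriLocalCechCoho}, specialized to the present setting. I take \(n=1\), \(I=(p)\), and the \(\setZ_{\geq 0}\)-graded section ring \(R\), regarded as \(\setQ_{\geq 0}\)-graded. The subgroup \(G\) generated by \(\Supp(R)\) and \(1/p\) is then \(\setZ[1/p]\); the support is nonzero in degree \(1\) up to finitely many degrees, but in any case \(G\subseteq\setZ[1/p]\), and I would simply work with \(\setZ[1/p]\) via \Cref{GradedPerfdCofinalFunctors}. Since \(R\) is finitely generated over \(R_0\) (\(L\) is ample and \(X\) is projective over the Noetherian ring \(R_0\)), \(R_+\) is finitely generated by homogeneous elements. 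I identify \(\Proj(R)=X\) and \(\hProj(R)=\mscrX\) using \cite{ishizuka2026Algebraization}*{Lemma 5.2}. By \Cref{RationalTwistAbsPerfd}, the last term of \Cref{ExactTriLocalCechCoho} is \(\bigoplus_{q}R\Gamma(\mscrX,\mcalO_{\mscrX,\perfd}(q))\). By \Cref{DefGradedAbsPerfd}, \(R_{\tperfd,\graded}\) is \(R_{\grpfd}\) with the \(p\)-adic topology. Together these give the first fiber sequence in \(\mcalD^{\comp{p}}_{\graded{\setZ[1/p]}}(R)\).

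For the second statement, I apply \(R\Gamma_{(p)}(-)\), computed in \(\mcalD_{\graded{\setZ[1/p]}}(R)\) via \Cref{DefLocalCohoGradedModules}, to the first fiber sequence; this preserves fiber sequences because it is exact. Three identifications are then needed. First, by \Cref{PropertiesCompLocCoho}\Cref{PropertiesCompLocCohoStable}, \(R\Gamma_{(p)}(R_{\grpfd})\cong R\Gamma_{(p)}(\dcomp{p}{R_{\grpfd}})\). The underlying object of this is \(R\Gamma_{(p)}(R_{\perfd})\) by \Cref{GradedAbsPerfdProperties}\Cref{CompletionOfAbsoluteGradedPerfd} and \Cref{LocCohoWellDef}\Cref{LocCohoWellDefUnderlying}, using \(R\Gamma_{(p)}(R_{\perfd})\cong R\Gamma_{(p)}(\dcomp{p}{R_{\perfd}})\) in \(\mcalD(R)\). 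This equips \(R\Gamma_{(p)}(R_{\perfd})\) with its \(\setZ[1/p]\)-grading. Second, using \Cref{PropertiesCompLocCoho}\Cref{PropertiesCompLocCohoStable} and then \Cref{PropertiesCompLocCohoComposition}, together with \(R\Gamma_{(p)}\circ R\Gamma_{R_+}=R\Gamma_{(p,R_+)}\) from \Cref{LocCohoWellDef}\Cref{LocCohoWellDefWellDefined}, I get
\[
R\Gamma_{(p)}(\dgrcomp{(p)}{R\Gamma_{R_+}(R_{\grpfd})})\cong R\Gamma_{(p)}(R\Gamma_{R_+}(R_{\grpfd}))\cong R\Gamma_{(p,R_+)}(R_{\grpfd}).
\]
Its underlying object agrees with \(R\Gamma_{(p,R_+)}(R_{\perfd})\) because local cohomology at an ideal containing \(p\) is insensitive to derived \(p\)-completion (\Cref{PropertiesCompLocCoho}\Cref{PropertiesCompLocCohoStable} applied after \(R\Gamma_{R_+}\)). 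Third, the functor \(R\Gamma_{(p)}\) commutes with the direct sum by \Cref{LocCohoWellDef}\Cref{LocCohoWellDefColimits}. Each summand becomes \(R\Gamma_{(p)}(R\Gamma(\mscrX,\mcalO_{\mscrX,\perfd}(q)))\), which is the notation in \Cref{SettingFibersequenceRecall}.

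The main obstacle is matching the grading on the direct sum. The term \(\bigoplus_q\) in \Cref{ExactTriLocalCechCoho} is a graded direct sum placed in degree \(q\), but it lives in \(\mcalD^{\comp{p}}_{\graded{}}\), where the gradedwise completion is taken degreewise. I would have to check that \(R\Gamma_{(p)}\) of this graded object is the plain graded direct sum of the \(R\Gamma_{(p)}\) of its components. I expect this to follow from the fact that in \(\mcalD_{\graded{\setZ[1/p]}}(R)\) the localization \((-)[1/p]\) acts degreewise, since \(p\) has degree \(0\). Consequently \(R\Gamma_{(p)}\) is computed degree by degree as the fiber of \(M_q\to M_q[1/p]\). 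The same degreewise description also justifies the remaining identifications above, and it is the step I would verify most carefully.
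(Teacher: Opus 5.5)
Your proposal is correct and follows essentially the same route as the paper. You specialize \Cref{ExactTriLocalCechCoho} to \(I=(p)\) and \(G=\setZ[1/p]\) (valid since \(R_n\neq 0\) for all \(n\gg 0\)), identify the third term with the twists of \Cref{RationalTwistAbsPerfd}, and apply \(R\Gamma_{(p)}\) using \Cref{PropertiesCompLocCoho}\Cref{PropertiesCompLocCohoStable}, \Cref{GradedAbsPerfdProperties}\Cref{CompletionOfAbsoluteGradedPerfd}, and commutation with direct sums. The grading issue you flag is harmless: \(p\) has degree \(0\), so \(R\Gamma_{(p)}\) is computed degreewise as the fiber of \(M_q\to M_q[1/p]\).
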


\begin{proof}
    The first fiber sequence follows directly from \Cref{ExactTriLocalCechCoho} and \Cref{DefTwistedPerfdProj}.
    Applying the local cohomology functor \(R\Gamma_{(p)}(-)\) yields the second fiber sequence, where we use \Cref{PropertiesCompLocCoho}\Cref{PropertiesCompLocCohoStable}, \Cref{DefGradedAbsPerfd}, and \Cref{GradedAbsPerfdProperties}\Cref{CompletionOfAbsoluteGradedPerfd}.
    We note that
    \begin{align*}
    R\Gamma_{(p)}\left(\bigoplus_{q \in \setZ[1/p]} R\Gamma(\mscrX, \mcalO_{\mscrX, \perfd}(q)) \right)
    &\simeq R\Gamma_{(p)}\left(\bigoplus_{q \in \setZ[1/p]} R\Gamma(\mscrX, \mcalO_{\mscrX, \perfd}(q)) \right) \\
    &\simeq \bigoplus_{q \in \setZ[1/p]} R\Gamma_{(p)}(\mscrX, \mcalO_{\mscrX, \perfd}(q)). 
    \end{align*}
\end{proof}

\begin{theorem}\label{LocalToGlobal}
    Keep the setting of \Cref{SettingFibersequenceRecall}.
    Let \(i \geq 2\) be an integer.
    Then we have a commutative diagram
    \begin{equation*}
        \begin{tikzcd}
            {\bigoplus_{q \in \setZ[1/p]} H^i_{(p)}(\mscrX, \mcalO_{\mscrX, \perfd}(q))} \arrow[r]       & {H^{i+1}_{(p, R_+)}(R_{\perfd})}   \\
            {\bigoplus_{m \in \setZ} H^i_{(p)}(\mscrX, \mcalO_{\mscrX, \perfd}(m))} \arrow[u, hook]      &                                    \\
            {\bigoplus_{m \in \setZ} H^i_{(p)}(\mscrX, \mcalO_{\mscrX}(m))} \arrow[r, "\cong"] \arrow[u] & {H^{i+1}_{(p, R_+)}(R)} \arrow[uu]
        \end{tikzcd}
    \end{equation*}
    of \(\setZ[1/p]\)-graded \(R\)-modules.
    In particular, if \(H^{i+1}_{(p, R_+)}(R) \to H^{i+1}_{(p, R_+)}(R_{\perfd})\) is injective, then so is \(H^i_{(p)}(\mscrX, \mcalO_{\mscrX}(m)) \to H^i_{(p)}(\mscrX, \mcalO_{\mscrX, \perfd}(m))\) for any \(m \in \setZ\).
\end{theorem}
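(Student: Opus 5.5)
The plan is to compare the fiber sequence of \Cref{GradedAbsPerfdSheafCoho} for \(R_{\grpfd}\) with the analogous classical fiber sequence for \(R\) itself, and to read off the diagram from the induced long exact sequences.

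First I will establish the bottom row. For the section ring \(R\), the usual \v{C}ech comparison gives a fiber sequence
\[
R\Gamma_{R_+}(R) \to R \to \bigoplus_{m \in \setZ} R\Gamma(X, \mcalO_X(m))
\]
in \(\mcalD_{\graded{\setZ}}(R)\). It is the graded-degree-\(m\) version of the identification of \v{C}ech complexes used in the proof of \Cref{ExactTriLocalCechCoho}, now without completion. Applying \(R\Gamma_{(p)}(-)\) and using \(R\Gamma_{(p)} \circ R\Gamma_{R_+} = R\Gamma_{(p, R_+)}\) (\Cref{LocCohoWellDef}) gives
\[
R\Gamma_{(p,R_+)}(R) \to R\Gamma_{(p)}(R) \to \bigoplus_m R\Gamma_{(p)}(X, \mcalO_X(m)).
\]
By \Cref{DerivedCompletionFormalSchemes} together with \Cref{PropertiesCompLocCoho}\Cref{PropertiesCompLocCohoStable}, we have \(R\Gamma_{(p)}(X,\mcalO_X(m)) \cong R\Gamma_{(p)}(\mscrX, \mcalO_{\mscrX}(m))\). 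Since \(R\) is a finitely generated module over the DVR-based ring, each \(R_m\) is finite free over \(R_0\) (flat, with no \(p\)-torsion). So \(R\Gamma_{(p)}(R)\) is concentrated in cohomological degree \(1\), and \(H^{i}_{(p)}(R) = 0\) for \(i \geq 2\). The long exact sequence therefore gives the isomorphism \(H^i_{(p)}(\mscrX, \mcalO_{\mscrX}(m)) \xrightarrow{\cong} H^{i+1}_{(p,R_+)}(R)\) in each degree \(m\) for \(i \geq 2\). This is where the hypothesis \(i \geq 2\) enters.

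Next, the natural map \(R \to R_{\grpfd}\) induces a morphism of fiber sequences, from the one above to the second sequence of \Cref{GradedAbsPerfdSheafCoho}. On the last terms this morphism is the composite of \(\mcalO_{\mscrX}(m) \to \mcalO_{\mscrX,\perfd}(m)\) with the split inclusion of \Cref{RemarkRationalTwist}(3). Producing this morphism of fiber sequences is the main point to check. I expect to obtain it from the functoriality of \eqref{FiberSequenceGrPerfd} in \(P\), applied to the maps \(R \to P\). The sequence for \(R\) itself arises by the same \v{C}ech construction, after a \(p\)-completion which \(R\Gamma_{(p)}\) does not see. Taking the limit over \(P\) then yields a compatible map into the limit sequence. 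Passing to \(H^i\) of the connecting maps, we get a commutative square whose top arrow is the connecting map \(\bigoplus_q H^i_{(p)}(\mscrX,\mcalO_{\mscrX,\perfd}(q)) \to H^{i+1}_{(p,R_+)}(R_{\perfd})\). The left column factors through the degree-\(\setZ\) summand, which gives the displayed diagram. The left middle arrow is injective because the inclusion is split.

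Finally, the injectivity claim. Suppose the right vertical arrow is injective and the bottom arrow is an isomorphism. Then the composite from \(\bigoplus_m H^i_{(p)}(\mscrX, \mcalO_{\mscrX}(m))\) around to \(H^{i+1}_{(p,R_+)}(R_{\perfd})\) is injective. By commutativity, the left vertical composite is injective, so \(H^i_{(p)}(\mscrX,\mcalO_{\mscrX}(m)) \to H^i_{(p)}(\mscrX,\mcalO_{\mscrX,\perfd}(m))\) is injective for each \(m\), by restricting to the summand in degree \(m\).
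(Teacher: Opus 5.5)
Your proposal is correct and takes the same approach as the paper. The paper also builds a morphism of fiber sequences from the \v{C}ech construction of \Cref{ExactTriLocalCechCoho} and \Cref{GradedAbsPerfdSheafCoho}, applied to \(R \to R_{\grpfd}\); there the third vertical map is \(\mcalO_{\mscrX}(m) \to \mcalO_{\mscrX,\perfd}(m)\) followed by the split inclusion of \Cref{RemarkRationalTwist}. It then reads off the diagram from the long exact sequences, using \(H^i_{(p)}(R) = H^{i+1}_{(p)}(R) = 0\) for \(i \geq 2\). That vanishing holds for any discrete module because \((p)\) is principal, so your freeness argument for the \(R_m\) is more than you need.
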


\begin{proof}

    By the construction of the fiber sequences in \Cref{GradedAbsPerfdSheafCoho} and \Cref{ExactTriLocalCechCoho}, we have the following commutative diagram
    \begin{equation*}
        \begin{tikzcd}
            {R\Gamma_{(p, R_+)}(R_{\perfd})} \arrow[r]   & R\Gamma_{(p)}(R_{\perfd}) \arrow[r]   & {\bigoplus_{q \in \setZ[1/p]} R\Gamma_{(p)}(\mscrX, \mcalO_{\mscrX, \perfd}(q))}            \\
             &                                   & {\bigoplus_{m \in \setZ} R\Gamma_{(p)}(\mscrX, \mcalO_{\mscrX, \perfd}(m))} \arrow[u, hook] \\
            {R\Gamma_{(p, R_+)}(R)} \arrow[r] \arrow[uu] & R\Gamma_{(p)}(R) \arrow[r] \arrow[uu] & {\bigoplus_{m \in \setZ} R\Gamma_{(p)}(\mscrX, \mcalO_{\mscrX}(m))} \arrow[u]              
        \end{tikzcd}
    \end{equation*}
    in \(\mcalD_{\graded{\setZ[1/p]}}(R)\) whose horizontal sequences are fiber sequences, where the right morphism is the composition of the morphism induced from \(\mcalO_{\mscrX} \to \mcalO_{\mscrX, \perfd}\) and the inclusion to \(\bigoplus_{q \in \setZ[1/p]} R\Gamma(\mscrX, \mcalO_{\mscrX, \perfd}(q))\) given by \Cref{RemarkRationalTwist}(4).

    The local cohomology \(H^i_{(p)}(-)\) vanishes for \(i > 1\).
    Taking the associated long exact sequence in cohomology, we obtain the following commutative diagram
    \begin{equation*}
        \begin{tikzcd}
            H^i_{(p)}(R_{\perfd}) \arrow[r]         & {\bigoplus_{q \in \setZ[1/p]} H^i_{(p)}(\mscrX, \mcalO_{\mscrX, \perfd}(q))} \arrow[r]       & {H^{i+1}_{(p, R_+)}(R_{\perfd})}   \\
            & {\bigoplus_{m \in \setZ} H^i_{(p)}(\mscrX, \mcalO_{\mscrX, \perfd}(m))} \arrow[u, hook]      &                                    \\
            0 = H^i_{(p)}(R) \arrow[uu] \arrow[r] & {\bigoplus_{m \in \setZ} H^i_{(p)}(\mscrX, \mcalO_{\mscrX}(m))} \arrow[r, "\cong"] \arrow[u] & {H^{i+1}_{(p, R_+)}(R)} \arrow[uu]
        \end{tikzcd}
    \end{equation*}
    of \(\setZ[1/p]\)-graded \(R\)-modules for \(i > 1\).
    \qedhere
\end{proof}

\begin{corollary}[Local-to-global correspondence]\label{local-to-global-PS}
Keep the setting of \Cref{SettingFibersequenceRecall}.
Assume that $X$ is Cohen--Macaulay, and that either $\omega_X \simeq \cO_X$, $L=\omega_X^{-1}$, or $L=\omega_X$.
Then the following assertions hold:
\begin{enumerate}
    \item If $H^{d+1}_{(p,R_+)}(R) \to H^{d+1}_{(p,R_+)}(R_{\perfd})$ is injective, then $X$ lim-perfectoid split.
    \item If $H^{d+1}_{(p,R_+)}(R) \to H^{d+1}_{(p,R_+)}(P)$ is injective for some perfectoid \(R\)-algebra \(P\), then \(X\) is perfectoid split.
\end{enumerate}

\end{corollary}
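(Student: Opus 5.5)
The plan for (1) is to combine \Cref{LocalToGlobal} with the duality criterion \Cref{CompareGLPPGLPI}. Take \(i = d\), which is at least \(2\) by the standing assumption. \Cref{LocalToGlobal} then shows that injectivity of \(H^{d+1}_{(p,R_+)}(R) \to H^{d+1}_{(p,R_+)}(R_{\perfd})\) forces injectivity of
\begin{equation*}
    H^d_{(p)}(\mscrX, \mcalO_{\mscrX}(m)) \to H^d_{(p)}(\mscrX, \mcalO_{\mscrX, \perfd}(m))
\end{equation*}
for every \(m \in \setZ\). Here \(H^i_{(p)}(\mscrX,-)\) means \(H^i\) of \(R\Gamma_{(p)}(R\Gamma(\mscrX,-))\), and \(\m_0 = (p)\) up to radical, since \(R_0\) is a DVR with residue characteristic \(p\). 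This is therefore exactly the local cohomology \(H^d_{\m}(\mscrX,-)\) appearing in \Cref{CompareGLPPGLPI}(3). It remains to identify \(\omega_{\mscrX}\) with a twist \(\mcalO_{\mscrX}(m_0)\) in each of the three cases: \(m_0 = 0\) if \(\omega_X \simeq \cO_X\), \(m_0 = -1\) if \(L = \omega_X^{-1}\), and \(m_0 = 1\) if \(L = \omega_X\).

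Because \(X\) is Cohen--Macaulay, the normalized dualizing complex is \(\omega_X^{\bullet} \simeq \omega_X[d]\), and after completion \(\omega_{\mscrX}^{\bullet} \simeq \omega_{\mscrX}[d] \simeq \mcalO_{\mscrX}(m_0)[d]\). This uses \Cref{SymMonDerivedCompletionFormalSchemes} and the fact that \(\omega_X\) is a line bundle. We then get
\begin{equation*}
    \mcalO_{\mscrX, \perfd} \otimes^L \omega_{\mscrX}^{\bullet}[-d] \simeq \mcalO_{\mscrX,\perfd} \otimes^L \mcalO_{\mscrX}(m_0) = \mcalO_{\mscrX, \perfd}(m_0)
\end{equation*}
by \Cref{DefTwistedPerfdProj}. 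This needs no derived completion, since tensoring a \(p\)-complete object with a line bundle keeps it complete. Under these identifications, the morphism in \Cref{CompareGLPPGLPI}(3) becomes the map \(H^d_{(p)}(\mscrX,\mcalO_{\mscrX}(m_0)) \to H^d_{(p)}(\mscrX,\mcalO_{\mscrX,\perfd}(m_0))\), which is injective by the previous paragraph. Hence \(X\) is lim-perfectoid split. The step that needs the most care is checking that the map in \Cref{LocalToGlobal} coincides, under these identifications, with the map in \Cref{CompareGLPPGLPI}, i.e.\ that both are induced by \(\mcalO_{\mscrX} \to \mcalO_{\mscrX,\perfd}\) twisted by \(m_0\). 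This compatibility is built into the construction of the fiber sequences in \Cref{ExactTriLocalCechCoho}.

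For (2), the plan is to follow the same route but replace \(R_{\perfd}\) by a graded perfectoid algebra. First I would reduce to a \(\setZ[1/p]\)-graded perfectoid \(R\)-algebra \(Q\) with \(Q \to P\) (after \(p\)-completing \(P\)), using \Cref{CofinalGradedPerfdForGradedRing}. Injectivity into \(H^{d+1}_{(p,R_+)}(P)\) then implies injectivity into \(H^{d+1}_{(p,R_+)}(Q)\), because the map factors through \(Q\). Next I would apply the fiber sequence \eqref{FiberSequenceGrPerfd} to the single algebra \(Q\) instead of the limit, combined with \Cref{GradedPerfdLocalCoho}. Running the argument of \Cref{LocalToGlobal} in degree \(i = d\) shows that \(H^d_{(p)}(\mscrX,\mcalO_{\mscrX}(m_0)) \to H^d_{(p)}(\mscrX, Q(m_0)^{\Delta})\) is injective.

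Now let \(Y \subseteq \Proj(Q)\) be the open subscheme covering \(X\), exactly as in the proof of \Cref{LocalGlobalPure}, with affine morphism \(\pi\colon Y \to X\) and perfectoid completion \(\widehat{Y}\). The injectivity just obtained is condition (3) of \Cref{ind-split-dual} for \(f\colon \mcalO_X \to \pi_*\mcalO_Y\), using \(\widehat{\pi_*\mcalO_Y} \simeq Q^{\Delta}\). So \(f\) is ind-split and \(X\) is perfectoid split. The main obstacle in this part is the reduction from an arbitrary perfectoid \(P\) to a graded \(Q\). If that reduction causes trouble, the fallback is to pass to the graded perfectoid algebra given by \cite{ishizuka2025Graded}*{Proposition 4.24}-type constructions.
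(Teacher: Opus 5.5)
Your proposal is correct and follows the paper's proof. Part (1) is \Cref{LocalToGlobal} with $i=d$, followed by the duality criterion once $\omega_{\mscrX}$ is identified with $\mcalO_{\mscrX}(m_0)$; you cite \Cref{CompareGLPPGLPI}, while the paper cites \Cref{ind-split-dual} directly. Part (2) reruns that argument for a single graded perfectoid algebra via \eqref{FiberSequenceGrPerfd} and the open $Y \subseteq \Proj(Q)$ as in \Cref{LocalGlobalPure}; the only difference is that the paper takes its graded perfectoid $R_\infty$ from \cite{ishizuka2025Graded}*{Proposition 4.24}, whereas you get $Q \to P$ from \Cref{CofinalGradedPerfdForGradedRing} and factor the injectivity through $Q$, which works equally well.
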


\begin{proof}
(1): We have an injection
\[
H^{d+1}_{(p,R_+)}(R) \to H^{d+1}_{(p,R_+)}(R_{\perfd}).
\]
By \Cref{LocalToGlobal} and the assumption on $\omega_X$, we obtain an injection
\[
H^d_{(p)}(\mscrX, \mcalO_{\mscrX} \otimes \omega_{\mscrX}) \to H^d_{(p)}(\mscrX, \mcalO_{\mscrX, \perfd}\otimes^L \omega_{\mscrX}).
\]
Therefore, by the Cohen--Macaulayness of $X$ and \Cref{ind-split-dual}, we conclude that $X$ is lim-perfectoid split.

(2): By \cite{ishizuka2025Graded}*{Proposition 4.24}, there exists a $\setZ[1/p]$-graded perfectoid $R$-algebra $R_{\infty}$ such that the induced homomorphism
\[
H^{d+1}_{(p,R_+)}(R) \to H^{d+1}_{(p,R_+)}(R_{\infty})
\]
is injective.
As in the proof of \Cref{local-to-global-PS}, we can take an affine morphism \(\pi \colon \mscrY \to \mscrX\) from a perfectoid formal scheme \(\mscrY\) such that
\[
\pi_*\cO_{\mscrY} \cong R_{\infty}^{\Delta}
\]
as $\cO_{\mscrX}$-algebras.
Moreover, by \eqref{FiberSequenceGrPerfd}, we have a fiber sequence
\[
R\Gamma_{(p,R_+)}(R_{\infty}) \to R\Gamma_{(p)}(R_{\infty}) \to \bigoplus_{q \in \Z[1/p]} R\Gamma_{(p)}(\mscrX,R_{\infty}^{\Delta}(q)).
\]
Hence, by the proof of \Cref{LocalToGlobal} and the assumption on $\omega_X$, we obtain an injection
\[
H^d_{(p)}(\mscrX,\cO_{\mscrX}\otimes \omega_{\mscrX}) \to H^d_{(p)}(\mscrX,R_{\infty}^{\Delta} \otimes \omega_{\mscrX}) \cong H^d_{(p)}(\mscrX,\pi_*\cO_{\mscrY} \otimes \omega_{\mscrX}).
\]
Thus, the assertion follows from \Cref{ind-split-dual}.
\end{proof}

\subsection{Global-to-local correspondence}

In this subsection, we establish a global-to-local correspondence.

\begin{lemma} \label{VanishingPerfd}
    Let \((V, \mfrakm, k)\) be a discrete valuation ring whose maximal ideal \(\mfrakm\) contains \(p\).
    Then \(R\Gamma_{(p)}(V_{\perfd})\) in \(\mcalD(V)\) is concentrated in (cohomological) degrees \([1, 2]\).
\end{lemma}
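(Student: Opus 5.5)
The plan is to reduce to the structure of \(V_{\perfd}\), which is quite explicit for a discrete valuation ring. First, \(R\Gamma_{(p)}(M)\) is computed by the two-term \v{C}ech complex \(M \to M[1/p]\). So for any \(M \in \mcalD(V)\) concentrated in cohomological degrees \([a,b]\), the object \(R\Gamma_{(p)}(M)\) is concentrated in degrees \([a, b+1]\). Moreover \(H^a_{(p)}(M) = H^0_{(p)}(H^a(M))\) is the \(p^\infty\)-torsion of \(H^a(M)\). It therefore suffices to prove two things: \(V_{\perfd}\) (equivalently its derived \(p\)-completion \(V_{\tperfd}\), since \(R\Gamma_{(p)}\) does not see derived \(p\)-completion by \Cref{PropertiesCompLocCoho}\Cref{PropertiesCompLocCohoStable}) is concentrated in degrees \([0,1]\), and \(H^0(V_{\perfd})\) is \(p\)-torsion-free.

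For the structure of \(V_{\perfd}\), I would split according to whether \(V\) has characteristic \(p\) or not.

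If \(V\) has characteristic \(p\), then \(V_{\perfd} = V_{\perf}\) is a discrete perfect ring killed by \(p\). Then \(R\Gamma_{(p)}(V_{\perfd}) = V_{\perf}\) sits in degree \(0\). This case is excluded under the paper's standing \(p\)-torsion-freeness assumption, so I would either note this or assume \(V\) is \(p\)-torsion-free from the outset.

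In the \(p\)-torsion-free case, I would use the description of the perfectoidization of a semiperfectoid or regular ring via André's flatness lemma. Take a faithfully flat (even \(p\)-completely) map \(V \to P\) to a perfectoid ring \(P\): for a DVR one can take the \(p\)-completion of a suitable ramified extension \(\widehat{V}[\pi^{1/p^\infty}, \ldots]\), or an absolute integral closure. Then compute \(V_{\perfd}\) by the \v{C}ech nerve \(P \rightrightarrows (P\widehat{\otimes}_V P)_{\perfd} \cdots\), using arc-descent / \(p\)-complete flat descent for perfectoidization (Bhatt--Scholze).

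Since \(V\) is regular, \cite{bhatt2019Regular}-type results give that \(V_{\perfd}\) is discrete after \(p\)-completion, i.e. \(V \to V_{\perfd}\) is a \(p\)-completely faithfully flat map to a perfectoid ring. In fact, for \(V\) of dimension \(1\) the \(\mathrm{Tor}\)-amplitude argument of \cite{bhatt2024Perfectoid} shows \(V_{\perfd}\) lies in degrees \([0,1]\) in general (perfectoidization of a regular local ring of dimension \(d\) is concentrated in \([0,d]\)). I would invoke that statement and check that \(H^0(V_{\perfd})\) is \(p\)-torsion-free: \(H^0\) is a perfectoid ring \(p\)-completely flat over \(V\), hence \(p\)-torsion-free.

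Assembling, \(V_{\perfd}\) lies in \([0,1]\) with \(p\)-torsion-free \(H^0\). Hence \(H^0_{(p)}(V_{\perfd}) = 0\) and \(R\Gamma_{(p)}(V_{\perfd})\) lies in \([1,2]\).

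The main obstacle is justifying that \(V_{\perfd}\) has cohomological amplitude at most \([0,1]\) and torsion-free \(H^0\). If no citation is available, I would argue directly. Use \(V_{\perfd} \simeq R\lim_{P} P\) over perfectoid \(V\)-algebras (\Cref{CompletionAbsolutePerfectoidization}), cofinally replaced by a countable tower or a single cover with Čech descent, and bound the \(R^1\lim\) contributions. This step is where the real work lies.
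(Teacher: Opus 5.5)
Your reduction is sound: $R\Gamma_{(p)}$ has amplitude $[0,1]$, and $H^a_{(p)}(M)=H^a(M)[p^\infty]$ for $M$ concentrated in degrees $[a,b]$. So it would suffice to know that $V_{\mathrm{perfd}}$ lies in degrees $[0,1]$ with $p$-torsion-free $H^0$.

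Your observation about equal characteristic $p$ is also correct. There $R\Gamma_{(p)}(V_{\mathrm{perf}})=V_{\mathrm{perf}}$ sits in degree $0$, so the lemma needs $V$ to be $p$-torsion-free. The paper's proof and its application to $R_0$ tacitly use mixed characteristic.

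\textbf{The gap.} You never establish the input you reduce to, and the justification you offer is false. The Bhatt--Iyengar--Ma theorem asserts the \emph{existence} of some faithfully flat map from a regular ring to a perfectoid ring. It says nothing about the absolute perfectoidization, which is a limit over \emph{all} perfectoid $V$-algebras and is genuinely non-discrete.

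Already for $V=\mathbb{Z}_p$, $H^0(V_{\mathrm{perfd}})$ is $\mathbb{Z}_p$ itself, which is not perfectoid. Moreover $H^1(V_{\mathrm{perfd}})\neq 0$, since rationally it is $H^1_{\mathrm{cont}}(\Gamma,C)\neq 0$ by Tate, with $C$ as below. Consequently:
\begin{itemize}
\item ``$V_{\mathrm{perfd}}$ is discrete after $p$-completion'' is wrong.
\item ``$H^0(V_{\mathrm{perfd}})$ is a perfectoid ring $p$-completely flat over $V$'' is wrong.
\item The amplitude bound $[0,d]$ for $d$-dimensional regular local rings is invoked without a precise reference or proof.
\item The fallback of bounding $R^1\lim$ over the category of all perfectoid algebras is not an argument.
\end{itemize}
The step you flag as ``where the real work lies'' is the entire content of the lemma, and it is missing.

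\textbf{The missing idea: an explicit model of $V_{\mathrm{perfd}}$.} Your instinct to descend along a perfectoid cover $V\to P$ is right, but you never compute the Čech nerve. The paper reduces to $V$ complete and passes to a complete DVR with residue field $k_{\mathrm{perf}}$. It then takes $C$ to be the completion of a totally ramified Galois extension with group $\Gamma\cong\mathbb{Z}_p$, which is perfectoid. Bhatt's computation (\emph{Aspects}, Proposition 4.5.1 and Example 4.5.2; or Proposition 3.17 of the perfectoid purity paper for $\mathbb{Z}_p$) gives a fiber sequence
\[
V_{\mathrm{perfd}}\to \mathrm{fib}\bigl(\mathcal{O}_C\xrightarrow{\gamma-1}\mathcal{O}_C\bigr)\to k_{\mathrm{perf}}[-1].
\]
The correction term $k_{\mathrm{perf}}[-1]$ reflects that $(\mathcal{O}_C\widehat{\otimes}_V\mathcal{O}_C)_{\mathrm{perfd}}$ differs from $C^0(\Gamma,\mathcal{O}_C)$ along the special fiber.

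One then applies $R\Gamma_{(p)}$ termwise:
\begin{itemize}
\item $R\Gamma_{(p)}(\mathcal{O}_C)=(C/\mathcal{O}_C)[-1]$, so the middle term lies in degrees $[1,2]$.
\item $R\Gamma_{(p)}(k_{\mathrm{perf}}[-1])=k_{\mathrm{perf}}[-1]$ lies in degree $1$.
\end{itemize}
The long exact sequence then gives the claim.

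This route does not need your sharper intermediate statement. The fiber sequence alone only places $V_{\mathrm{perfd}}$ in degrees $[0,2]$, with $H^0=\mathcal{O}_C^{\Gamma}$, which is $p$-torsion-free. Proving $V_{\mathrm{perfd}}$ lies in $[0,1]$ would additionally require surjectivity of $H^1_{\mathrm{cont}}(\Gamma,\mathcal{O}_C)\to k_{\mathrm{perf}}$. Applying $R\Gamma_{(p)}$ directly to the sequence sidesteps this, because the possible $H^2(V_{\mathrm{perfd}})$ is a quotient of $k_{\mathrm{perf}}$, hence $p$-torsion, and contributes only in degree $2$.
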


\begin{proof}
    Since the absolute perfectoidization \(V_{\perfd}\) is isomorphic to \((\widehat{V})_{\perfd}\) for the completion of \(V\), we may assume that \(V\) is complete.
    Then there exists a complete discrete valuation ring \((W, \mfrakn, k_{\perf})\) and a faithfully flat extension \(V \hookrightarrow W\).
    Since the residue field \(k_{\perf}\) is perfect, there exists a perfectoid extension \(C\) of \(K \defeq \Frac(W)\) which is the \(p\)-adic completion of a totally ramified Galois extension of \(K\) with Galois group \(\Gamma = \setZ_p\).
    By \cite{bhatt2025Aspects}*{Proposition 4.5.1 and Example 4.5.2} (or \cite{bhatt2024Perfectoid}*{Proposition 3.17} if \(V\) is \(\setZ_p\)), we have a fiber sequence
    \begin{equation*}
        V_{\perfd} \to \fib(\mcalO_C \xrightarrow{\gamma -1} \mcalO_C) \to k_{\perf}[-1]
    \end{equation*}
    in \(\mcalD(V)\).
    Taking \(R\Gamma_{(p)}(-)\), we have a fiber sequence
    \begin{equation*}
        R\Gamma_{(p)}(V_{\perfd}) \to \fib(R\Gamma_{(p)}(\mcalO_C) \xrightarrow{\gamma - 1} R\Gamma_{(p)}(\mcalO_C)) \to R\Gamma_{(p)}(k_{\perf})[-1],
    \end{equation*}
    which becomes
    \begin{equation*}
        R\Gamma_{(p)}(V_{\perfd}) \to \fib(H^1_{(p)}(\mcalO_C) \xrightarrow{\gamma -1} H^1_{(p)}(\mcalO_C))[-1] \to k_{\perf}[-1]
    \end{equation*}
    in \(\mcalD(V)\).
    Taking the associated long exact sequence, we conclude \(H^i_{(p)}(V_{\perfd}) = 0\) for all \(i \leq 0\) and \(i \geq 3\).
\end{proof}

\begin{lemma}[cf.~\cite{bhatt2025Aspects}*{Proposition 11.2.8}] \label{GradedZeroPartInjective}
    Keep the setting of \Cref{SettingFibersequenceRecall}.
    For any \(m \in \setZ[1/p]_{\leq 0}\), consider the degree \(m\) part of the upper horizontal morphism appearing in \Cref{LocalToGlobal}:
    \begin{equation*}
        H^i_{(p)}(\mscrX, \mcalO_{\mscrX, \perfd}(m)) \to H^{i+1}_{(p, R_+)}(R_{\perfd})_m.
    \end{equation*}
    Then the following hold:
    \begin{enumerate}
        \item If \(m < 0\), this morphism is injective for any \(i \in \setZ\).
        \item If \(m = 0\), this morphism is injective for any \(i \in \setZ \setminus \{1, 2\}\).
    \end{enumerate}
\end{lemma}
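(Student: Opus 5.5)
The plan is to read off injectivity from the long exact sequence attached to the second fiber sequence in \Cref{GradedAbsPerfdSheafCoho}, taken in degree \(m\):
\begin{equation*}
    R\Gamma_{(p,R_+)}(R_{\perfd})_m \to R\Gamma_{(p)}(R_{\perfd})_m \to R\Gamma_{(p)}(\mscrX, \mcalO_{\mscrX,\perfd}(m)).
\end{equation*}
The morphism in the statement is the connecting map \(H^i_{(p)}(\mscrX, \mcalO_{\mscrX,\perfd}(m)) \to H^{i+1}_{(p,R_+)}(R_{\perfd})_m\). Its kernel is the image of \(H^i_{(p)}(R_{\perfd})_m\). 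It therefore suffices to prove the vanishing \(H^i_{(p)}(R_{\perfd})_m = 0\) for \(m<0\) and all \(i\), and for \(m=0\) and \(i \notin \{1,2\}\).

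\textbf{Identifying the graded pieces.} By the first fiber sequence in \Cref{GradedAbsPerfdSheafCoho} and its proof, the \(\setZ[1/p]\)-grading on \(R\Gamma_{(p)}(R_{\perfd})\) is the one on \(R\Gamma_{(p)}(R_{\grpfd})\). This is justified by \Cref{PropertiesCompLocCoho}\Cref{PropertiesCompLocCohoStable} together with \Cref{GradedAbsPerfdProperties}\Cref{CompletionOfAbsoluteGradedPerfd}. Since local cohomology with respect to the degree-zero element \(p\) is computed degreewise, the degree \(m\) piece is \(R\Gamma_{(p)}((R_{\grpfd})_m)\).

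\textbf{Case \(m<0\).} By \Cref{GradedAbsPerfdProperties}\Cref{RemarkGradedPerfd}, the support of \(R_{\grpfd}\) equals \(H[1/p]\), where \(H\) is the monoid generated by \(\Supp(R) \subseteq \setZ_{\geq 0}\). Hence \((R_{\grpfd})_m = 0\) for \(m<0\). This gives \(H^i_{(p)}(R_{\perfd})_m = 0\) for every \(i\), which proves (1).

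\textbf{Case \(m=0\).} \Cref{GradedAbsPerfdProperties}\Cref{GradedPerfdZeroPart} applies because \(R\) is \(\setZ_{\geq 0}\)-graded, so \(R_g=0\) or \(R_{-g}=0\) for each \(g\). It identifies \((R_{\grpfd})_0\) with \((R_0)_{\tperfd}\), where \(R_0=V\) is the discrete valuation ring. By \Cref{VanishingPerfd}, \(R\Gamma_{(p)}(V_{\perfd})\) is concentrated in degrees \([1,2]\). Moreover \(R\Gamma_{(p)}\) does not change under derived \(p\)-completion, by \Cref{PropertiesCompLocCoho}\Cref{PropertiesCompLocCohoStable}. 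Hence \(H^i_{(p)}(R_{\perfd})_0=0\) for \(i\notin\{1,2\}\), which proves (2).

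\textbf{Main obstacle.} The delicate step is checking that the grading on \(R\Gamma_{(p)}(R_{\perfd})\) really has degree-zero part \(R\Gamma_{(p)}((R_0)_{\perfd})\). This requires the compatibility between \(\dgrcomp{p}{-}\) on \(R_{\grpfd}\) and taking degree-\(0\) parts. The identification \((R_{\grpfd})_0 \cong (R_0)_{\tperfd}\) in \Cref{GradedAbsPerfdProperties} must be used at the level of the graded object itself, not after completing to \(R_{\perfd}\). I would make this precise using the degreewise description of the derived gradedwise completion: the completion is computed in each degree separately, so \(R\Gamma_{(p)}\) of a graded object decomposes as the direct sum over degrees of \(R\Gamma_{(p)}\) of its graded pieces.
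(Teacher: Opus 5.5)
Your proposal is correct and follows the paper's own proof. You use the long exact sequence of the second fiber sequence in \Cref{GradedAbsPerfdSheafCoho} to reduce to the vanishing of \(H^i_{(p)}(R_{\grpfd})_m \cong H^i_{(p)}((R_{\grpfd})_m)\). For \(m<0\) this follows from the support statement in \Cref{GradedAbsPerfdProperties}, and for \(m=0\) from the degree-zero identification there combined with \Cref{VanishingPerfd}; your remarks on degreewise completion and on \(R\Gamma_{(p)}\) being insensitive to derived \(p\)-completion only spell out steps the paper uses implicitly.
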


\begin{proof}
    Using the second fiber sequence of \Cref{GradedAbsPerfdSheafCoho}, we obtain a long exact sequence of \(R\)-modules:
    \begin{equation*}
        H^i_{(p, R_+)}(R_{\grpfd}) \to H^i_{(p)}(R_{\grpfd}) \to \bigoplus_{q \in \setZ[1/p]} H^i_{(p)}(\mscrX, \mcalO_{\mscrX, \perfd}(q)) \to H^{i+1}_{(p, R_+)}(R_{\perfd}).
    \end{equation*}
    To establish the desired injectivity, it suffices to show the vanishing
    \begin{equation}
        H^i_{(p)}(R_{\grpfd})_m = 0
    \end{equation}
    for \(i \in \setZ\) if \(m < 0\) and for \(i \geq 3\) if \(m = 0\).
Then we have the isomorphism
    \begin{equation} \label{GradedZeroPartIsom}
        H^i_{(p)}(R_{\grpfd,m}) \xrightarrow{\sim} H^i_{(p)}(R_{\grpfd})_m.
    \end{equation}
    If $m<0$, then we have $R_{\grpfd,m}=0$ by \Cref{GradedAbsPerfdProperties}\Cref{RemarkGradedPerfd}, thus we obtain the desired vanishing.

    Using \Cref{GradedAbsPerfdProperties}\Cref{GradedPerfdZeroPart}, in the case of \(m = 0\), it remains to show the vanishing of \(H^i_{(p)}(R_{0, \perfd})\) for \(i \neq 1, 2\).
    Since \(R_0\) is a discrete valuation ring with perfect residue field, we can apply \Cref{VanishingPerfd}.
    So we are done.
    \qedhere

\end{proof}

First, we establish the global-to-local principle for the Calabi--Yau and Fano cases.

\begin{theorem} \label{GlobalToLocalCYFano}
    Keep the setting of \Cref{SettingFibersequenceRecall}.
    We  assume that $X$ is lim-perfectoid split.
    Assume either of the following:
    \begin{enumerate}
        \item $\omega_{X}$ is trivial with \(d \geq 3\), or
        \item $\omega_{X}$ is anti-ample and $\omega_{X}^{-1}= L$.
    \end{enumerate}
    Then the morphism
    \begin{equation*}
        H^{d+1}_{(p, R_+)}(R) \to H^{d+1}_{(p, R_+)}(R_{\perfd})
    \end{equation*}
    is injective.
    In particular, if $R_{(\m_0, R_+)}$ is Cohen--Macaulay, then $R^{\wedge p}$ is lim-perfectoid pure.
\end{theorem}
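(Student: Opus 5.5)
The plan is to read off injectivity degree by degree from the commutative diagram of \Cref{LocalToGlobal} with \(i = d\), using the fact that every nonzero submodule of an Artinian module meets its socle.

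\emph{Step 1: the socle lies in nonpositive degrees.} Write \(\mfrakm = (\m_0, R_+) = (p, R_+)\) up to radical, so that \(H^{d+1}_{(p,R_+)}(R) = H^{d+1}_{\mfrakm}(R)\). This is the top local cohomology of the \((d+1)\)-dimensional graded ring \(R\), and it is an Artinian graded \(R\)-module. The kernel \(K\) of
\begin{equation*}
  H^{d+1}_{(p,R_+)}(R) \to H^{d+1}_{(p,R_+)}(R_{\perfd})
\end{equation*}
is a graded submodule, because the map is a morphism of \(\setZ[1/p]\)-graded modules by \Cref{GradedAbsPerfdSheafCoho}. So if \(K \neq 0\), it contains a nonzero homogeneous socle element. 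By graded local duality, the socle degrees of \(H^{d+1}_{\mfrakm}(R)\) are the negatives of the degrees of minimal generators of the graded canonical module \(\omega_R\). I would use the description of \(\omega_R\) in \Cref{dualizing-module-affine-cone}, namely \(\omega_R \cong \bigoplus_{n} H^0(X, \omega_X \otimes L^{n})\) (at least in degrees \(\ge 0\), which is what matters for generators). In case (1), \(\omega_R \cong R\), so the socle sits in degree \(0\). In case (2), \(\omega_R \cong \bigoplus_n H^0(X, L^{n-1}) = R(-1)\), so the socle sits in degree \(-1\). It therefore suffices to prove that the map is injective in degree \(m = 0\) for case (1), and in degree \(m = -1\) for case (2). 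To be safe I would prove it for all \(m \le 0\).

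\emph{Step 2: injectivity in a fixed degree \(m \le 0\).} Take \(i = d \ge 2\) in \Cref{LocalToGlobal}. There the bottom map \(\bigoplus_m H^d_{(p)}(\mscrX, \mcalO_{\mscrX}(m)) \to H^{d+1}_{(p,R_+)}(R)\) is an isomorphism. Hence the degree \(m\) part of our map factors as
\begin{equation*}
  H^d_{(p)}(\mscrX, \mcalO_{\mscrX}(m)) \to H^d_{(p)}(\mscrX, \mcalO_{\mscrX,\perfd}(m)) \to H^{d+1}_{(p,R_+)}(R_{\perfd})_m.
\end{equation*}
For the first arrow: lim-perfectoid splitting says \(\mcalO_{\mscrX} \to \mcalO_{\mscrX,\perfd}\) is ind-split in \(\mcalD_{\qcoh}(\mscrX)\). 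Tensoring with the line bundle \(\mcalO_{\mscrX}(m)\) preserves ind-splitness, and so does applying the colimit-preserving functor \(R\Gamma_{(p)}R\Gamma(\mscrX,-)\). Since a filtered colimit of split injections is injective, the first arrow is injective on \(H^d\). For the second arrow, I apply \Cref{GradedZeroPartInjective}. When \(m < 0\) it is injective for every \(i\). When \(m = 0\) it is injective for \(i = d \notin \{1,2\}\), which is exactly where the hypothesis \(d \ge 3\) in case (1) is used. In case (2) only \(m = -1\) is needed, so \(d \ge 2\) suffices. Combining Steps 1 and 2, \(K\) contains no nonzero socle element, hence \(K = 0\).

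\emph{Step 3: the final assertion.} If \(R_{\mfrakm}\) is Cohen--Macaulay, then by \Cref{rmk:Gorensteiness-section-ring} it is Gorenstein. Then \Cref{EquivGradedPerfdPure}, via the equivalence (4) \(\Leftrightarrow\) (2) with \(\dim R_{\mfrakm} = d+1\), turns the injectivity into lim-perfectoid purity of \(R^{\wedge p}\).

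The main obstacle I expect is Step 1, namely pinning down the socle degrees without assuming \(R\) is Cohen--Macaulay. I would first try to justify via \Cref{dualizing-module-affine-cone} that \(\omega_R\) is generated in degree \(0\) (respectively \(1\)). If that fails without the Cohen--Macaulay hypothesis, I would fall back on showing directly that \(H^d_{(p)}(\mscrX, \mcalO_{\mscrX}(m))\) has no socle for \(m > 0\). Writing \(H^d_{(p)}(\mscrX, \mcalO_{\mscrX}(m))\) as an extension of \(H^0_{(p)}(H^d(X,\mcalO_X(m)))\) by \(H^1_{(p)}(H^{d-1}(X,\mcalO_X(m)))\), this would use Serre duality with \(\omega_X \cong \mcalO_X\) or \(L^{-1}\).
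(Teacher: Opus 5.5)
Your proposal is correct and follows the paper's proof essentially step for step. The paper also factors the degree-$i$ part (with $i=0$, resp.\ $i=-1$) through $H^d_{(p)}(\mscrX,\mcalO_{\mscrX,\perfd}\otimes^L\omega_{\mscrX})$, and gets injectivity there from lim-perfectoid splitting together with \Cref{GradedZeroPartInjective}. It then concludes because $\omega_R\simeq R(i)$ forces the socle of $H^{d+1}_{(p,R_+)}(R)$ into degree $i$.

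The only difference is that the paper locates the socle by the direct computation in \Cref{socle}, not by graded local duality. That computation needs no Cohen--Macaulay hypothesis, so the obstacle you anticipated does not arise.
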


\begin{proof}
    Since \(X\) is lim-perfectoid split, we have an injection
    \begin{equation*}
        H^d_{(p)}(\mscrX, \omega_{\mscrX}) \to H^d_{(p)}(\mscrX, \mcalO_{\mscrX, \perfd} \otimes^L_{\mcalO_{\mscrX}} \omega_{\mscrX}).
    \end{equation*}

    By considering the long exact sequence analogous to that in the proof of \Cref{GradedZeroPartInjective} applied to \(R\), we obtain a commutative diagram
    \begin{equation} \label{GlobalToLocalCYFanoDiagram}
        \begin{tikzcd}
            {H^d_{(p)}(\mscrX, \mcalO_{\mscrX, \perfd} \otimes^L_{\mcalO_{\mscrX}} \omega_{\mscrX})} \arrow[r, hook]            & {H^{d+1}_{(p, R_+)}(R_{\perfd})_i} \arrow[r, hook]  & {H^{d+1}_{(p, R_+)}(R_{\perfd})}  \\
            {H^d_{(p)}(\mscrX, \omega_{\mscrX})} \arrow[r, "\cong"] \arrow[u, hook] & {H^{d+1}_{(p, R_+)}(R)_i} \arrow[r, hook] \arrow[u] & {H^{d+1}_{(p, R_+)}(R)} \arrow[u]
        \end{tikzcd}
    \end{equation}
    of \(R_0\)-modules, where $i=0$ if $\omega_{X}$ is trivial and $i=-1$ if $\omega_{X}^{-1}= L (= \mcalO_{X}(1))$.

    By \Cref{GradedZeroPartInjective}, the assumption (1) or (2) implies the injectivity of the upper left morphism.
    Since \(d > 1\), the lower left morphism is an isomorphism.
    Therefore, the middle vertical morphism is injective.
    Because we have $\omega_R \simeq R(i)$, the degree $i$ part \(H^{d+1}_{(p, R_+)}(R)_i\) contains the socle of \(H^{d+1}_{(p, R_+)}(R)\) by \Cref{socle}. This yields the desired injection.

    Furthermore, the final assertion follows from \Cref{rmk:Gorensteiness-section-ring} and \Cref{EquivGradedPerfdPure}.
\end{proof}

\begin{theorem}\label{Global-local-CY}
Keep the setting of \Cref{SettingFibersequenceRecall}.
Assume that $X$ is Cohen--Macaulay and that $\omega_X \simeq \cO_X$.
\begin{enumerate}
    \item If \(H^{d+1}_{(p, R_+)}(R) \to H^{d+1}_{(p, R_+)}(R_{\perfd})\) is injective, then $X$ is lim-perfectoid split.
    \item If $\dim X \geq 3$,
    then the converse of \textrm{(1)} also holds.
\end{enumerate}
In particular, if \(R_{(\mfrakm_0, R_+)}\) is Cohen--Macaulay, the injectivity is equivalent to the lim-perfectoid purity of \(R^{\wedge p}\).
\end{theorem}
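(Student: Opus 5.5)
This theorem is essentially a repackaging of \Cref{local-to-global-PS} and \Cref{GlobalToLocalCYFano} in the Calabi--Yau case, so the plan is to reduce each assertion to those results.

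For (1), we are in the case \(\omega_X \simeq \mcalO_X\) with \(X\) Cohen--Macaulay, so \Cref{local-to-global-PS}(1) applies verbatim. Concretely, \Cref{LocalToGlobal} with \(i = d \geq 2\) and \(m = 0\) turns the assumed injectivity of \(H^{d+1}_{(p,R_+)}(R) \to H^{d+1}_{(p,R_+)}(R_{\perfd})\) into injectivity of \(H^d_{(p)}(\mscrX, \mcalO_{\mscrX}) \to H^d_{(p)}(\mscrX, \mcalO_{\mscrX,\perfd})\). Since \(\omega_{\mscrX} \cong \mcalO_{\mscrX}\), this is the map \(H^d_{(p)}(\mscrX,\omega_{\mscrX}) \to H^d_{(p)}(\mscrX, \mcalO_{\mscrX,\perfd}\otimes^L\omega_{\mscrX})\). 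Cohen--Macaulayness gives \(\omega^{\bullet}_{\mscrX} \simeq \omega_{\mscrX}[d]\), so this map is exactly the one in \Cref{CompareGLPPGLPI}(3). Its injectivity yields lim-perfectoid splitting.

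For (2), assume \(d \geq 3\) and \(X\) lim-perfectoid split. Then \Cref{GlobalToLocalCYFano}(1) directly gives the injectivity of \(H^{d+1}_{(p,R_+)}(R) \to H^{d+1}_{(p,R_+)}(R_{\perfd})\). The point to double-check is where \(d \geq 3\) enters. The degree-\(0\) map \(H^d_{(p)}(\mscrX,\mcalO_{\mscrX,\perfd}) \to H^{d+1}_{(p,R_+)}(R_{\perfd})_0\) is injective by \Cref{GradedZeroPartInjective}(2) only when \(d \notin \{1,2\}\). This is precisely where \Cref{VanishingPerfd} (concentration of \(R\Gamma_{(p)}((R_0)_{\perfd})\) in degrees \([1,2]\)) is used, and it is the main subtlety. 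The rest of the argument is the diagram chase in \eqref{GlobalToLocalCYFanoDiagram}, combined with the socle of \(H^{d+1}_{(p,R_+)}(R)\) lying in degree \(0\) because \(\omega_R \simeq R\).

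For the final sentence, assume \(R_{(\mfrakm_0,R_+)}\) is Cohen--Macaulay. By \Cref{rmk:Gorensteiness-section-ring} it is then Gorenstein, and \Cref{EquivGradedPerfdPure}(4) identifies the injectivity on \(H^{d+1}_{\mfrakm}\) with lim-perfectoid purity of \(R^{\wedge p}\). Here \(\mfrakm = (p,R_+)\) and \(\dim R_{\mfrakm} = d+1\). A priori, then, injectivity is equivalent to lim-perfectoid purity for any \(d\). The lim-perfectoid splitting of \(X\) enters only through (1) and (2), where the \(d \geq 3\) hypothesis is used for the converse direction.
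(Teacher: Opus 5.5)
Your proposal is correct and is essentially the paper's own argument. The paper proves this theorem by combining \Cref{local-to-global-PS}, \Cref{GlobalToLocalCYFano}, and \Cref{CompareGLPPGLPI}, with the final assertion coming from \Cref{rmk:Gorensteiness-section-ring} and \Cref{EquivGradedPerfdPure}; your account of where $d \geq 3$ enters, via \Cref{GradedZeroPartInjective}(2) and \Cref{VanishingPerfd}, matches the paper as well.
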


\begin{proof}
The result follows from \Cref{local-to-global-PS}, \Cref{GlobalToLocalCYFano}, and \Cref{CompareGLPPGLPI}.
\end{proof}

\begin{corollary}\label{every-section-ring}
Let $k$ be a field of characteristic $p$.
Let $X \subseteq \P^N_{C_k}$ be a smooth hypersurface of degree $N+1$ over $C_k$, where $C_k$ is the Cohen ring of $k$.
If $N \geq 3$ and $X$ is lim-perfectoid split, then the $p$-adic completion of every section ring of $X$ is lim-perfectoid pure.
\end{corollary}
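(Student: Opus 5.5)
The plan is to reduce to \Cref{Global-local-CY} applied to each ample line bundle on \(X\). The hypersurface \(X \subseteq \P^N_{C_k}\) of degree \(N+1\) is smooth and Cohen--Macaulay. Adjunction gives \(\omega_X \cong \mcalO_X(N+1-N-1) = \mcalO_X\). Its relative dimension is \(N-1 \geq 2\), so \(d = \dim X = N \geq 3\).

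Since \(X\) is smooth over \(C_k\), we have \(H^0(X,\mcalO_X) = C_k\), a \(p\)-torsion-free DVR with residue characteristic \(p\), as required in \Cref{SettingFibersequenceRecall}. Flatness is clear because \(X\) is a hypersurface whose defining equation is nonzero modulo \(p\).

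Fix an arbitrary ample line bundle \(L\) and put \(R = \bigoplus_{n\ge 0} H^0(X, L^{\otimes n})\). By \Cref{Global-local-CY}(2), lim-perfectoid splitting of \(X\) gives injectivity of \(H^{d+1}_{(p,R_+)}(R) \to H^{d+1}_{(p,R_+)}(R_{\perfd})\). By the final sentence of \Cref{Global-local-CY} (equivalently \Cref{rmk:Gorensteiness-section-ring} with \Cref{EquivGradedPerfdPure}), this injectivity is equivalent to lim-perfectoid purity of \(R^{\wedge p}\), provided \(R_{(\m_0,R_+)}\) is Cohen--Macaulay. So the whole argument reduces to checking that Cohen--Macaulayness.

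That check is the main step. I would use the standard criterion for section rings: \(R\) is Cohen--Macaulay at \((\m_0,R_+)\) if and only if \(X\) is Cohen--Macaulay and \(H^i(X, L^{\otimes n}) = 0\) for \(0 < i < d-1 = \dim X_{\text{fibers}}\) and all \(n \in \setZ\). Via the graded comparison \(H^{i+1}_{R_+}(R) \cong \bigoplus_n H^i(X, L^n)\) for \(i \ge 1\), this should give depth \(d+1\) after adding the regular element \(p\). Here it matters that \(\dim R = d+1\).

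To verify the vanishing, I would use that \(\Pic\) of a smooth hypersurface of relative dimension \(\geq 3\) is generated by \(\mcalO(1)\) (Grothendieck--Lefschetz), so that \(L^{\otimes n} = \mcalO_X(a)\). The intermediate cohomology of \(\mcalO_X(a)\) on a hypersurface then vanishes from the sequence \(0\to\mcalO_{\P}(a-N-1)\to\mcalO_{\P}(a)\to\mcalO_X(a)\to 0\) and the cohomology of \(\P^N_{C_k}\).

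The case \(N=3\) (relative dimension \(2\), a K3 surface) is the delicate one, since \(\Pic\) may be larger than \(\setZ\). There I would argue directly instead. The only intermediate group is \(H^1(X,L^n)\). For \(n \neq 0\) it vanishes by Kodaira-type vanishing on the fibers together with cohomology and base change; on each fiber it follows from Riemann--Roch plus Serre duality for K3 surfaces, where \(H^1\) of an ample bundle vanishes in all characteristics. For \(n=0\) it vanishes because \(H^1(X,\mcalO_X)=0\) for a K3 surface.

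In general it may be cleaner to avoid Lefschetz entirely and argue fiberwise via base change. The fiberwise vanishing \(H^i(X_k, L^n)=0\) for \(0<i<N-1\) is the step I expect to need the most care.
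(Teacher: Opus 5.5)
Your reduction matches the paper's. You apply \Cref{Global-local-CY} to an arbitrary ample $L$, which reduces the statement to Cohen--Macaulayness of $R_{(\mathfrak{m}_0,R_+)}$. That in turn comes from cohomology vanishing: Grothendieck--Lefschetz for $N\ge 4$, Kodaira vanishing on the K3 fibre for $N=3$.

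The gap is the Cohen--Macaulay criterion you quote, which is false over the DVR $C_k$. Suppose $H^i(X,L^n)=0$ for $0<i<d-1$. Combined with $H^0_{R_+}(R)=H^1_{R_+}(R)=0$, $H^{i+1}_{R_+}(R)\cong\bigoplus_n H^i(X,L^n)$ for $i\ge 1$, and $H^{d+1}_{R_+}(R)=0$, you get $R\Gamma_{R_+}(R)\simeq H^d_{R_+}(R)[-d]$, and hence
\[
H^d_{\mathfrak{m}}(R)\cong H^0_{(p)}\bigl(H^d_{R_+}(R)\bigr)\cong\bigoplus_{n\in\mathbb{Z}}H^{d-1}(X,L^{n})[p^\infty].
\]
So Cohen--Macaulayness also requires the top cohomology $H^{d-1}(X,L^n)$ to be $p$-torsion-free for every $n$. ``Adding the regular element $p$'' does not supply this, because $p$ being a nonzerodivisor on $R$ says nothing about $p$ acting on $H^d_{R_+}(R)$. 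For relative curves your condition is vacuous, yet $R$ fails to be Cohen--Macaulay as soon as $h^0(L^n)$ jumps on the special fibre. The missing condition is exactly what the case $i=d-1$ of $H^i_{(p)}(X,\mathcal{O}_X(m))=0$ in \Cref{CMCriterion} encodes.

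The gap closes with what you already have.
\begin{itemize}
\item For $N\ge 4$: $L^n\cong\mathcal{O}_X(a)$, and $H^{N-1}(X,\mathcal{O}_X(a))$ is the kernel of $H^N(\mathbb{P}^N_{C_k},\mathcal{O}(a-N-1))\to H^N(\mathbb{P}^N_{C_k},\mathcal{O}(a))$. It is therefore a submodule of a free $C_k$-module, hence torsion-free.
\item For $N=3$: the sequence $0\to L^n\xrightarrow{p}L^n\to\overline{L}^{n}\to 0$ shows $p$ is injective on $H^2(X,L^n)$ once $H^1(\overline{X},\overline{L}^{n})=0$. This vanishing holds for all $n$; for $n=0$ it holds because $\overline{X}$ is K3.
\end{itemize}
The paper handles vanishing and torsion-freeness at once. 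Kodaira-type vanishing on $\overline{X}$ for anti-ample $M$ gives, via the same $p$-sequence, $H^i_{(p)}(X,M)=0$ for all $i<\dim X$. Duality then covers nonnegative twists, and \Cref{CYSectionRingCM} concludes.

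Three smaller points:
\begin{itemize}
\item $H^0(X,\mathcal{O}_X)=C_k$ does not follow from smoothness alone; use the hypersurface sequence with $N\ge 2$.
\item Kodaira vanishing on a K3 surface in characteristic $p$ needs more than Riemann--Roch and Serre duality. For example, use $h^0(\mathcal{O}_C)=1$ for $C\in|\overline{L}|$, or Deligne--Illusie via the lift.
\item Grothendieck--Lefschetz is a statement over a field, so getting $\Pic(X)=\mathbb{Z}\cdot\mathcal{O}_X(1)$ over $C_k$ needs a transfer step. For example, $\Pic(X)\hookrightarrow\Pic(\overline{X})$, using $H^1(\overline{X},\mathcal{O}_{\overline{X}})=0$ and completeness of $C_k$.
\end{itemize}
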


\begin{proof}
First, we note that the Kodaira-type vanishing theorem holds for
$\var{X}=X_{p=0}$ by \cite{Mukai}*{Proposition~3.4} and the Grothendieck--Lefschetz theorem.
Thus, for any anti-ample line bundle $M$ on $X$, it follows from the exact sequence
\[
0 \to M \xrightarrow{\cdot p} M \to M|_{\var{X}} \to 0
\]
that the homomorphism
\[
H^i_{(p)}(X,M) \xrightarrow{\cdot p} H^i_{(p)}(X,M)
\]
is injective for $i<N=\dim X$.
In particular, we have
\[
H^i_{(p)}(X,M)=0 \quad \text{for $i<\dim X$}.
\]

Furthermore, by the exact sequence
\[
0 \to \cO_{\P^N}(-(N+1)) \to \cO_{\P^N} \to \cO_X \to 0,
\]
we obtain
\[
H^i(X,\cO_X)=0 \quad \text{for $1 \leq i \leq \dim X-1$}.
\]

Therefore, every section ring of $X$ is Cohen--Macaulay by \Cref{CYSectionRingCM}.
By \Cref{Global-local-CY}, the $p$-adic completion of every section ring of $X$ is lim-perfectoid pure, as desired.
\end{proof}

\begin{theorem}\label{Global-local-Fano}
Keep the setting of \Cref{SettingFibersequenceRecall}.
Assume that $X$ is Cohen--Macaulay and $\omega_X \simeq L^{-1}$.
Then the canonical morphism \(H^{d+1}_{(p, R_+)}(R) \to H^{d+1}_{(p, R_+)}(R_{\perfd})\) is injective if and only if \(X\) is lim-perfectoid split.

In particular, if \(R_{(\mfrakm_0, R_+)}\) is Cohen--Macaulay, the former condition is equivalent to the lim-perfectoid purity of \(R^{\wedge p}\).
\end{theorem}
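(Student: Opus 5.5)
The plan is to assemble this from the two directions already established, in the same way \Cref{Global-local-CY} was derived.

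For the implication from injectivity to lim-perfectoid splitting, we are exactly in the situation of \Cref{local-to-global-PS}(1) with \(L = \omega_X^{-1}\). Concretely, \Cref{LocalToGlobal} with \(i = d \geq 2\) turns the injectivity of \(H^{d+1}_{(p,R_+)}(R) \to H^{d+1}_{(p,R_+)}(R_{\perfd})\) into the injectivity of \(H^d_{(p)}(\mscrX, \mcalO_{\mscrX}(m)) \to H^d_{(p)}(\mscrX, \mcalO_{\mscrX,\perfd}(m))\) for every \(m\). Taking \(m = -1\) and using \(\mcalO_{\mscrX}(-1) \cong \omega_{\mscrX}\) gives the injectivity of
\[
H^d_{(p)}(\mscrX, \omega_{\mscrX}) \to H^d_{(p)}(\mscrX, \mcalO_{\mscrX,\perfd} \otimes^L \omega_{\mscrX}).
\]
Because \(X\) is Cohen--Macaulay, \(\omega_X^{\bullet} \cong \omega_X[d]\). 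Since \(R_0\) is a DVR with maximal ideal generated by a power of \(p\) up to radical, \(H_{(p)}\) coincides with \(H_{\m_0}\). Hence \Cref{CompareGLPPGLPI} (iii)\(\Rightarrow\)(i) yields that \(X\) is lim-perfectoid split.

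For the converse, we apply \Cref{GlobalToLocalCYFano}(2). Its hypotheses are exactly that \(X\) is lim-perfectoid split and that \(\omega_X^{-1} = L\) is ample, and there is no dimension restriction, because in degree \(i = -1 < 0\) \Cref{GradedZeroPartInjective}(1) gives injectivity in every cohomological degree. The ingredients inside that proof are as follows. First, lim-perfectoid splitting gives the injection on \(H^d_{(p)}(\mscrX, \omega_{\mscrX})\) via \Cref{CompareGLPPGLPI}. Second, the degree \(-1\) part of the fiber sequence in \Cref{GradedAbsPerfdSheafCoho} embeds this into \(H^{d+1}_{(p,R_+)}(R_{\perfd})_{-1}\). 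Third, the socle of \(H^{d+1}_{(p,R_+)}(R)\) lies in degree \(-1\), since \(\omega_R \cong R(-1)\) by \Cref{socle} and \Cref{dualizing-module-affine-cone}. An injective map on the essential-socle piece then forces injectivity on the whole Artinian module.

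The final sentence follows from \Cref{rmk:Gorensteiness-section-ring} and \Cref{EquivGradedPerfdPure}(4). When \(R_{(\m_0,R_+)}\) is Cohen--Macaulay, the remark makes it Gorenstein, and then the lemma makes the injectivity equivalent to lim-perfectoid purity of \(R^{\wedge p}\).

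The only delicate points are two identifications. One is that the degree \(-1\) piece of the middle map is the map induced by \(\mcalO_{\mscrX} \to \mcalO_{\mscrX,\perfd}\) twisted by \(\omega\), which follows from the diagram in \Cref{LocalToGlobal}. The other is the socle statement. I expect the socle step to be the main thing to double-check, but it is already handled in \Cref{GlobalToLocalCYFano}, so the theorem is essentially a direct combination of these results.
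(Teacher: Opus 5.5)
Your proposal is correct and is essentially the paper's own proof. The paper obtains the theorem exactly this way: \Cref{local-to-global-PS}(1) (via \Cref{LocalToGlobal} with the twist \(m=-1\) and \Cref{CompareGLPPGLPI}) for one direction, \Cref{GlobalToLocalCYFano}(2) (degree \(-1\) injectivity from \Cref{GradedZeroPartInjective}(1) plus the socle lying in degree \(-1\)) for the converse, and \Cref{rmk:Gorensteiness-section-ring} with \Cref{EquivGradedPerfdPure} for the final sentence.
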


\begin{proof}
The result also follows from \Cref{local-to-global-PS}, \Cref{GlobalToLocalCYFano}, and \Cref{CompareGLPPGLPI}.
\end{proof}


\begin{corollary} \label{CICase}
Keep the setting of \Cref{SettingFibersequenceRecall}. Assume that the section ring \(R\) is complete intersection.
We assume either
\begin{enumerate}
    \item $\omega_{X}$ is trivial with \(d \geq 3\), or
    \item $\omega_{X}$ is anti-ample and $\omega_{X}^{-1}= L$.
\end{enumerate}
Then \(X\) is lim-perfectoid split if and only if it is perfectoid split.
\end{corollary}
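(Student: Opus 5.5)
One direction is already available: by \Cref{GPPtoGLPP}, if \(X\) is perfectoid split then it is lim-perfectoid split. So the content is the converse. Assume \(X\) is lim-perfectoid split. The plan is to pass to the section ring, use its complete intersection structure to upgrade lim-perfectoid purity to perfectoid purity, and then return to \(X\).

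First, under (1) or (2), \Cref{GlobalToLocalCYFano} gives that
\[
H^{d+1}_{(p,R_+)}(R) \to H^{d+1}_{(p,R_+)}(R_{\perfd})
\]
is injective. Since \(R\) is a complete intersection, \(R_{(\m_0,R_+)}\) is Gorenstein, hence Cohen--Macaulay. The final assertion of \Cref{GlobalToLocalCYFano}, together with \Cref{EquivGradedPerfdPure}, then shows that \(R^{\wedge p}\) (equivalently \(R_{(\m_0,R_+)}\)) is lim-perfectoid pure.

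The key input is next: for complete intersection rings, lim-perfectoid purity is equivalent to perfectoid purity. This is the Fedder-type criterion for complete intersections in \cite{yoshikawa2025Computation} (cf.\ \cite{bhatt2024Perfectoid}), applied to the complete intersection local ring \(R_{(\m_0,R_+)}\) or to its completion. It yields a perfectoid algebra \(P\) with \(R_{(\m_0,R_+)} \to P\) pure. I expect this to be the main obstacle. One has to check that the cited equivalence applies here: the ring must be complete intersection and \(p\)-torsion free, and perhaps complete, so one may need to pass to \(\m\)-adic completion, which preserves both properties. One also has to transport purity back to \(R^{\wedge p}\), using the graded equivalences in \Cref{PurityEquiv} and the graded perfectoid cofinality in \Cref{CofinalGradedPerfdForGradedRing}.

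With perfectoid purity in hand, there are two ways to finish. \Cref{LocalGlobalPure} directly gives that \(X\) is perfectoid split, since \(R^{\wedge p}\) is perfectoid pure. Alternatively, purity of \(R \to P\) together with Gorensteinness gives injectivity of
\[
H^{d+1}_{(p,R_+)}(R) \to H^{d+1}_{(p,R_+)}(P),
\]
and \Cref{local-to-global-PS}(2) then yields perfectoid splitting; this uses that \(X\) is Cohen--Macaulay, which follows since \(R\) is Cohen--Macaulay. I would use the first route and keep the second as a fallback, in case the perfectoid purity obtained is only local at \((\m_0,R_+)\) rather than for \(R^{\wedge p}\). The injectivity statement is compatible with localization at the graded maximal ideal because local cohomology supported at \((p,R_+)\) is unchanged.
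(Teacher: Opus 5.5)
Your proposal is correct and is essentially the paper's argument. The ``if'' direction comes from \Cref{GPPtoGLPP}; for the converse, \Cref{GlobalToLocalCYFano} together with the complete-intersection result \cite{bhatt2024Perfectoid}*{Corollary 4.25} (the precise reference for the key input you identified) gives perfectoid purity of \(R^{\wedge p}\), and \Cref{local-to-global-PS} then yields perfectoid splitting (the paper applies that corollary directly to the injectivity and finishes with what you call your fallback route, so your detour through lim-perfectoid purity and \Cref{LocalGlobalPure} is a harmless variant).
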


\begin{proof}
    The ``if'' direction follows from \Cref{GPPtoGLPP}.
    We show the ``only if'' direction.
    By \Cref{GlobalToLocalCYFano} and \cite{bhatt2024Perfectoid}*{Corollary 4.25}, the \(p\)-adic completion \(R^{\wedge p}\) is perfectoid pure since \(R\) is complete intersection.
    Then \Cref{local-to-global-PS} shows that \(X\) is perfectoid split.    
\end{proof}

\subsection{Examples}


\begin{example}\label{Du-Val}
Let $\var{X}$ be a Du Val del Pezzo surface such that $\var{X}$ is a hypersurface in a well-formed weighted projective space over an algebraically closed field $k$ of characteristic $p$.
Then every $W(k)$-lift $X$ of $\var{X}$ is perfectoid split.
Indeed, by the proof of \cite{onuki2025QuasiFsplitting}*{Theorem~4.3}, the section ring
\[
R:=\bigoplus_{n \geq 0} H^0(X,(\omega_X^{-1})^{\otimes n})
\]
is quasi-$F$-split.
Since $\var{X}$ is hypersurface, so is $R$.
By \cite{yoshikawa2025Computation}*{Theorem~A} and \cite{ishizuka2025Graded}*{Theorem~E}, the $p$-adic completion $R^{\wedge p}$ of $R$ is perfectoid pure.
Therefore, by \Cref{LocalGlobalPure}, the lift $X$ is perfectoid split.
\end{example}

\begin{example}[{cf.~\cite{bhatt2024Perfectoid}*{Example 7.7}}] \label{ExampleWeaklyOrdinary}
    Let \(\overline{X}\) be a weakly ordinary smooth projective variety over a perfect field \(k\) of characteristic \(p\) such that \(\Omega^1_{\overline{X}/k}\) is trivial.
    Let \(X\) be the canonical \(W(k)\)-lift of \(\overline{X}\).
    If \(\omega_X\) is trivial, then \(X\) is lim-perfectoid split.
    For example, the canonical lift of an ordinary abelian variety is perfectoid split.

    It follows from \Cref{local-to-global-PS} and the fact that the \(p\)-adic completion of the section ring of \(X\) with respect to an ample line bundle is perfectoid injective by \cite{bhatt2024Perfectoid}*{Example 7.7}.

\end{example}

\begin{example}\label{elliptic-curve}
Let \(k\) be a perfect field of characteristic \(p>0\), and let \(\var{X}\) be an elliptic curve over \(k\).
Then every \(W(k)\)-lift \(X\) of \(\var{X}\) is perfectoid split.

Indeed, since \(\var{X}\) is a plane cubic, there exists a homogeneous polynomial \(\var{f} \in k[x,y,z]\) of degree \(3\) such that
\[
\var{X}=(\var{f}=0)\subseteq \P^2_k.
\]
Since \(\var{X}\) is quasi-\(F\)-split, so is
\[
\var{S}:=k[x,y,z]/(\var{f}).
\]
As \(X\) is a \(W(k)\)-lift of \(\var{X}\), there exists a homogeneous lift \(f \in W(k)[x,y,z]\) of \(\var{f}\) of degree \(3\) such that
\[
X=(f=0)\subseteq \P^2_{W(k)}.
\]
Hence
\[
S:=W(k)[x,y,z]/(f)
\]
is a section ring of \(X\). Since \(\var{S}=S/pS\) is quasi-\(F\)-split, it follows from \cite{yoshikawa2025Computation}*{Theorem~A} that the \(p\)-adic completion \(S^{\wedge p}\) is perfectoid pure. Therefore \(X\) is perfectoid split by \Cref{Global-local-CY}.
\end{example}

\begin{example}\label{CY}
Let $k$ be an algebraically closed field of characteristic $p$.
\begin{enumerate}
\item Let $X$ be a smooth Calabi--Yau hypersurface of degree $d$ in a projective space over $W(k)$.
If $p$ is larger than the relative dimension of $X$ and $p \nmid d$, then $X$ is perfectoid split by \cite{Yoshikawa26}*{Theorem~A}.
\item Let $\var{X}$ be a hypersurface Calabi--Yau variety in $\P^N_k$ such that the Artin--Mazur height of $\var{X}$ is infinite and $\dim \var{X} \geq 2$.
If $p=2$, then there exist $W(k)$-lifts $X_1$ and $X_2$ of $\var{X}$ such that $X_1$ is not lim-perfectoid split and $X_2$ is perfectoid split.
Indeed, let $\bar f$ be an equation defining $\var{X}$.
By \cite{TY26}*{Theorem~5.11}, there exist $W(k)$-lifts of equations $f_1$ and $f_2$ of $f$ such that $\mathrm{ns}(f_1)<\infty$ and $\mathrm{ns}(f_2)=\infty$.
By \cite{yoshikawa2025Criterion}*{Theorem~A and B}, $(A/f_1)^{\wedge p}$ is not perfectoid pure and $(A/f_2)^{\wedge p}$ is perfectoid pure, where
\[
A=W(k)[x_0,\ldots,x_N].
\]
Thus, $X_1=\Proj(A/f_1)$ is not lim-perfectoid split and $X_2=\Proj(A/f_2)$ is perfectoid split by \Cref{Global-local-CY}. 
\end{enumerate}
\end{example}

\begin{example}\label{non-lci}
Let \(p\neq 2\), let \(k\) be a perfect field of characteristic \(p\), and put \(W=W(k)\).  
Consider the Fermat quartic K3 surface
\[
X
=
\operatorname{Proj}
\frac{W[x,y,z,w]}{(x^4+y^4+z^4+w^4)}
\subseteq \mathbf P^3_W.
\]
By \cite{huybrechts2016Lectures}*{Chapter~17,~Proposition~2.10}, the specialization map
\[
\Pic(X_{\Q}) \to \Pic(X_{\F_p})
\]
is injective.
Since the rank of $\Pic(X_{\Q})$ is $20$, there is an ample line bundle $L$ such that $L$ is not isomorphic to $\cO_X(m)$ for every $m \in \Z_{\>0}$.
Let $R$ be a section ring with respect to $L$.

Since \(p\neq 2\), the \(p\)-adic completion of the coordinate ring of $X$ is perfectoid pure by \cite{yoshikawa2025Criterion}*{Example~7.8}.
By \Cref{every-section-ring}, the $p$-adic completion $R^{\wedge p}$ is lim-perfectoid pure. 
\end{example}



\appendix

\section{Dualizing module on affine cone}

\begin{lemma}\label{matlis-dual}
Let $(B,\m)$ be a Noetherian local ring with a dualizing complex $\omega_B^{\bullet}$ and $X$ a proper scheme over $\Spec B$.
Let $\omega_X^{\bullet}\defeq (X \to \Spec B)^{!}\omega_B^{\bullet}$ and $\omega_X$ a dualizing sheaf associated to $\omega_X^{\bullet}$.
For any object \(\mcalF\) of \(\mcalD_{\coh}^b(X, \cO_X)\), we have an isomorphism
\begin{equation*}
    \Hom_B(H^0_{\m}(X, \omega_X^{\bullet} \otimes^L_{\cO_X} \mcalF), E) \cong \comp{\m}{\Hom_{\cO_X}(\mcalF, \cO_X)},
\end{equation*}
where $E$ is an injective hull of $B/\m$.
If \(\mcalF\) is concentrated in degree \(0\), the following isomorphism also holds:
\[
\Hom_B(H^d_{\m}(X,\omega_X \otimes_{\cO_X} \mcalF),E) \cong \comp{\m}{\Hom_{\cO_X}(\mcalF,\cO_X)},
\]
where $d \defeq \dim X$.
\end{lemma}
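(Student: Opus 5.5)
The approach is Grothendieck duality for the proper morphism $f \colon X \to \Spec B$, combined with local duality on the Noetherian local ring $B$.

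First, rewrite the left-hand side via local duality. Since $\mcalF \in \mcalD^b_{\coh}(X)$ and $f$ is proper, $Rf_*(\omega_X^{\bullet} \otimes^L \mcalF) \in \mcalD^b_{\coh}(B)$. Moreover,
\[
H^0_{\m}(X, \omega_X^{\bullet} \otimes^L \mcalF) = H^0\bigl(R\Gamma_{\m}(Rf_*(\omega_X^{\bullet} \otimes^L \mcalF))\bigr).
\]
Local duality for $B$ with the normalized dualizing complex $\omega_B^{\bullet}$ gives, for $M \in \mcalD^b_{\coh}(B)$,
\[
\Hom_B(H^0_{\m}(M), E) \cong \comp{\m}{H^0(R\Hom_B(M, \omega_B^{\bullet}))}.
\]
Here the right side is a finitely generated $B$-module, so classical completion agrees with the derived one. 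If $\omega_B^{\bullet}$ is normalized differently, one fixes the shift, but the paper's convention $\omega_X = \mcalH^{-d}$ suggests normalized.

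Second, apply Grothendieck duality $R\Hom_B(Rf_*\mcalG, \omega_B^{\bullet}) \cong Rf_* R\mathcal{H}om(\mcalG, f^!\omega_B^{\bullet})$ with $\mcalG = \omega_X^{\bullet} \otimes^L \mcalF$. Since $\omega_X^{\bullet}$ is a dualizing complex on $X$, biduality gives
\[
R\mathcal{H}om(\omega_X^{\bullet} \otimes^L \mcalF, \omega_X^{\bullet}) \cong R\mathcal{H}om(\mcalF, R\mathcal{H}om(\omega_X^{\bullet}, \omega_X^{\bullet})) \cong R\mathcal{H}om(\mcalF, \cO_X).
\]
Taking $H^0 R\Gamma$ gives $\Hom_{\mcalD(X)}(\mcalF, \cO_X)$, which is finitely generated over $B$, and this yields the first isomorphism.

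For the second statement, with $\mcalF$ a coherent sheaf, I would compare $H^d_{\m}(X, \omega_X \otimes \mcalF)$ with $H^0_{\m}(X, \omega_X^{\bullet} \otimes^L \mcalF)$. There is a natural map $\omega_X[d] \to \omega_X^{\bullet}$ coming from the truncation $\tau^{\ge -d}$, since $\mcalH^{i}(\omega_X^{\bullet}) = 0$ for $i < -d$. Its cone lives in cohomological degrees $\geq -d+1$, with coherent cohomology sheaves supported in dimension $\le d-1$, because $\mcalH^{-j}(\omega_X^{\bullet})$ is supported in dimension $\le j$. I expect this dimension bookkeeping to be the main obstacle. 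One must show that for $\mcalC$ in degrees $\ge -d+1$ with $\mcalH^{-j}$ supported in dimension $\le j$, and $\mcalF$ coherent, the complex $\mcalC \otimes^L \mcalF$ has vanishing $H^{0}_{\m}$ and $H^{1}_{\m}$ contributions. The key bound is $H^k_{\m}(X, \mathcal{K}) = 0$ for $k > \dim \Supp \mathcal{K}$, where local cohomology along the closed fibre of a proper scheme has vanishing range bounded by the dimension of the support, including the base direction. Applied via a hypercohomology spectral sequence, this gives that $H^0_{\m}(X, \mcalC \otimes^L \mcalF) = 0$.

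The remaining issue is the derived tensor. The terms $\mcalH^{-j}(\omega^{\bullet}) \otimes^L \mcalF$ have $\mathrm{Tor}$ in nonpositive degrees, so they only shift things downward. This does not break the bound, since negative Tor degrees combined with the support bound still only contribute to $H^{<0}$. Also, $\omega_X \otimes^L \mcalF$ versus $\omega_X \otimes \mcalF$ differs by $\mathrm{Tor}_{>0}$ terms sitting in degrees $\le -d-1$ after the shift, which are killed by the same support-dimension bound. Hence
\[
H^d_{\m}(X, \omega_X \otimes \mcalF) \cong H^0_{\m}(X, \omega_X^{\bullet} \otimes^L \mcalF),
\]
and the second isomorphism follows from the first. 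In the Cohen--Macaulay applications $\omega_X^{\bullet} = \omega_X[d]$, so this step is trivial there; I would present the general argument but note that simplification.
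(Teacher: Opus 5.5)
Your proof of the first isomorphism is the paper's proof: local duality for the normalized $\omega_B^\bullet$, Grothendieck duality for $X\to\Spec B$, the biduality $R\mathcal{H}om(\omega_X^\bullet,\omega_X^\bullet)\cong\mathcal{O}_X$, then $H^0$ and finite generation. Your comparison of $\omega_X\otimes^L\mathcal{F}$ with $\omega_X\otimes\mathcal{F}$ via $H^{>d}_{\mathfrak m}=0$ is also fine.

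\textbf{The gap is the cone step.} Let $\mathcal{C}=\tau^{\geq -d+1}\omega_X^\bullet$ be the cone of $\omega_X[d]=\tau^{\leq -d}\omega_X^\bullet\to\omega_X^\bullet$ (the map comes from $\tau^{\leq -d}$, not $\tau^{\geq -d}$).

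\begin{itemize}
\item For the induced map on $H^0_{\mathfrak m}(X,-\otimes^L\mathcal{F})$ to be an isomorphism, you need both $H^{-1}_{\mathfrak m}(X,\mathcal{C}\otimes^L\mathcal{F})=0$ and $H^{0}_{\mathfrak m}(X,\mathcal{C}\otimes^L\mathcal{F})=0$.
\item The $\mathrm{Tor}_0$-pieces $\mathcal{H}^{-j}(\omega_X^\bullet)\otimes\mathcal{F}$ with $j\le d-1$ sit in degree $-j$. So their $H^{j}_{\mathfrak m}$ lands in total degree $0$, and their $H^{j-1}_{\mathfrak m}$ lands in total degree $-1$.
\item Grothendieck vanishing only kills $H^k_{\mathfrak m}$ for $k$ \emph{strictly} larger than the support dimension. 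Hence $\dim\Supp\mathcal{H}^{-j}(\omega_X^\bullet)\le j$ yields neither vanishing. Your claim that these terms ``only contribute to $H^{<0}$'' fails precisely for $s=0$.
\end{itemize}

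What the argument actually needs is $\dim\Supp\mathcal{H}^{-j}(\omega_X^\bullet)\le j-2$ for all $j<d$. This holds when $X$ is equidimensional and $S_2$ (e.g.\ normal), and trivially when $X$ is Cohen--Macaulay, since then $\mathcal{C}=0$. Under this bound every contribution of $\mathcal{C}\otimes^L\mathcal{F}$ lands in total degree $\le -2$, and your triangle argument is complete for all coherent $\mathcal{F}$.

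\textbf{This cannot be fixed under the stated hypotheses alone}, because the second isomorphism is false for general proper $X$.

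\begin{itemize}
\item Take $B=k$, $X=\P^2_k\sqcup\P^1_k$ and $\mathcal{F}=\mathcal{O}_X$. Then $H^2(X,\omega_X)=k$ but $\Hom(\mathcal{O}_X,\mathcal{O}_X)=k^2$. Here $H^0_{\mathfrak m}(X,\mathcal{C})=H^1(\P^1,\omega_{\P^1})\neq 0$.
\item Take $X=\Spec B$ with $B$ the local ring at the origin of $k[x,y,z,w]/(xz,xw,yz,yw)$ (two planes meeting in a point). Then $\Hom_B(H^2_{\mathfrak m}(\omega_B),E)\cong\Hom_B(\omega_B,\omega_B)^\wedge\cong k[[x,y]]\times k[[z,w]]$. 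This needs two generators over $\widehat{B}$, so it is not $\widehat{B}$. Here $H^{-1}_{\mathfrak m}(X,\mathcal{C})\neq 0$.
\end{itemize}

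The paper's own proof does not supply the missing input either.
\begin{itemize}
\item It asserts that $\mathcal{H}^{-(d-i)}(\omega_X^\bullet\otimes^L\mathcal{F})$ has support of dimension strictly less than $d-i$. This is false in the first example.
\item It reads the answer off $E_2^{d,-d}$ without controlling the differentials into that term. A nonzero $d_2$ occurs in the second example; this is exactly your degree $-1$ condition.
\end{itemize}

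So the second statement should carry an extra hypothesis of the kind above. That hypothesis does hold in the Cohen--Macaulay settings of the main theorems, and for normal $X$ with $\mathcal{F}$ a line bundle as in the appendix.
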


\begin{proof}
We have 
\begin{align*}
R\Hom_{B}(R\Gamma_{\m}(X,\omega^{\bullet}_X \otimes^L_{\cO_X} \mcalF),E)
&\cong \dcomp{\m}{R\Hom_{B}(R\Gamma(X, \omega^{\bullet}_X \otimes^L_{\cO_X} \mcalF),\omega_{B}^{\bullet})} \\
&\cong \dcomp{\m}{R\Hom_{\cO_X}(\omega_X^{\bullet}\otimes^L_{\cO_X} \mcalF,\omega_X^{\bullet})} \\
&\cong \dcomp{\m}{R\Hom_{\cO_X}(\mcalF,\cO_X)},
\end{align*}
where the first isomorphism follows from local duality (\citeSta{0A84}) and the second is Grothendieck duality for \((X \to \Spec B)^!\) (\citeSta{0AU3}).
Since \(E\) is an injective \(B\)-module, taking the \(0\)-th cohomology yields the isomorphism
\begin{equation*}
    \Hom_B(H^0_{\m}(X, \omega^{\bullet}_X \otimes^L_{\cO_X} \mcalF), E) \cong \comp{\m}{\Hom_{\cO_X}(\mcalF, \cO_X)}
\end{equation*}
by \citeSta{0A06}.

Next, assume that \(\mcalF\) is concentrated in degree \(0\), i.e., \(\mcalF\) is a coherent sheaf on \(X\).

Since \(\mcalF\) is coherent and the dimension of the support of \(\mcalH^{-j}(\omega_X^{\bullet})\) is at most \(j\) for all \(j \geq 0\), the dimension of the support of \(\mcalH^{-(d-i)}(\omega_X^{\bullet} \otimes^L_{\cO_X} \mcalF)\) is strictly less than \(d-i\) for \(i > 0\). By Grothendieck's vanishing theorem for local cohomology, we have
\[
H^{d-i}_{\m}(X,\mcalH^{-(d-i)}(\omega^{\bullet}_X \otimes^L_{\cO_X} \mcalF))=0
\]
for $i >0$.
Using the spectral sequence
\[
E_2^{p,q} = H^p_{\m}(X, \mcalH^q(\omega^{\bullet}_X \otimes^L_{\cO_X} \mcalF)) \Rightarrow H^{p+q}_{\m}(X, \omega^{\bullet}_X \otimes^L_{\cO_X} \mcalF),
\]
the only non-vanishing term contributing to total degree \(0\) is \(E_2^{d,-d}\), which yields
\[
H^0_{\m}(X,\omega^{\bullet}_X \otimes^L_{\cO_X} \mcalF) \cong H^d_{\m}(X,\mcalH^{-d}(\omega^{\bullet}_X \otimes^L_{\cO_X} \mcalF)) \cong H^d_{\m}(X,\omega_X \otimes_{\cO_X} \mcalF),
\]
\end{proof}

\begin{notation}\label{notation-dualizing}
Let $(R_0,\m_0)$ be a Noetherian local ring with a normalized dualizing complex $\omega_{R_0}^{\bullet}$, and let $X$ be a normal projective scheme over $R_0$ such that $H^0(X,\cO_X) \cong R_0$. 
Set $d\defeq \dim X$, $\omega_X^{\bullet} \defeq (X \to \Spec R_0)^{!}\omega_{R_0}^{\bullet}$, and $\omega_X\defeq \mcalH^{-d}(\omega_X^{\bullet})$.
We note that $\omega_X$ is a dualizing sheaf since $\omega_X^{\bullet}$ is a dualizing complex by \cite{KTTWYY1}*{Proposition~2.5}.
We assume that $d > \dim R_0$.
Let $L$ be an ample line bundle on $X$.
Set $R\defeq \bigoplus_{m \geq 0} H^0(X,L^m)$.
\end{notation}

\begin{proposition}\label{dualizing-module-affine-cone}
Assume the setup of \Cref{notation-dualizing}.
Then
\[
\bigoplus_{m \in \Z} H^0(X,\omega_X \otimes_{\cO_X} L^m)
\]
is a dualizing module on $R$.
\end{proposition}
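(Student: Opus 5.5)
We compute $\omega_R$ by local duality on the affine cone, reducing it to Grothendieck--Serre duality on $X$. Set $\m \defeq (\m_0, R_+)$, $n \defeq \dim R = d+1$, and $\omega'_R \defeq \bigoplus_{m \in \Z} H^0(X, \omega_X \otimes L^m)$. It suffices to show that the graded Matlis dual of $H^{n}_{\m}(R)$ is $\omega'_R$. Indeed, $R$ is a finitely generated graded $R_0$-algebra with a normalized dualizing complex, hence it admits a graded canonical module $\omega_R$, characterized by
\[
\omega_R^{\wedge \m} \cong \Hom_R(H^{n}_{\m}(R), E).
\]
Since $\omega'_R$ and $\omega_R$ are finitely generated graded modules, an isomorphism of their $\m$-adic completions that is compatible with the gradings descends to a graded isomorphism. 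Thus the whole argument reduces to computing $H^{n}_{\m}(R)$ degreewise.

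\textbf{Relating $H^{n}_{\m}(R)$ to cohomology on $X$.} Consider the \v{C}ech/Proj sequence
\[
R\Gamma_{R_+}(R) \to R \to \bigoplus_{m} R\Gamma(X, L^m),
\]
which is the classical analogue of \Cref{ExactTriLocalCechCoho}. Normality of $X$, together with the definition of $R$, gives $H^0_{R_+}(R) = H^1_{R_+}(R) = 0$ and
\[
H^{i+1}_{R_+}(R) \cong \bigoplus_m H^i(X, L^m) \quad \text{for } i \geq 1.
\]
Next apply $R\Gamma_{\m_0}$ and use $R\Gamma_{\m} = R\Gamma_{\m_0} \circ R\Gamma_{R_+}$. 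Because $\dim R_0 < d$, the top local cohomology $H^{d+1}_{\m}(R)$ receives contributions only from the top degree $H^d_{\m_0}(X, L^m)$; the term $R\Gamma_{\m_0}(R)$ lives in degrees $\leq \dim R_0 + 1 \leq d$. Concretely, applying $R\Gamma_{\m_0}$ to the fiber sequence and reading off degree $d+1$ gives
\[
H^{d+1}_{\m}(R)_m \cong H^d_{\m_0}(X, L^m)
\]
for each $m$, where $H^d_{\m_0}(X,-) = H^d(R\Gamma_{\m_0} R\Gamma(X,-))$. The one check here is that the map $H^d_{\m_0}(R)_m \to H^d_{\m_0}(X, L^m)$ and the next term do not spoil this. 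The next term vanishes because $H^{d+1}_{\m_0}(R) = 0$, since $d + 1 > \dim R_0 + 1$. For the previous term, a dimension count shows $H^{d}_{\m_0}(R)$ is also zero when $d > \dim R_0$, since $R$ is a direct sum of finitely generated $R_0$-modules of dimension $\leq \dim R_0$.

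\textbf{Applying duality.} By \Cref{matlis-dual} with $\mcalF = L^{-m}$, which is concentrated in degree $0$,
\[
\Hom_{R_0}(H^d_{\m_0}(X, \omega_X \otimes L^{-m}), E) \cong \comp{\m_0}{H^0(X, L^{m})}.
\]
Dually (Matlis duality on Artinian/complete modules), applying the same lemma with the roles swapped, using $\mcalF = \omega_X \otimes L^{-m}$ together with $R\mathcal{H}om(\omega_X^\bullet, \omega_X^\bullet) = \cO_X$, we get
\[
\Hom_{R_0}(H^d_{\m_0}(X, L^{m}), E) \cong \comp{\m_0}{H^0(X, \omega_X \otimes L^{-m})}.
\]
This uses the first isomorphism of \Cref{matlis-dual} for $\mcalF = L^{-m} \otimes \omega_X^\bullet[-d]$. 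Its local cohomology in degree $0$ is $H^d_{\m_0}(X, L^m)$, and
\[
R\mathcal{H}om(\mcalF, \cO_X) \cong \omega_X^{\bullet}[-d] \otimes L^{m}^{\vee\vee}\ldots
\]
More simply, $R\mathcal{H}om(L^{-m}\otimes\omega_X^\bullet[-d],\omega_X^\bullet)\cong L^m[d]$ before twisting. The \emph{main obstacle} is exactly this step: $\omega_X^\bullet[-d]$ need not equal $\omega_X$ when $X$ is not Cohen--Macaulay. One must check that $\Hom_{\cO_X}(\mcalF, \cO_X)$, computed in $H^0$, gives exactly $H^0(X, \omega_X \otimes L^{-m})$. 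This holds because $H^0 R\mathcal{H}om(\omega_X^\bullet[-d], \omega_X^\bullet) \cdots$; I would instead argue via $\mathcal{H}om(\mcal{H}^{-d}, -)$ and the truncation $\tau^{\geq -d}$, using that $\mathcal{H}^{-j}(\omega_X^\bullet)$ has support of dimension $\leq j$ and that $X$ is normal (S$_2$). Together these identify $H^{-d}$ of the relevant dual with the reflexive sheaf $\omega_X \otimes L^{-m}$.

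\textbf{Assembling.} Summing over $m$ with the grading $m \mapsto -m$ gives
\[
\Hom_R(H^{d+1}_{\m}(R), E) \cong \Big(\bigoplus_m H^0(X, \omega_X \otimes L^m)\Big)^{\wedge \m}
\]
as graded modules. Finally, verify that the $R$-module structures agree, which follows from the naturality of the duality isomorphisms in $L^m$ under multiplication by sections of $L$. This concludes that $\omega'_R$ is a dualizing module on $R$.
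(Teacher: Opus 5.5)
Your route differs from the paper's: you identify $\omega_R$ through local duality at $\m=(\m_0,R_+)$. You use $H^{d+1}_{\m}(R)_m\cong H^d_{\m_0}(X,L^m)$, which is correct and is the same computation as in the proof of \Cref{socle}, and then dualize degree by degree on $X$.

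The step you single out as the main obstacle is not one, and your attempted fix is wrong. Substituting $\mathcal{F}=L^{-m}\otimes\omega_X^{\bullet}[-d]$ into \Cref{matlis-dual} does not produce $H^d_{\m_0}(X,L^m)$, because $\omega_X^{\bullet}\otimes^L\mathcal{F}$ is not $L^m[d]$. Instead, combine Grothendieck duality for $X\to\Spec R_0$ with local duality on $R_0$, exactly as in the proof of \Cref{matlis-dual}. This gives $\Hom_{R_0}(H^d_{\m_0}(X,L^m),E_{R_0})\cong H^{-d}\bigl(\dcomp{\m_0}{R\Gamma(X,\omega_X^{\bullet}\otimes L^{-m})}\bigr)$. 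Since $\omega_X^{\bullet}$ lives in degrees $\geq -d$, the right side is $\comp{\m_0}{H^0(X,\omega_X\otimes L^{-m})}$, with no Cohen--Macaulay or $S_2$ input.

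The genuine gap is in your first and last paragraphs. Local duality only sees $\omega_R:=\mathcal{H}^{-(d+1)}((\Spec R\to\Spec R_0)^!\omega_{R_0}^{\bullet})$ after localizing at the single point $\m$ and completing. To return to a statement about $R$, you assume two things without proof:
\begin{enumerate}
\item[(i)] $\omega_R$ carries a grading with $(\omega_R)_m\otimes_{R_0}\widehat{R_0}\cong\Hom_{R_0}(H^{d+1}_{\m}(R)_{-m},E_{R_0})$ compatibly with the $R$-action. This is a graded local duality over a local, possibly non-complete base $R_0$, which is not the standard statement over a field.
\item[(ii)] Your degreewise duality isomorphisms commute with multiplication by $R_k=H^0(X,L^k)$.
\end{enumerate}
These two points are the actual content of the proof. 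Only with both can you descend to a graded isomorphism valid on all of $\Spec R$ rather than just at $\m$. You also use $\dim R_{\m}=d+1$ without proof; the paper gets it from $H^{d+1}_{\m}(R)\neq 0=H^{d+2}_{\m}(R)$.

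The paper avoids all of this geometrically. The punctured cone $U=\Spec R\setminus V(R_+)$ is the $\mathbb{G}_m$-torsor $\pi\colon U\to X$ with $\pi_*\cO_U=\bigoplus_m L^m$, so $\pi^!=\pi^*[1]$ gives $\widetilde{\omega_R}|_U\cong\pi^*\omega_X$. Since $d>\dim R_0$ forces $\operatorname{ht}(R_+)\geq 2$ and $\omega_R$ is $S_2$, one gets $\omega_R\cong\Gamma(U,\pi^*\omega_X)=\bigoplus_m H^0(X,\omega_X\otimes L^m)$ on all of $\Spec R$. The module structure comes for free, and no completion, descent, or graded duality is needed.
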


\begin{proof}
We note that the natural map $\Spec R \to \Spec R_0$ is of finite type by the ampleness of $L$.
By the fiber sequence
\[
R\Gamma_{R_+}(R) \to R[0] \to \bigoplus_{m \in \Z} R\Gamma(X,L^m),
\]
we have an isomorphism
\[
\bigoplus_{m \in \Z} H^i_{\m_0}(X,L^m) \xrightarrow{\simeq} H^{i+1}_{\m}(R)
\]
for $i > \dim R_0$, where $\m\defeq (\m_0,R_+)$. Since $\dim X = d$, Grothendieck's vanishing and non-vanishing theorems imply that
\[
H^{d+1}_{\m}(R) \neq 0, \quad \text{and} \quad H^{d+2}_{\m}(R) = 0.
\]
Therefore, we have $\dim R_{\m}=d+1$.
Since $R$ is catenary, we have
\[
\dim R_0+\Ht(R_+)=\dim R_{\m}=d+1,
\]
thus by the assumption $d > \dim R_0$, we obtain
\begin{equation}\label{eq:dim}
    \Ht(R_+) = d+1-\dim R_0 \geq 2.
\end{equation}

We set
\[
\omega_R^{\bullet}\defeq (\Spec R \to \Spec R_0)^{!}(\omega_{R_0}^{\bullet})
\]
and
\[
\pi \colon U\defeq \Spec R \backslash V(R_+) \to X,
\]
where $R_+\defeq \bigoplus_{m \in \Z_{\geq 1}} H^0(X,L^m)$.
Since $\pi$ is affine and
\[
\pi_*\cO_U=\bigoplus_{m \in \Z} L^m,
\]
the morphism $\pi$ is smooth and $\Omega_{U/X} \simeq \cO_U$; in particular, we have $\pi^{!}\simeq \pi^*[1]$ by \citeSta{0AU3}~(9).
Thus, letting $\wt{\omega_R}$ be the sheafification of $\omega_R$, we have
Thus, we have
\[
\wt{\omega_R}|_{U} \simeq \pi^*\omega_X. 
\]
Since $\omega_R$ satisfies the $(S_2)$-condition and $U$ contains all points of codimension at most one in $\Spec R$ by \eqref{eq:dim}, we have
\[
\omega_R \simeq H^0(U,\wt{\omega_R}|_{U}) \simeq H^0(X,\pi_*\pi^*\omega_X) \simeq \bigoplus_{m \in \Z} H^0(X,\omega_X \otimes_{\cO_X} L^m),
\]
as desired.
\end{proof}

\begin{proposition}\label{socle}
Assume the setup of \Cref{notation-dualizing}.
Let $\omega_R$ be the dualizing module constructed in \Cref{dualizing-module-affine-cone}.
Then the fiber sequence
\begin{equation}\label{eq:fiber-seq}
    R\Gamma_{\m}(\omega_R) \to R\Gamma_{\m_0}(\omega_R) \to \bigoplus_{m \in \Z}R\Gamma_{\m_0}(X,\omega_X \otimes_{\cO_X} L^m)  
\end{equation}
induces an injection
\[
H^d_{\m_0}(X,\omega_X) \hookrightarrow H^{d+1}_\m(\omega_R)
\]
and the image of this homomorphism contains the socle of $H^{d+1}_\m(\omega_R)$.    
\end{proposition}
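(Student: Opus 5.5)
The plan is to read off the injectivity from the long exact sequence of the fiber sequence \eqref{eq:fiber-seq}, and then to identify the socle by Matlis duality, using the graded structure of \(\omega_R\).

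\emph{Injectivity.} The long exact sequence of \eqref{eq:fiber-seq} contains the segment
\[
H^d_{\m_0}(\omega_R) \to \bigoplus_{m\in\Z} H^d_{\m_0}(X,\omega_X\otimes L^m) \to H^{d+1}_\m(\omega_R).
\]
Since \(\m_0\)-local cohomology over \(R_0\) is computed by a Čech complex on generators of \(\m_0\), we have \(H^d_{\m_0}(\omega_R)=0\) for \(d>\dim R_0\). Indeed, \(R\Gamma_{\m_0}\) of a module is concentrated in degrees \(\le \dim R_0\) (the arithmetic rank of \(\m_0\) is at most \(\dim R_0\), since \(\m_0\) is primary). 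Hence the connecting map is injective on each graded piece. Restricting to the degree-\(0\) summand gives \(H^d_{\m_0}(X,\omega_X)\hookrightarrow H^{d+1}_\m(\omega_R)_0\subseteq H^{d+1}_\m(\omega_R)\). I will also record that the whole sequence is \(\Z\)-graded, so this map lands in degree \(0\).

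\emph{Socle.} By Proposition \ref{dualizing-module-affine-cone}, \(\omega_R\) is a dualizing module of \(R\). The ring \(R_\m\) is local of dimension \(d+1\); this is computed in that proof, and \(\omega_R\) is \((S_2)\) of full dimension. By local duality, \(H^{d+1}_\m(\omega_R)\) is the Matlis dual of \(\mathrm{Hom}_R(\omega_R,\omega_R)^{\wedge}\). Since \(\omega_R\) is a dualizing module, \(\mathrm{Hom}_R(\omega_R,\omega_R)=\bar R\), the \((S_2)\)-ification of \(R\). By normality of \(X\) and \(\Ht(R_+)\ge2\) we have \(\bar R=R\); alternatively, I would compute \(\mathrm{End}(\omega_R)\) directly as \(\bigoplus_m H^0(X,L^m)=R\) via reflexivity on \(U\). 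Hence \(H^{d+1}_\m(\omega_R)\) is the injective hull \(E\) of \(R/\m\) (graded version), and its socle is \(1\)-dimensional over \(k=R_0/\m_0\), Matlis dual to \(R/\m\). In graded terms, the graded Matlis dual of \(R\) is \(\bigoplus_m \mathrm{Hom}_{R_0}(R_{-m},E_{R_0})\), whose socle is \(\mathrm{Hom}_{R_0}(R_0/\m_0,E_{R_0})\) sitting in degree \(0\) (because \(R_0\) is the degree-\(0\) part and \(\m=\m_0\oplus R_+\)). Thus \(\mathrm{Soc}\,H^{d+1}_\m(\omega_R)\subseteq H^{d+1}_\m(\omega_R)_0\), and more precisely it is contained in the socle of the \(R_0\)-module \(H^{d+1}_\m(\omega_R)_0\).

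\emph{Conclusion.} It remains to see that \(H^{d+1}_\m(\omega_R)_0\) equals the image of \(H^d_{\m_0}(X,\omega_X)\). From the degree-\(0\) part of the long exact sequence, the next term is \(H^{d+1}_{\m_0}(\omega_R)_0=0\), again since \(d+1>\dim R_0\). So the degree-\(0\) map is surjective, hence an isomorphism, and the image contains the whole degree-\(0\) part, and in particular the socle.

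The main obstacle is the socle localization step: establishing \(\mathrm{End}_R(\omega_R)=R\), which needs the normality of \(X\) and \(H^0(X,\mathcal O_X)=R_0\), together with the graded Matlis duality bookkeeping that puts the socle in degree exactly \(0\), with the correct sign convention for the grading of \(\omega_R\). I would handle this by writing \(H^{d+1}_\m(\omega_R)_m\cong \mathrm{Hom}_{R_0}(R_{-m},E_{R_0})\) explicitly through Serre–Grothendieck duality on \(X\): \(H^d_{\m_0}(X,\omega_X\otimes L^m)\) is dual to \(H^0(X,L^{-m})^\wedge\).
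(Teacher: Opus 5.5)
Your proposal is correct and is essentially the paper's argument. You both use the vanishing of \(H^d_{\m_0}(\omega_R)\) and \(H^{d+1}_{\m_0}(\omega_R)\) to identify the image of \(H^d_{\m_0}(X,\omega_X)\) with \(H^{d+1}_\m(\omega_R)_0\), and the duality \(H^d_{\m_0}(X,\omega_X\otimes L^m)^\vee\cong H^0(X,L^{-m})^\wedge\) to put the one-dimensional socle in degree \(0\).

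The only difference is how you pin the socle to degree \(0\). You read it off from the full graded Matlis dual \(\bigoplus_m \Hom_{R_0}(R_{-m},E_{R_0})\), which needs the degreewise duality isomorphisms to be compatible with multiplication by \(R_+\); this follows from naturality in \(\mathcal{F}\). The paper avoids that check: it uses only that degrees \(m\ge 1\) vanish, degree \(0\) is nonzero, and \(R_+\) raises degree, combined with one-dimensionality of the socle.
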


\begin{proof}
By \eqref{eq:fiber-seq} and $\dim R_0<d$, Grothendieck's vanishing theorem implies
\[
H^d_{\m_0}(\omega_R) = 0 \quad \text{and} \quad H^{d+1}_{\m_0}(\omega_R)=0.
\]
Thus, we have an isomorphism
\[
\bigoplus_{m \in \Z} H^d_{\m_0}(X,\omega_X\otimes_{\cO_X} L^m) \xrightarrow{\simeq} H^{d+1}_\m(\omega_R).
\]
Thus, we obtain the injection
\[
\varphi \colon H^d_{\m_0}(X,\omega_X) \hookrightarrow \bigoplus_{m \in \Z} H^d_{\m_0}(X,\omega_X\otimes_{\cO_X} L^m) \xrightarrow{\simeq}  H^{d+1}_\m(\omega_R).
\]
Applying \Cref{matlis-dual} to $X=\Spec B=\Spec R$, we have
\[
\Hom_{R}(R/\m,H^{d+1}_\m(\omega_R)) \simeq \Hom_{R}(R/\m,E_R) \simeq R/\m,
\]
where $E_R$ is the injective hull of $R/\m$.
Thus, the socle of $H^{d+1}_\m(\omega_R)$ is a one-dimensional vector space over $R/\m$.
Note that the action of $R_+$ on \(\bigoplus_{m \in \Z} H^d_{\m_0}(X,\omega_X\otimes_{\cO_X} L^m)\) strictly increases the degree. Therefore, to show that the socle is contained in degree \(0\), it is enough to show that
\[
H^d_{\m_0}(X,\omega_X) \neq 0 \quad \text{and} \quad H^d_{\m_0}(X,\omega_X \otimes_{\cO_X} L^m)=0
\]
for every $m \geq 1$.
Applying \Cref{matlis-dual} to $X \to \Spec R_0$, we obtain
\[
\Hom_{R_0}(H^d_{\m_0}(X, \omega_X \otimes_{\cO_X} L^m), E_{R_0}) \simeq \comp{\m_0}{H^0(X, L^{-m})},
\]
where \(E_{R_0}\) is the injective hull of \(R_0/\m_0\).
Since \(\mcalL\) is ample and \(X\) is normal projective, we have \(H^0(X, \mcalL^{-m})=0\) for \(m \geq 1\), while
\[
\comp{\m_0}{H^0(X,\cO_X)} \simeq \comp{\m_0}{R_0} \neq 0.
\]
This proves the claim.
\end{proof}

\begin{proposition} \label{CMCriterion}
We use the setting of Notation~\ref{SettingFibersequenceRecall}, and let \(\mathfrak m=(\mathfrak m_0,R_+)\).
Then the following are equivalent:
\begin{enumerate}
    \item \(R_{\mathfrak m}\) is Cohen--Macaulay.
    \item 
    \[
    H^i_{(p)}(X,\mathcal O_X(m))=0 \qquad (2\le i\le d-1,\ m\in \mathbb Z),
    \]
    and \(H^1(X,\mathcal O_X(m))\) is \(p\)-torsion free for all \(m\in \mathbb Z\).
\end{enumerate}
\end{proposition}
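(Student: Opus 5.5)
We compute the local cohomology \(H^j_{\mathfrak m}(R)\) for \(j\le d\) through the fiber sequence \(R\Gamma_{R_+}(R)\to R\to \bigoplus_{m\in\mathbb Z}R\Gamma(X,\mathcal O_X(m))\) (the algebraic counterpart of \Cref{GradedAbsPerfdSheafCoho}, used in the proof of \Cref{dualizing-module-affine-cone}). We then apply \(R\Gamma_{(p)}\). Because \(R_0\) is a DVR with uniformizer dividing a power of \(p\), we have \(\sqrt{(\mathfrak m_0)}=\sqrt{(p)}\) in \(R_0\), so \(R\Gamma_{\mathfrak m}=R\Gamma_{(p)}\circ R\Gamma_{R_+}\). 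This gives a fiber sequence
\begin{equation*}
R\Gamma_{\mathfrak m}(R)\to R\Gamma_{(p)}(R)\to \bigoplus_{m\in\mathbb Z}R\Gamma_{(p)}(X,\mathcal O_X(m)).
\end{equation*}
The ring \(R\) is \(p\)-torsion-free and \(R/pR\ne 0\), so \(R\Gamma_{(p)}(R)\simeq (R[1/p]/R)[-1]\) is concentrated in degree \(1\). Also \(\dim R_{\mathfrak m}=d+1\), as in \Cref{dualizing-module-affine-cone}. Hence \(R_{\mathfrak m}\) is Cohen--Macaulay if and only if \(H^j_{\mathfrak m}(R)=0\) for \(j\le d\). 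Since the graded module \(H^j_{\mathfrak m}(R)\) is supported at \(\mathfrak m\), this vanishing is the same graded or after localizing.

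The long exact sequence gives \(H^0_{\mathfrak m}(R)=0\) and an exact sequence
\begin{equation*}
0\to H^1_{\mathfrak m}(R)\to R[1/p]/R\to \bigoplus_m H^1_{(p)}(X,\mathcal O_X(m))\to H^2_{\mathfrak m}(R)\to 0,
\end{equation*}
together with isomorphisms \(H^{j+1}_{\mathfrak m}(R)\cong\bigoplus_m H^j_{(p)}(X,\mathcal O_X(m))\) for \(j\ge 2\). So the condition \(H^j_{\mathfrak m}(R)=0\) for \(3\le j\le d\) is exactly \(H^i_{(p)}(X,\mathcal O_X(m))=0\) for \(2\le i\le d-1\). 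It remains to match \(H^1_{\mathfrak m}(R)=H^2_{\mathfrak m}(R)=0\) with \(p\)-torsion-freeness of \(H^1(X,\mathcal O_X(m))\).

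For this, compute \(H^i_{(p)}(X,\mathcal F)\) for \(\mathcal F=\mathcal O_X(m)\) with \(M^\bullet=R\Gamma(X,\mathcal F)\). Since \(R\Gamma_{(p)}(N)=\mathrm{fib}(N\to N[1/p])\), there are short exact sequences
\begin{equation*}
0\to \operatorname{coker}\bigl(H^{i-1}(M)\to H^{i-1}(M)[1/p]\bigr)\to H^i_{(p)}(M)\to H^i(M)[p^\infty]\to 0.
\end{equation*}
In degree \(1\), \(H^0(X,\mathcal O_X(m))=R_m\) is \(p\)-torsion-free, so \(H^1_{(p)}(X,\mathcal O_X(m))\) is an extension of \(H^1(X,\mathcal O_X(m))[p^\infty]\) by \(R_m[1/p]/R_m\). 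The map \(R[1/p]/R\to\bigoplus_m H^1_{(p)}\) identifies degreewise with the inclusion of this first piece, because the comparison \(R\to\bigoplus_m H^0(X,\mathcal O_X(m))\) is an isomorphism in degrees \(m\ge 0\), and \(H^0(X,\mathcal O_X(m))=0\) for \(m<0\) by ampleness and normality/flatness, as in \Cref{socle}. Hence \(H^1_{\mathfrak m}(R)=0\) automatically, and \(H^2_{\mathfrak m}(R)\cong\bigoplus_m H^1(X,\mathcal O_X(m))[p^\infty]\). So \(H^2_{\mathfrak m}(R)=0\) if and only if every \(H^1(X,\mathcal O_X(m))\) is \(p\)-torsion-free, which gives the equivalence.

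The main obstacle is this last identification: checking that the connecting map from \(R\Gamma_{(p)}(R)\) lands exactly in the cokernel part of \(H^1_{(p)}(X,\mathcal O_X(m))\), compatibly with the gradings. We also need \(H^0(X,\mathcal O_X(m))=0\) for \(m<0\). Flatness plus \(H^0(X,\mathcal O_X)=R_0\) should give this, since the generic fiber is an integral projective variety, but it requires care if \(X\) is not assumed normal. The isomorphism \(H^0(X,\mathcal O_X(m))=R_m\) for \(m\ge0\) holds by the definition of the section ring.
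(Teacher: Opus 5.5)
Your proposal is correct and follows the paper's proof essentially step by step. It uses the same fiber sequence \(R\Gamma_{\mathfrak m}(R)\to R\Gamma_{(p)}(R)\to\bigoplus_m R\Gamma_{(p)}(X,\mathcal O_X(m))\), the same long exact sequence analysis giving \(H^{j+1}_{\mathfrak m}(R)\cong\bigoplus_m H^j_{(p)}(X,\mathcal O_X(m))\) for \(j\ge 2\), and the same triangle \(R\Gamma(X,\mathcal O_X(n))\to R\Gamma(X,\mathcal O_X(n))[1/p]\) to identify \(H^2_{\mathfrak m}(R)_n\cong H^1(X,\mathcal O_X(n))[p^\infty]\); the paper simply works one degree \(n\) at a time. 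The point you flag, \(H^0(X,\mathcal O_X(m))=0\) for \(m<0\), is used just as implicitly in the paper when it writes \(H^0(X,\mathcal O_X(n))=R_n\) for all \(n\), so it is not a gap relative to the paper's argument.
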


\begin{proof}
Consider the exact triangle
\[
R\Gamma_{\mathfrak m}(R)\to R\Gamma_{(p)}(R)\to \bigoplus_{m\in\mathbb Z} R\Gamma_{(p)}(X,\mathcal O_X(m)).
\]
Taking the degree-\(n\) part, we obtain an exact triangle
\[
R\Gamma_{\mathfrak m}(R)_n \to R\Gamma_{(p)}(R_n)\to R\Gamma_{(p)}(X,\mathcal O_X(n)).
\]

Since \(R_n\) is \(p\)-torsion free, we have
\[
R\Gamma_{(p)}(R_n)\simeq [\,R_n\to R_n[1/p]\,],
\]
and hence
\[
H^i_{(p)}(R_n)=0 \qquad (i\neq 1), \qquad
H^1_{(p)}(R_n)\cong R_n[1/p]/R_n.
\]
Therefore, noting that \(H^0_{(p)}(X,\mathcal O_X(n))=0\), the associated long exact sequence yields
\[
H^0_{\mathfrak m}(R)_n=H^1_{\mathfrak m}(R)_n=0,
\]
an exact sequence
\[
0\to R_n[1/p]/R_n \to H^1_{(p)}(X,\mathcal O_X(n))
   \to H^2_{\mathfrak m}(R)_n \to 0,
\]
and isomorphisms
\[
H^i_{\mathfrak m}(R)_n \cong H^{i-1}_{(p)}(X,\mathcal O_X(n))
\qquad (i\ge 3).
\]

Now consider the exact triangle
\[
R\Gamma(X,\mathcal O_X(n)) \to R\Gamma(X,\mathcal O_X(n))[1/p]
\to R\Gamma_{(p)}(X,\mathcal O_X(n))[1].
\]
Since \(H^0(X,\mathcal O_X(n))=R_n\), the associated long exact sequence yields
\[
0\to R_n[1/p]/R_n \to H^1_{(p)}(X,\mathcal O_X(n))
   \to H^1(X,\mathcal O_X(n)) \to
   H^1(X,\mathcal O_X(n))[1/p].
\]
Hence
\[
0\to R_n[1/p]/R_n \to H^1_{(p)}(X,\mathcal O_X(n))
   \to H^1(X,\mathcal O_X(n))[p^\infty]\to 0.
\]
Combining this with the previous exact sequence, we obtain a natural isomorphism
\[
H^2_{\mathfrak m}(R)_n \cong H^1(X,\mathcal O_X(n))[p^\infty].
\]
Therefore, \(H^2_{\mathfrak m}(R)=0\) if and only if \(H^1(X,\mathcal O_X(n))\) is \(p\)-torsion free for all \(n\in\mathbb Z\).

For \(i\ge 3\), the isomorphism
\[
H^i_{\mathfrak m}(R)_n \cong H^{i-1}_{(p)}(X,\mathcal O_X(n))
\]
shows that \(H^i_{\mathfrak m}(R)=0\) if and only if \(H^{i-1}_{(p)}(X,\mathcal O_X(n))=0\) for all \(n\in\mathbb Z\).

Since we have already seen that \(H^i_{\mathfrak m}(R)=0\) for \(i=0, 1\), the condition that \(H^i_{\mathfrak m}(R)=0\) for all \(0\le i\le d\) is equivalent to the conjunction of the following two conditions:
\begin{enumerate}
    \item \(H^j_{(p)}(X,\mathcal O_X(n))=0\) for all \(2\le j\le d-1\) and \(n\in\mathbb Z\);
    \item \(H^1(X,\mathcal O_X(n))\) is \(p\)-torsion free for all \(n\in\mathbb Z\).
\end{enumerate}

Finally, since \(\dim R_{\mathfrak m}=d+1\), the ring \(R_{\mathfrak m}\) is Cohen--Macaulay if and only if
\[
H^i_{\mathfrak m}(R)=0 \qquad (i<d+1).
\]
This proves the assertion.
\end{proof}

\begin{proposition} \label{CYSectionRingCM}
Keep the setting of \Cref{SettingFibersequenceRecall}.
Assume that 
\[
H^i_{(p)}(X,\cO_X(m))=0 \quad \text{for $m<0$ and $i<\dim X$}.
\]
    Assume further one of the following conditions:
    \begin{enumerate}
        \item \(\omega_X\) is trivial and \(H^i(X, \mcalO_X) = 0\) for \(0 < i < d-1\);
        \item \(\omega_X\) is anti-ample and \(\omega_X^{-1} = L\).
    \end{enumerate}
    Then the localization \(R_{\mfrakm}\) of the section ring \(R = \bigoplus_{n \geq 0} H^0(X, \mcalO_X(n))\) is Cohen--Macaulay.
\end{proposition}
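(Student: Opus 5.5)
We will verify criterion (2) of \Cref{CMCriterion}: we need $H^i_{(p)}(X,\cO_X(m))=0$ for $2\le i\le d-1$ and all $m\in\Z$, and $H^1(X,\cO_X(m))$ $p$-torsion free for all $m$. We split by the sign of $m$ and handle $m<0$, $m>0$, and $m=0$ separately.

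\emph{Negative twists.} For $m<0$ the hypothesis gives $H^i_{(p)}(X,\cO_X(m))=0$ for all $i<d$, which covers the range $2\le i\le d-1$. For $H^1$, use the triangle $R\Gamma(X,\cO_X(m))\to R\Gamma(X,\cO_X(m))[1/p]\to R\Gamma_{(p)}(X,\cO_X(m))[1]$ from the proof of \Cref{CMCriterion}. It shows that $H^1(X,\cO_X(m))[p^\infty]$ is a quotient of $H^1_{(p)}(X,\cO_X(m))$. The latter vanishes since $1<d$, so there is no $p$-torsion.

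\emph{Positive twists.} For $m>0$ we use Serre duality over $R_0$ together with \Cref{matlis-dual}. Since $X$ is Cohen--Macaulay (implicitly needed here, and guaranteed in our applications), $H^i_{(p)}(X,\cO_X(m))$ is Matlis dual to $\comp{\m_0}{H^{d-i}(X,\omega_X(-m))}$, up to the shift relating $R\Gamma_{(p)}$ over the DVR $R_0$ to $R\Gamma_{\m_0}$. In case (1) we have $\omega_X(-m)=\cO_X(-m)$. In case (2) we have $\omega_X(-m)=\cO_X(-m-1)$. Either way the twist is negative.

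So we need the vanishing of $H^j(X,\cO_X(-n))$ for $n>0$ and small $j$, and the torsion-freeness of $H^{j}$ in the adjacent degree. Both follow from the negative-twist hypothesis through the triangle above. Concretely, $H^{i}_{(p)}(X,\cO_X(-n))=0$ for $i<d$ forces two things via the long exact sequence: $H^{i}(X,\cO_X(-n))\cong H^{i}(X,\cO_X(-n))[1/p]$ for $i\le d-2$, and $H^{d-1}(X,\cO_X(-n))$ is $p$-torsion free. Finitely generated modules that are $p$-divisible vanish, so $H^i(X,\cO_X(-n))=0$ for $i\le d-2$.

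Translating back through duality, $H^i_{(p)}(X,\cO_X(m))$ vanishes for $2\le i\le d-1$. For $H^1(X,\cO_X(m))$ with $m>0$, we dualize the torsion-freeness of $H^{d-1}(X,\omega_X(-m))$: the torsion of $H^1$ over a DVR is dual to the torsion in $H^{d}$-type terms via Grothendieck duality ($\mathrm{Ext}^1$ pieces). This needs careful bookkeeping, but it is standard.

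\emph{Degree zero.} This is the main obstacle, and the place where the dichotomy matters. In case (2), $\omega_X=\cO_X(-1)$, so the duality argument applies to $m=0$ as well, because $\omega_X(0)$ is already a negative twist. In case (1), $\omega_X=\cO_X$, so $m=0$ is self-dual. We use $H^i(X,\cO_X)=0$ for $0<i<d-1$ directly, together with Serre duality, which identifies $H^{d-1}(X,\cO_X)$ with the dual of $H^1(X,\cO_X)$ modulo torsion terms. This shows that $H^i(X,\cO_X)$ is torsion free for $i\le d-1$, with $H^0=R_0$ and the middle cohomology zero.

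The triangle then gives $H^i_{(p)}(X,\cO_X)=0$ for $2\le i\le d-1$. The only possible nonzero group is $H^{d-1}(X,\cO_X)[1/p]/H^{d-1}$ in $H^{d}_{(p)}$, which lies outside the range. For $d=2$ the required range $2\le i\le d-1$ is empty, so nothing is needed there.

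I expect the delicate point to be checking $p$-torsion-freeness of $H^{d-1}(X,\cO_X)$ in case (1). I would obtain it from duality with $H^1(X,\cO_X)$: either $H^1=0$ for $d\ge3$, or, for $d=2$, from the fact that $H^{d}(X,\cO_X)\cong R_0$ is torsion free.
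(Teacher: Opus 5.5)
Your route is the paper's. You reduce to \Cref{CMCriterion}(2), treat negative twists via the hypothesis and the triangle $R\Gamma(X,\cO_X(m))\to R\Gamma(X,\cO_X(m))[1/p]\to R\Gamma_{(p)}(X,\cO_X(m))[1]$, and treat non-negative twists via the duality of \Cref{matlis-dual}. As you note, that duality tacitly needs $X$ Cohen--Macaulay, exactly as in the paper. The differences are cosmetic:
\begin{itemize}
\item For negative twists you read off torsion-freeness of $H^1$ directly from the hypothesis. This is cleaner than the paper's appeal to anti-ampleness on the possibly non-reduced $X_{p=0}$.
\item For positive twists the paper dualizes the hypothesis itself: $H^i(X,\cO_X(m))^{\wedge}\cong H^{d-i}_{(p)}(X,\omega_X(-m))^{\vee}=0$ for $i\geq 1$ (for $m>0$, resp.\ $m\geq 0$ in case (2)). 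This gives $H^1(X,\cO_X(m))=0$ outright and $R\Gamma_{(p)}(X,\cO_X(m))\simeq R\Gamma_{(p)}(R_m)$, with no $\mathrm{Ext}^1$ bookkeeping.
\item There is no shift between $R\Gamma_{(p)}$ and $R\Gamma_{\mathfrak m_0}$ on $R_0$; they coincide.
\end{itemize}
In case (2), and in case (1) with $d\geq 3$, your argument is correct.

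The step that fails is case (1) with $d=2$ and $m=0$, where you must show that $H^1(X,\cO_X)$ is $p$-torsion-free.
\begin{itemize}
\item Your reason, ``$H^{d}(X,\cO_X)\cong R_0$'', mixes absolute and relative dimension. Here $d=\dim X$, but $X$ has relative dimension $d-1$ over $R_0$. So $H^2(X,\cO_X)=0$ for $d=2$, and the top coherent cohomology is $H^{d-1}=H^1$ itself.
\item Duality over the DVR then only gives $H^1(X,\cO_X)\cong R_0\oplus\mathrm{Ext}^1_{R_0}(H^1(X,\cO_X),R_0)$. This pairs the torsion of $H^1$ with itself, which is no constraint.
\item By $0\to\cO_X\xrightarrow{p}\cO_X\to\cO_{X_{p=0}}\to 0$, torsion-freeness amounts to $H^0(X_{p=0},\cO_{X_{p=0}})=R_0/p$. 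For $d=2$ nothing in the hypotheses of (1) forces this, since the condition $H^i(X,\cO_X)=0$ for $0<i<d-1$ is empty.
\item The point cannot be bypassed: the proof of \Cref{CMCriterion} identifies this torsion with $H^2_{\mathfrak m}(R)_0$.
\end{itemize}

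The paper's proof has the same hole. Its claim that $H^1(X,\cO_X(m))=0$ for $m\geq 0$ uses $H^1(X,\cO_X)=0$, which hypothesis (1) supplies only when $d\geq 3$. So case (1) should be read either with $d\geq 3$ or with $p$-torsion-freeness of $H^1(X,\cO_X)$ added as a hypothesis. The restriction $d\geq 3$ costs nothing, since the only use is in \Cref{every-section-ring}, where $d=N\geq 3$. With either amendment your proof goes through.
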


\begin{proof}
    By \Cref{CMCriterion}, it suffices to show that \(H^i_{(p)}(X, \mcalO_X(m)) = 0\) for \(m \in \setZ\) and \(2 \leq i \leq d-1\) and that \(H^1(X, \mcalO_X(m))\) is \(p\)-torsion-free for all \(m \in \setZ\).

    First, we show that \(H^i_{(p)}(X, \mcalO_X(m)) = 0\) for any \(m \geq 0\) and \(1 \leq i \leq  d-1\).
    By assumption, we have \(H^i_{(p)}(X, \mcalO_X(m)) = 0\) for \(m < 0\) and \(i \leq d-1\).

    (1): Assume that \(\omega_X\) is trivial and \(H^i(X, \mcalO_X) = 0\) for \(0 < i < d-1\).
    Using the duality in \Cref{matlis-dual}, we have an isomorphism
    \begin{equation*}
        H^i(X, \mcalO_X(m)) \cong H^{d-i}_{(p)}(X, \mcalO_X(-m))^{\vee},
    \end{equation*}
    which implies \(H^i(X, \mcalO_X(m)) = 0\) for \(m > 0\) and \(i \geq 1\).
    In particular, we have
    \begin{equation} \label{RGammapVanishing}
        R\Gamma_{(p)}(X, \mcalO_X(m)) \cong R\Gamma_{(p)}(H^0(X, \mcalO_X(m))) \in \mcalD^{[0, 1]}(R_0)
    \end{equation}
    for \(m > 0\) and thus \(H^i_{(p)}(X, \mcalO_X(m)) = 0\) holds for \(m > 0\) and \(i \geq 2\).
    Using the duality again, the assumption \(H^i(X, \mcalO_X) = 0\) for \(1 \leq i \leq d-2\) implies that \(H^i_{(p)}(X, \mcalO_X) = 0\) for \(2 \leq i \leq d-1\).

    (2): Assume that \(\omega_X\) is anti-ample and \(\omega_X^{-1} = L\).
    Using the duality in \Cref{matlis-dual}, we have an isomorphism
    \begin{equation*}
        H_{(p)}^{d-i}(X, \mcalO_X(m-1))^{\vee} \cong H^i(X, \mcalO_X(-m)),
    \end{equation*}
    which implies \(H^i(X, \mcalO_X(m)) = 0\) for \(m \geq 0\) and \(i \geq 1\).
    As in (1), the same isomorphism \eqref{RGammapVanishing} holds for \(m \geq 0\) and thus \(H^i_{(p)}(X, \mcalO_X(m)) = 0\) for \(m \geq 0\) and \(i \geq 2\).

    Finally, we show that \(H^1(X, \mcalO_X(m))\) is \(p\)-torsion-free for all \(m \in \setZ\) in both cases.
    By the above vanishing \(H^1(X, \mcalO_X(m)) = 0\) for \(m \geq 0\), it suffices to show this for \(m < 0\).
    Since $\mcalO_{X_{p=0}}(m)$ is anti-ample, we have $H^0(X_{p=0}, \mcalO_{X_{p=0}}(m))=0$.
    Thus, it follows from the exact sequence
    \[
0 \to \cO_X(m) \xrightarrow{\cdot p} \cO_X(m) \to \cO_{X_{p=0}}(m) \to 0
    \]
    that \(H^1(X, \mcalO_X(m))\) is \(p\)-torsion-free, as desired.
\end{proof}


\end{document}